\DeclareMathOperator*{\argmin}{arg\,min}
\newcommand{\eps}{\ensuremath{\varepsilon}}
   \newcommand{\R}{\mathbb{R}}
\newcommand{\RR}{\mathbb{R}}
\renewcommand{\t}{^{\top}}
\newcommand{\diag}{\mathrm{Diag}}
\newcommand{\inv}{^{-1}}
\newcommand{\E}{\mathbb{E}}
\newcommand{\p}{\mathbb{P}}
\newcommand{\per}{U_{\perp}U_{\perp}\t}
\renewcommand{\P}{\mathcal{P}}
\renewcommand{\hat}{\widehat}
\newcommand{\snr}{\mathrm{SNR}}
\newtheorem{theorem}{Theorem}
\newtheorem{corollary}{Corollary}
\newtheorem{lemma}{Lemma}
\theoremstyle{definition}
\newtheorem{assumption}{Assumption}
\newcommand{\tr}{\text{Tr}}
\newcommand{\utilde}{\tilde U}
\newcommand{\uhat}{\hat U}
\providecommand{\keywords}[1]
{
  \small
  \textbf{Keywords:} #1
}
\title{Entrywise Estimation of Singular Vectors of Low-Rank Matrices with Heteroskedasticity and Dependence}
\author[1]{Joshua Agterberg}
\author[1]{Zachary Lubberts}
\author[1]{Carey Priebe}
\date{\today}
\affil[1]{Department of Applied Mathematics and Statistics, Johns Hopkins University}
\begin{document}

\maketitle

\begin{abstract}
We propose an estimator for the singular vectors of high-dimensional low-rank matrices corrupted by additive subgaussian noise, where the noise matrix is allowed to have dependence within rows and heteroskedasticity between them. We prove finite-sample $\ell_{2,\infty}$ bounds and a Berry-Esseen theorem for the individual entries of the estimator, and we apply these results to high-dimensional mixture models.  Our Berry-Esseen theorem 
clearly shows the geometric relationship between the signal matrix, the covariance structure of the noise, and the distribution of the errors in the singular vector estimation task. These results are illustrated in numerical simulations. Unlike previous results of this type, which rely on assumptions of Gaussianity or independence between the entries of the additive noise, handling the dependence between entries in the proofs of these results requires careful leave-one-out analysis and conditioning arguments. Our results depend only on the signal-to-noise ratio, the sample size, and the spectral properties of the signal matrix.
\end{abstract}

\keywords{Entrywise estimation, singular vectors, heteroskedastic, Berry-Esseen}

\tableofcontents

\section{Introduction}
Consider the signal-plus-noise model
 \begin{align*}
     \hat M = M + E,
 \end{align*}
 where $M \in \R^{n\times d}$ is a deterministic rank $r$ signal matrix and $E$ is a mean-zero noise matrix.  We assume $M$ has the singular value decomposition 
 \begin{align*}
     M = U \Lambda V\t,
 \end{align*}
 where $U$ is an $n \times r$ matrix with orthonormal columns, $\Lambda$ is an $r\times r$ diagonal matrix whose entries are the $r$ nonzero singular values $\lambda_1 \geq \lambda_2 \geq \cdots \geq \lambda_r > 0$ of $M$, and $V$ is an $d \times r$ matrix with orthonormal columns. Letting  $E_i\t$ denote the $i$'th row of the noise matrix $E$, we suppose each noise vector $E_i$ is of the form 
 \begin{align*}
     E_i = \Sigma_{i}^{1/2} Y_i,
 \end{align*}
where $\E Y_i = 0$, $\Sigma_{i} \in \R^{d\times d}$ is a positive semidefinite matrix allowed to depend on the row $i$, and the coordinates $Y_{i\alpha}$ of the vector $Y_i$ are independent subgaussian random variables satisfying $\E Y_{i\alpha}^2 = 1$.  Define the \emph{signal to noise ratio}
 \begin{align*}
     \snr := \frac{\lambda_r}{\sigma \sqrt{rd}},
 \end{align*}
 where $\sigma^2 := \max_i \| \Sigma_i \|$ is the maximum spectral norm of the covariances of each row. 
 
 In this work we study the entrywise estimation of the matrix $U$ (or column space of $M$) in the ``quasi-asymptotic regime'' wherein $n$ and $d$ are assumed to be large but finite.  Because we allow the rows of $E$ to have different covariances, the ``vanilla SVD'' can be biased, so we propose a bias-corrected estimator for the matrix $U$ and analyze its limiting distribution when the signal to noise ratio is large enough relative to the dimension of the problem. Furthermore, we do not assume distinct singular values for $M$.  
 
 Our main contributions are the following:
 \begin{itemize}
     \item We provide a nonasymptotic Berry-Esseen Theorem (Theorem \ref{thm:b-e}) for the entries of our proposed estimator for $U$ under general assumptions on the signal matrix $M$ and the noise matrix $E$;
     \item We study the $\ell_{2,\infty}$ approximation (Theorem \ref{thm:2_infty}) of our proposed estimator to $U$ which matches previous $\ell_{2,\infty}$ bounds in the special case of independent noise.
 \item We apply our results to the particular setting of a $K$-component subgaussian mixture model (Corollary \ref{cor:mixture}) and show that one can accurately estimate the resulting limiting covariance of the rows of our estimator within each community (Corollary \ref{cor:consistent_estimate}), allowing us to derive data-driven, asymptotically valid confidence regions.
 \end{itemize}
Since we allow for dependence within each row of the noise matrix $E$, our asymptotic results highlight the geometric relationship between the covariance structure and the singular subspaces of $M$. Furthermore, our estimator is based off the HeteroPCA algorithm proposed in \citet{zhang_heteroskedastic_2021}, so as a byproduct of our results, we also provide a more refined analysis of this algorithm, 
leading to an upper bound on the $\ell_{2,\infty}$ error that scales with the noise.

The organization of the paper is as follows. In the next subsection we discuss related work, and we end this section with notation and terminology we will use throughout. In Section~\ref{sec:bckgd}, we recall the HeteroPCA algorithm of \cite{zhang_heteroskedastic_2021}, and use this to define our estimator $\hat{U}$ for $U$. We also discuss alternative approaches to the problem and some of the shortcomings of those approaches which have motivated the present work. In Section~\ref{sec:results}, we state our main theorems, namely our $\ell_{2,\infty}$ concentration and Berry-Esseen theorems. We also discuss the various assumptions of our model, and compare these to previous work. In Section~\ref{sec:applications}, we discuss the statistical implications of our results for mixture distributions. We further illustrate our results in simulations in Section~\ref{sec:simulations}. Discussion of these results and potential future work is in Section~\ref{sec:discussion}. Finally, Section~\ref{sec:proofs} contains the proof of Theorem~\ref{thm:2_infty} and a proof sketch of Theorem~\ref{thm:b-e}.
The full proof of Theorem~\ref{thm:b-e} and additional supplementary lemmas are contained in the Appendices.

\subsection{Related Work}
Spectral methods, which refer to a collection of tools and techniques informed by matrix analysis and eigendecompositions, underpin a number of methods used in high-dimensional multivariate statistics and data science, including but not limited to network analysis \citep{abbe_ell_p_2020,abbe_entrywise_2020,agterberg_nonparametric_2020,athreya_statistical_2017,cai_subspace_2020,fan_simple_2019,jin_estimating_2019,lei_unified_2019,lei_consistency_2015,mao_estimating_2020,rubin-delanchy_statistical_2020,zhang_detecting_2020}, principal components analysis,  \citep{cai_subspace_2020,cai_rate-optimal_2018,cai_optimal_2021, koltchinskii_efficient_2020,koltchinskii_normal_2017,koltchinskii_perturbation_2016,koltchinskii_asymptotics_2016,wang_asymptotics_2017,johnstone_consistency_2009,lounici_sparse_2013,lounici_high-dimensional_2014,xie_bayesian_2019,zhu_high-dimensional_2019}, and spectral clustering \citep{abbe_ell_p_2020,abbe_entrywise_2020,amini_concentration_2021,cai_subspace_2020,lei_unified_2019,loffler_optimality_2020,schiebinger_geometry_2015,srivastava_robust_2021}.  In addition, eigenvectors or related quantities can be used as a ``warm start'' for  optimization methods (\citet{chen_inference_2019,chen_spectral_2021,chi_nonconvex_2019,lu_spectral_2017,ma_implicit_2020,xie_efficient_2020,xie_euclidean_2021}), yielding provable convergence to quantities of interest provided the initialization is sufficiently close to the optimum.  The model we consider includes as a submodel the high-dimensional $K$-component mixture model.  High-Dimensional mixture models play an important role in the analysis of spectral clustering \citep{abbe_ell_p_2020,amini_concentration_2021,loffler_optimality_2020,schiebinger_geometry_2015,vempala_spectral_2004,von_luxburg_tutorial_2007} and classical multidimensional scaling \citep{ding_modified_2019,li_central_2020,little_exact_2020}.

To analyze these methods, researchers have used existing results on matrix perturbation theory such as the celebrated Davis-Kahan $\sin\Theta$ Theorem \citep{bhatia_matrix_1997,yu_useful_2014,chen_spectral_2021}, which provides a deterministic bound on the difference between the eigenvectors of a perturbed matrix and the eigenvectors of the unperturbed matrix, provided the perturbation is sufficiently small.  Unfortunately, the Davis-Kahan Theorem and classical approaches to matrix perturbation analysis may fail to provide entrywise guarantees for estimated eigenvectors, though there has been work on studying the subspace distances in the presence of random noise \citep{li_two-sample_2018,xia_normal_2019,bao_singular_2021,orourke_random_2018,ding_high_2020}.

Entrywise eigenvector analysis plays an important role in furthering the understanding of spectral methods in a number of statistical problems
\citep{abbe_entrywise_2020,abbe_ell_p_2020,cai_subspace_2020,cape_signal-plus-noise_2019,cape_two--infinity_2019,chen_spectral_2021,damle_uniform_2019,eldridge_unperturbed_2018,fan_ell_infty_2018,lei_unified_2019,luo_schatten-q_2020,mao_estimating_2020,rohe_vintage_2020,xia_statistical_2020,xie_bayesian_2019,zhong_near-optimal_2018,zhu_high-dimensional_2019}.
A number of works have studied entrywise eigenvector or singular vector analysis when the noise matrix $E$ consists of independent entries  \citep{abbe_entrywise_2020,abbe_ell_p_2020,chen_spectral_2021,cape_signal-plus-noise_2019,cai_subspace_2020,lei_unified_2019},  and some authors have also studied the estimation of linear forms of eigenvectors
\citep{chen_asymmetry_2021,cheng_tackling_2020,fan_asymptotic_2020,koltchinskii_perturbation_2016,koltchinskii_normal_2017,koltchinskii_perturbation_2016,koltchinskii_asymptotics_1998,koltchinskii_efficient_2020,li_minimax_2021}.  In the present work, we extend existing results on entrywise analysis by allowing for dependence in the noise matrix. We address this more general setting using a combination of matrix series expansions  \citep{cape_signal-plus-noise_2019,chen_asymmetry_2021,eldridge_unperturbed_2018,xia_confidence_2019,xia_normal_2019,xia_statistical_2020,xie_bayesian_2019}, leave-one-out analysis \citep{abbe_entrywise_2020,abbe_ell_p_2020,chen_spectral_2021,lei_unified_2019}, and careful conditioning arguments. 

While entrywise statistical guarantees refine classical deterministic perturbation techniques, they do not necessarily allow for studying the distributional properties of the eigenvectors.  Several works have studied the asymptotic distributions of individual eigenvectors \citep{cape_signal-plus-noise_2019,fan_asymptotic_2020} or $\sin\Theta$ distances \citep{bao_singular_2021,ding_high_2020,li_two-sample_2018,xia_normal_2019}, but there are very few finite-sample results on the distribution of the individual entries of singular vectors rather than eigenvectors, and existing results often depend on independence of the noise. We explicitly characterize the distribution of the individual entries of the estimated singular vectors, which showcases the effect that the geometric relationship of the covariance structure of the rows of $E$ and the spectral structure of the signal matrix $M$ has on this distribution.  

Finally, the bias of the singular value decomposition in the presence of heteroskedastic noise has been addressed in a number of works \citep{cai_subspace_2020,abbe_ell_p_2020,florescu_spectral_2016,koltchinskii_random_2000,leeb_optimal_2021,lei_bias-adjusted_2021,lounici_high-dimensional_2014}.  A common method for addressing this is the diagonal deletion algorithm, for which an entrywise analysis is carried out in \citet{cai_subspace_2020} for several different statistical problems.  However, as identified in \citet{zhang_heteroskedastic_2021}, in some situations diagonal deletion incurs unnecessary additional error, leading them to propose the HeteroPCA algorithm which we further study here.  We include detailed comparisons to existing work in Section \ref{sec:results}.

\subsection{Notation}
We use capital letters for both matrices and vectors, where the distinction will be clear from context, except for the letter $C$, which we use to denote constants.  For a matrix $M$, we write $M_{ij}$ as its $i,j$ entry, $M_{\cdot j}$ for its $j$'th column and $M_{j \cdot}$ for its $j$'th row. The symbol $e_i$ represents the standard basis vector in the appropriate dimension.  We use $\|\cdot \|$ to denote the spectral norm for matrices and the Euclidean norm for vectors, and $\|\cdot\|_F$ as the Frobenius norm for matrices.  We write the $\ell_{2,\infty}$ norm of a matrix  as $\|U\|_{2,\infty} = \max_i \|U_{i\cdot}\|$ which is the maximum Euclidean row norm.  We consider the set of orthogonal $r \times r $ matrices as $\mathbb{O}(r)$, and we exclusively use the letter $\mathcal{O}$ to mean an element of $\mathbb{O}(r)$.  We write $\langle \cdot, \cdot \rangle$ as the standard Euclidean inner product.

For a random variable $X$ taking values in $\R$, we let $\|X\|_{\psi_2}$ denote its $\psi_2$ (subgaussian) Orlicz norm; that is, 
\begin{align*}
    \| X \|_{\psi_2} := \sup_{z \in \R} \{ \E \exp( X^2/z) \leq 2 \},
\end{align*}
 and for a random variable $Y$ taking values in $\R^d$, we let $\|Y\|_{\psi_2}$ denote $\sup \| \langle Y, u \rangle \|_{\psi_2}$, where the supremum is over all unit vectors $u$.  Similarly we denote $\|\cdot \|_{\psi_1}$ as the $\psi_1$ (subexponential) Orlicz norm.  For more details on relationships between these norms, see \citet{vershynin_high-dimensional_2018}.
 
 For two equal-sized matrices $U_1$ and $U_2$ with orthonormal columns, we denote the $\sin\Theta$ distance between them as 
 \begin{align*}
     \| \sin\Theta(U_1, U_2) \| := \| U_1 U_1\t - U_2 U_2\t \|.
 \end{align*}
 For details on the $\sin\Theta$ distance between subspaces, see \citet{bhatia_matrix_1997}, \citet{chen_spectral_2021}, \citet{cape_two--infinity_2019}, or \citet{cape_orthogonal_2020}.   For a square matrix $M$, we write $\Gamma(M)$ to be the hollowed version of $M$; that is, $\Gamma(M)_{ij} = M_{ij}$ for $i\neq j$ and $\Gamma(M)_{ii} = 0$.  We write $G(M) := M - \Gamma(M)$.  

Occasionally we will write $f(n,d) \lesssim g(n,d)$ if there exists a constant $C$ sufficiently large such that $f(n,d) \leq C g(n,d)$ for sufficiently large $n$ and $d$.  We also write $f(n,d) \ll g(n,d)$ if $f(n,d)/g(n,d)$ tends to zero as $n$ and $d$ tend to infinity. We write $f(n,d) = O(g(n,d))$ if $f(n,d) \lesssim g(n,d)$.  For two numbers $a$ and $b$, we write $a \vee b$ to denote the maximum of $a$ and $b$.  Finally we write $f(n,d) \asymp g(n,d)$ if $f(n,d) = O(g(n,d))$ and $g(n,d) = O(f(n,d))$.

\section{Background and Methodology} \label{sec:bckgd}

We consider a rectangular matrix $M\in\RR^{n\times d}$ in the ``high-dimensional regime'' wherein $n$ and $d$ are large and comparable, though we are interested in the setting that $d$ is larger than $n$.  We observe $\hat{M}=M+E$ where the additive error matrix $E$ has rows $E_i\t$ which are independent with covariance matrices $\Sigma_i\in\RR^{d\times d}$ that are allowed to vary between rows. We write the singular value decomposition of $M$ as $U\Lambda V\t$, where $U\in \RR^{n\times r},V\in \RR^{d\times r}$ are matrices with orthonormal columns and $\Lambda\in \RR^{r\times r}$ is diagonal with nonincreasing positive diagonal entries $\lambda_1 \geq \lambda_2 \geq \cdots \geq \lambda_r > 0$ that are not necessarily distinct. We note that this decomposition of $M$ is not unique, so we fix some choice of $U$, which necessarily fixes a choice of $V$ also. Our results account for the nonuniqueness of $U$ by aligning the estimator $\hat{U}$ to $U$ through right-multiplication by an $r \times r$ orthogonal matrix. For more details on this alignment procedure, see, for example, \citet{chen_spectral_2021} or \citet{cape_orthogonal_2020}.

 \begin{algorithm}
\begin{algorithmic}[1]
\caption{HeteroPCA (Algorithm 1 of \citet{zhang_heteroskedastic_2021})}
\label{alg1}
\REQUIRE Input matrix $A + Z$, rank $r$, and maximum number of iterations $T$
\STATE Let $N_0 := \Gamma(A + Z)$; $T = 0$
\REPEAT
\STATE Take SVD of $N_T := \sum_{i} \lambda_i^{(T)} U_{\cdot i}^{(T)} (V_{\cdot i}^{(T)})\t$
\STATE Set $\tilde N_T := \sum_{i \leq r} \lambda_i^{(T)} U_{\cdot i}^{(T)} (V_{\cdot i}^{(T)})\t$ the best rank $r$ approximation of $N_T$
\STATE Set $N_{T+1} := G(\tilde N_T) + \Gamma(N_T)$
\STATE $T = T +1$
\UNTIL convergence or maximum number of iterations reached
\RETURN $\hat U := U^{(T)}$
\end{algorithmic}
\end{algorithm}

Since $U$ may equivalently be understood to be the matrix of eigenvectors of $A=MM\t$ corresponding to its $r$ nonzero eigenvalues, it is natural to consider $A$ and its noisy counterpart $A+Z,$ where $Z=ME\t+EM\t+EE\t$, sometimes referred to as the ``sample Gram matrix'' in the literature (e.g. \citep{cai_subspace_2020}). The matrix $\E[Z]$ is diagonal with diagonal entries $\tr(\Sigma_i)$. When $d$ is large and the rows of $E$ are heteroskedastic, this means that the eigenvectors of $A+\E[Z]$ may not well-approximate those of $A$.

Authors have suggested hollowing the matrix $A+Z$ as a method to correct this bias, which amounts to using the eigenvectors of $\Gamma(A+Z)$ as the estimator for those of $A$. An analysis of this approach is given in \citet{cai_subspace_2020} and \citet{abbe_ell_p_2020}, though this is not the primary focus of the latter. Unfortunately, while the eigenvectors of $\Gamma(A+Z)$ may be closer to the eigenvectors of $A$  than those of $A+Z$, they still incur a nontrivial, deterministic bias owing to the loss of information along the diagonal of $A$. In \citet{zhang_heteroskedastic_2021}, the authors provide an example where the eigenvectors of $\Gamma(A+Z)$ do not yield a consistent estimator for $U$ in the regime that $n$ and $d$ tend to infinity with $d\asymp n$. This motivates their alternative approach to correcting the bias, the HeteroPCA algorithm, which we review in Algorithm~\ref{alg1}. In essence, the algorithm proceeds by iteratively re-scaling the diagonals, attempting to fit the off-diagonal entries of $A+Z$ while maintaining the low rank $r$. We refer to the output of this algorithm after sufficiently many iterations as $\hat{A}$ 
(see Theorem \ref{thm:b-e} for a quantitative condition on the number of iterations),
and prove in Lemma~\ref{lem:deterministic_spectral} that it well-approximates the ``idealized'' perturbation of $A$ given by $\tilde{A}=A+\Gamma(Z)$ in spectral norm. This latter matrix has mean $A$, meaning that the bias along the diagonal has been accounted for. Removing this bias is also important for our distributional results, since otherwise one must center around the eigenvectors of $\Gamma(A)$ rather than those of $A$, but our interest is in the latter quantity.

The leading eigenvectors $\hat{U}$ of this matrix $\hat{A}$ serve as our estimator for $U$, and our concentration and distributional results demonstrate the quality of this estimator. This builds on the work in \cite{zhang_heteroskedastic_2021}, where they prove a bound on the $\sin\Theta$ distance between $\hat{U}$ and $U$, showing that $\hat{U}$ is a consistent estimator for $U$. While a bound on the discrepancy between $\hat{U}$ and $U$ in this metric is a first step towards the entrywise concentration results we obtain here, our results require much tighter control of the error between them. In addition, measuring the error in this way does not lend itself to the distributional results we consider in Theorem~\ref{thm:b-e}.

In the case that $M$ is well-conditioned and has highly incoherent singular vectors $U$ and $V$, the bias associated with diagonal deletion is not too severe, and the difference in performance between the eigenvectors of $\hat{A}$ and $\Gamma(A+Z)$ may not be significant, especially when $n$ and $d$ are large. On the other hand, for moderate $n$ and $d$, or for moderate levels of incoherence in $U$ and $V$, the bias incurred by diagonal deletion is highly significant. We consider an example of this in Figure~\ref{fig:illustration}, in the setting of estimating memberships in a Gaussian mixture model, under heteroskedasticity and dependence. The noise level in this problem is relatively small, which suggests that consistent estimation should be possible for the right estimator, however the moderate incoherence causes a severe breakdown in the performance of the estimate obtained by diagonal deletion, while the HeteroPCA algorithm continues to perform well.

\begin{figure}
    \centering
    \includegraphics[width=0.75\textwidth]{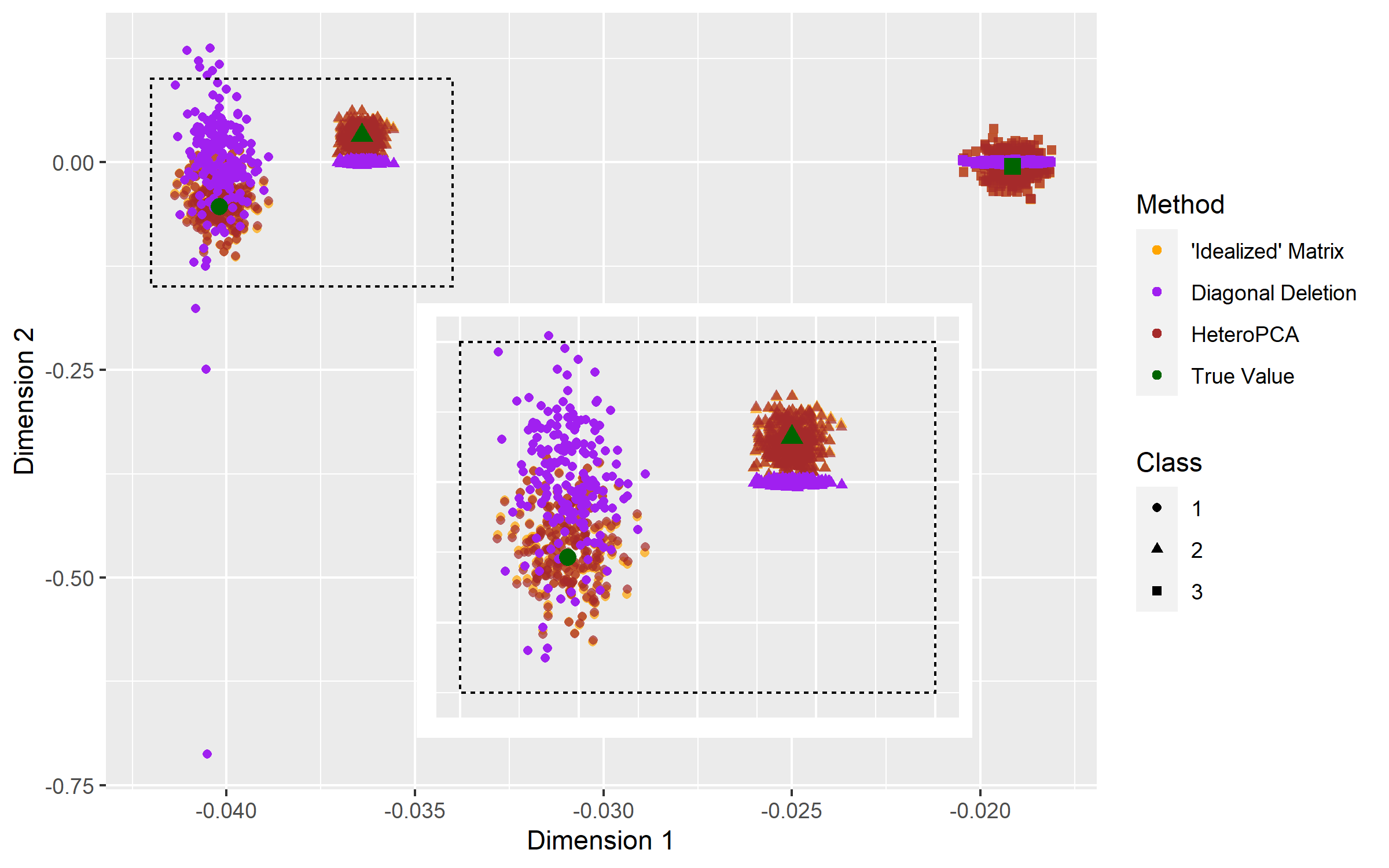}
    \caption{Comparison of the estimators for $U$ given by the eigenvectors of $\Gamma(A+Z)$ (diagonal deletion) and those of $\hat{A}$ (the output of the HeteroPCA algorithm). For convenience, we also plot the eigenvectors of the ``idealized'' matrix $A + \Gamma(Z)$ that the HeteroPCA algorithm is approximating as well as a zoomed-in reference for classes one and two.   For details on the experimental setup, see Section \ref{sec:simulations}.}
    \label{fig:illustration}
\end{figure}

\section{Main Results} \label{sec:results}

Before presenting our main results, we discuss the various assumptions required, and their role in the analysis. Our first assumption concerns the noise matrix $E$.

\begin{assumption}[Noise] \label{assumption:noise}
The noise matrix $E$ has rows $E_i\t$ that can be written in the form $E_i = \Sigma_i^{1/2} Y_i$, where each $Y_i$ is a vector of independent mean-zero subgaussian random variables with unit variance and $\psi_2$ norm uniformly bounded by 1, and $\Sigma_i$ is positive semidefinite. 
\end{assumption}

The following assumption ensures that there is sufficient signal to consistently identify the eigenvectors $U$. We let $\kappa := \lambda_1/\lambda_r$ denote the condition number of $M$.

\begin{assumption}[Enough Signal]\label{assumption:eigengap}
The signal-to-noise ratio satisfies $\snr \geq C_{SNR} \kappa \sqrt{\log(n\vee d)}$, for a sufficiently large constant $C_{SNR}$.
\end{assumption}

We remark that in the case of independent noise, $\snr \to \infty$ is required in order for consistency (e.g. \citet{cai_rate-optimal_2018,zhang_heteroskedastic_2021,xia_normal_2019}).  

The following assumption ensures that we have sufficiently many samples for our concentration results to hold.

\begin{assumption}[High Dimensional Regime, Low Rank]\label{assumption:dimension}
There exists a constant $c_1$ and a sufficiently small constant $c_2$ such that such that $d\geq c_1 n$, and $r \leq c_2 n$.  In addition, $\log(d) \leq n$.  
\end{assumption}

The next assumption concerns the incoherence of the matrix $M$, which measures the ``spikiness'' of the matrix.  We say $M$ is $\mu_0$-incoherent if
\begin{align*}
    \max\bigg\{ \| U\|_{2,\infty} \sqrt{\frac{n}{r}}, \|V\|_{2,\infty} \sqrt{ \frac{d}{r} }\bigg\} \leq \mu_0.
\end{align*}
When $\mu_0 = O(1)$, then the entries of $U$ and $V$ are spread, which corresponds to $M$ being fully incoherent. For more details, see \citep{chen_spectral_2021,chi_nonconvex_2019}.

\begin{assumption}[Incoherence]\label{assumption:incoherence}
The matrix  $U$ of left singular vectors of $M$ satisfies $\|U\|_{2,\infty} \leq \mu_0 \sqrt{\frac{r}{n}}$, where $\mu_0$ satisfies $\kappa^2 \mu_0 \leq \sqrt{n}$.  In addition, there exists a constant $C_I$ sufficiently large such that $\|V\|_{2,\infty} \leq C_I \|U\|_{2,\infty}$. 
\end{assumption}

We note that in much of the literature one assumes that both $V$ and $U$ are both $\mu_0$-incoherent, whereas Assumption \ref{assumption:incoherence} is slightly stronger as we assume that $\|V\|_{2,\infty} \leq C_I \|U\|_{2,\infty}$.   As we assume that $d \gtrsim n$, this assumption is not stringent, but rather we make this assumption for convenience as this results in a simple statement of Theorem \ref{thm:2_infty} in terms of $\|U\|_{2,\infty}$.    In order to apply our results to mixture distributions (see Section \ref{sec:applications}), we need that $\|\uhat \mathcal{O}_* - U\|_{2,\infty} \ll \|U\|_{2,\infty}$ in order to guarantee sufficient cluster separation. Consequently, assuming both $U$ and $V$ are $\mu_0$-incoherent is not quite sufficient for these purposes; instead we must additionally assume that $\|V\|_{2,\infty} \leq C_I \|U\|_{2,\infty}$. 

Our final assumption concerns the relationship between the covariance of each row and the singular subspace of $M$. It ensures that the covariance matrices $\Sigma_i$ are not too ill-conditioned on the signal subspace of $M$.

\begin{assumption}[Covariance Condition Number]\label{assumption:be_assumptions} There exists a constant $\kappa_{\sigma}$ such that
\begin{align*}
   0<  \frac{1}{\kappa_{\sigma}} \leq \min_{i,j} \frac{\sigma_i}{\|\Sigma_i^{1/2} V_{\cdot j}\|} \leq \max_{i,j} \frac{\sigma}{\|\Sigma_i^{1/2}V_{\cdot j}\|} \leq \kappa_{\sigma} < \infty.
\end{align*}
\end{assumption}
 Informally, Assumption \ref{assumption:be_assumptions} requires that $\Sigma_i$ does not act ``adversarially'' along $V$ in the sense that the action of $\Sigma_i$ along the subspace $V$ is well-behaved.  Note that if $\Sigma_i =\sigma^2 I_d$ for all $i$, then this condition is automatically satisfied with $\kappa_{\sigma} = 1$.  
 Our main result (see the forthcoming Theorem \ref{thm:b-e}) is stated in terms of a fixed $i$ and $j$.   While we make the simplifying assumption that $\kappa_{\sigma}$ is uniformly bounded in both $i$ and $j$, we note that if instead $\kappa_{\sigma}$ is allowed to depend on $i$ and $j$ and satisfies 
 \begin{align*}
 0 < \frac{1}{\kappa_{\sigma}} \leq \min_{k} \frac{\sigma_k}{\|\Sigma_i^{1/2} V_{\cdot j}\|} \leq \frac{\sigma}{\|\Sigma_i^{1/2} V_{\cdot j}\|} \leq \kappa_{\sigma} < \infty,
 \end{align*}
 then our asymptotic normality results continue to hold with the proviso that $\kappa_{\sigma}$ depends on both $i$ and $j$ (with appropriate modifications in the setting of Corollary \ref{cor:mixture} and \ref{cor:consistent_estimate}).

We are now ready to present our main result concerning the distribution of the entrywise difference between $\hat U$ and $U$.  
\begin{theorem}\label{thm:b-e}
Define
\begin{align*}
    \sigma^2_{ij} := \|\Sigma_i^{1/2} V_{\cdot j}\|^2 \lambda_j^{-2}.
\end{align*}
Suppose Assumptions \ref{assumption:noise}, \ref{assumption:eigengap}, \ref{assumption:dimension}, \ref{assumption:incoherence} and \ref{assumption:be_assumptions} hold.  
Let $\hat U$ be the output of the HeteroPCA algorithm after $T = \Theta\left( \frac{\lambda_r^2}{\| U\|_{2,\infty} \| \Gamma(Z)\|} \right)$ iterations.
Then there exist absolute constants $C_1$, $C_2$ and $C_3$ and an orthogonal matrix $\mathcal{O}_*$ such that
 \begin{align*}
       \sup_{x \in \R} \bigg| \p\bigg( \frac{1}{\sigma_{ij}} e_i\t \left( \hat U\mathcal{O}_* - U \right) e_j \leq x \bigg) - \Phi(x) \bigg| &\leq C_1 \frac{ \|\Sigma_i^{1/2} V_{\cdot j}\|_3^3}{\|\Sigma_i^{1/2}V_{\cdot j}\|^3} +  C_2 \kappa^3 \kappa_{\sigma} \mu_0 \frac{r \log(n\vee d)}{\snr} \\
       &\qquad + C_3 \kappa^2 \kappa_{\sigma} \mu_0 \sqrt{\frac{r}{n}} \bigg(  \sqrt{\log(n\vee d)} + \mu_0 \kappa^2 \sqrt{r} \bigg).
 \end{align*}
 \end{theorem}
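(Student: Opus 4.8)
The plan is to establish a first-order expansion of the entrywise error $e_i\t(\hat U \mathcal{O}_* - U)e_j$ into a leading linear-in-noise term plus a higher-order remainder, show the leading term is a normalized sum of independent summands to which a Berry-Esseen bound applies, and control the remainder using the $\ell_{2,\infty}$ machinery of Theorem~\ref{thm:2_infty} together with leave-one-out decoupling. Concretely, first I would use Lemma~\ref{lem:deterministic_spectral} to reduce from $\hat A$ (the HeteroPCA output) to the idealized matrix $\tilde A = A + \Gamma(Z)$, so that $\hat U$ may be replaced by the leading eigenvectors $\tilde U$ of $\tilde A$ up to a spectrally negligible correction; the error incurred here should be absorbable into the $\snr^{-1}$ and $\sqrt{r/n}$ terms. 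Then I would write the standard eigenvector perturbation series for $\tilde U \mathcal{O} - U$ (as in the matrix-series references cited, e.g. \citet{xia_normal_2019,chen_asymmetry_2021}), whose first-order term is $(\tilde A - A)U\Lambda^{-2}\cdot(\text{something})$; tracking the $(i,j)$ entry, the dominant contribution is $e_i\t \Gamma(Z) U \Lambda^{-1} (V\t \cdot)$-type, and after using $Z = ME\t + EM\t + EE\t$ and $M = U\Lambda V\t$, the leading piece becomes $\lambda_j^{-1} e_i\t E V_{\cdot j} = \lambda_j^{-1}\sum_{\alpha} E_{i\alpha} V_{\alpha j}$, which has variance exactly $\sigma_{ij}^2 = \|\Sigma_i^{1/2}V_{\cdot j}\|^2\lambda_j^{-2}$ (here is where $E_i = \Sigma_i^{1/2}Y_i$ and the independence of the coordinates of $Y_i$ enter).

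Next, conditionally on everything except the $i$-th row, $\lambda_j^{-1}\sum_\alpha E_{i\alpha}V_{\alpha j}$ is a sum of independent mean-zero subgaussian terms, so the classical Berry-Esseen theorem gives the first error term $C_1 \|\Sigma_i^{1/2}V_{\cdot j}\|_3^3 / \|\Sigma_i^{1/2}V_{\cdot j}\|^3$ (the Lyapunov ratio, after rewriting $\sum_\alpha \E|E_{i\alpha}V_{\alpha j}|^3 \asymp \|\Sigma_i^{1/2}V_{\cdot j}\|_3^3$ up to the uniformly bounded $\psi_2$ constants). Assumption~\ref{assumption:be_assumptions} is what guarantees $\sigma_{ij}$ is comparable to $\sigma\lambda_j^{-1}$ so that normalizing by $\sigma_{ij}$ does not blow up the other error terms. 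The remaining two error terms must come from bounding, in probability (and then converting to a Kolmogorov-distance bound via the usual ``small perturbation of a near-Gaussian'' argument, i.e. $\sup_x|\p(W+\Delta\le x)-\Phi(x)| \le \sup_x|\p(W\le x)-\Phi(x)| + \p(|\Delta|>t) + t$), the normalized remainder $\sigma_{ij}^{-1}(\text{higher-order terms})$: the $EE\t$ contribution, the second- and higher-order terms in the eigenvector series, the $\mathcal{O}_*$-vs-$\mathcal{O}$ alignment error, and the $\hat A \to \tilde A$ replacement. Theorem~\ref{thm:2_infty} supplies $\|\hat U \mathcal{O} - U\|_{2,\infty} \lesssim$ (something like $\kappa^2\mu_0\sqrt{r/n}(\sqrt{\log(n\vee d)}/\snr + \ldots)$), and the structure of the two displayed error terms—one scaling like $\kappa^3\kappa_\sigma\mu_0 r\log(n\vee d)/\snr$ and one like $\kappa^2\kappa_\sigma\mu_0\sqrt{r/n}(\sqrt{\log(n\vee d)} + \mu_0\kappa^2\sqrt r)$—strongly suggests these are exactly the bounds one gets for the second-order series term and the diagonal/bias-correction error respectively, divided by $\sigma_{ij}$ and using $\sigma_{ij}^{-1} \lesssim \kappa_\sigma \lambda_j \sigma^{-1}$.

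The main obstacle is handling the dependence within rows of $E$ in the higher-order terms: terms like $e_i\t EE\t U \Lambda^{-2}$ or the iterated series terms mix the $i$-th row with all other rows, and one cannot simply condition once and apply independence. The fix is a careful leave-one-out construction—replacing the $i$-th row of $E$ by an independent copy to build $\hat U^{(i)}$ (and correspondingly $\tilde A^{(i)}$), showing $\|\hat U\mathcal{O} - \hat U^{(i)}\mathcal{O}^{(i)}\|$ is small, and then exploiting that $\hat U^{(i)}$ is independent of $E_i$ so that the quadratic forms in $E_i$ can be handled by Hanson-Wright-type concentration conditionally—combined with conditioning arguments that peel off one source of dependence at a time. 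Propagating the $\kappa$, $\kappa_\sigma$, $\mu_0$ dependence correctly through these nested leave-one-out bounds, and verifying that the cross terms between the leading Gaussian part and the remainder are genuinely lower-order (so that $W = W_{\text{lin}} + \Delta$ with $W_{\text{lin}}$ conditionally a normalized independent sum and $\Delta$ small), is where the bulk of the technical work lies; this is presumably why the full proof is deferred to the appendix with only a sketch in Section~\ref{sec:proofs}.
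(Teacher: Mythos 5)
Your proposal is correct and follows essentially the same route as the paper's proof: you isolate the leading term $\lambda_j^{-1}\langle E_i, V_{\cdot j}\rangle$ via the Xia-type eigenprojection series and the deterministic HeteroPCA analysis, apply classical Berry--Esseen to that normalized sum, absorb the higher-order residuals via the Lipschitz/small-perturbation argument, and handle the row-dependence in the quadratic $EE^\top$ and iterated terms by leave-one-out constructions that zero out the $i$-th row to restore conditional independence. The only (inconsequential) cosmetic differences are that the paper's leave-one-out kills the $i$-th row/column rather than swapping in an independent copy, and the paper passes through an intermediate projector $\tilde U_D\tilde U_D^\top$ onto the top eigenspace of $MM^\top + EM^\top + ME^\top + \Gamma(EE^\top)$ before invoking the series expansion.
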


 One should interpret Theorem \ref{thm:b-e} as stating that the entries of $\hat U$ are approximately Gaussian about their corresponding population counterparts modulo the nonidentifiability in the singular subspace stemming from the repeated singular values.  One can also generalize our analysis for the \emph{rows} of $\hat U \mathcal{O}_*$ to obtain the joint distribution; see Corollary \ref{cor:mixture} for an application of this for fixed $r$. 
 
Suppose $\kappa,\kappa_{\sigma},\mu_0 = O(1)$.  Then we see that asymptotic normality holds as long as 
\begin{align*}
    \max\bigg\{ \frac{ \|\Sigma_i^{1/2} V_{\cdot j}\|_3^3}{\|\Sigma_i^{1/2}V_{\cdot j}\|^3} ,  \frac{r \log(n\vee d)}{\snr} ,\frac{r \log(n\vee d)}{\sqrt{n}}\bigg\} \to 0.
\end{align*}
 When $\Sigma_i$ is the identity and $V_{\cdot j} = \frac{1}{\sqrt{d}} \mathbf{1}$, then the first term is exactly equal to $\frac{1}{\sqrt{d}}$.  Moreover, if $\snr  \geq \sqrt{n}$, then we obtain the classical parametric rate up to logarithmic factors.  If instead $\mu_0 \log(n\vee d) \ll \min( \sqrt{n}, \snr )$, then asymptotic normality still holds.  Therefore, in the ``high-signal'' regime with $\snr \gg \sqrt{n}$, asymptotic normality holds as long as $\mu_0 \log(n\vee d) = o( \sqrt{n} )$.

We remark that the additional logarithmic factors stem from our $\ell_{2,\infty}$ result in Theorem \ref{thm:2_infty} below.  It may be possible that these logarithmic factors can be eliminated with more refined analysis, but we leave this for future work, since our primary focus is on studying asymptotic normality in the presence of dependence.    Furthermore, our results allow the covariances to be (strictly) positive semidefinite as long as the vector $V_{\cdot j}$ is not too close to the null space of the matrix $\Sigma_i$.  A simple example is if $\Sigma_i \propto V V\t$, then asymptotic normality still holds.

 Note that Theorem \ref{thm:b-e} (and Assumption \ref{assumption:be_assumptions}) depends on the fact that $\sigma_{ij} \neq 0$ (and, moreover, does not shrink to zero relative to the overall noise $\sigma$). If instead $\sigma_{ij} = 0$, then one must consider the higher-order asymptotics, in which case the dominant term contributing to the asymptotic normality becomes a higher order noise term, resulting in a different scaling for the asymptotic normality.  While it may be possible to obtain asymptotic normality in this setting, Theorem \ref{thm:2_infty} still holds regardless, thereby yielding strong concentration for $\uhat$.  

The following corollary specializes Theorem~\ref{thm:b-e} to the case of scalar matrices $\Sigma_i$. We note that in this case, the variances of the entries of the $j$'th singular vector estimate are proportional to the inverse of the $j$'th singular value of $MM\t$.  Furthermore, the leading term in Theorem \ref{thm:b-e} simplifies to be $\| V\|_{2,\infty} \leq 
C_I \|U\|_{2,\infty} \lesssim \mu_0 \sqrt{\frac{r}{n}}, $
which is smaller than the rightmost term by  
Assumption \ref{assumption:incoherence}.
In particular, this result shows that the variance of the $i,j$ entry of $\hat U$ is asymptotically equal to to $\sigma_i^2/\lambda_j^2$, which reflects the fact that the variance increases deeper into the spectrum of $M$.

\begin{corollary}
\label{cor:b-e}
Assume the setting of Theorem~\ref{thm:b-e}, with $\Sigma_i=\sigma_i^2 I_d$ for all $i$. Then 
 \begin{align*}
       \sup_{x \in \R} \bigg| \p\bigg( \frac{\lambda_j}{\sigma_{i}} e_i\t \left( \hat U\mathcal{O}_* - U \right) e_j \leq x \bigg) - \Phi(x) \bigg| &\leq C_1' \kappa^3 \kappa_{\sigma} \mu_0 \frac{r \log(n\vee d)}{\snr}\\& + C_2' \kappa^2 \kappa_{\sigma} \mu_0 \sqrt{\frac{r}{n}} \bigg(  \sqrt{\log(n\vee d)} + \mu_0 \kappa^2 \sqrt{r} \bigg).
 \end{align*}
\end{corollary}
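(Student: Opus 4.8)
The plan is to derive Corollary~\ref{cor:b-e} directly from Theorem~\ref{thm:b-e} by specializing to the isotropic case $\Sigma_i = \sigma_i^2 I_d$ and checking that the leading (first) term in the Berry--Esseen bound is dominated by the terms already present. First I would observe that under $\Sigma_i = \sigma_i^2 I_d$ we have $\Sigma_i^{1/2} V_{\cdot j} = \sigma_i V_{\cdot j}$, so that the Berry--Esseen standardization $\sigma_{ij} = \|\Sigma_i^{1/2} V_{\cdot j}\| \lambda_j^{-1} = \sigma_i \|V_{\cdot j}\| \lambda_j^{-1} = \sigma_i \lambda_j^{-1}$, since $V_{\cdot j}$ is a unit vector. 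This justifies replacing $1/\sigma_{ij}$ with $\lambda_j / \sigma_i$ in the probability statement. Similarly $\sigma = \max_i \sigma_i$ and the covariance condition number assumption reads $1/\kappa_\sigma \le \sigma_i / (\sigma_i \|V_{\cdot j}\|) \le \sigma/(\sigma_i\|V_{\cdot j}\|) \le \kappa_\sigma$, which simplifies to a statement about the ratios $\sigma/\sigma_i$ and is automatically finite.

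Next I would bound the first term of Theorem~\ref{thm:b-e}. Writing $\|\Sigma_i^{1/2} V_{\cdot j}\|_3^3 / \|\Sigma_i^{1/2} V_{\cdot j}\|^3 = \sigma_i^3 \|V_{\cdot j}\|_3^3 / (\sigma_i^3 \|V_{\cdot j}\|_2^3) = \|V_{\cdot j}\|_3^3$, since $\|V_{\cdot j}\|_2 = 1$. Then I would use the elementary inequality $\|v\|_3^3 \le \|v\|_\infty \|v\|_2^2 = \|v\|_\infty$ for a unit vector $v$, applied to $v = V_{\cdot j}$, giving $\|V_{\cdot j}\|_3^3 \le \|V_{\cdot j}\|_\infty \le \|V\|_{2,\infty} \le \mu_0 \sqrt{r/d}$ by the incoherence assumption (Assumption~\ref{assumption:incoherence}, together with $\|V\|_{2,\infty} \le C_I \|U\|_{2,\infty}$). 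Finally, by Assumption~\ref{assumption:dimension} we have $d \ge c_1 n$, so $\mu_0\sqrt{r/d} \le c_1^{-1/2} \mu_0 \sqrt{r/n}$, which is bounded by a constant multiple of the factor $\kappa^2 \kappa_\sigma \mu_0 \sqrt{r/n}$ appearing in the $C_3$ term of Theorem~\ref{thm:b-e} (using $\kappa, \kappa_\sigma \ge 1$ and the bracketed factor $\ge 1$). Absorbing the first term into the last term and renaming constants $C_1', C_2'$ gives the stated bound.

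The only step requiring any care is the norm inequality $\|V_{\cdot j}\|_3^3 \le \|V\|_{2,\infty}$, but this follows immediately since $\|V_{\cdot j}\|_3^3 = \sum_\alpha |V_{\alpha j}|^3 \le \max_\alpha |V_{\alpha j}| \cdot \sum_\alpha V_{\alpha j}^2 = \max_\alpha |V_{\alpha j}| \le \|V\|_{2,\infty}$, where the last inequality holds because each column entry $|V_{\alpha j}|$ is at most the Euclidean norm $\|V_{\alpha \cdot}\|$ of the corresponding row. There is no real obstacle here; the corollary is a routine simplification. The substantive content is entirely in Theorem~\ref{thm:b-e} itself, and this corollary merely records the clean form the bound takes in the homoskedastic-within-row (scalar covariance) regime, where the first ``skewness'' term disappears into the high-dimensional error terms and the limiting variance of $e_i\t(\hat U \mathcal{O}_* - U) e_j$ is seen to be $\sigma_i^2/\lambda_j^2$.
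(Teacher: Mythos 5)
Your proposal is correct and takes the same route the paper sketches in the remark immediately preceding the corollary: under $\Sigma_i=\sigma_i^2 I_d$ one has $\sigma_{ij}=\sigma_i/\lambda_j$, the skewness ratio collapses to $\|V_{\cdot j}\|_3^3$, and this is bounded by $\|V\|_{2,\infty}\leq\mu_0\sqrt{r/d}\lesssim\mu_0\sqrt{r/n}$, which is absorbed into the existing $C_3$ term since $\kappa,\kappa_\sigma\geq 1$ and $\sqrt{\log(n\vee d)}\geq 1$. Your spelled-out inequality $\|V_{\cdot j}\|_3^3=\sum_\alpha|V_{\alpha j}|^3\leq\|V_{\cdot j}\|_\infty\|V_{\cdot j}\|_2^2=\|V_{\cdot j}\|_\infty$ is exactly the detail the paper leaves implicit when it says the leading term ``simplifies to be $\|V\|_{2,\infty}$''; strictly speaking it is bounded by $\|V\|_{2,\infty}$, and your argument makes that precise.
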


Our second theorem is an $\ell_{2,\infty}$ concentration result for the matrix $\uhat$ as an estimator for the matrix $U$. 
Theorem \ref{thm:2_infty} is a consequence of a deterministic bound concerning the HeteroPCA algorithm (see Theorem \ref{thm:2_infty_deterministic}) and a bound concerning the idealized (random) perturbation $A + \Gamma(Z)$ (see Theorem \ref{thm:2_infty_random}).
As part of our proof of Theorem \ref{thm:2_infty}, we prove additional tight concentration for several residual terms that we rely on in the proof of Theorem \ref{thm:b-e}.

\begin{theorem}\label{thm:2_infty}  Suppose Assumptions \ref{assumption:noise},  \ref{assumption:eigengap}, \ref{assumption:dimension} and \ref{assumption:incoherence}   hold.   
Let $\hat U$ be the output of the HeteroPCA algorithm after $T = \Theta\left( \frac{\lambda_r^2}{\| U\|_{2,\infty} \| \Gamma(Z)\|} \right)$ iterations. 
Then there exists a universal constant $C > 0$ such that with probability at least $1 - 2(n\vee d)^{-4}$
\begin{align*}
    \inf_{\mathcal{O} \in \mathbb{O}(r)} \| \hat U - U\mathcal{O}\|_{2,\infty} \leq C \bigg(  \frac{\sqrt{r  n d} \log (n\vee d) \sigma^{2}}{\lambda_r^2} + \frac{\sqrt{rn \log (n \vee d)} \kappa \sigma }{\lambda_r} \bigg) \|U\|_{2,\infty}.
\end{align*}
\end{theorem}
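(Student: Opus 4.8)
The plan is to run a leave-one-out argument through the eigenvalue--eigenvector equation for the HeteroPCA output. First I pass to a high-probability event on which the noise concentrates at its typical scale: $\|E\|\lesssim\sigma\sqrt d$, $\|EV\|\lesssim\sigma\sqrt n$ (and the symmetric $\|U\t E\|$), $\|EE\t-\E EE\t\|\lesssim\sigma^2\sqrt{nd}$, and, uniformly over rows $m$, $\|E_m\|\lesssim\sigma\sqrt d$, $\|E_m\t V\|\lesssim\sigma\sqrt{r\log(n\vee d)}$ and $\|e_m\t Z\|\lesssim\lambda_1\sigma\sqrt d+\sigma^2 d$; each estimate fails with probability a high power of $(n\vee d)^{-1}$, and the union over the $n$ rows stays within the claimed probability, with Assumptions~\ref{assumption:dimension} and \ref{assumption:incoherence} controlling the constants and the passage between $\|U\|_{2,\infty}$ and $\|V\|_{2,\infty}$. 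On this event Lemma~\ref{lem:deterministic_spectral} identifies the HeteroPCA iterate as $\hat A=A+\Gamma(Z)+D$, where $\tilde A:=A+\Gamma(Z)$ has mean $A$ and, because the off-diagonal entries of $\hat A$ and $\tilde A$ agree by the structure of the iteration, the residual $D$ is \emph{diagonal} with $\|D\|$ of smaller order than the bound we seek. Weyl's inequality and Assumption~\ref{assumption:eigengap} then give $\lambda_r(\hat A)\asymp\lambda_r^2$ with a gap of order $\lambda_r^2$ above the $r$-th eigenvalue, hence a well-defined leading eigenvector matrix $\hat U$ with diagonal eigenvalue matrix $\hat\Lambda$, and Davis--Kahan gives the crude $\|\sin\Theta(\hat U,U)\|\ll1$; fix an orthogonal $\mathcal{O}$ realizing this alignment.

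Second, I build leave-one-out copies. For each $m$, let $E^{(m)}$ be $E$ with its $m$-th row zeroed, form $Z^{(m)}=M(E^{(m)})\t+E^{(m)}M\t+E^{(m)}(E^{(m)})\t$, run HeteroPCA on $A+Z^{(m)}$, and call its leading eigenvectors $\hat U^{(m)}$. Then $\hat U^{(m)}$ is independent of the row $E_m$; and since $Z-Z^{(m)}$, hence the difference of the two iterates, is supported on the $m$-th row and column with operator norm $\lesssim\|e_m\t Z\|$, Davis--Kahan produces an orthogonal $\mathcal{O}^{(m)}$ with $\|\hat U-\hat U^{(m)}\mathcal{O}^{(m)}\|\lesssim(\lambda_1\sigma\sqrt d+\sigma^2 d)/\lambda_r^2$; a standard argument shows the various $\mathcal{O}^{(m)}$ and $\mathcal{O}$ agree to the same order, so I work with a single alignment.

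Third, the row bound. Projecting $\hat U=\hat A\,\hat U\,\hat\Lambda^{-1}$ onto $I-UU\t$ and using $(I-UU\t)A=0$ gives the exact identity $(\hat U\mathcal{O}-U)_{m\cdot}=U_{m\cdot}(U\t\hat U\mathcal{O}-I)+\big[e_m\t-U_{m\cdot}U\t\big](\Gamma(Z)+D)\hat U\hat\Lambda^{-1}\mathcal{O}$; since the outer $\mathcal{O}$ does not affect row norms I drop it. The first term is $O(\|U\|_{2,\infty}\|\sin\Theta(\hat U,U)\|^2)$, negligible. The ``$U_{m\cdot}U\t$'' part of the second is bounded on the event by $\|U_{m\cdot}\|(\|U\t\Gamma(Z)\|+\|D\|)/\lambda_r^2\lesssim\|U_{m\cdot}\|(\lambda_1\sigma\sqrt n+\sigma^2\sqrt{nd})/\lambda_r^2$, the $\sqrt n$ coming from routing the first-order part of $Z$ through $V$, and this lies within the claimed bound. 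The remaining, decisive piece is $e_m\t(\Gamma(Z)+D)\hat U\hat\Lambda^{-1}$: the $D$-contribution equals $D_{mm}\hat\Lambda^{-1}(\hat U)_{m\cdot}$, of size $\lesssim(\|D\|/\lambda_r^2)(\|U\|_{2,\infty}+\|\hat U\mathcal{O}-U\|_{2,\infty})$, whose last summand is moved to the left-hand side; and in $e_m\t\Gamma(Z)\hat U\hat\Lambda^{-1}=\sum_{j\neq m}Z_{mj}\hat U_{j\cdot}\hat\Lambda^{-1}$ I replace $\hat U$ by the independent copy $\hat U^{(m)}\mathcal{O}^{(m)}$ (the error $\lesssim\|e_m\t Z\|^2/\lambda_r^4$ being lower order) and expand $Z_{mj}=M_{m\cdot}E_j+E_m\t M_{j\cdot}\t+E_m\t E_j$. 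Conditioning on $(E^{(m)},\hat U^{(m)})$, which is independent of $E_m=\Sigma_m^{1/2}Y_m$: the $M_{m\cdot}E_j$ term is free of $E_m$ and, routed through $V$, is bounded on the event by $\lesssim\|U_{m\cdot}\|\lambda_1\sigma\sqrt n$; the $E_m\t M_{j\cdot}\t$ and $E_m\t E_j$ terms form, conditionally, subgaussian linear forms in the independent coordinates of $Y_m$, which Hanson--Wright estimates on their squared norms control, the first giving the $\kappa\sigma/\lambda_r$-type contribution and the second the $\sigma^2/\lambda_r^2$-type contribution --- and crucially the hollowing $\Gamma$ deletes the $E_m\t E_m$ term, so no $\sigma^2 d$ bias appears. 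Dividing by $\lambda_r^2$, assembling the pieces, maximizing over $m$, and rearranging the absorbed summand yields the stated bound, with the $\sigma^2$ term from the second-order noise $\Gamma(EE\t)$ and the $\kappa\sigma$ term from the first-order noise $ME\t+EM\t$ together with the propagated subspace error.

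The main obstacle is reconciling the leave-one-out device with the fact that $\hat A$ is only implicitly defined by the HeteroPCA iteration: one needs that replacing a single row of $E$ moves the algorithm's output only on the corresponding row and column and only by roughly the size of that row of $Z$, and that the residual relative to $\tilde A$ is purely diagonal --- both supplied by Lemma~\ref{lem:deterministic_spectral} and the fixed-point structure, the diagonality being what lets the residual be absorbed rather than propagated. A secondary delicate point is the second-order term $\Gamma(EE\t)$, which drives the dominant $\sigma^2\sqrt{rnd}/\lambda_r^2$ contribution and must be handled by keeping the independent copy $\hat U^{(m)}$ inside the quadratic-type form while simultaneously knowing that $\|\hat U-\hat U^{(m)}\mathcal{O}^{(m)}\|$ is small --- a mild circularity broken by bootstrapping off the crude $\sin\Theta$ bound. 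By contrast the within-row dependence $E_m=\Sigma_m^{1/2}Y_m$ is essentially harmless for this $\ell_{2,\infty}$ statement, since the subgaussian linear-form and Hanson--Wright estimates are insensitive to a fixed linear reparametrization of independent coordinates; it is only in the Berry--Esseen theorem (Theorem~\ref{thm:b-e}) that this dependence is the real source of difficulty.
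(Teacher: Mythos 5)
Your architecture is genuinely different from the paper's. The paper decomposes $\hat U - U\mathcal{O}$ into an algorithmic error $\hat U - \tilde U\tilde H$ (analyzed deterministically via the HeteroPCA iterates, Theorem~\ref{thm:2_infty_deterministic}), a statistical error $\tilde U - UH$ (analyzed via the matrix-series expansion of Lemma~\ref{lem:xia} with a nested, term-by-term leave-one-out in Lemma~\ref{lem:2infty_general}), and a Procrustes correction (Lemma~\ref{orthogonal_matrix_lemma}). You instead run one global leave-one-out on the entire HeteroPCA pipeline. This is a legitimate alternative strategy, but there is a quantitative gap in the step where you substitute the independent copy.

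The problem is your claim that replacing $\hat U$ by $\hat U^{(m)}\mathcal{O}^{(m)}$ in $e_m^\top\Gamma(Z)\hat U\hat\Lambda^{-1}$ incurs an error ``$\lesssim\|e_m^\top Z\|^2/\lambda_r^4$ being lower order.'' It is not lower order under the stated assumptions. Even with the sharp row bound $\|e_m^\top\Gamma(Z)\|\lesssim\sigma^2\sqrt{nd}+\lambda_1\sigma\sqrt{r\log(n\vee d)}$ (your quoted $\sigma^2 d$ is cruder) and the leave-one-out sensitivity $\|\hat A - \hat A^{(m)}\|\lesssim\|e_m^\top\Gamma(Z)\|+\|U\|_{2,\infty}\|\Gamma(Z)\|$ obtained via the triangle inequality through $\tilde A$ and $\tilde A^{(m)}$, the dominant cross term is
\begin{align*}
\frac{\|e_m^\top\Gamma(Z)\|\,\|\hat A - \hat A^{(m)}\|}{\lambda_r^4}\;\gtrsim\;\frac{\sigma^4 nd}{\lambda_r^4},
\end{align*}
which carries no factor of $\|U\|_{2,\infty}$. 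Comparing to the target $\sqrt{rnd}\,\log(n\vee d)\,\sigma^2\|U\|_{2,\infty}/\lambda_r^2$ and using $\|U\|_{2,\infty}\asymp\sqrt{r/n}$ and the eigengap floor $\lambda_r^2\gtrsim\kappa^2 r d\log(n\vee d)\,\sigma^2$ from Assumption~\ref{assumption:eigengap}, the ratio is of order $n/(\kappa^2 r^{5/2}\sqrt d\,\log^2(n\vee d))$, which under Assumption~\ref{assumption:dimension} grows like $\sqrt n$ up to logarithms. So the crude operator-norm product does not close the argument; the quadratic noise $\Gamma(EE^\top)$ is precisely what makes this fail, in contrast to the linear-noise setting where the same product-of-norms bound works.

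The fix requires extracting a $\|U\|_{2,\infty}$ factor from the cross term, for instance by expanding $\hat U - \hat U^{(m)}\mathcal{O}^{(m)}$ to first order in the perturbation $\hat A-\hat A^{(m)}$ (which is supported on row/column $m$ up to a diagonal correction) and noticing that the off-row-$m$ part of that difference is proportional to $(\hat U^{(m)})_{m\cdot}$, whose norm is $\lesssim\|U\|_{2,\infty}$ once you establish $\|\hat U^{(m)}\|_{2,\infty}\lesssim\|U\|_{2,\infty}$. But that incoherence of $\hat U^{(m)}$ is itself another circularity that the crude $\sin\Theta$ bound does not break — it is exactly what the theorem is trying to prove. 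The paper sidesteps all of this by never comparing the full estimator to a leave-one-out copy: each term $(U_\perp U_\perp^\top W U_\perp U_\perp^\top)^{p-1}WU$ of the series expansion is analyzed with its own leave-one-out matrix $X^{-i}$, and Lemma~\ref{lem:events} shows the correction $X - X^{-i}$ has $\ell_{2,\infty}$ norm gaining a factor $1/\sqrt{r\log(n\vee d)}$ per application of $W$ (since $\|W\|\leq\delta/\sqrt{r\log(n\vee d)}$ on the good event), which is what keeps the cross term below the leading one. You would need an analogue of that gain, not the operator-norm product, to make the global leave-one-out close.

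A secondary point: Lemma~\ref{lem:deterministic_spectral} does not, as you state, supply the claim that ``replacing a single row of $E$ moves the algorithm's output only on the corresponding row and column.'' The lemma bounds $\|\hat A - \tilde A\|$ and shows the residual is diagonal; it says nothing about the sensitivity of the iteration to input perturbations. In particular the diagonal of $\hat A$ depends on all rows of $E$. The triangle inequality through $\tilde A,\tilde A^{(m)}$ I used above is what one actually has, and that introduces the extra $\|U\|_{2,\infty}\|\Gamma(Z)\|$ in the sensitivity bound — which happens to be harmless here, but the structural claim you assert is stronger than what the lemma provides.
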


Suppose $\kappa,\kappa_{\sigma}, \mu_0, r = O(1)$.  The upper bound in Theorem \ref{thm:2_infty} holds as long as $\snr \gtrsim \sqrt{\log(n\vee d)}$, 
whereas the asymptotic normality in  Theorem \ref{thm:b-e} holds when $\snr \gg \log(n\vee d)$. It is possible that with additional work the asymptotic normality in Theorem \ref{thm:b-e} holds in the regime $\sqrt{\log(n\vee d)} \lesssim \snr \lesssim \log(n\vee d)$, but this is  not the focus of the present paper.

\subsection{Comparison to Prior Work}

Our Theorem~\ref{thm:b-e} is the first distributional result for the entries of the singular vector estimator in the setting of dependent, heteroskedastic noise. In \citet{xia_normal_2019}, the author derives Berry-Esseen Theorems for the $\sin\Theta$ distance between the singular vectors under the assumption that the noise consists of independent Gaussian random variables with unit variance.  See also \citet{bao_singular_2021} for a similar result in slightly different regime.   In \citet{xia_statistical_2020}, the authors use the techniques in \citet{xia_normal_2019} to develop Berry-Esseen Theorems for linear forms of the matrix $M$. They also develop $\ell_{2,\infty}$ bounds (see their Theorem 4) en route to their main results that are similar to the bounds we obtain.  While our approach and that of \citet{xia_normal_2019} and \citet{xia_statistical_2020} share a common core, our main results require additional technical considerations due to the heteroskedasticity and dependence. We also include a deterministic analysis of the HeteroPCA algorithm, which is not needed in \citet{xia_statistical_2020} as the entries all have the same variance. 

The works  \citet{koltchinskii_asymptotics_2016,koltchinskii_efficient_2020} study estimating general linear forms of the eigenvectors of a sample covariance matrix.  More specifically, Theorem 7 of \citet{koltchinskii_asymptotics_2016} derive the asymptotic normality of the linear form
\begin{align*}
    \sqrt{n} \left\langle \hat U_{\cdot i} - \sqrt{1 + b(n)} U_{\cdot i}, a \right\rangle,
\end{align*}
where $b(n)$ is a bias term, $a$ is a unit vector, and $\hat U_{\cdot i}$ is the $i$'th estimated eigenvector of the sample covariance matrix.  If $a = e_i$, then our results are similar in spirit to those of \citet{koltchinskii_asymptotics_2016}.  However, there are a few key differences: 
\begin{itemize}
    \item \citet{koltchinskii_asymptotics_2016} study covariance estimation, whereas we study the signal-plus-noise model, where the signal is assumed to be deterministic.  Viewing the matrix $X$ as the matrix whose rows are the observations, in effect \citet{koltchinskii_asymptotics_2016} study the right singular subspace of $X$ assuming that the vectors are mean-zero.  However, our analysis allows for dependence within \emph{rows}, whereas \citet{koltchinskii_asymptotics_2016} study iid observations of sample vectors, which corresponds to dependence within \emph{columns}.
    \item The results of \citet{koltchinskii_asymptotics_2016} rely heavily on the fact that the corresponding eigenvalue is simple, whereas our results allow for repeated singular values.  Consequently, our results only hold up an orthogonal transformation  $\mathcal{O}_*$ that accounts for the nonidentifiability of the associated singular spaces, whereas \citet{koltchinskii_asymptotics_2016} are able to directly analyze the corresponding empirical eigenvector up to a global sign flip.
    \item \citet{koltchinskii_asymptotics_2016} obtain asymptotic normality for a general linear form of the eigenvectors, whereas we obtain asymptotic normality for only the individual entries.  However,  \citet{koltchinskii_asymptotics_2016} consider iid \emph{Gaussian} random variables, whereas we allow general subgaussian tail conditions.  Consequently, \citet{koltchinskii_asymptotics_2016} are able to make use of powerful techniques tailored specifically to Gaussian random variables, whereas our analysis uses a combination of leave-one-out arguments and conditioning.
\end{itemize}
Therefore, the results of \citet{koltchinskii_asymptotics_2016}, while closely related, are not immediately comparable to our results.  

In the context that $M$ is a symmetric, square matrix and $E$ is a matrix of independent noise along the upper triangle, \citet{cape_signal-plus-noise_2019} developed asymptotic normality results for the rows of the leading eigenvectors, which is similar in spirit to our main result in Theorem \ref{thm:b-e}.  Similarly, \citet{fan_asymptotic_2020} studied general bilinear forms of eigenvectors in this setting.  Our results do not apply in these settings even if $n = d$ since we require that $E_{ij}$ and $E_{ji}$ are independent if $j\neq i$. On the other hand, their results cannot be applied in our setting either because of the dependence structure.

Regarding our $\ell_{2,\infty}$ concentration results, perhaps the most similar results appear in \citet{cai_subspace_2020}, in which the authors study the performance of the diagonal deletion algorithm in the presence of independent heteroskedastic noise.  Our work differs from theirs in several ways:
\begin{itemize}
    \item We allow for dependence among the rows of the noise matrix, whereas \citet{cai_subspace_2020} requires the entries of the noise matrix to be independent random variables. 
    \item \citet{cai_subspace_2020} allow for missingness in the matrix $M$, whereas we assume that the matrix $M$ is fully observed.
    \item Assumption \ref{assumption:eigengap} requires that $\sqrt{d \log(n\vee d)} \lesssim \lambda_r/\sigma$, whereas the theory in \citet{cai_subspace_2020} covers the setting $(nd)^{1/4} \lesssim \lambda_r/\sigma$, which is a broader range than ours for the $\snr$ regime.   
    However, we note that our $\ell_{2,\infty}$ concentration holds under the weaker assumption $\lambda_r/\sigma \gtrsim \kappa (nd)^{1/4} \sqrt{r\log(n\vee d)}$ (which can be seen from the main proof in Section \ref{sec:proofs}).
    \item Our main results include both asymptotic normality and $\ell_{2,\infty}$ concentration, whereas \citet{cai_subspace_2020} only obtain $\ell_{2,\infty}$ concentration.  
    \item Our $\ell_{2,\infty}$ concentration result does not incur the ``diagonal deletion effect'' in the upper bound of Theorem 1 in \citet{cai_subspace_2020}, since this has been accounted for using the HeteroPCA algorithm (see Theorem~\ref{thm:2_infty_deterministic}). This allows our upper bound to scale with the noise.
\end{itemize}

The most important of these differences is perhaps the last point, since eliminating the diagonal deletion effect is crucial for our asymptotic normality analysis. Besides us removing this term, our upper bound agrees with theirs up to a $\sqrt{r}$ factor. Also, since we have rid ourselves of the error coming from diagonal deletion, we are able to achieve the minimax lower bound for this problem given in Theorem 2 of \citet{cai_subspace_2020} up to log factors when $r, \mu_0, \kappa \asymp 1$. 

Shortly after posting our manuscript to ArXiv and submitting for publication, a very closely related manuscript \citep{yan_inference_2021} was also posted, studying a very similar setting to ours.  In \citet{yan_inference_2021}, the authors study statistical inference for Heteroskedastic PCA under the spiked covariance model, where the spike component is assumed to be low rank; moreover, they also use the HeteroPCA algorithm of \citet{zhang_heteroskedastic_2021}. They also obtain $\ell_{2,\infty}$ concentration and asymptotic normality, though their asymptotic normality results are not directly comparable, as they focus on statistical inference for the spike component (as opposed to the singular subspace directly). The key difference is that our asymptotic normality results allow for dependence within rows, which is a setting not covered by \citet{yan_inference_2021}.
However, our $\ell_{2,\infty}$ concentration result is markedly similar to theirs (see their Theorem 10) and agrees up to factors of $r$ and $\kappa$.  In \citet{yan_inference_2021}, the authors study the regime  $(nd)^{1/4} \lesssim \lambda_r/\sigma$ (ignoring logarithmic terms, factors of $r$, and factors of $\kappa$).  While our $\ell_{2,\infty}$ concentration continues to hold in this regime, our asymptotic normality result may not hold.  
Since our main focus in this paper is the entrywise estimation of singular subspaces under both dependence and heteroskedasticity, we leave deriving limit theory and asymptotic normality in this regime to future work. 

Finally, in \citet{abbe_ell_p_2020}, the authors study exact recovery in the case of the two-component mixture model.  Their results are similar to ours in that they allow for dependence and heteroskedasticity, but they do not study the limiting distribution of their diagonal deletion estimator. Moreover, their results are not directly comparable, as they use a different definition of incoherence and do not study the explicit dependence of their bound on the noise parameters and the spectral structure of the matrix $M$, but instead find conditions on their signal-to-noise ratio such that their  upper bound tends to zero.  On the other hand, their theory covers the weak-signal regime, and they extend their results to Hilbert spaces. In principle, since our results depend only on the properties of the Gram matrix, they could also be extended to a general Hilbert space, but we do not pursue such an extension here. 

%
%
%
%
%
%
%

\subsection{Application to Mixture Distributions} \label{sec:applications}

Consider the following submodel.  Suppose we observe $n$ observations of the form $X_i = M_i + E_i \in \R^d$, where there are $K$ unique vectors $\mu_1, \dots, \mu_K$.  Let $\hat M$ be the matrix whose $i$'th row is $X_i\t$.  If $M$ is rank $K$, by Lemma 2.1 of \citet{lei_consistency_2015}, there are $K$ unique rows of the matrix $U$, where each row $i$ corresponds to the membership of the vector $X_i$.  We then have the following Corollary to Theorem \ref{thm:b-e} in this setting.

\begin{corollary} \label{cor:mixture}
Let $X_i = M_i + E_i$, and define the matrix
\begin{align}
    S_i := \Lambda\inv V\t \Sigma_i V \Lambda\inv. \label{eq:comm_cov}
\end{align}
Suppose $\kappa, \kappa_{\sigma}, \mu_0$ are bounded  and $r$ stays fixed as $n$ and $d$ tend to infinity, and suppose \begin{align*}
    \frac{\log(n\vee d)}{\snr  } \to 0.
\end{align*}
Then
\begin{align*}
    (S_i)^{-1/2} \bigg( \hat U \mathcal{O}_* - U \bigg)_{i\cdot} \to N(0, I_r)
\end{align*}
as $n$ and $d$ tend to infinity with $d\geq n\geq \log(d)$.  
\end{corollary}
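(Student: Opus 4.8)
The plan is to treat this as the vector-valued refinement of Theorem~\ref{thm:b-e}, specialized to the mixture submodel, reading the joint limit off from the first-order expansion that already underlies that theorem. Concretely, the proof of Theorem~\ref{thm:b-e} produces, on a high-probability event, the row-wise decomposition
\begin{align*}
  \left( \hat U\mathcal{O}_* - U \right)_{i\cdot}\t = \Lambda\inv V\t E_i + R_i ,
\end{align*}
where $\Lambda\inv V\t E_i$ is the leading first-order perturbation of the $i$-th row of the left singular vectors and $R_i \in \R^r$ is a remainder controlled by the higher-order and non-Gaussian parts of the bounds in Theorems~\ref{thm:b-e} and~\ref{thm:2_infty}. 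Writing $E_i = \Sigma_i^{1/2} Y_i$, the leading term is $\sum_{\alpha=1}^d Y_{i\alpha}\, c_\alpha$ with $c_\alpha := \Lambda\inv V\t\Sigma_i^{1/2} e_\alpha$, a sum of $d$ independent, mean-zero, uniformly subgaussian vectors with $\sum_\alpha c_\alpha c_\alpha\t = \Lambda\inv V\t \Sigma_i V \Lambda\inv = S_i$. So the corollary reduces to (i) a multivariate CLT for $S_i^{-1/2}\sum_\alpha Y_{i\alpha} c_\alpha$ and (ii) showing $S_i^{-1/2} R_i \overset{p}{\to} 0$.

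For step (i) I would use the Cram\'er--Wold device. Under Assumption~\ref{assumption:be_assumptions} (and $\kappa = O(1)$) the matrix $S_i$ is nonsingular and well conditioned, with $\|S_i^{-1/2}\|$ bounded by a constant times $\lambda_1/\sigma$; set $B_i := S_i^{-1/2}\Lambda\inv V\t\Sigma_i^{1/2} \in \R^{r\times d}$, which by construction obeys $B_i B_i\t = I_r$, i.e.\ has orthonormal rows. For each fixed unit $t \in \R^r$, $t\t S_i^{-1/2}\sum_\alpha Y_{i\alpha} c_\alpha = \sum_\alpha Y_{i\alpha}\, (B_i e_\alpha)\t t$ is a sum of independent mean-zero variables whose variances sum to $\|t\|^2 = 1$, so the classical Berry--Esseen theorem (the same one used in Theorem~\ref{thm:b-e}) bounds its Kolmogorov distance to $N(0,1)$ by a constant times $\sum_\alpha |(B_i e_\alpha)\t t|^3 \le \max_\alpha \|B_i e_\alpha\|$. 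Now
\begin{align*}
  \|B_i e_\alpha\|^2 = e_\alpha\t \Sigma_i^{1/2} V (V\t\Sigma_i V)\inv V\t\Sigma_i^{1/2} e_\alpha
\end{align*}
is precisely the $\alpha$-th leverage score of the $r$-dimensional subspace $\mathrm{col}(\Sigma_i^{1/2}V)$: in the scalar case it equals $\|V_{\alpha\cdot}\|^2 \le \|V\|_{2,\infty}^2 \lesssim \mu_0^2 r/n$, and in general it is controlled by combining the incoherence of $V$ (Assumption~\ref{assumption:incoherence}) with the condition-number control on $V\t\Sigma_i V$ (Assumption~\ref{assumption:be_assumptions}), so that with $r$ fixed $\max_\alpha \|B_i e_\alpha\| \to 0$. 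Hence $t\t S_i^{-1/2}\sum_\alpha Y_{i\alpha} c_\alpha \to N(0,1)$ for every fixed $t$, and since $\mathrm{Cov}(S_i^{-1/2}\sum_\alpha Y_{i\alpha} c_\alpha) = I_r$ exactly, Cram\'er--Wold yields $S_i^{-1/2}\sum_\alpha Y_{i\alpha} c_\alpha \to N(0,I_r)$.

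For step (ii), the proof of Theorem~\ref{thm:b-e} shows that each coordinate of $R_i$ is $o(1)\cdot\sigma_{ij}$, the $o(1)$ being the sum of all terms except the skewness term in the bound of Theorem~\ref{thm:b-e}; one checks that under the present hypotheses ($\kappa,\kappa_\sigma,\mu_0$ bounded, $r$ fixed, $\log(n\vee d)/\snr \to 0$, $d\ge n\ge\log d$) each of those terms tends to zero. Since Assumption~\ref{assumption:be_assumptions} gives $\sigma_{ij} \asymp \sigma/\lambda_j \le \sigma/\lambda_r$ while $\|S_i^{-1/2}\| \lesssim \lambda_1/\sigma$, this yields $\|S_i^{-1/2} R_i\| \lesssim \sqrt{r}\,\kappa \cdot o(1) = o(1)$ with probability tending to one, and Slutsky's theorem combines (i) and (ii) into $(S_i)^{-1/2}(\hat U\mathcal{O}_* - U)_{i\cdot} \to N(0,I_r)$.

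I expect the main obstacle to be the upgrade from the coordinatewise statement of Theorem~\ref{thm:b-e} to a genuinely joint one: one has to reopen the proof of that theorem to extract the common linear term $\Lambda\inv V\t E_i$ for an entire row rather than just its $j$-th coordinate, and then the multivariate CLT rate is governed not by the scalar skewness quantity of Theorem~\ref{thm:b-e} but by $\max_\alpha \|B_i e_\alpha\|$ for $B_i = S_i^{-1/2}\Lambda\inv V\t\Sigma_i^{1/2}$ --- equivalently, by the incoherence of the transformed subspace $\mathrm{col}(\Sigma_i^{1/2}V)$. Establishing that this is small is where Assumptions~\ref{assumption:incoherence} and~\ref{assumption:be_assumptions} must be used in tandem, and it is the step that the scalar-covariance intuition (under which $\Sigma_i^{1/2}V$ and $V$ span the same subspace) conceals.
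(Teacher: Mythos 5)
Your route is the same as the paper's: expand the $i$-th row as a linear term $\Lambda\inv V\t E_i$ plus a residual $R_i$ (both coming from the proof of Theorem~\ref{thm:b-e}), use the residual bounds of Lemmas~\ref{lem:residuals1}--\ref{lem:final_res} together with $\|S_i^{-1/2}\|\sigma_{ij}\leq\kappa\kappa_\sigma=O(1)$ to kill $S_i^{-1/2}R_i$, and then combine Cram\'er--Wold with Slutsky. What you add is an explicit check of the triangular-array CLT hypothesis hidden in the Cram\'er--Wold step: you write the linear term as $\sum_{\alpha}Y_{i\alpha}c_\alpha$, set $B_i:=S_i^{-1/2}\Lambda\inv V\t\Sigma_i^{1/2}$ with $B_iB_i\t=I_r$, and observe that the one-dimensional Berry--Esseen error is controlled by $\max_\alpha\|B_i e_\alpha\|$, i.e.\ by the leverage scores of $\Sigma_i^{1/2}V$. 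The paper's proof omits this verification and simply invokes Cram\'er--Wold, even though the skewness term $\|\Sigma_i^{1/2}V_{\cdot j}\|_3^3/\|\Sigma_i^{1/2}V_{\cdot j}\|^3$ in Theorem~\ref{thm:b-e} is a reminder that the condition is not automatic.

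That said, your claim that $\max_\alpha\|B_i e_\alpha\|\to 0$ is ``controlled by combining the incoherence of $V$ with the condition-number control on $V\t\Sigma_i V$'' does not hold up as stated. The generic bound those assumptions give is $\|B_ie_\alpha\|\leq\|(V\t\Sigma_iV)^{-1/2}\|\,\|\Sigma_i^{1/2}e_\alpha\|\lesssim\kappa_\sigma$, which is $O(1)$, not $o(1)$. In the scalar case $\Sigma_i=\sigma_i^2 I_d$ the leverage score reduces to $\|V_{\alpha\cdot}\|^2\leq\|V\|_{2,\infty}^2$ and incoherence finishes the job, but for a general admissible $\Sigma_i$ the leverage scores of $\Sigma_i^{1/2}V$ need not tend to zero (e.g.\ if $\Sigma_i$ puts almost all its mass on a handful of coordinates aligned with $V$). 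This is exactly the same gap that the paper's proof leaves open: the hypotheses of Corollary~\ref{cor:mixture} do not force the first Berry--Esseen term of Theorem~\ref{thm:b-e} to vanish. Either an additional incoherence-type hypothesis on $\Sigma_i^{1/2}V$ (equivalently, that $\max_j\|\Sigma_i^{1/2}V_{\cdot j}\|_3^3/\|\Sigma_i^{1/2}V_{\cdot j}\|^3\to 0$) should be imposed, or you should state clearly that this is being assumed; your final paragraph correctly identifies this as the delicate step, and it deserves to be surfaced as an assumption rather than folded into a heuristic.
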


We remark that the result above allows $E_i$ to have an $i$-dependent covariance matrix.  In the setting that the covariance matrix of $E_i$ depends only on the vector $\mu_k$, where $k$ is such that $M_i = \mu_k$,
the following result shows that we can leverage this structure to consistently estimate the matrix $S_i$, which is the same within each community.  We assume that one can accurately estimate the cluster memberships with probability tending to one, which holds by the signal-to-noise ratio condition, the $\ell_{2,\infty}$ bound in Theorem \ref{thm:2_infty}, and the setting for Corollary \ref{cor:mixture} since the eigenvector difference $\|\hat U - U \mathcal{O}_*\|_{2,\infty} \ll \|U\|_{2,\infty}$, which implies that the rows of $\hat U$ are asymptotically separated (see e.g. \citet{lei_consistency_2015}).

\begin{corollary} \label{cor:consistent_estimate}
Suppose the setting for Corollary \ref{cor:mixture}, and assume that there are $K$ different communities with each community having mean $\mu_k$ and covariance matrix $\Sigma^{(k)}$ (that is, $\Sigma_i = \Sigma^{(k)}$ for all $i$ in community $k$).  Let $n_k$ denote the number of observations in community $k$, and suppose that $n_k \asymp n$.  Suppose $\bar{U}^{(k)}$ is the estimate for the centroid of the $k$-th mean, and let $C_k$ denote the set of indices such that $M_i = \mu_k$.  Define the estimate
\begin{align*}
    \hat S^{(k)} := \frac{1}{n_k} \sum_{i \in C_k} \bigg(\hat U_{i\cdot}  - \bar{U}^{(k)}  \bigg) \bigg(\hat U_{i\cdot}  - \bar{U}^{(k)} \bigg)\t. 
\end{align*}
Then for the orthogonal matrix $\mathcal{O}_*$ appearing in Corollary \ref{cor:b-e}, 
\begin{align*}
    \|(S^{(k)})\inv\mathcal{O}_*\t\hat S^{(k)}\mathcal{O}_*  - I_r\| \to 0
\end{align*}
in probability, where $S^{(k)}$ is the community-wise covariance defined in \eqref{eq:comm_cov}.
\end{corollary}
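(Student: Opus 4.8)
The plan is to reduce $\hat S^{(k)}$ to a centered empirical second-moment matrix of (essentially) independent noise vectors and then invoke a standard sub-gaussian covariance concentration bound. Throughout we work on the event, of probability tending to one, that the clustering step recovers the true partition: as remarked before the statement, $\|\hat U - U\mathcal{O}_*\|_{2,\infty} \ll \|U\|_{2,\infty}$ together with Lemma~2.1 of \citet{lei_consistency_2015} (the rows of $U$ take exactly $K$ distinct values, one per community, separated from one another and from the origin) guarantees this, so that $\bar U^{(k)} = \frac{1}{n_k}\sum_{i\in C_k}\hat U_{i\cdot}$. Put $w_i := (\hat U\mathcal{O}_* - U)_{i\cdot}\in\R^r$ for $i\in C_k$. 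Since $M_i = \mu_k$ for every $i\in C_k$ and $M = U\Lambda V\t$ with $\Lambda$ invertible and $V$ of full column rank, the rows $U_{i\cdot}$ are constant on $C_k$; hence $\mathcal{O}_*\t(\hat U_{i\cdot} - \bar U^{(k)}) = w_i - \bar w$ with $\bar w := \frac{1}{n_k}\sum_{i\in C_k}w_i$, so that
\begin{align*}
  \mathcal{O}_*\t\hat S^{(k)}\mathcal{O}_* = \frac{1}{n_k}\sum_{i\in C_k}(w_i-\bar w)(w_i-\bar w)\t .
\end{align*}

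Next I would extract from the proof of Theorem~\ref{thm:b-e} the first-order expansion $\hat U\mathcal{O}_* - U = (I-UU\t)EV\Lambda\inv + \Psi$, where $\Psi$ is the higher-order remainder controlled there; reading off the $i$-th row gives $w_i = \Lambda\inv V\t E_i + \psi_i$ with $\psi_i := -(\Lambda\inv V\t E\t U)U_{i\cdot} + \Psi_{i\cdot}$. Define $g_i := (S^{(k)})^{-1/2}\Lambda\inv V\t E_i$ for $i\in C_k$. Since $E_i = (\Sigma^{(k)})^{1/2}Y_i$ on $C_k$ with the $Y_i$ independent and $\psi_2$-bounded by $1$, the $g_i$ are independent, mean zero, with covariance $(S^{(k)})^{-1/2}\Lambda\inv V\t\Sigma^{(k)}V\Lambda\inv(S^{(k)})^{-1/2} = I_r$; moreover $(S^{(k)})^{-1/2}\Lambda\inv V\t(\Sigma^{(k)})^{1/2}$ has orthonormal rows, so $\|g_i\|_{\psi_2}\le 1$. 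The crux of the argument is that the remainder is negligible after normalization, namely $\max_{i\in C_k}\|(S^{(k)})^{-1/2}\psi_i\| \to 0$ in probability: the projection correction is bounded using $\|U\t EV\| \lesssim r\sigma\sqrt{\log(n\vee d)}$ (with high probability), $\|U_{i\cdot}\| \le \mu_0\sqrt{r/n}$, and $\|(S^{(k)})^{-1/2}\| \lesssim \lambda_1/\sigma$, while $\Psi_{i\cdot}$ is bounded by the $\ell_{2,\infty}$-type estimate on $\Psi$ from the proof of Theorem~\ref{thm:b-e}. In the regime of Corollary~\ref{cor:mixture} (with $\kappa,\kappa_\sigma,\mu_0$ bounded, $r$ fixed, and $\log(n\vee d)/\snr\to 0$) both contributions are $o(\sqrt{\lambda_{\min}(S^{(k)})})$.

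Granting the expansion, write $(S^{(k)})^{-1/2}w_i = g_i + \delta_i$ with $\delta_i := (S^{(k)})^{-1/2}\psi_i$, and let $\bar g,\bar\delta$ be their averages over $C_k$. Then
\begin{align*}
  (S^{(k)})^{-1/2}\mathcal{O}_*\t\hat S^{(k)}\mathcal{O}_*(S^{(k)})^{-1/2} = \frac{1}{n_k}\sum_{i\in C_k}g_i g_i\t - \bar g\,\bar g\t + \mathcal{R},
\end{align*}
where $\mathcal{R}$ collects the cross terms $\frac{1}{n_k}\sum_{i\in C_k}(g_i\delta_i\t + \delta_i g_i\t)$, the purely-$\delta$ terms, and the analogous terms involving $\bar g,\bar\delta$. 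Using $\frac{1}{n_k}\sum_{i\in C_k}\|g_i\|^2 \lesssim r$ with high probability (concentration for sums of subexponentials), Cauchy--Schwarz bounds the cross terms by $\lesssim \sqrt r\,\max_i\|\delta_i\| \to 0$, and the $\delta$-only terms are even smaller, so $\|\mathcal{R}\| \to 0$ in probability. Since the $g_i$ are independent, mean zero, identity-covariance, and $\psi_2$-bounded, and $n_k\asymp n\to\infty$ with $r$ fixed, the sub-gaussian covariance estimation bound (see, e.g., \citet{vershynin_high-dimensional_2018}) gives $\|\frac{1}{n_k}\sum_{i\in C_k}g_i g_i\t - I_r\| \lesssim \sqrt{r/n_k} + r/n_k \to 0$, while $\E\|\bar g\|^2 = r/n_k\to 0$ gives $\|\bar g\,\bar g\t\|\to 0$, both in probability. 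Hence $\|(S^{(k)})^{-1/2}\mathcal{O}_*\t\hat S^{(k)}\mathcal{O}_*(S^{(k)})^{-1/2} - I_r\| \to 0$ in probability. Finally, $(S^{(k)})\inv\mathcal{O}_*\t\hat S^{(k)}\mathcal{O}_* - I_r = (S^{(k)})^{-1/2}\big[(S^{(k)})^{-1/2}\mathcal{O}_*\t\hat S^{(k)}\mathcal{O}_*(S^{(k)})^{-1/2} - I_r\big](S^{(k)})^{1/2}$ has spectral norm at most the square root of the condition number of $S^{(k)}$ times the preceding quantity, and that condition number is at most $\kappa^2$ times the condition number of $V\t\Sigma^{(k)}V$, which the model assumptions keep bounded; this yields the claim.

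The main obstacle is the second paragraph: establishing, on a single high-probability event and uniformly over $i\in C_k$, the first-order expansion $w_i = \Lambda\inv V\t E_i + \psi_i$ with $\|(S^{(k)})^{-1/2}\psi_i\|$ uniformly negligible. This requires carrying the bookkeeping of the proof of Theorem~\ref{thm:b-e} --- the deterministic HeteroPCA-versus-$\Gamma(Z)$ discrepancy of Lemma~\ref{lem:deterministic_spectral}, the leave-one-out control of the higher-order matrix-series terms, and the $(I-UU\t)$-projection correction --- and verifying that each is $o(\sqrt{\lambda_{\min}(S^{(k)})})$ in the $\ell_{2,\infty}$ sense. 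Everything downstream --- the reduction exploiting constancy of $U_{i\cdot}$ on $C_k$, the negligibility of the centering and cross terms, and the covariance concentration for the $g_i$ --- is routine.
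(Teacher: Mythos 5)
Your proposal is correct and reaches the same conclusion, but the bookkeeping is organized differently from the paper's, and the difference is worth noting. You whiten at the outset: writing $(S^{(k)})^{-1/2}w_i = g_i + \delta_i$ with $g_i$ isotropic unit-$\psi_2$ and $\delta_i$ a uniformly small remainder, you reduce the whole claim to (i) a standard subgaussian covariance concentration bound $\|\tfrac{1}{n_k}\sum g_ig_i\t - I_r\| \lesssim \sqrt{r/n_k}$, (ii) $\|\bar g\|^2 \to 0$, and (iii) Cauchy–Schwarz on the $g$–$\delta$ cross terms; the conjugation by $(S^{(k)})^{\pm 1/2}$ at the end is controlled by the bounded condition number of $S^{(k)}$. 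The paper instead expands $\hat S^{(k)}$ directly into $J_1$ (the outer products of $\hat U_{i\cdot} - U\mathcal{O}_*\t$), $J_2$ (the centroid discrepancy), and $J_3$ (cross terms) — which, after using that $U_{i\cdot}$ is constant on $C_k$, is algebraically the same decomposition $\tfrac{1}{n_k}\sum w_iw_i\t - \bar w\bar w\t$ you write — and then handles $J_1$ by a further split into the $E_i$-part and the residual $R_i$, invoking the strong law of large numbers for the main term and a sequence of restricted-Markov arguments for the others. Your whitening route is cleaner and quantitative (it gives an explicit rate rather than a bare convergence-in-probability statement), whereas the paper's argument is more hands-on but relies on the same uniform residual bounds. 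The crux you flag — establishing, on a single high-probability event and uniformly in $i \in C_k$, that $\max_i\|(S^{(k)})^{-1/2}\psi_i\| \to 0$ — is precisely what the paper's Lemmas \ref{lem:residuals1}, \ref{lem:residuals2}, and \ref{lem:final_res} supply (together with the $\kappa\kappa_\sigma$ bound on $\|(S^{(k)})^{-1/2}\|\sigma_{ij}$), and plugging those in completes your argument; you are also right that the non-independence of the $\psi_i$ across $i$ is harmless because the cross terms are handled by Cauchy–Schwarz rather than any independence claim.
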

We note that the appearance of the orthogonal matrix $\mathcal{O}_*$ is of no inferential consequence, since Gaussianity is preserved by orthogonal transformation. This result implies that one can consistently estimate the covariance matrix for the corresponding row, which immediately implies that one can derive a pivot for the $i$'th row in the mixture setting described above, by setting
\begin{align*}
    \hat T_{i} := (\hat S^{(k)})^{-1/2}(\hat U_i - \bar U^{(k)}).
\end{align*}
Corollaries \ref{cor:mixture} and \ref{cor:consistent_estimate}, the Continuous Mapping Theorem, and Slutsky's Theorem imply that $\hat T_i \to N(0, I_r)$ as $n$ and $d$ tend to infinity, which provides an asymptotically valid confidence region.  We remark that in the asymptotic regime in Corollaries \ref{cor:mixture} and \ref{cor:consistent_estimate}, when $r$ is fixed, any fixed finite collection of rows can be shown to be asymptotically independent, and hence the confidence region is simultaneously valid for any fixed set of rows; for example, for one row each of each community.

\section{Numerical Results} \label{sec:simulations}

We consider the following mixture model setup.  We let $M$ be the matrix whose first $n_1$, $n_2$, and $n_3$ rows are $\mu_1, \mu_2$, and $\mu_3$ respectively where
\begin{align*}
    \mu_1 :&= (10 , 10, \dots, 10, 12, \dots, 12)\t \\
    \mu_2 :&= (10 , 10 ,\dots ,10 ,10,\dots ,10)\t \\
    \mu_3 :&= (5, 5, \dots , 5, 5.5, \dots, 5.5)\t
\end{align*}
The  matrix $M$ is readily seen to be rank $2$, since $\mu_3 = .25 \mu_1 + .25 \mu_2$.  


We compare $\hat{U}$ to the diagonal deletion estimator in this setting in  Figure~\ref{fig:illustration} of Section~\ref{sec:bckgd}, where we can clearly see the bias that comes from deleting the diagonal entries in the final approximation of the singular vectors. We consider the class-wise covariances
\begin{align*}
    \Sigma^{(k)} := \sigma_k^2 F_k F_k\t + I_d,
\end{align*}
where $\sigma_1 = 15$, $\sigma_2 =10$, $\sigma_3=7.5$, and $F_k$ is drawn uniformly on the Stiefel manifold of dimensions $100,50,$ and $200$ respectively, and $n_1 = 200$, and $n_2= n_3 = 400$, with $d = 1000$.  
For the smallest class with mean $\mu_1$ we clearly see that the reduced incoherence severely impacts the estimation with the diagonal deletion estimator, while the effect on $\hat{U}$ is relatively small. On the latter two classes, we observe that the rows of $\hat{U}$ are very close to those of the idealized matrix $A+\Gamma(Z)$, and the covariances are comparable between these. On the other hand, the rows of the diagonal deletion estimator do not preserve the covariance structure of this idealized matrix, since the diagonal deletion estimator is approximating the eigenvectors of $\Gamma(A+Z)$. Since $A+\Gamma(Z)$ is an unbiased perturbation of $A$ while $\Gamma(A+Z)$ is not, we consider this to be the more natural object of comparison, and our theory supports this view.

To study the effect of larger $n$ and $d$, in Figure \ref{fig:be2} we plot 1000 Monte Carlo iterations of $(\hat U \mathcal{O}_* - U)_{1\cdot}$ where $\mathcal{O}_*$ is estimated using a Procrustes alignment between $\hat U$ and $U$ on $n = d = 1500$ points with $n_1 = n_2 = n_3 =500$, where we use the balanced case to ensure incoherence.  The solid line represents the estimated 95\% confidence ellipse, and the dotted line represents the  95\% confidence ellipse implied by Theorem \ref{thm:b-e}.  We consider the spherical noise setting, with class-wise covariances $.1 I_d$, $.2 I_d$, and $.3 I_d$ for each component respectively.  The empirical and theoretical ellipses are readily seen to be close.

\begin{figure}
    \centering
    \includegraphics[width=0.8\textwidth,height=90mm]{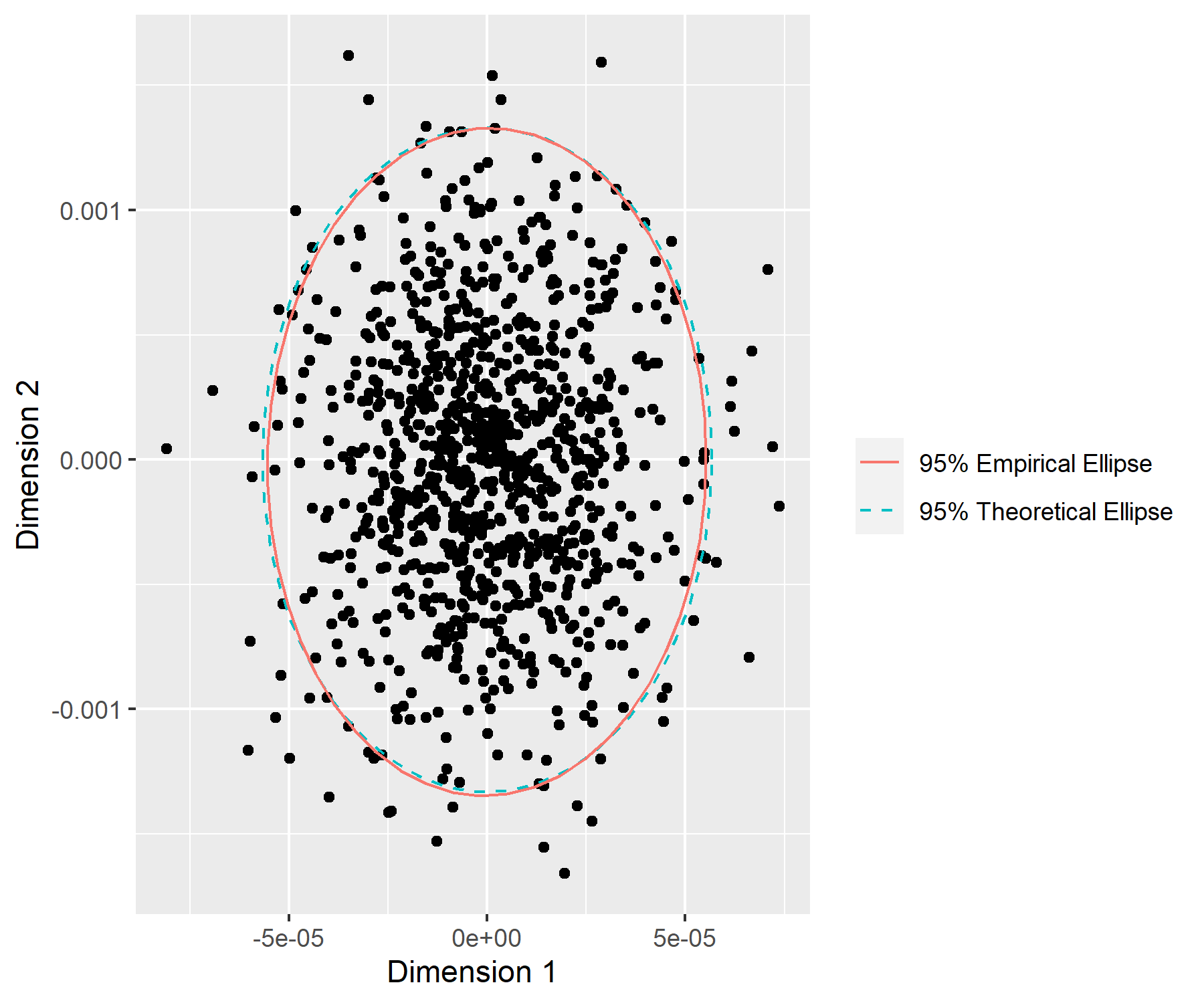}
    \caption{Plot of 1000 Monte Carlo iterations of the first row of $\hat U \mathcal{O}_* - U$ with the same $M$ matrix as above and $n = d = 1800$ with $n_1 = n_2 = n_3 = 600$.  The covariances here are spherical within each mixture component, though they differ between components. The dotted line represents the theoretical 95 percent confidence ellipse from Theorem \ref{thm:b-e} and Corollary \ref{cor:mixture}.  The solid line is the estimated ellipse. The different scalings on the two axes arise because the variances in these two dimensions are proportional to the first two squared singular values of $M$, as seen in Corollary~\ref{cor:b-e}.  Further details are in Section \ref{sec:simulations}.}
    \label{fig:be2}
\end{figure}

\subsection{Elliptical Versus Spherical Covariances} \label{sec:spherical_elliptical}

In Figure \ref{fig:spherical_elliptical} we examine the effect of elliptical covariances on the limiting distribution predicted by Corollary \ref{cor:mixture}.  We consider the same $M$ matrix and mixture sizes as in Figure \ref{fig:illustration}, only we fix the covariances as 
\begin{align*}
    \Sigma_E^{(1)} :&= F_1 F_1\t + .1 I_d \\
    \Sigma^{(2)} :&= F_2 F_2\t + I_d \\
    \Sigma^{(3)} :&= 2I_d,
\end{align*}
where $F_1$ and $F_2$ are square matrices with entries drawn independently from uniform distributions on $[0,.003]$ and $[0,.001]$ respectively.  We also consider the spherical case $\Sigma_S^{(1)} = VV\t + I_d$.  We run 1000 iterations of this simulation and examine the first row of the matrix $(\hat U \mathcal{O}_* - U)\Lambda$, where again $\mathcal{O}_*$ is estimated using the procrustes difference between $\hat U$ and $U$.  The only thing we change between each dataset is the covariance; i.e., we draw $E_1 \in \R^{n_1 \times d}$ as a random Gaussian matrix with independent entries and multiply by $(\Sigma_E^{(1)})^{1/2}$ or $(\Sigma_S^{(1)})^{1/2}$ to obtain the first $n_1$ rows of the matrix $E$.  We keep the other $n - n_1$ rows fixed within each Monte Carlo iteration, so the only randomness for each iteration is in drawing the $nd$ Gaussian random variables.  
\begin{figure}
    \centering
    \includegraphics[width=.8\textwidth,height=85mm]{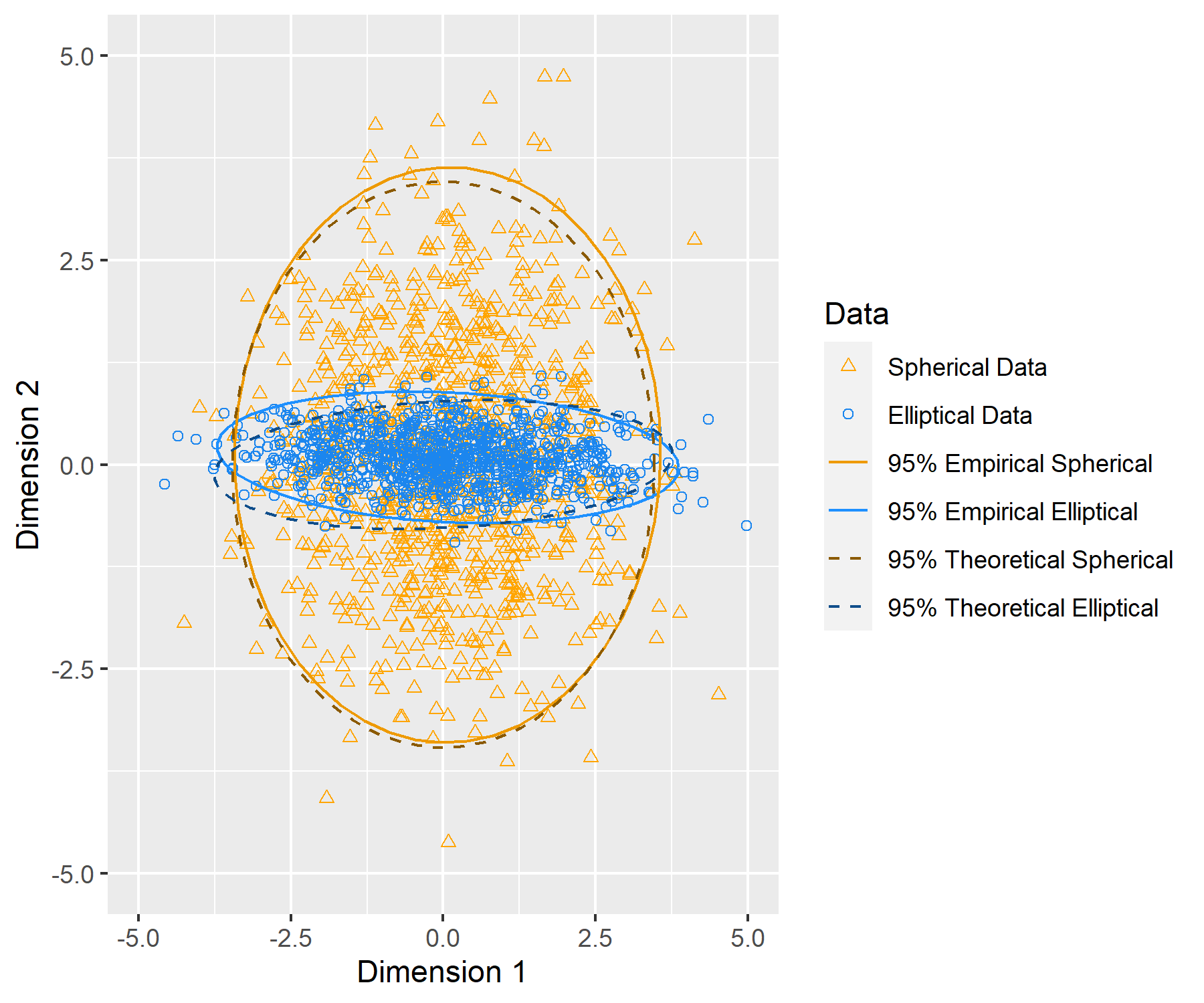}
    \caption{Comparison of $\Lambda(\hat U \mathcal{O}_* - U )_{1\cdot}$ for the same mixture distribution as above, only we modify the covariance for the first mixture component between each data set. Details are in Section \ref{sec:spherical_elliptical}.}
    \label{fig:spherical_elliptical}
\end{figure}
We scale $\hat U\mathcal{O}_* - U$ by $\Lambda$ in order to explicitly showcase the covariance structure.  The way we create $\Sigma_E^{(1)}$ yields that the limiting covariance $S_E^{(1)}$ is approximately
\begin{align*}
    S_E^{(1)} :=            \Lambda\inv \begin{pmatrix} 2.35 & 0.083  \\ 
 0.083 & 0.104 \end{pmatrix} \Lambda\inv,
\end{align*}
so that $\Lambda(\hat U\mathcal{O}_* - U)_{1\cdot}$ is approximately Gaussian with covariance $\begin{pmatrix} 2.35 & 0.083  \\ 
 0.083 & 0.104 \end{pmatrix}$.  On the other hand, when we consider the spherical case, we see that $S_S^{(1)}$ is of the form
    \begin{align*}
        S_S^{(1)} &= 2 \Lambda^{-2}
    \end{align*}
    so that $\Lambda(\hat U\mathcal{O}_* - U)_{1\cdot} $ is approximately Gaussian with covariance $2 I_r$.      Figure \ref{fig:spherical_elliptical} shows these differences, where we plot the empirical 95\% confidence ellipses with respect to both the estimated covariance (solid line) and theoretical covariance (dashed line).  
    
    To study the relationship between $\Sigma_i$ and $V$ in more detail, we also consider the following setting.  We consider the class-wise covariances again
    \begin{align*}
        \Sigma^{(1)} :&= 15 F_1 F_1\t + .1 I_d \\
    \Sigma_\theta^{(2)} :&= 5 V_{\cdot 1} V_{\cdot 1}\t + 5 V_2^\theta (V_2^\theta)\t + .1 I_d\\
    \Sigma^{(3)} :&= 10 F_3 F_3\t +.1 I_d,
    \end{align*}
    where again $F_1$ and $F_3$ are drawn uniformly from the Stiefel manifold of dimension 100 and 200 respectively, with $n_1 = 200$ and $n_2= n_3 = 400$. We also change $\mu_3=(5,\ldots,5,6,\ldots6)$ to better separate the clusters. The vector $V_2^\theta$ is orthogonal to $V_{\cdot 1}$ and satisfies $\langle V_2^\theta , V_{\cdot 2} \rangle = \theta$ for $\theta \in \{.9,.5,.1\}$.  As $\theta$ decreases, the limiting covariance matrix in Corollary \ref{cor:mixture} will change along the second dimension only.   Figure \ref{fig:angle} reflects this theory, where we plot 1000 Monte Carlo runs of $\Lambda(\hat U\mathcal{O}_* - U)_{(n_1+1)\cdot}$.  The variance stays fixed along the first dimension, but it shrinks along the second dimension, showcasing the geometric relationship between $V$ and $\Sigma^{(2)}$ as suggested by Corollary \ref{cor:mixture}.

    \begin{figure}
        \centering
        \includegraphics[width=.8\textwidth,height=85mm]{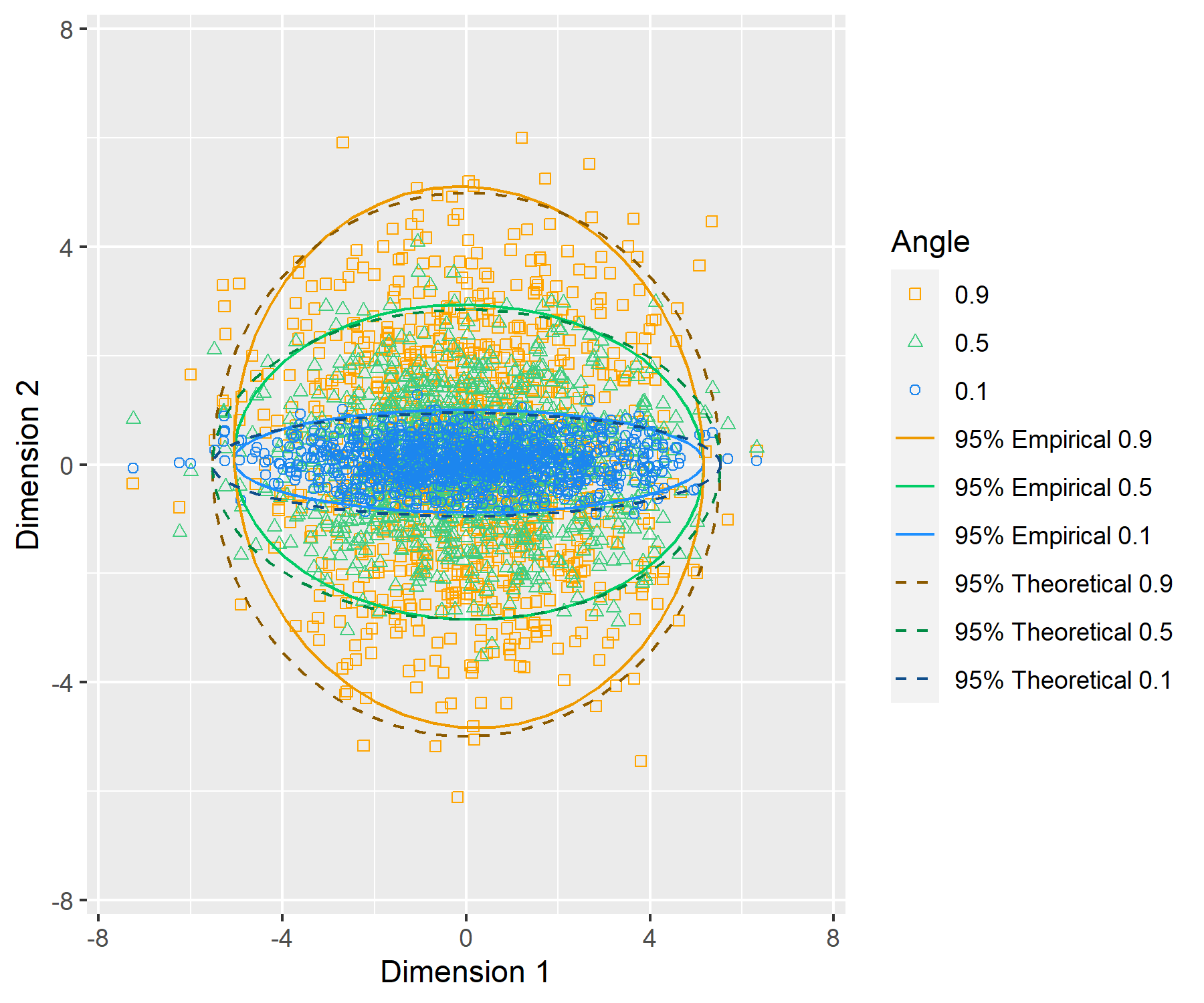}
        \caption{Comparison of $\Lambda(\hat U \mathcal{O}_* - U )_{(n_1+1)\cdot}$ for the same mixture distribution as above, only we modify the covariance for the second mixture component between each data set by changing the angle between the leading covariance and the second mixture component. Details are in Section \ref{sec:spherical_elliptical}.}
        \label{fig:angle}
    \end{figure}



\section{Discussion}
\label{sec:discussion}

We have shown that under general model assumptions for the noise matrix, allowing for heteroskedasticity between rows and dependence within them, that the entries of the left singular vectors of the output from HeteroPCA are consistent estimators for those of the original signal matrix $M$, and the errors are asymptotically normally-distributed in a natural high-dimensional regime. Furthermore, our Berry-Esseen theorem makes clear the rate at which this asymptotic approximation becomes valid, revealing the  effect of the relationship between the noise covariances and the spectral structure of the signal matrix on the distributional convergence.  In the particular case that the individual covariances are scalar multiples of the identity, our results also show that the variances of the entries of the  $j$'th estimated singular vector are proportional to the inverse $j$'th singular value of the signal matrix. In particular, this means that estimating additional singular vectors in this model becomes more challenging and requires more data, since the variance in this estimation grows with $j$.

In this paper, we assume the rank $r$ is known \emph{a priori}, but in practice one may need to estimate $r$ using, for example, the methods proposed in \citet{zhu_automatic_2006}, \citet{han_universal_2020}, or \citet{yang_simultaneous_2020}.  In addition, while our results highlight the interplay of the dependence structure of the noise with the signal matrix, additional work is required to make the upper bound computable from observed data. For example, our results do not imply consistent estimation for the row-wise covariance matrices $\Sigma_i$, though we do show if one has covariances that are only distinct between clusters, then one can estimate the limiting covariance matrix $S_i$ for each row of $\hat{U}$.  
Our techniques could therefore be appropriately modified to develop two-sample asymptotically valid confidence regions or test statistics, such as in deriving a Hotelling $T^2$ analogue for the singular vectors as in \citet{fan_simple_2019,du_hypothesis_2021}. Furthermore, one possibility for further inference would be to consider drawing several matrices $M+E$ independently from the same distribution, assuming the rows are matched together between samples, in which case one could leverage existing statistical methodology to conduct two-sample tests of hypothesis. 

Another possible extension is estimating linear forms of singular vectors under the dependence structure we consider, which has been studied in other settings under independent noise.  Our results naturally extend to sufficiently sparse linear forms (i.e. linear forms $T$ such that $\| T U\|\leq b \|U\|_{2,\infty}$),  but studying linear forms for which $ \| T U \|\asymp 1$ would require additional methods, as the entrywise analysis methods we use would not be applicable. Finally, our results hold for a natural subgaussian mixture model, but many high-dimensional datasets contain outlier vectors or heavier tails, in which case additional techniques are required.




\section{Proof Architecture for Theorems \ref{thm:b-e} and \ref{thm:2_infty}}
\label{sec:proofs}
In this section we state several intermediate lemmas, prove Theorem \ref{thm:2_infty},  and sketch the proof of Theorem \ref{thm:b-e}.  Full proofs are in the appendices.  First we collect some initial spectral norm bounds that are useful in the sequel.  The first is a bound on the noise error $\|\Gamma(Z)\|$, the proof of which is adapted from Theorem 2 in  \citet{amini_concentration_2021}.  
Throughout this section and all the proofs, we allow constants $C$ to change from line to line.  

\begin{restatable}[Spectral Norm Concentration]{lemma}{spectralnormconcentration} \label{lem:spectral_norm_concentration}
Under assumption \ref{assumption:noise},  there exists a universal constant $C_{\text{spectral}}$ such that with probability at least $1 - 4(n\vee d)^{-6}$
\begin{align*}
    \| \Gamma(EM\t + ME\t + EE\t) \| \leq C_{\text{spectral}}\bigg(  \sigma^2 ( n + \sqrt{nd}) + \sigma \sqrt{n} \kappa \lambda_r \bigg).
\end{align*}

\end{restatable}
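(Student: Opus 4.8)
The plan is to split $Z = EM\t + ME\t + EE\t$ and control the hollowed version of each summand, using that for symmetric $W$ one has $\|\Gamma(W)\| \le \|W\| + \max_i |W_{ii}|$, and that $\E EE\t = \diag(\tr(\Sigma_i))$ is already diagonal, so $\Gamma(EE\t) = (EE\t - \E EE\t) - \diag(\|E_i\|^2 - \tr(\Sigma_i))$. This gives
\[
\|\Gamma(Z)\| \;\le\; 2\|EM\t\| \;+\; 2\max_i|E_i\t M_i| \;+\; \|EE\t - \E EE\t\| \;+\; \max_i\big|\,\|E_i\|^2 - \tr(\Sigma_i)\big|,
\]
and it suffices to bound the first two terms by $\sigma\sqrt{n}\,\kappa\lambda_r$ and the last two by $\sigma^2(n + \sqrt{nd})$.

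For the cross term the crude estimate $\|EM\t\| \le \|E\|\lambda_1 \lesssim \sigma(\sqrt n + \sqrt d)\lambda_1$ is too large, so I would exploit the rank-$r$ structure: $EM\t = EV\Lambda U\t$, hence $\|EM\t\| \le \lambda_1\|EV\|$ with $EV$ only $n\times r$. A $\tfrac14$-net argument over $S^{n-1}$ and $S^{r-1}$ bounds $\|EV\|$, since for unit $u,w$ the bilinear form $u\t EVw = \sum_i u_i\langle Y_i, \Sigma_i^{1/2}Vw\rangle$ is a sum of independent mean-zero subgaussians with $\psi_2$-norm $\lesssim \sigma|u_i|$ (as $\|\Sigma_i^{1/2}Vw\| \le \sigma$), hence has $\psi_2$-norm $\lesssim \sigma$; this yields $\|EV\| \lesssim \sigma\sqrt n$ on the relevant event, using $r\le n$ and the mild condition $\log(n\vee d)\lesssim n$ to absorb the net's logarithmic contribution. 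The diagonal pieces $E_i\t M_i = \langle Y_i, \Sigma_i^{1/2}M_i\rangle$ are subgaussian with $\psi_2$-norm $\lesssim \sigma\lambda_1\|U\|_{2,\infty}\le\sigma\lambda_1$, so $\max_i|E_i\t M_i| \lesssim \sigma\lambda_1\sqrt{\log(n\vee d)} \lesssim \sigma\sqrt n\,\lambda_1$. Since $\lambda_1 = \kappa\lambda_r$, these terms contribute $O(\sigma\sqrt n\,\kappa\lambda_r)$.

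The quadratic term is the main obstacle, as a naive triangle-inequality or Frobenius-norm bound loses a $\sqrt n$ factor; this is the step adapted from Theorem 2 of \citet{amini_concentration_2021}. The key identity is $u\t(EE\t - \E EE\t)u = \|E\t u\|^2 - \E\|E\t u\|^2$, where $E\t u = \sum_i u_i\Sigma_i^{1/2}Y_i = A\mathbf{Y}$ with $A = [\,u_1\Sigma_1^{1/2}\mid\cdots\mid u_n\Sigma_n^{1/2}\,]$ and $\mathbf{Y} = (Y_1,\dots,Y_n)$ having independent unit-variance subgaussian coordinates. Here $AA\t = \sum_i u_i^2\Sigma_i =: \Sigma_u$ with $\|\Sigma_u\| \le \sigma^2$ and $\|A\t A\|_F = \|\Sigma_u\|_F \le \sigma^2\sqrt d$, so the Hanson--Wright inequality gives
\[
\p\Big(\big|\,\|E\t u\|^2 - \E\|E\t u\|^2\big| > t\Big) \;\le\; 2\exp\!\Big(-c\min\big(t^2/(\sigma^4 d),\, t/\sigma^2\big)\Big).
\]
Taking $t \asymp \sigma^2(n + \sqrt{nd})$ makes the right side at most $e^{-c'n}$, and a union bound over a $\tfrac14$-net of $S^{n-1}$ (of cardinality $9^n$) upgrades this to $\|EE\t - \E EE\t\| \lesssim \sigma^2(n + \sqrt{nd})$ with the stated probability; the diagonal deviations $|\,\|E_i\|^2 - \tr(\Sigma_i)|$ are handled by the same bound applied row-by-row with a union bound over $i$, giving $\max_i|\,\|E_i\|^2 - \tr(\Sigma_i)| \lesssim \sigma^2(\sqrt{d\log(n\vee d)} + \log(n\vee d)) \lesssim \sigma^2(n+\sqrt{nd})$.

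Finally I would collect the high-probability events by a union bound and rescale $C_{\text{spectral}}$. The delicate point is that the quadratic term has order $\sigma^2(n+\sqrt{nd})$ rather than the much larger $\sigma^2 d$: this is exactly because hollowing removes the $\E EE\t$ part, so $\E\|E\t u\|^2$ cancels and only fluctuations survive; keeping the row-dependent covariances $\Sigma_i$ throughout the net and Hanson--Wright computations, rather than a single common covariance, is what distinguishes this from the homoskedastic argument in the independent-noise literature.
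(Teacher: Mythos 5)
Your proposal is correct, but it takes a genuinely different route from the paper's own proof. The paper treats $z\t \Gamma(Z) z$ as a \emph{single} shifted quadratic form: after writing $Xz = \Xi_z \Sigma^{1/2}\vec{\xi}$ with $\vec{\xi}=\Sigma^{-1/2}\vec{\mu}+\vec{W}$, it applies a non-centered extension of Hanson--Wright (Theorem 6 of \citet{amini_concentration_2021}) so that the cross terms $EM\t+ME\t$ and the quadratic term $EE\t$ are absorbed simultaneously, with the mean vector $\tilde M=(\Sigma^{-1/2}\vec{\mu})\t$ carrying the $\|M\|^2$ contribution; a single $9^n$-net then upgrades the pointwise bound to the spectral norm. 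You instead split $\Gamma(Z)$ into a cross part and a centered quadratic part via $\|\Gamma(W)\|\le\|W\|+\max_i|W_{ii}|$ and $\Gamma(EE\t)=(EE\t-\E EE\t)-\diag(\|E_i\|^2-\tr\Sigma_i)$, bound the quadratic part with \emph{standard} Hanson--Wright on $\|E\t u\|^2-\E\|E\t u\|^2$ (with $AA\t=\Sigma_u:=\sum_i u_i^2\Sigma_i$, $\|\Sigma_u\|\le\sigma^2$, $\|\Sigma_u\|_F\le\sigma^2\sqrt d$), and bound the cross part by exploiting the rank-$r$ structure $EM\t=EV\Lambda U\t$ so that the net only needs to control the $n\times r$ matrix $EV$, yielding $\|EM\t\|\lesssim\lambda_1\sigma\sqrt n$ rather than the lossy $\lambda_1\sigma(\sqrt n+\sqrt d)$. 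Your route is more modular and more elementary (no shifted Hanson--Wright needed), and it neatly sidesteps the $\Sigma^{-1/2}$ appearing in the paper's proof, which is awkward when the $\Sigma_i$ are only positive semidefinite as Assumption~\ref{assumption:noise} allows; the paper's route is slicker in that one concentration inequality covers everything, at the cost of needing the more specialized Hanson--Wright variant. Both arguments implicitly use $\log(n\vee d)\lesssim n$ and $r\le n$ (the former is Assumption~\ref{assumption:dimension}, and the latter is automatic), even though the lemma as stated invokes only Assumption~\ref{assumption:noise} --- you are right to flag this explicitly, and the paper's own derivation of $\delta\le C\sqrt{n/\nu}$ has the same hidden requirement.
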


The next bound shows that the approximation of $\tilde A=A+\Gamma(Z)$ to $\hat A$, the output of the HeteroPCA algorithm, is much smaller than the approximation of $\tilde A$ to $A$. Recall that $N_T$ denotes the approximation of $A$ from Algorithm~\ref{alg1} after $T$ iterations. We note that the existence of $T_0$ in the statement of this lemma follows from \citet{zhang_heteroskedastic_2021} and Assumptions~\ref{assumption:dimension} and \ref{assumption:incoherence}.
In particular, if we take $T_0 \geq C \log\left( \frac{\lambda_r^2}{\|\Gamma(Z)\|} \right)$, then by the proof of Theorem 7 in \citet{zhang_heteroskedastic_2021}, it holds that $\| N_T - A \| \leq 3 \| \Gamma(Z) \|$.

\begin{lemma}
\label{lem:deterministic_spectral}
Define $T_0$ as the first iteration such that $\|N_T - A\|\leq 3\|\Gamma(Z)\|$. Let $\rho=10\|\Gamma(Z)\|/\lambda_r^2$, and suppose Assumption~\ref{assumption:eigengap}.
Define $\tilde K_T := \|N_T - \tilde A\|$.
Then for all $T \geq T_0$ and $n$ large enough, on the event in Lemma \ref{lem:spectral_norm_concentration}, we have that 
$\rho < \frac{1}{2}$, 
and
\begin{align*}
     \tilde K_{T} &\leq 4 \rho^{T-T_0} \|\Gamma(Z)\| + \frac{20}{1-\rho}\|U\|_{2,\infty}\|\Gamma(Z)\|.
\end{align*}
Consequently, when $T = \Theta\left( \log\left( \frac{\lambda_r^2}{\|U\|_{2,\infty} \| \Gamma(Z)\|} \right) \right)$, it holds that 
$$\| \hat A - \tilde A \| \leq 41 \| U \|_{2,\infty} \| \Gamma(Z)\|.$$
%
\end{lemma}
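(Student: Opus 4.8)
The plan is to reduce the whole statement to a one‑dimensional recursion for the discrepancy between the diagonal of the iterates and the true diagonal of $A$, sum a geometric series, and pass to the limit in $T$. All bounds below are taken on the event of Lemma~\ref{lem:spectral_norm_concentration}. First, for the claim $\rho<1$: I would substitute $\|\Gamma(Z)\| \le C_{\text{spectral}}\big(\sigma^2(n+\sqrt{nd}) + \sigma\sqrt n\,\kappa\lambda_r\big)$ into $\rho = 10\|\Gamma(Z)\|/\lambda_r^2$, write $\lambda_r^2 = \snr^2\sigma^2 rd$, and invoke $d\ge c_1 n$ (Assumption~\ref{assumption:dimension}) and $\snr\ge C_{SNR}\kappa\sqrt{\log(n\vee d)}$ (Assumption~\ref{assumption:eigengap}). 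Each of the three resulting terms is then at most a constant multiple of $1/(C_{SNR}\sqrt{r c_1})$ up to $\kappa,\log$ factors that only help, so $\rho\le C'/C_{SNR}$ for an absolute $C'$. Choosing $C_{SNR}$ large makes $\rho$ smaller than any prescribed constant; in particular $\rho<1$, and I will assume $\rho\le 1/21$ at the end.

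The next and central step is the recursion. The update $N_{T+1}=G(\tilde N_T)+\Gamma(N_T)$ together with $\Gamma(N_T)=\Gamma(A+Z)=\Gamma(\tilde A)$ and $G(\tilde A)=G(A)$ shows $N_{T+1}-\tilde A = G(\tilde N_T - A)$, a diagonal matrix; hence the error of interest is $\delta_T := \|N_T-\tilde A\| = \max_i|(\tilde N_{T-1})_{ii}-A_{ii}|$, and $\tilde K_T$ in the statement is exactly this diagonal discrepancy (up to the shift of iteration index). To bound $\delta_{T+1}$, write $N_T = A+\Gamma(Z)+D_T$ with $D_T:=N_T-\tilde A$ diagonal and $\|D_T\|=\delta_T$; since $\|\Gamma(Z)\|+\delta_T\ll\lambda_r^2$, the best rank‑$r$ approximation is $\tilde N_T = P_T N_T P_T$ with $P_T$ the top‑$r$ eigenprojector of $N_T$, which I compare to $A = PAP$, $P=UU\t$. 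Writing $P_T = P + (P_T-P)$ splits $(\tilde N_T-A)_{ii}$ into (a) $e_i\t P_T(\Gamma(Z)+D_T)P_T e_i$ and (b) cross terms in $(P_T-P)$ and $A$. For (a) I use propagation of incoherence along the iteration — all iterates stay within $O(\|\Gamma(Z)\|)$ of $A$, so $\|P_T e_i\| = \|(\hat U_T)_{i\cdot}\|\lesssim\|U\|_{2,\infty}$ — giving a contribution $\lesssim \|U\|_{2,\infty}^2\|\Gamma(Z)\| + \|U\|_{2,\infty}^2\delta_T$. For (b) I use a row‑wise ($\ell_{2,\infty}$) perturbation bound for $P_T - UU\t$, namely the deterministic estimate underlying Theorem~\ref{thm:2_infty_deterministic}, giving $\|(P_T-UU\t)e_i\|\lesssim(\|\Gamma(Z)\|+\delta_T)\|U\|_{2,\infty}/\lambda_r^2$ up to lower‑order terms, which together with $\|APe_i\|\le\lambda_1^2\|U\|_{2,\infty}$ and Assumption~\ref{assumption:incoherence} (to keep $\kappa^2\|U\|_{2,\infty}$ under control) produces a contribution $\le \rho\,\delta_T + c\|U\|_{2,\infty}\|\Gamma(Z)\|$. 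After a careful accounting of constants (using the assumptions to ensure that $\rho$ dominates the incoherence‑scale contraction factors $\|U\|_{2,\infty}^2$, $\kappa^2\|U\|_{2,\infty}^2$, etc.) this yields
\begin{align*}
\delta_{T+1}\;\le\;\rho\,\delta_T + 20\|U\|_{2,\infty}\|\Gamma(Z)\|,\qquad T\ge T_0.
\end{align*}

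The remaining steps are routine. By definition of $T_0$ we have $\|N_{T_0}-A\|\le 3\|\Gamma(Z)\|$, hence $\delta_{T_0}\le 4\|\Gamma(Z)\|$, and unrolling the recursion from $T_0$ gives, for all $T\ge T_0$,
\begin{align*}
\tilde K_T \;=\;\delta_T\;\le\; 4\rho^{\,T-T_0}\|\Gamma(Z)\| + 20\|U\|_{2,\infty}\|\Gamma(Z)\|\sum_{k\ge 0}\rho^k \;=\; 4\rho^{\,T-T_0}\|\Gamma(Z)\| + \frac{20}{1-\rho}\|U\|_{2,\infty}\|\Gamma(Z)\|,
\end{align*}
which is the displayed bound. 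For the final assertion, $\hat A = \lim_{T\to\infty}N_T$ by convergence of HeteroPCA \citep{zhang_heteroskedastic_2021}; the geometric term vanishes as $T\to\infty$ since $\rho<1$, and since we may take $\rho\le 1/21$ we get $\tfrac{20}{1-\rho}\le 21$, so $\|\hat A-\tilde A\|\le \tfrac{20}{1-\rho}\|U\|_{2,\infty}\|\Gamma(Z)\|\le 41\|U\|_{2,\infty}\|\Gamma(Z)\|$. (Alternatively, without appealing to convergence, the same bound holds for any $T$ large enough that $4\rho^{\,T-T_0}\|\Gamma(Z)\|\le 20\|U\|_{2,\infty}\|\Gamma(Z)\|$.)

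The main obstacle is the contraction in the second step: one must show that the \emph{diagonal} of the rank‑$r$ approximation contracts, not merely that the iterates stay bounded, and this requires two coupled facts — that the incoherence of the iterate eigenspaces $\hat U_T$ is preserved along the algorithm (so $\|P_T e_i\|\lesssim\|U\|_{2,\infty}$), and that the extra gain of a factor $\|U\|_{2,\infty}/\lambda_r^2$ on the diagonal (versus the spectral norm, where one would only recover $\delta_T$ with no contraction) can be extracted via row‑wise perturbation bounds for the eigenprojectors rather than Davis–Kahan. This is precisely where the deterministic $\ell_{2,\infty}$ analysis developed elsewhere in the paper and Assumption~\ref{assumption:incoherence} are needed, and bookkeeping the constants so that the final coefficient is exactly $\rho$ (and the additive term exactly $20\|U\|_{2,\infty}\|\Gamma(Z)\|$) is the delicate part.
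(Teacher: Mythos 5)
Your reduction to the scalar recursion $\tilde K_T\le\rho\,\tilde K_{T-1}+20\|U\|_{2,\infty}\|\Gamma(Z)\|$, the observation that $N_T-\tilde A$ is diagonal (hence $=G(\tilde N_{T-1}-\tilde A)$), the geometric unrolling, and the endgame with $\rho$ small all match the paper. The gap is in the mechanism you propose for the contraction.

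To bound the diagonal of $\tilde N_{T}-A$ you write $\tilde N_T=P_TN_TP_T$ and appeal to (i) ``propagation of incoherence along the iteration,'' i.e.\ $\|P_Te_i\|\lesssim\|U\|_{2,\infty}$ for the iterate eigenprojectors, and (ii) a row-wise perturbation bound for $P_T-UU\t$ which you attribute to ``the deterministic estimate underlying Theorem~\ref{thm:2_infty_deterministic}.'' Neither is available at this point. Claim (i) is not established anywhere in the paper or in \citet{zhang_heteroskedastic_2021}, and proving incoherence of the \emph{iterate} eigenvectors would itself require essentially the lemma you are proving. Claim (ii) is circular: Theorem~\ref{thm:2_infty_deterministic} and the eigengap control in Lemma~\ref{lem:eigengaps} invoke Lemma~\ref{lem:deterministic_spectral} (via $\tilde K_T$ and the bound $\|\hat A-\tilde A\|\le 41\|U\|_{2,\infty}\|\Gamma(Z)\|$), so its ``underlying deterministic estimate'' cannot be used here without a separate, self-contained argument.

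The paper avoids both issues by never putting the iterate projector $P_{U^{T-1}}$ alone inside the hollowing operator $G$. It decomposes
$G(\tilde N_{T-1}-\tilde A) = G\bigl((P_{U^{T-1}}-P_{\tilde U})(N_{T-1}-\tilde A)\bigr)+G\bigl(P_{\tilde U}(N_{T-1}-\tilde A)\bigr)-G\bigl(P_{U_\perp^{T-1}}A\bigr)-G\bigl(P_{U_\perp^{T-1}}\Gamma(Z)\bigr)$, splits the last two terms further so that $P_{U_\perp^{T-1}}$ always appears either in a projector \emph{difference} (bounded in spectral norm by Davis--Kahan as $\lesssim\tilde K_{T-1}/\lambda_r^2$) or multiplied by a projector onto the \emph{known} incoherent subspaces $U$, $\tilde U$, and then applies the elementary identity $\|G(UU\t A)\|\le\|U\|_{2,\infty}\|A\|$ (Lemma~\ref{lemma1_spectral}, from \citet{zhang_heteroskedastic_2021}) together with $\|P_{U_\perp^{T-1}}A\|\le 2\|N_{T-1}-A\|$ (Lemma~\ref{lemma7_spectral}) and $\|\tilde U\|_{2,\infty}\le 2\|U\|_{2,\infty}$ (from Theorem~\ref{thm:2_infty_random}, which is independent of this lemma). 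This is the step your proposal needs to replace: extract the incoherence factor from $P_U$ or $P_{\tilde U}$ via the $G$-contraction lemma, not from the iterate projectors.
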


Assumption \ref{assumption:eigengap} implies that
 \begin{align*}
     \lambda_r^2 &\geq C \bigg(\sigma^2 rd \log(n\vee d) + \sigma \sqrt{n r\log(n\vee d)} \kappa \lambda_r\bigg),
 \end{align*}
  which ensures that that there is an eigengap on the event in Lemma \ref{lem:spectral_norm_concentration}.   Therefore, a standard application of the Davis-Kahan Theorem (e.g. \citet{chen_spectral_2021}) and Lemma \ref{lem:spectral_norm_concentration} immediately implies that $\hat U \hat U\t - UU\t$ tends to zero in spectral norm. 

To prove our main $\ell_{2,\infty}$ result, we analyze the statistical error and algorithmic error separately.  Define $H:= U\t \tilde U$, and $\tilde H := \tilde U\t \hat U$. 
First, 
write $\hat U - U\mathcal{O}$ as 
$$\hat{U}-U\mathcal{O}= (\hat{U}-\tilde{U}\tilde H)+(\tilde{U}-UH)\tilde H+U(H\tilde H-\mathcal{O}).$$ The first term captures the algorithmic error between the eigenvectors of the output of Algorithm~\ref{alg1}, $\hat{A}$, and those of the matrix it approximates, $\tilde{A}$. The next term is the statistical error between the matrix $\tilde{A}$ approximated by the algorithm and the true matrix of interest $A$. Finally, we have a correction term which accounts for the fact that $\tilde{H}$ and $H$ are contractions rather than orthogonal matrices.  


Since the bound on the algorithmic error depends on the properties of $\tilde{U}$, we first prove the bound on the middle term, or the statistical error between $\tilde{U}$ and $U$. Then we bound the algorithmic error and finally the correction term. The following result bounds the statistical error between $\tilde{U}$ and $UH$ with high probability.

\begin{theorem}\label{thm:2_infty_random}
Under Assumptions \ref{assumption:noise}, \ref{assumption:eigengap},  \ref{assumption:dimension} and \ref{assumption:incoherence}, we have that there exists a universal constant $C_R$ such that
\begin{align*}
    \| \tilde U - UH\|_{2,\infty} &\leq C_R \bigg(  \frac{\sqrt{r  n d} \log (\max (n, d)) \sigma^{2}}{\lambda_r^2} + \frac{\sqrt{rn \log (n\vee d)} \kappa \sigma }{\lambda_r} \bigg) \|U\|_{2,\infty} 
\end{align*}
with probability at least $1 - (n\vee d)^{-4}$.
\end{theorem}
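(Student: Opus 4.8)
The plan is to combine an exact first-order identity for the leading eigenvectors of $\tilde A := A + \Gamma(Z)$ with a leave-one-out decoupling argument and a self-bootstrapping step. Writing $A = MM\t = U\Lambda^2 U\t$ and using $\tilde A\,\tilde U = \tilde U\,\tilde\Lambda$ for the diagonal matrix $\tilde\Lambda$ of the top $r$ eigenvalues of $\tilde A$, one gets $A\tilde U = \tilde U\tilde\Lambda - \Gamma(Z)\tilde U$; left-multiplying by $U\t$ yields $\Lambda^2 H - H\tilde\Lambda = -U\t\Gamma(Z)\tilde U$, and substituting back gives the clean identity
\begin{align*}
  \tilde U - UH = \left(I - UU\t\right)\Gamma(Z)\,\tilde U\,\tilde\Lambda\inv.
\end{align*}
By Lemma~\ref{lem:spectral_norm_concentration}, Weyl's inequality and Assumption~\ref{assumption:eigengap} we have $\|\Gamma(Z)\| \ll \lambda_r^2$ and hence $\|\tilde\Lambda\inv\| \lesssim \lambda_r^{-2}$. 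Splitting the $m$-th row of $(I-UU\t)\Gamma(Z)\tilde U$ as $e_m\t\Gamma(Z)\tilde U - U_{m\cdot}U\t\Gamma(Z)\tilde U$, the second piece has norm at most $\|U\|_{2,\infty}\|\Gamma(Z)\|$, which after division by $\lambda_r^2$ already lies within the claimed bound (using $n\le\sqrt{nd}$ from Assumption~\ref{assumption:dimension}). So everything reduces to controlling $\max_m\|e_m\t\Gamma(Z)\tilde U\|$, and the difficulty is that the $m$-th row of $\Gamma(Z)$ is not independent of $\tilde U$.

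To decouple, I would introduce the leave-one-out matrix $\tilde A^{(m)} := A + \Gamma(Z)^{(m)}$, where $\Gamma(Z)^{(m)}$ agrees with $\Gamma(Z)$ except that its $m$-th row and column are zeroed, with leading eigenvectors $\tilde U^{(m)}$; since $\tilde A^{(m)}$ does not involve the $m$-th row $E_m$ of $E$, neither does $\tilde U^{(m)}$. Let $g_m$ be the $m$-th column of $\Gamma(Z)$, so that $e_m\t\Gamma(Z) = g_m\t$ by symmetry and $g_m$ has a zero $m$-th entry; then $\Delta^{(m)} := \Gamma(Z) - \Gamma(Z)^{(m)} = e_m g_m\t + g_m e_m\t$ is rank $\le 2$ with $\|\Delta^{(m)}\| \le \sqrt 2\|g_m\|$, so Davis--Kahan produces an orthogonal $\tilde R^{(m)}$ with $\|\tilde U - \tilde U^{(m)}\tilde R^{(m)}\| \lesssim \|g_m\|/\lambda_r^2$. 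Decompose $e_m\t\Gamma(Z)\tilde U = g_m\t\tilde U^{(m)}\tilde R^{(m)} + g_m\t(\tilde U - \tilde U^{(m)}\tilde R^{(m)})$. For the first term, conditioning on $\{E_j\}_{j\ne m}$ makes $\tilde U^{(m)}$ fixed, and — crucially, because the diagonal has been hollowed — $g_m$ becomes a \emph{linear} function of $E_m$: indeed $g_m\t\tilde U^{(m)} = M_m\t E\t P_{-m}\tilde U^{(m)} + E_m\t\hat M\t P_{-m}\tilde U^{(m)}$ with $P_{-m} := I - e_m e_m\t$. The first summand is a fixed vector of norm at most $\|E M_m\| \lesssim \sqrt{n}\,\kappa\sigma\lambda_r\|U\|_{2,\infty}$, and each coordinate of the second is a centered sub-Gaussian linear form in $E_m$ with variance proxy $\lesssim \sigma^2\|\hat M\|^2 \lesssim \sigma^2(\kappa^2\lambda_r^2 + \sigma^2 d)$; a sub-Gaussian tail bound and a union bound over the $r$ coordinates then place this contribution within the target bound.

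The main obstacle will be the correction term $g_m\t(\tilde U - \tilde U^{(m)}\tilde R^{(m)})$. The naive estimate $\|g_m\|\cdot\|\tilde U - \tilde U^{(m)}\tilde R^{(m)}\| \lesssim \|g_m\|^2/\lambda_r^2$ is too lossy: after the final division by $\lambda_r^2$ it lacks the factor $\|U\|_{2,\infty}$ that the theorem requires, and one checks it fails already when $d\asymp n$ and $\kappa,\mu_0,r = O(1)$. Instead I would use the Sylvester-equation refinement of the leave-one-out comparison: with $Q := I - \tilde U^{(m)}(\tilde U^{(m)})\t$ one has $\|\tilde U - \tilde U^{(m)}\tilde R^{(m)}\| \lesssim \|Q\Delta^{(m)}\tilde U\|/\lambda_r^2$, and since $\Delta^{(m)}\tilde U = e_m(g_m\t\tilde U) + g_m\,\tilde U_{m\cdot}$ this gives $\|Q\Delta^{(m)}\tilde U\| \lesssim \|g_m\t\tilde U\| + \|g_m\|\,\|\tilde U\|_{2,\infty}$; hence the correction is at most $\lambda_r^{-2}\big(\|g_m\|\,\|g_m\t\tilde U\| + \|g_m\|^2\,\|\tilde U\|_{2,\infty}\big)$. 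This bound is circular, since it involves $\|g_m\t\tilde U\| = \|e_m\t\Gamma(Z)\tilde U\|$ itself and $\|\tilde U\|_{2,\infty} \le \|U\|_{2,\infty} + \|\tilde U - UH\|_{2,\infty}$; but Assumption~\ref{assumption:eigengap} forces $\|g_m\|/\lambda_r^2 \ll 1$, so the recursion contracts. I would therefore run a simultaneous bootstrap controlling $\max_m\|e_m\t\Gamma(Z)\tilde U\|$, $\|\tilde U - UH\|_{2,\infty}$ and $\|\tilde U\|_{2,\infty}$, together with their leave-one-out analogues, showing that the bound claimed in the theorem is a fixed point of the recursion. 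Finally I would assemble the pieces: the needed concentration estimates — Lemma~\ref{lem:spectral_norm_concentration} for $\|\Gamma(Z)\|$; a Hanson--Wright/Bernstein bound giving $\|g_m\| \lesssim \sigma^2\sqrt{nd\log(n\vee d)} + \kappa\sigma\lambda_r\,\mu_0\sqrt{r\log(n\vee d)}$; and standard sub-Gaussian bounds for $\|E\|$, $\|EM_m\|$, $\|V\t E_m\|$ — each hold off an event of probability at most $(n\vee d)^{-5}$, and substituting these into the converged bootstrap bound and taking a union bound over $m\in\{1,\dots,n\}$ yields the stated estimate with probability at least $1 - (n\vee d)^{-4}$.
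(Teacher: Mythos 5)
Your argument is sound, but it takes a genuinely different route from the paper's. The paper first passes to $\tilde U_D$, the eigenvectors of $A+W$ with $W := EM\t + ME\t + \Gamma(EE\t)$ (Lemma \ref{lem:diag}), then applies the full Neumann-type series of Lemma \ref{lem:xia} and controls each term $S_{MM\t,p}(W)U$ by an inductive leave-one-out argument applied to the \emph{noise matrix} — the auxiliary object $X^{-i}$ in Lemma \ref{lem:2infty_general} has the $i$-th row and column of $W$ zeroed inside the powers. You instead derive the exact one-step resolvent identity $\tilde U - UH = (I-UU\t)\Gamma(Z)\tilde U\tilde\Lambda\inv$ and do a leave-one-out at the level of the \emph{eigenvectors} $\tilde U^{(m)}$, in the Abbe--Fan--Wang--Zhong / Chen--Chi--Fan--Ma style, with a contraction step to close the self-referential correction. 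Your key observation — that, precisely because the Gram matrix has been hollowed, $g_m$ is affine (not quadratic) in $E_m$, so that $g_m\t\tilde U^{(m)}$ is a linear form in $E_m$ after conditioning on $\{E_j\}_{j\ne m}$ — is the analogue in your framework of the paper's deletion of the $i$-th row and column before applying Hoeffding in Lemma \ref{lem:2infty_general}, and it plays exactly the same role. Your diagnosis that the naive $\|g_m\|^2/\lambda_r^4$ bound for the correction fails (it misses the $\|U\|_{2,\infty}$ factor, e.g.\ when $d\asymp n$, $r,\kappa,\mu_0=O(1)$) is correct, and the Sylvester-equation refinement fixes it; in fact no genuine iteration is needed, since $\|g_m\t\tilde U\|\le\|g_m\t\tilde U^{(m)}\| + \lambda_r^{-2}\|g_m\|\,\|g_m\t\tilde U\| + \lambda_r^{-2}\|g_m\|^2\|\tilde U\|_{2,\infty}$ can be solved row by row once Assumption \ref{assumption:eigengap} forces $\lambda_r^{-2}\|g_m\|\le 1/2$, and the resulting linear system in $(\max_m\|g_m\t\tilde U\|,\ \|\tilde U-UH\|_{2,\infty})$ has its fixed point inside the target bound. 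Two small remarks: since $\tilde R^{(m)}$ is orthogonal, $\|g_m\t\tilde U^{(m)}\tilde R^{(m)}\| = \|g_m\t\tilde U^{(m)}\|$, which spares you from tracking the (conditional) randomness of $\tilde R^{(m)}$; and the variance-proxy bound $\sigma^2\|\hat M\|^2$ for the second summand is crude but, as you implicitly use, still within the stated bound because $\sqrt n\,\|U\|_{2,\infty}\ge\sqrt r\ge 1$. As for the trade-offs: your route avoids both the infinite-series bookkeeping and the $\tilde U_D$-versus-$\tilde U$ detour, at the cost of the coupled contraction; the paper's series expansion is heavier but simultaneously isolates the leading term $\per W U\Lambda^{-2}$ that it reuses in the proof of Theorem \ref{thm:b-e}, which the resolvent identity does not surface directly.
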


In order to prove this result, we use the matrix series expansion developed in \citet{xia_normal_2019} to write the difference of projection matrices in terms of the noise matrix, each term of which requires careful  considerations due to the dependence between columns of the noise matrix $E$.

Consequently, by Assumption \ref{assumption:eigengap}, the result in Theorem \ref{thm:2_infty_random} implies that
\begin{align*}
    \| \tilde U\|_{2,\infty} &\leq \|\tilde U - UH\|_{2,\infty} + \|U\|_{2,\infty} \\
    &\lesssim \|U\|_{2,\infty},
\end{align*}
which shows that the matrix $\tilde U$ is just as incoherent as $U$ up to constant factors.  We also have the following result for the deterministic analysis.

\begin{theorem}\label{thm:2_infty_deterministic}
Define $\tilde H := \tilde U\t \hat U$. Suppose the event in Theorem \ref{thm:2_infty_random} holds.  Then, in the setting of Lemma~\ref{lem:deterministic_spectral}, we have that there exists a universal constant $C_D$ such that
$$\|\hat{U}-\tilde{U}\tilde{H}\|_{2,\infty}\leq C_D  \kappa^2\frac{\|U\|_{2,\infty}^2\|\Gamma(Z)\|}{\lambda_r^2}.$$
\end{theorem}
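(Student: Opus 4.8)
The plan is to reduce the claim to an $\ell_{2,\infty}$ bound on $R := (I-\tilde U\tilde U\t)\hat U = \hat U - \tilde U\tilde H$ (the last equality because $\tilde H = \tilde U\t\hat U$). Two inputs drive the argument. First, Lemma~\ref{lem:deterministic_spectral} gives $\|\hat A - \tilde A\| \le 41\,\|U\|_{2,\infty}\|\Gamma(Z)\|$, and, crucially, $D := \hat A - \tilde A$ is a \emph{diagonal} matrix: the HeteroPCA iteration never alters the off-diagonal part of the input Gram matrix $A+Z$, so the off-diagonal of $\hat A$ coincides with that of $A+Z$, which is by definition the off-diagonal of $\tilde A = A + \Gamma(Z)$. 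Second, on the event of Lemma~\ref{lem:spectral_norm_concentration}, Assumption~\ref{assumption:eigengap} forces $\|\Gamma(Z)\| \ll \lambda_r^2$, so $\tilde A$ (resp.\ $\hat A$) has its top-$r$ eigenspace $\tilde U$ (resp.\ $\hat U$) separated by a gap of order $\lambda_r^2$ from the rest of its spectrum, and the corresponding eigenvalue matrix $\hat\Lambda$ of $\hat A$ satisfies $\|\hat\Lambda^{-1}\| \lesssim \lambda_r^{-2}$.

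Next I would set up a Sylvester-type perturbation identity. Using $\tilde A = \tilde U\tilde\Lambda\tilde U\t + \tilde A_\perp$ with $\tilde A_\perp := (I-\tilde U\tilde U\t)\tilde A(I-\tilde U\tilde U\t)$ and $\|\tilde A_\perp\| \le \|\Gamma(Z)\|$ (by Weyl, since $A$ has rank $r$), projecting the top-$r$ eigen-relation $\hat A\hat U = \hat U\hat\Lambda$ by $I - \tilde U\tilde U\t$ yields
\begin{align*}
    R\hat\Lambda - \tilde A_\perp R = (I-\tilde U\tilde U\t)\,D\,\hat U .
\end{align*}
Since $\|\tilde A_\perp\|\,\|\hat\Lambda^{-1}\| \lesssim \|\Gamma(Z)\|/\lambda_r^2 < 1$, this inverts to the convergent Neumann series $R = \sum_{k\ge 0}\tilde A_\perp^{\,k}\,(I-\tilde U\tilde U\t)\,D\,\hat U\,\hat\Lambda^{-(k+1)}$, which I would estimate term by term in $\ell_{2,\infty}$. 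For the $k=0$ term, the diagonal structure of $D$ gives $\|D\hat U\|_{2,\infty} \le \|D\|\,\|\hat U\|_{2,\infty}$, and together with $\|\tilde U\t D\hat U\|\le\|D\|$ and $\|\tilde U\|_{2,\infty}\lesssim\|U\|_{2,\infty}$ (from Theorem~\ref{thm:2_infty_random} and Assumption~\ref{assumption:eigengap}) this term is $\lesssim \|U\|_{2,\infty}\|\Gamma(Z)\|\bigl(\|\hat U\|_{2,\infty}+\|U\|_{2,\infty}\bigr)/\lambda_r^2$. For $k\ge 1$, I would peel off one factor using $\tilde A_\perp = \tilde A(I-\tilde U\tilde U\t)$, bound $\|\tilde A B\|_{2,\infty}\le\|\tilde A\|_{2,\infty}\|B\|$ with $\|\tilde A\|_{2,\infty}\lesssim\|A\|_{2,\infty}+\|\Gamma(Z)\|_{2,\infty}\lesssim\lambda_1^2\|U\|_{2,\infty}$, and use $\|\tilde A_\perp\|^{k-1}\le\|\Gamma(Z)\|^{k-1}$ and $\|\hat\Lambda^{-(k+1)}\|\lesssim\lambda_r^{-2(k+1)}$; each such term is then $\lesssim \kappa^2\|U\|_{2,\infty}^2(\|\Gamma(Z)\|/\lambda_r^2)^k$, which is precisely where the factor $\kappa^2=\lambda_1^2/\lambda_r^2$ is created. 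Summing the geometric series yields $\|R\|_{2,\infty}\lesssim \|U\|_{2,\infty}\|\Gamma(Z)\|\,\|\hat U\|_{2,\infty}/\lambda_r^2 + \kappa^2\|U\|_{2,\infty}^2\|\Gamma(Z)\|/\lambda_r^2$.

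Finally I would close the loop on $\|\hat U\|_{2,\infty}$: since $\|\hat U\|_{2,\infty}\le\|R\|_{2,\infty}+\|\tilde U\tilde H\|_{2,\infty}\le\|R\|_{2,\infty}+\|\tilde U\|_{2,\infty}$ and the coefficient $\|U\|_{2,\infty}\|\Gamma(Z)\|/\lambda_r^2$ multiplying $\|\hat U\|_{2,\infty}$ is $\ll 1$ (Lemma~\ref{lem:deterministic_spectral}), I can absorb $\|\hat U\|_{2,\infty}$ to the left and obtain $\|\hat U\|_{2,\infty}\lesssim\|U\|_{2,\infty}$ (using also that $\kappa^2\|U\|_{2,\infty}\|\Gamma(Z)\|/\lambda_r^2\ll1$ by Assumptions~\ref{assumption:eigengap} and \ref{assumption:incoherence}); substituting this back gives the claimed bound $\|R\|_{2,\infty}\lesssim\kappa^2\|U\|_{2,\infty}^2\|\Gamma(Z)\|/\lambda_r^2$. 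Making the bootstrap rigorous requires a preliminary crude bound on $\|\hat U\|_{2,\infty}$, available from $\|\hat U\hat U\t-\tilde U\tilde U\t\|\ll1$ (Davis--Kahan applied with the gap above). The step I expect to be the main obstacle is the term-by-term $\ell_{2,\infty}$ bookkeeping in the Neumann series: $\ell_{2,\infty}$ is not submultiplicative under left multiplication, so each $\tilde A_\perp^{\,k}$ must be handled by carefully distributing the projectors $I-\tilde U\tilde U\t$ so as to extract exactly one factor of $\tilde A$ (hence exactly one $\kappa^2$) rather than $k$ of them, and one must control the auxiliary quantities $\|\tilde A\|_{2,\infty}$ and $\|\Gamma(Z)\|_{2,\infty}$ — or, alternatively, run the entire analysis along the HeteroPCA iteration using the per-iteration estimate behind Lemma~\ref{lem:deterministic_spectral} — while checking that all lower-order contributions are dominated by $\kappa^2\|U\|_{2,\infty}^2\|\Gamma(Z)\|/\lambda_r^2$ under Assumptions~\ref{assumption:eigengap} and \ref{assumption:incoherence}.
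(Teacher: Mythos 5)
Your Sylvester-equation and Neumann-series route is genuinely different from the paper's proof, and parts of it are sound: the observation that $D := \hat A - \tilde A$ is diagonal (because the HeteroPCA iteration never changes the off-diagonal of the Gram matrix, which already equals $\Gamma(A+Z) = \Gamma(\tilde A)$) is correct and is exactly what makes the boundary term $k=0$ manageable, and the identity $R\hat\Lambda - \tilde A_\perp R = P_\perp D\hat U$ with $\|\tilde A_\perp\|\leq\|\Gamma(Z)\|$ is valid, so the Neumann series converges. The paper instead decomposes $\hat U - \tilde U\tilde H$ directly into four pieces $J_1,\dots,J_4$ whose leading factors are always either $P_{\tilde U}\tilde A = \tilde U\tilde\Lambda\tilde U\t$, $\tilde U$, or $\hat U - \tilde U\tilde H$ itself, so only $\|\tilde U\tilde\Lambda\tilde U\t\|_{2,\infty}\leq\tilde\lambda_1^2\|\tilde U\|_{2,\infty}$ and the incoherence $\|\tilde U\|_{2,\infty}\lesssim\|U\|_{2,\infty}$ are ever needed; no raw $\|\tilde A\|_{2,\infty}$ appears.

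The gap is exactly where you flagged it: the estimate $\|\tilde A\|_{2,\infty}\lesssim\|A\|_{2,\infty}+\|\Gamma(Z)\|_{2,\infty}\lesssim\lambda_1^2\|U\|_{2,\infty}$ that you use to peel off the first $\tilde A_\perp$ factor for $k\geq 1$ is false under the stated assumptions. On the event of Lemma~\ref{lem:spectral_norm_concentration}, $\|\Gamma(EE\t)\|_{2,\infty}$ is typically of order $\sigma^2\sqrt{nd}$ (the $i$'th row has $n-1$ entries each $\asymp\sigma^2\sqrt{d}$), while $\lambda_1^2\|U\|_{2,\infty}$ can be as small as $\lambda_1^2\sqrt{r/n}\asymp\kappa^2\snr^2\sigma^2 rd\sqrt{r/n}$; with $\kappa,r,\mu_0=O(1)$, $d\asymp n$, and $\snr$ near the threshold $\sqrt{\log(n\vee d)}$, their ratio $\|\Gamma(Z)\|_{2,\infty}/(\lambda_1^2\|U\|_{2,\infty})$ grows like $\sqrt{n}/\log(n\vee d)$, so the $k\geq 1$ contribution to $\|R\|_{2,\infty}$ overshoots the target bound $\kappa^2\|U\|_{2,\infty}^2\|\Gamma(Z)\|/\lambda_r^2$ by that same factor. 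To repair the argument you would need to avoid $\|\tilde A\|_{2,\infty}$ altogether: re-sum $\sum_{k\geq 1}\tilde A_\perp^k P_\perp D\hat U\hat\Lambda^{-(k+1)} = \tilde A_\perp R\hat\Lambda^{-1} = P_\perp\tilde A R\hat\Lambda^{-1}$, and then, rather than peeling $\|\tilde A\|_{2,\infty}$, use $\tilde A = N_T - D$ together with $N_T\hat U=\hat U\hat\Lambda$, $\tilde A\tilde U=\tilde U\tilde\Lambda$, and the diagonality of $D$, so that every occurrence of $\tilde A$ acting on the left gets absorbed into an eigen-relation before one takes an $\ell_{2,\infty}$ norm; this recovers the structure of the paper's $J_1$--$J_4$ decomposition and its self-referencing bound in $\|\hat U - \tilde U\tilde H\|_{2,\infty}$, which is then closed using $\kappa^2\tau\ll 1$.
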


Finally, we consider the correction term.

\begin{restatable}{lemma}{orthogonalmatrixlemma}\label{orthogonal_matrix_lemma}
There exists an orthogonal matrix $\mathcal{O}_*$ and a universal constant $C$ such that under Assumptions \ref{assumption:eigengap} and \ref{assumption:incoherence}, the event in Lemma \ref{lem:spectral_norm_concentration}, 
and $T = \Theta\left( \frac{\lambda_r^2}{\|U\|_{2,\infty} \|\gamma(Z)\|} \right)$,
\begin{align*}
   \| U H \tilde  H  - U\mathcal{O}_*\t \|_{2,\infty} &\leq C \|U\|_{2,\infty} \frac{\|\Gamma(Z)\|^2}{\lambda_r^4}.
 \end{align*}
\end{restatable}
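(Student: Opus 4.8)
The plan is to take $\mathcal{O}_*$ to be the orthogonal factor in the polar decomposition of $H\tilde H = U\t\tilde U\tilde U\t\hat U$, and to show that $H\tilde H$ is close to this orthogonal factor in spectral norm, with the discrepancy controlled by the two $\sin\Theta$ distances $\|\sin\Theta(U,\tilde U)\|$ and $\|\sin\Theta(\tilde U,\hat U)\|$. More precisely, I would first record the trivial reduction: for any $r\times r$ matrix $B$ one has $\|UB\|_{2,\infty} = \max_i\|U_{i\cdot}B\| \leq \|U\|_{2,\infty}\|B\|$, so it suffices to prove $\|H\tilde H - \mathcal{O}_*\t\| \lesssim \|\Gamma(Z)\|^2/\lambda_r^4$. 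Writing $H\tilde H = \mathcal{O}_*\t P$ with $P = \big((H\tilde H)\t H\tilde H\big)^{1/2}$ positive semidefinite, we get $\|H\tilde H - \mathcal{O}_*\t\| = \|P - I\|$, and since $|\sqrt{\mu}-1| = |\mu-1|/(\sqrt{\mu}+1) \leq |\mu - 1|$ for every $\mu \geq 0$, this is at most $\|P^2 - I\| = \|\tilde H\t H\t H\tilde H - I\|$. (I will also note that once the eigenvalues of $H\t H$ and $\tilde H\t\tilde H$ are shown to lie near $1$, the matrix $H\tilde H$ is nonsingular, so its polar factor is genuinely orthogonal and $\mathcal{O}_*$ is well defined; this is the matrix referenced in Theorem~\ref{thm:b-e} and its corollaries.)

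Next I would set $\Delta_1 := I - H\t H = \tilde U\t(I - UU\t)\tilde U$ and $\Delta_2 := I - \tilde H\t\tilde H = \hat U\t(I - \tilde U\tilde U\t)\hat U$, both positive semidefinite, with $\|\Delta_1\| = \|\sin\Theta(U,\tilde U)\|^2$ and $\|\Delta_2\| = \|\sin\Theta(\tilde U,\hat U)\|^2$ by the standard identification of the principal angles. Then
\begin{align*}
\tilde H\t H\t H\tilde H - I = \tilde H\t(I - \Delta_1)\tilde H - I = -\Delta_2 - \tilde H\t\Delta_1\tilde H,
\end{align*}
so, using $\|\tilde H\| \leq 1$,
\begin{align*}
\|\tilde H\t H\t H\tilde H - I\| \leq \|\Delta_2\| + \|\tilde H\|^2\|\Delta_1\| \leq \|\Delta_1\| + \|\Delta_2\|.
\end{align*}
It remains to bound the two $\sin\Theta$ distances, which is where Davis--Kahan enters twice. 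Since $A = MM\t$ has $r$-th eigenvalue $\lambda_r^2$ and $(r+1)$-th eigenvalue $0$, while $\tilde A = A + \Gamma(Z)$, Assumption~\ref{assumption:eigengap} together with Lemma~\ref{lem:spectral_norm_concentration} forces $\|\Gamma(Z)\| \leq \lambda_r^2/4$, hence $\|\sin\Theta(U,\tilde U)\| \leq 2\|\Gamma(Z)\|/\lambda_r^2$. Likewise, the $r$-th eigenvalue of $\tilde A$ is at least $\lambda_r^2 - \|\Gamma(Z)\|$ and its $(r+1)$-th eigenvalue is at most $\|\Gamma(Z)\|$, so the relevant gap is $\gtrsim \lambda_r^2$; combining this with $\|\hat A - \tilde A\| \leq 41\|U\|_{2,\infty}\|\Gamma(Z)\|$ from Lemma~\ref{lem:deterministic_spectral} yields $\|\sin\Theta(\tilde U,\hat U)\| \lesssim \|U\|_{2,\infty}\|\Gamma(Z)\|/\lambda_r^2$. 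Putting the pieces together gives $\|\tilde H\t H\t H\tilde H - I\| \lesssim (1 + \|U\|_{2,\infty}^2)\|\Gamma(Z)\|^2/\lambda_r^4 \lesssim \|\Gamma(Z)\|^2/\lambda_r^4$, and multiplying by $\|U\|_{2,\infty}$ finishes the proof.

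The calculations here are all routine; the only place that needs genuine care is the bookkeeping of eigengaps in the two Davis--Kahan applications — in particular checking that $\|\Gamma(Z)\|$ is small enough relative to $\lambda_r^2$ (via Assumptions~\ref{assumption:eigengap} and the event of Lemma~\ref{lem:spectral_norm_concentration}) so that the eigengap of $\tilde A$ does not degrade, and confirming that the $\mathcal{O}_*$ constructed here as the polar factor of $H\tilde H$ is exactly the orthogonal matrix carried through the decomposition $\hat U - U\mathcal{O}_*\t = (\hat U - \tilde U\tilde H) + (\tilde U - UH)\tilde H + U(H\tilde H - \mathcal{O}_*\t)$ used in the proofs of Theorems~\ref{thm:2_infty} and~\ref{thm:b-e}.
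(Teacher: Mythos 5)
Your proof is correct, and it lands on the same intermediate bound $\|\sin\Theta(U,\tilde U)\|^2 + \|\sin\Theta(\tilde U,\hat U)\|^2$ that the paper obtains, followed by the same two Davis--Kahan applications; the only genuine difference is in how you reach that bound. The paper constructs $\mathcal{O}_* = (\mathcal{O}^{(1)}\mathcal{O}^{(2)})^{\top}$ as a product of two Procrustes alignments ($\mathcal{O}^{(1)}=\argmin_{\mathcal{O}}\|U-\tilde U\mathcal{O}\|_F$, $\mathcal{O}^{(2)}=\argmin_{\mathcal{O}}\|\tilde U-\hat U\mathcal{O}\|_F$) and then runs a triangle inequality through the intermediate quantity $\mathcal{O}^{(1)}\tilde U^{\top}\hat U$, invoking the standard bound $\|H^{\top}-\mathcal{O}^{(1)}\|\leq\|\sin\Theta(U,\tilde U)\|^2$ for each factor. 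You instead take $\mathcal{O}_*^{\top}$ to be the single polar factor of the product $H\tilde H$ and bound $\|P-I\|\leq\|P^2-I\|$ directly, computing $P^2-I=-\Delta_2-\tilde H^{\top}\Delta_1\tilde H$; this is a clean piece of algebra that avoids the intermediate term and handles both error sources in one stroke. The two constructions of $\mathcal{O}_*$ do not in general coincide (the polar factor of a product is not the product of the polar factors), but this is immaterial: the lemma asserts existence, and this lemma is precisely where $\mathcal{O}_*$ is pinned down for use in Theorems~\ref{thm:2_infty} and~\ref{thm:b-e} — the downstream proofs simply use whichever $\mathcal{O}_*$ the lemma produces, and the quantity $\|\mathcal{O}_*-(H\tilde H)^{\top}\|$ that reappears in the $R_4$ bound of Lemma~\ref{lem:residuals2} is controlled identically under either choice. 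So your closing worry is warranted as a sanity check but not an actual gap. One small remark in your favor: your final bound for $\|\sin\Theta(\tilde U,\hat U)\|^2$ is $\|U\|_{2,\infty}^2\|\Gamma(Z)\|^2/\lambda_r^4$, which is in fact what Davis--Kahan with Lemma~\ref{lem:deterministic_spectral} gives; the paper writes $\|U\|_{2,\infty}^4$ at this step, an apparent typo that is harmless since $\|U\|_{2,\infty}\leq 1$.
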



We now have all the pieces to prove Theorem \ref{thm:2_infty}.  
\begin{proof}[Proof of Theorem \ref{thm:2_infty}]
We note that all the events that determine Lemmas~\ref{lem:deterministic_spectral},  \ref{orthogonal_matrix_lemma}, and Theorem~\ref{thm:2_infty_deterministic} are the event in Lemma \ref{lem:spectral_norm_concentration}, and the event in Theorem \ref{thm:2_infty_random}. Taking a union bound, these events occur simultaneously with probability at least $1 - (n\vee d)^{-4} - 4(n\vee d)^{-6} \geq 1 - 2(n\vee d)^{-4};$ henceforth we operate on the intersection of these events. Since Theorem~\ref{thm:2_infty_random} gives the stated upper bound for $\|\tilde{U}-UH\|_{2,\infty}$ on this set, by increasing the constants if necessary, we need only show that this bound holds for the algorithmic error $\|\hat{U}-\tilde{U}\tilde{H}\|_{2,\infty}$, and the correction term $\|UH\tilde{H}-U\mathcal{O}_{*}\|_{2,\infty}$.

Under assumption \ref{assumption:incoherence}, we have that 
\begin{align*}
    \kappa^2  \|U\|_{2,\infty} &\leq \kappa^2  \mu_0 \sqrt{r/n} \\
    &\leq \sqrt{r\log(n \vee d)}.
\end{align*}
Hence, the bound in Theorem \ref{thm:2_infty_deterministic} becomes
\begin{align}
    \|\hat U - \tilde U\hat{H}\|_{2,\infty} \leq C_D \|U\|_{2,\infty} \sqrt{r\log(n \vee d)} \frac{\|\Gamma(Z)\|}{\lambda_r^2}. \label{det_bd}
\end{align}
In addition, on the event of Lemma \ref{lem:spectral_norm_concentration},
\begin{align*}
    \|\Gamma(Z)\| \leq C_{\text{spectral}}\bigg(  \sigma^2 ( n + \sqrt{nd}) + \sigma \sqrt{n} \kappa \lambda_r \bigg).
\end{align*}
This gives the desired bound for the algorithmic error.
For the correction term, we note that the upper bound in Lemma \ref{orthogonal_matrix_lemma} does not exceed that of Equation~(\ref{det_bd}), which has already been bounded.

Combining these bounds, there is a constant $C>0$ such that with  probability at least $1 - 2(n\vee d)^{-4}$,
\begin{align*}
    \inf_{\mathcal{O} \in \mathbb{O}(r)} \| \hat U - U\mathcal{O}\|_{2,\infty} \leq C  \bigg(  \frac{\sqrt{r  n d} \log (n\vee d) \sigma^{2}}{\lambda_r^2} + \frac{\sqrt{rn \log (n\vee d)} \kappa \sigma }{\lambda_r} \bigg) \|U\|_{2,\infty},
\end{align*}
as advertised.
\end{proof}

In order to prove Theorem \ref{thm:b-e}, 
we show that
\begin{align}
    e_i\t \left( \hat U \mathcal{O}_* - U \right) e_j &= \langle E_i, V_{\cdot j}\rangle \lambda_j\inv + R, \label{mainresidual}
\end{align}
where $V_{\cdot j}$ is the $j$'th column of $V$ and $R$ is a residual term that we bound using similar ideas to the proof of Theorem \ref{thm:2_infty}. A straightforward calculation reveals that $\E \bigg(\langle E_i, V_{\cdot j} \rangle\lambda_j\inv \bigg)^2 = V_{\cdot j}\t \Sigma_i V_{\cdot j} \lambda_j^{-2}$, and hence if we define $\sigma_{ij} := \| \Sigma_i^{1/2} V_{\cdot j}\|\lambda_j\inv$, we have that
\begin{align*}
    \frac{\langle E_i, V_{\cdot j} \rangle \lambda_j\inv}{ \sigma_{ij}} &= \frac{ \langle E_i, V_{\cdot j} \rangle}{\| \Sigma_i^{1/2} V_{\cdot j}\|} \\
    &= \frac{ \langle Y_i, \Sigma_i^{1/2} V_{\cdot j} \rangle}{\| \Sigma_i^{1/2} V_{\cdot j}\|},
\end{align*}
which by Assumption \ref{assumption:noise} is a sum of $d$ independent mean-zero random variables to which the classical Berry-Esseen Theorem \citep{berry_accuracy_1941} can be applied.  
The residual term $R$ consists of higher-order terms stemming from a matrix series expansion (see Lemma \ref{lem:xia} in  Appendix \ref{sec:randompart}).  However, we have already bounded many of these residual terms as part of the proof of Theorem \ref{thm:2_infty_random}, so we need only show that dividing the residual terms by $\sigma_{ij}$ yields convergence to zero.
We then use the Lipschitz property of the Gaussian cumulative distribution function to complete the proof of the theorem.

%


\section*{Acknowledgements}
Joshua Agterberg's work is supported by a fellowship from the Johns Hopkins Mathematical Institute of Data Science (MINDS).  Zachary Lubberts and Carey Priebe are supported through DARPA D3M under grant FA8750-17-2-011, the Naval Engineering Education Consortium (NEEC), Office of Naval Research (ONR) Award Number N00174-19-1-0011, and funding through Microsoft Research.  

\appendix

\section{Proof of Theorem \ref{thm:2_infty_random}} \label{sec:randompart}First, note that it holds that
\begin{align*}
    \| \tilde U \hat H - U \tilde H \hat H\|_{2,\infty} &\leq  \| \tilde U  - U \tilde H \|_{2,\infty} \\
    &\leq \| \tilde U\tilde U\t - U U\t \tilde U \tilde U\t \|_{2,\infty} \\
    &\leq \| \tilde U\tilde U\t - UU\t \|_{2.\infty} \|\tilde U\tilde U\t\| \\
    &\leq \| \tilde U \tilde U\t - UU\t \|_{2,\infty},
\end{align*}
which is a difference of projection matrices.  We now wish to expand $\tilde U\tilde U\t$ in terms of the noise matrix $\Gamma(EM\t + ME\t + EE\t)$. 

In what follows, define, for some sufficiently large constant $C_0$
\begin{align*}
    \delta_1 :&= \sqrt{r  n d} \log (\max (n, d)) \sigma^{2}; \\
    \delta_2:&=  \sqrt{rn \log (n\vee d)} \lambda_{1} \sigma; \\
    \delta :&= C_0 (\delta_1 + \delta_2).
\end{align*}
The two terms $\delta_1$ and $\delta_2$ appear frequently in our bounds, so this notation simplifies the statements of several of our results.  With this new notation, to prove Theorem \ref{thm:2_infty_random} it is sufficient to show that 
\begin{align*}
    \| \tilde U \tilde U\t - UU\t \|_{2,\infty} &\leq C_R \frac{\delta}{\lambda_r^2} \|U\|_{2,\infty}.
\end{align*}

It is slightly more mathematically convenient to study the perturbation $EM\t + ME\t + \Gamma(EE\t)$, so we introduce the matrix $\tilde U_D \tilde U_D\t$ which is the projection onto the leading eigenspace of the matrix $MM\t + ME\t  + EM\t + \Gamma(EE\t)$.  We have the following spectral norm guarantee that shows that $\tilde U_D \tilde U_D\t$ and $\tilde U \tilde U\t$ are exceedingly close.

\begin{restatable}{lemma}{diagonals}\label{lem:diag}
 With probability at least $1-2(n\vee d)^{-5}$, 
$$\|\tilde{U}_D\tilde{U}_D\t-\tilde{U}\tilde{U}\t\|\leq \frac{C\delta_1}{\lambda_r^2}\|U\|_{2,\infty}$$
\end{restatable}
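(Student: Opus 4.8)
The plan is to compare the two perturbation matrices $\tilde A = MM\t + ME\t + EM\t + \Gamma(EE\t)$ and $\tilde A_D := MM\t + ME\t + EM\t + \Gamma(EE\t)$... wait — more precisely, $\tilde U$ is the leading eigenspace of $\Gamma(A + Z) + G(\tilde A)$ (the HeteroPCA target $\tilde A = A + \Gamma(Z)$), while $\tilde U_D$ is the leading eigenspace of $A + ME\t + EM\t + \Gamma(EE\t)$. These two matrices differ only in how the \emph{diagonal} is treated: the target $\tilde A$ has the true diagonal of $A$ restored along with $\Gamma(Z)$ off-diagonal, whereas $\tilde A_D$ additionally keeps the cross terms $ME\t + EM\t$ on the diagonal. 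So the difference $\tilde A - \tilde A_D$ is a \emph{diagonal} matrix whose $(i,i)$ entry is $-2\langle M_i, E_i\rangle - \|E_i\|^2 + \tr(\Sigma_i)$ ... in any case, a diagonal matrix $D$ with entries that are centered quadratic/bilinear forms in the subgaussian rows $Y_i$. The first step is therefore to bound $\|D\|$; since $D$ is diagonal, $\|D\| = \max_i |D_{ii}|$, and each $D_{ii}$ is a sum of a linear form $\langle M_i, E_i\rangle = \langle \Lambda U_i, V\t \Sigma_i^{1/2} Y_i\rangle$ and a centered quadratic form $\|\Sigma_i^{1/2}Y_i\|^2 - \tr(\Sigma_i)$. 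By the Hanson–Wright inequality (for the quadratic part) and a subgaussian tail bound (for the linear part), together with a union bound over $i \le n$, one gets with probability at least $1 - 2(n\vee d)^{-5}$ a bound of the form $\|D\| \lesssim \sigma \|M\|_{2,\infty}\sqrt{\log(n\vee d)} + \sigma^2\sqrt{d\log(n\vee d)} \lesssim \lambda_1 \|U\|_{2,\infty}\sigma\sqrt{\log(n\vee d)} + \sigma^2\sqrt{d\log(n\vee d)}$, using $\|M\|_{2,\infty} \le \lambda_1\|U\|_{2,\infty}$ and $\|M\|_{2,\infty}\le \kappa\lambda_1\|U\|_{2,\infty}$ type estimates; one should check this is $\lesssim \delta_1/\sqrt{r\,n\,d}\cdot \text{(stuff)}$ — more carefully, that it is at most $(C\delta_1/\lambda_r^2)\cdot\lambda_r^2\|U\|_{2,\infty}/(\text{eigengap})$, i.e. small relative to $\lambda_r^2$.

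The second step is a Davis–Kahan / Weyl argument. Both $\tilde A$ and $\tilde A_D$ are within $O(\|\Gamma(Z)\| + \|D\|)$ of $A$ in spectral norm (using Lemma \ref{lem:spectral_norm_concentration} for $\|\Gamma(Z)\|$ and Step 1 for $\|D\|$), so by Assumption \ref{assumption:eigengap} both have a spectral gap of order $\lambda_r^2$ separating their top $r$ eigenvalues from the rest. Then the $\sin\Theta$ theorem gives
\[
\|\tilde U_D\tilde U_D\t - \tilde U\tilde U\t\| \;\lesssim\; \frac{\|\tilde A_D - \tilde A\|}{\lambda_r^2} \;=\; \frac{\|D\|}{\lambda_r^2}.
\]
Plugging in the Step 1 bound on $\|D\|$ and absorbing the $\sqrt{\log(n\vee d)}$ factors into $\delta_1 = \sqrt{rnd}\log(n\vee d)\sigma^2$ (note $\delta_1$ already contains $\sqrt{d}\log(n\vee d)\sigma^2$ times $\sqrt{rn}$, which dominates both $\sigma^2\sqrt{d\log(n\vee d)}$ and $\lambda_1\|U\|_{2,\infty}\sigma\sqrt{\log(n\vee d)}$ after using the incoherence $\|U\|_{2,\infty}\le\mu_0\sqrt{r/n}$ and the SNR condition) yields $\|\tilde U_D\tilde U_D\t - \tilde U\tilde U\t\| \le (C\delta_1/\lambda_r^2)\|U\|_{2,\infty}$, as claimed.

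The main obstacle I anticipate is Step 1: getting the high-probability bound on $\|D\|$ with the \emph{right} dependence on the parameters so that it collapses cleanly into $\delta_1$ (rather than generating a new term). Concretely, one must be careful that the quadratic-form contribution $\|\Sigma_i^{1/2}Y_i\|^2 - \tr(\Sigma_i)$, whose Hanson–Wright fluctuations are of order $\sigma^2\sqrt{d\log(n\vee d)} + \sigma^2\log(n\vee d)$, divided by $\lambda_r^2$, is bounded by $(C\delta_1/\lambda_r^2)\|U\|_{2,\infty}$ — this requires $\sigma^2\sqrt{d\log(n\vee d)} \lesssim \sqrt{rnd}\log(n\vee d)\sigma^2 \|U\|_{2,\infty}$, i.e. $1 \lesssim \sqrt{rn\log(n\vee d)}\|U\|_{2,\infty}$, which is \emph{not} automatic and must be extracted from Assumption \ref{assumption:incoherence} together with $\log(d)\le n$ in Assumption \ref{assumption:dimension} (giving $\|U\|_{2,\infty}\asymp \mu_0\sqrt{r/n}$, so the product is $\asymp\mu_0 r\sqrt{\log(n\vee d)}$, which indeed exceeds a constant). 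The linear term is easier. Once this bookkeeping is done, the Davis–Kahan step is routine and the stated probability $1 - 2(n\vee d)^{-5}$ comes from the union bound over the $n$ indices in the Hanson–Wright and subgaussian tail estimates.
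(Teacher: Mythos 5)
Your overall strategy---identify that $\tilde A_D$ and $\tilde A$ differ by a diagonal matrix $D$, bound $\|D\|$ by concentration, then apply Davis--Kahan with a Weyl-type eigengap from Assumption~\ref{assumption:eigengap}---is exactly the paper's proof. However, your expression for the diagonal entries of $D$ is wrong, and this pushes you into a spurious Hanson--Wright detour.

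Both $\tilde A = A + \Gamma(Z) = MM\t + \Gamma(ME\t + EM\t + EE\t)$ and $\tilde A_D = MM\t + ME\t + EM\t + \Gamma(EE\t)$ contain the \emph{hollowed} matrix $\Gamma(EE\t)$, which has zero diagonal in both, so the $EE\t$ contribution cancels exactly. Hence
\[
\tilde A_D - \tilde A = (ME\t + EM\t) - \Gamma(ME\t + EM\t) = G(ME\t + EM\t) = \diag(ME\t + EM\t),
\]
with $(i,i)$-entry $2\langle M_i, E_i\rangle$. The terms $-\|E_i\|^2 + \operatorname{tr}(\Sigma_i)$ you wrote simply do not appear: neither perturbed matrix involves a centered quadratic form on the diagonal, because the bias correction here is accomplished by hollowing, not by centering. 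Consequently $\|D\| = 2\max_i |\langle Y_i, \Sigma_i^{1/2}M_i\rangle|$ is controlled by a pure subgaussian Hoeffding bound (each entry has $\psi_2$ norm at most $2\sigma\lambda_1\|U\|_{2,\infty}$, since $\|\Sigma_i^{1/2}M_i\| \le \sigma\|M\|_{2,\infty} \le \sigma\lambda_1\|U\|_{2,\infty}$), yielding $\|D\| \le 2\lambda_1\sigma\|U\|_{2,\infty}\sqrt{6\log(n\vee d)}$ after a union bound over $i \le n$. The $\sigma^2\sqrt{d\log(n\vee d)}$ fluctuation you went to lengths to absorb never arises, so the entire Hanson--Wright argument and the associated bookkeeping in your ``main obstacle'' paragraph is effort spent on a term that isn't there.

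With the corrected $D$, your Step~2 (Weyl to preserve the eigengap, then $\sin\Theta$) is precisely the paper's conclusion. One further caution: the resulting bound is literally $C\lambda_1\sigma\|U\|_{2,\infty}\sqrt{\log(n\vee d)}/\lambda_r^2$, which is of order $\delta_2/(\sqrt{rn}\,\lambda_r^2)\cdot\|U\|_{2,\infty}$ rather than obviously $\lesssim \delta_1/\lambda_r^2\cdot\|U\|_{2,\infty}$ as the lemma is phrased; the paper's subsequent uses rely only on the intermediate $\lambda_1\sigma\sqrt{\log(n\vee d)}$ form, so you need not force the comparison against $\delta_1$.
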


By Lemma \ref{lem:spectral_norm_concentration} and Lemma \ref{lem:diag}, we have that with probability at least $1 - c(n\vee d)^{-5}$ that
\begin{align*}
   \|EM\t + ME\t + \Gamma(EE\t)  \| &\leq \frac{\delta}{\sqrt{r\log(n\vee d)}}
\end{align*}
provided $C_0$ is sufficiently large.

In order to analyze the approximation of $\tilde U_D \tilde U_D\t$ to $UU\t$, we will use the projection matrix expansion in \citet{xia_confidence_2019}, restated slightly for our purposes here.

\begin{lemma}[Theorem 1 from \citet{xia_normal_2019}] \label{lem:xia}
Let $U$ be the eigenvectors of $A$ corresponding to its nonzero eigenvalues and let $\tilde U_D $ be the eigenvectors of $A + W$, where $\|W\| \leq \frac{\lambda_r^2}{2}$.  Then $\tilde U_D $ admits the series expansion
\begin{align*}
    \tilde U_D \tilde U_D\t &= UU\t +\sum_{p\geq 1} S_{A,k}(W),
\end{align*}
where $S_{A,p}$ is defined according to \citet{xia_normal_2019} via
\begin{align*}
    S_{A,p}(W) &= \sum_{\mathbf{s}: s_1 + \cdots + s_{p+1} = p} (-1)^{1 + \tau(\mathbf{s})} \mathcal{P}^{-s_1} W\mathcal{P}^{-s_2} W \cdots W \mathcal{P}^{-s_{p+1}},
\end{align*}
where $\mathcal{P}^{-p} = U \Lambda^{-2p} U\t$, and $\mathcal{P}^0 = U_{\perp}U_{\perp}\t$.  
\end{lemma}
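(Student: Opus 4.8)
The plan is to prove the expansion in Lemma~\ref{lem:xia} via the Riesz (contour-integral) representation of the spectral projector together with a Neumann series for the resolvent. First I would fix the geometry of the spectrum: since $\|W\| \le \lambda_r^2/2$, Weyl's inequality puts the top $r$ eigenvalues of $A+W$ in $[\lambda_r^2/2,\ \lambda_1^2+\lambda_r^2/2]$ and the remaining eigenvalues in $[-\lambda_r^2/2,\ \lambda_r^2/2]$, while the spectrum of $A=MM\t$ itself is $\{0\}\cup\{\lambda_1^2,\dots,\lambda_r^2\}$ with the nonzero part in $[\lambda_r^2,\lambda_1^2]$. I would then choose a positively oriented contour $\mathcal{C}$ in $\mathbb{C}$ that encircles $[\lambda_r^2/2,\ \lambda_1^2+\lambda_r^2/2]$ while keeping distance at least $\lambda_r^2/2$ from $\{0\}\cup\{\lambda_1^2,\dots,\lambda_r^2\}$ (e.g. the boundary of a suitable neighborhood of $[\lambda_r^2,\lambda_1^2]$, shrinking constants slightly if needed), so that $\mathcal{C}$ encloses exactly the top-$r$ eigenspace of each of $A$ and $A+W$. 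This yields
\begin{align*}
  \tilde U_D \tilde U_D\t - UU\t = \frac{1}{2\pi i}\oint_{\mathcal{C}} \Big[ (zI - A - W)\inv - (zI - A)\inv \Big]\, dz.
\end{align*}

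Next I would expand the perturbed resolvent. For $z\in\mathcal{C}$ we have $\mathrm{dist}(z,\mathrm{spec}(A))\ge \lambda_r^2/2 \ge \|W\|$, hence $\|W(zI-A)\inv\|\le 1$ (strictly, after the slight constant adjustment), so the Neumann series
\begin{align*}
  (zI - A - W)\inv = (zI - A)\inv \sum_{k \ge 0} \big( W(zI - A)\inv \big)^k
\end{align*}
converges uniformly on the compact contour $\mathcal{C}$. Subtracting the $k=0$ term, which integrates to $UU\t$, and interchanging the uniformly convergent sum with the integral gives
\begin{align*}
  \tilde U_D \tilde U_D\t - UU\t = \sum_{k \ge 1} \frac{1}{2\pi i}\oint_{\mathcal{C}} (zI - A)\inv \big( W(zI - A)\inv \big)^k\, dz,
\end{align*}
so it remains only to identify the $k$-th integral with $S_{A,k}(W)$.

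To evaluate the $k$-th term I would substitute the spectral decomposition of the unperturbed resolvent, $(zI - A)\inv = z\inv \per + \sum_{l} (z - \lambda_l^2)\inv U_{\cdot l}U_{\cdot l}\t$, into each of the $k+1$ factors and expand the product. The only $z$-dependence then lives in the scalar rational factors $z\inv$ and $(z-\lambda_l^2)\inv$, and since $z=0$ lies outside $\mathcal{C}$ the contour integral is a finite sum of residues at the poles $z=\lambda_l^2$. Evaluating those residues — including the higher-order poles that arise when several slots carry $(z-\lambda_l^2)\inv$ at a common eigenvalue, which forces derivatives of the neighboring factors — and reorganizing the resulting sums, one collects everything into terms of the form $\mathcal{P}^{-s_1}W\mathcal{P}^{-s_2}W\cdots W\mathcal{P}^{-s_{k+1}}$ indexed by compositions $\mathbf{s}=(s_1,\dots,s_{k+1})$ with $s_1+\cdots+s_{k+1}=k$, where $\mathcal{P}^{-s_i}=U\Lambda^{-2s_i}U\t$ for $s_i\ge1$ and $\mathcal{P}^{0}=\per$, and the alternating sign $(-1)^{1+\tau(\mathbf{s})}$ emerges from the signs and factorials in the higher-order residue formula. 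This is exactly the definition of $S_{A,k}(W)$, which completes the proof.

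The main obstacle is the residue bookkeeping in the last step: one has to track which slots feed a given pole, the order of that pole, and the derivatives it induces on the adjacent $z\inv$ and $(z-\lambda_m^2)\inv$ factors, and then verify that the eigenvalue differences $\lambda_l^2-\lambda_m^2$ that appear at intermediate stages cancel so that the combinatorics collapses to the compact formula for $S_{A,p}$ with the stated sign convention. A secondary point, which should be stated explicitly, is the justification for exchanging summation and integration; this follows from the uniform geometric bound $\|W(zI-A)\inv\|^k \le (2\|W\|/\lambda_r^2)^k$ on $\mathcal{C}$, which simultaneously shows the series converges in operator norm so that the expansion is a genuine matrix identity.
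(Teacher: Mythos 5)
Your approach is the standard contour-integral/Neumann-expansion argument, and it is exactly how \citet{xia_normal_2019} proves this result (which the present paper simply imports by citation rather than reproving). The outline is sound: the geometry under $\|W\|\le\lambda_r^2/2$, the Riesz projector identity, the resolvent expansion and its uniform convergence on the contour, and the reduction to residue bookkeeping are all correct, and you have correctly flagged the residue combinatorics (higher-order poles at coincident $\lambda_l^2$, cancellation of the $\lambda_l^2-\lambda_m^2$ cross-terms, the sign $(-1)^{1+\tau(\mathbf{s})}$) as the only step requiring substantive work to complete.
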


By Assumption \ref{assumption:eigengap}, we have that $\lambda_r^2 \geq 2\| EM\t + ME\t + \Gamma(EE\t)\|$, and hence the assumptions to apply the expansion in Lemma \ref{lem:xia} hold. Let $W := ME\t + EM\t + \Gamma(EE\t)$.  We note that we therefore have
\begin{align*}
    \tilde U_D \tilde U_D\t - UU\t &= \sum_{p\geq 1} S_{MM\t,p}( W),
\end{align*}
Note that if $s_1 \geq 1$, $\| U \Lambda^{-2s_1} U\t W \cdots U \Lambda^{-2s_{p+1}} U\t \|_{2,\infty} \leq \| U\|_{2,\infty} \lambda_r^{-2p} \|W \|^{p}$ which is bounded above by $\delta^p \lambda_r^{-2p} \|U\|_{2,\infty}$ by Lemma \ref{lem:spectral_norm_concentration}.  Hence, it suffices to bound terms of the form
\begin{align*}
     \|  (\per W\per)^p W U \|_{2,\infty}.
\end{align*}
We have the following lemma characterizing terms of this form, whose proof is in Appendix \ref{sec:lemmas_random}.  This proof requires additional considerations about dependence which requires specially crafted ``leave-one-out" terms that have hitherto not been considered in the literature on entrywise eigenvector analysis.

\begin{restatable}{lemma}{leminftygeneral} 
\label{lem:2infty_general}
Let $W = EM\t + ME\t + \Gamma(EE\t)$.  There exists universal constants $C_1$ and $C_2$ such that for any $p \geq 1$, we have that with probability at least $1 - (p+1)(n\vee d)^{-5}$ for all $1 \leq p_0 \leq p$ that
\begin{align*}
    \|  (\per W\per)^{p_0-1} W U \|_{2,\infty} \leq C_1 (C_2\delta)^{p_0} \|U\|_{2,\infty},
\end{align*}
\end{restatable}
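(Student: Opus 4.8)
The plan is to proceed by induction on $p_0$, controlling the $\ell_{2,\infty}$-norm of $(\per W \per)^{p_0-1} W U$ by peeling off one factor of $\per W \per$ at a time, while keeping track of the probabilistic events needed at each stage (which is why the failure probability accumulates linearly in $p$). The base case $p_0 = 1$ amounts to bounding $\|\per W U\|_{2,\infty} \le \|W U\|_{2,\infty}$, where $W = EM\t + ME\t + \Gamma(EE\t)$. Here $E M\t U = E V \Lambda$, and $e_i\t E V \Lambda = \lambda_j$-weighted inner products $\langle E_i, V_{\cdot j}\rangle$, which are subgaussian with variance $\|\Sigma_i^{1/2} V_{\cdot j}\|^2 \le \sigma^2$; a union bound over $i \le n$, $j \le r$ gives a factor $\sqrt{\log(n\vee d)}$, so this term contributes $\lesssim \sigma \sqrt{r \log(n\vee d)}\,\lambda_1 \cdot (\text{incoherence})$, which is absorbed into $\delta_2 / \lambda_r \cdot \|U\|_{2,\infty}$-type bounds after dividing through — I will need to be careful that the statement is scaled so these factors land correctly. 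The term $M E\t U$ requires writing $E\t U = \sum_k E_k U_{k\cdot}\t$ and using that $\|U\|_{2,\infty}$ is small; the $\Gamma(EE\t) U$ term is the genuinely new one, since $e_i\t \Gamma(EE\t) U = \sum_{k \ne i} \langle E_i, E_k\rangle U_{k\cdot}$, and $E_i$ and $E_k$ are independent only because $i \ne k$ — this is where the leave-one-out device enters.

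For the inductive step, I would write $(\per W \per)^{p_0 - 1} W U = \per W \cdot (\per W \per)^{p_0 - 2} W U$ and let $x := (\per W \per)^{p_0-2} W U$, which by the inductive hypothesis satisfies $\|x\|_{2,\infty} \le C_1 (C_2 \delta)^{p_0 - 1}\|U\|_{2,\infty}$ and also $\|x\| \le (\text{trivially}) \lesssim (C_2\delta/\lambda_r^2)^{p_0-2}\cdot \|WU\| $. The task reduces to bounding $\|\per W \per x\|_{2,\infty} \le \|W \per x\|_{2,\infty}$, where now the vector $\per x$ is a \emph{random} vector depending on $E$, so one cannot simply condition. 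This is the crux: $e_i\t W \per x = e_i\t(EM\t + ME\t + \Gamma(EE\t))\per x$, and the problematic piece is $e_i\t \Gamma(EE\t) \per x = \sum_{k\ne i}\langle E_i, E_k\rangle (\per x)_k$, in which both $E_k$ (through the sum) and $\per x$ (through all rows of $E$, including $E_i$) depend on the very row $E_i$ we want to condition on. The fix is to introduce, for each fixed row index $i$, a ``leave-one-out'' version $x^{(i)}$ built from the matrix $E^{(i)}$ obtained by zeroing out the $i$'th row of $E$; then $\langle E_i, \cdot\rangle$ applied to quantities built from $E^{(i)}$ is a conditionally-subgaussian sum in $E_i$, so Hoeffding/Bernstein plus a union bound over $i$ handles it, while $\|x - x^{(i)}\|$ is controlled by a perturbation argument (removing one row changes $W$ by something of operator norm $\lesssim \|E_i\|\|M\| + \ldots$, times the incoherence), and $\|E_i\|\lesssim \sigma\sqrt d$ with high probability. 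One must also handle the $EM\t \per x$ and $ME\t \per x$ pieces; the latter again expands over rows of $E$ and picks up $\|U\|_{2,\infty}$-type smallness after noting $\per x$ inherits incoherence-like bounds from $x$, while the former again needs a leave-one-out treatment since $\per x$ depends on $E_i$.

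The main obstacle, then, is precisely the circular dependence: $\per x$ depends on all of $E$, and we want concentration of $\langle E_i, (\per x)_k E_k\rangle$-type quantities in $E_i$; resolving this requires (i) defining the right leave-one-out surrogates $x^{(i)}$ and $W^{(i)}$, (ii) showing $\|x - x^{(i)}\|$ and $\|(\per x) - (\per x^{(i)})\|$ are small on a high-probability event — this is itself an induction, mirroring the main one, since $x$ is a $(p_0-2)$-fold product in $W$, so each of the $p_0-2$ factors of $W$ must be swapped for $W^{(i)}$ one at a time, each swap costing $\lesssim (\text{op norm of }W - W^{(i)})\cdot \lambda_r^{-2}$ — and (iii) bounding the conditionally-subgaussian sums and taking a union bound over $i\le n$ (this is what forces the $(p+1)(n\vee d)^{-5}$ probability, one clean event per power). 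I expect the bookkeeping of constants $C_1, C_2$ to require that $C_2$ be chosen large relative to the implicit constants coming from Lemma~\ref{lem:spectral_norm_concentration} and from the leave-one-out perturbation bounds, and that the whole argument be run on the intersection of the event in Lemma~\ref{lem:spectral_norm_concentration}, the event in Lemma~\ref{lem:diag}, and the rows-of-$E$ norm event $\max_i \|E_i\| \lesssim \sigma\sqrt d$; I would state these at the outset and then proceed cleanly by the double induction just described.
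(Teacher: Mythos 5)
Your architecture — induction on the power, a leave-one-out surrogate to break the circular dependence on $E_i$, a telescoping perturbation bound — is indeed what the paper does, and you correctly identify the source of the $(p+1)(n\vee d)^{-5}$ failure probability. But there are two substantive differences, one of which is a genuine gap.

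First, you have not used the key algebraic identity $\per W \per = \per\Gamma(EE\t)\per$, which follows because $\per M = 0$ and $M\t\per = 0$. In the paper's proof, this collapses every interior factor to $\Gamma(EE\t)$, so the only place $EM\t$ or $ME\t$ can appear is in the final $WU$. Without this, your inductive step has to confront $e_i\t M E\t\,\per x = \sum_k \langle M_i, E_k\rangle(\per x)_{kj}$, which you propose to control by a leave-one-out on $E_i$ — but that is the wrong direction: here $\per x$ depends on every row $E_k$ appearing in the sum, not just $E_i$, so you would need $n$ simultaneous leave-one-out surrogates (leaving out each $E_k$ in turn), which is a decoupling problem of a fundamentally different character than the one for $\Gamma(EE\t)$. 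The identity $\per W\per = \per\Gamma(EE\t)\per$ makes these terms vanish entirely, reducing everything to the single leave-one-out in $i$ that you correctly identified for the quadratic piece.

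Second, your perturbation bound for $X - X^{-i}$ is described as ``each swap costing $\lesssim (\text{op norm of }W - W^{(i)})\cdot\lambda_r^{-2}$,'' which is an operator-norm-per-factor estimate. That is not strong enough: the Hoeffding step on $I_1^{ij} = \sum_{k\neq i}\langle E_i, E_k\rangle X_{kj}^{-i}$ requires $\max_k|X_{kj}^{-i}|\lesssim(C_2\delta)^{p-1}\|U\|_{2,\infty}$, i.e.\ an \emph{entrywise} bound carrying the $\|U\|_{2,\infty}$ factor. The paper gets this by exploiting the rank-one structure of the perturbation $\xi = e_ie_i\t W + We_ie_i\t - e_ie_i\t We_ie_i\t$: in each telescoping term $S_{-i}^{m-1}S_\xi S^{p-m-2}WU$, the $e_i$ in $S_\xi$ splits the product into a ``head'' $\|e_k\t B S_{-i}^{m-1}\per e_i\|$ bounded in operator norm, and a ``tail'' $\|e_i\t\per S^{p-m-2}WU\|$ bounded by the \emph{entrywise} inductive hypothesis on shorter products (this is exactly why the event $\mathcal{B}$ must include the $\ell_{2,\infty}$ bounds for all lower powers, not just the spectral-norm bounds from Lemmas~\ref{lem:spectral_norm_concentration} and~\ref{lem:diag}). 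Bounding each swap only by operator norm would lose this $\|U\|_{2,\infty}$ factor and the claimed rate. Your choice of surrogate (zeroing the $i$'th row of $E$ rather than the $i$'th row and column of $W$) is a harmless cosmetic variant, but the entrywise tracking through the telescope is essential and your sketch does not yet supply it.
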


  Let $c$ be some number to be chosen later.  There are at most $4^p$ terms such that $s_1 + \cdots + s_{p+1} = p$, and hence
\begin{align*}
   \| \sum_{p\geq 1} S_{A,k}(W)\|_{2,\infty} &\leq \sum_{p=1}^{c\log(n\vee d)}\|  S_{A,p}(W)\|_{2,\infty} +  \sum_{p=c\log(n\vee d)}^{\infty}\|  S_{A,p}( W)\|_{2,\infty}  \\
   &\leq \| U\|_{2,\infty} \sum_{p=1}^{c\log(n\vee d)} C_1\bigg(\frac{4 C_2 \delta}{\lambda_r^2} \bigg)^p + \sum_{p=c\log(n\vee d)}^{\infty} \bigg(\frac{4\|W\|}{\lambda_r^2} \bigg)^p \\
   &\leq C \|U\|_{2,\infty} \frac{\delta}{\lambda_r^2} + \frac{1}{2^{c\log(n\vee d)}} \\
   &\leq C_R \|U\|_{2,\infty}\frac{\delta}{\lambda_r^2} ,
\end{align*}
where in the final line we have used the fact that the second term can be bounded by $2^{-c\log(n\vee d)} \leq \| U\|_{2,\infty}$ for $c$ taken to be sufficiently large.  Finally, this bound holds with probability at least $1 - c\log(n \vee d) (n\vee d)^{-5} - 4(n\vee d)^{-6} \geq 1 - (n\vee d)^{-4}$, which completes the proof of Theorem \ref{thm:2_infty_random}.

\section{Proof of Theorem \ref{thm:2_infty_deterministic} } \label{sec:deterministicpart}

First, we have the following result on the eigengap.

\begin{lemma}\label{lem:eigengaps}
Suppose Assumption~\ref{assumption:eigengap} holds, and suppose $T\geq T_0$, where $T_0$ is the first iterate such that $\|N_T-A\|\leq 3\|\Gamma(Z)\|$. Then on the event in Lemma \ref{lem:spectral_norm_concentration}, we have
\begin{align*}
    (\hat \lambda_r^{(T)})^2 - \tilde \lambda_{r+1}^2 \geq \lambda_r^2/2.
\end{align*}
In particular, 
\begin{align*}
    \hat \lambda_r^2 - \tilde \lambda_{r+1}^2 \geq \lambda_r^2/2.
\end{align*}
\end{lemma}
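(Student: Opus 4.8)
\textbf{Proof proposal for Lemma~\ref{lem:eigengaps}.}

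The plan is to use Weyl's inequality to transfer the eigengap of $A$ (which has a perfect gap, since $\lambda_{r+1}(A) = 0$) to the perturbed matrices $N_T$ and $\tilde N_T$, exploiting the defining property of the iterate $T_0$. First I would recall that $A = MM\t$ has exactly $r$ nonzero eigenvalues $\lambda_1^2 \geq \cdots \geq \lambda_r^2 > 0$ and that $\lambda_{r+1}(A) = 0$. For $T \geq T_0$, the defining property gives $\|N_T - A\| \leq 3\|\Gamma(Z)\|$. Writing $\hat\lambda_i^{(T)} := \lambda_i(N_T)$ for the singular values of $N_T$ (which, since $N_T$ is symmetric PSD in the relevant regime, or via the SVD in Algorithm~\ref{alg1}, are the quantities of interest), Weyl's inequality yields
\begin{align*}
    (\hat\lambda_r^{(T)})^2 &\geq \lambda_r^2 - \|N_T - A\| \geq \lambda_r^2 - 3\|\Gamma(Z)\|.
\end{align*}
For the $(r+1)$-th eigenvalue of the rank-$r$ truncation $\tilde N_T$, I would note that $\tilde\lambda_{r+1} = \lambda_{r+1}(\tilde N_T) = 0$ by construction (since $\tilde N_T$ is the best rank-$r$ approximation), so in fact the bound $\tilde\lambda_{r+1}^2 = 0$ makes the second claim immediate once the first is established; alternatively, if $\tilde\lambda_{r+1}$ is meant to denote $\lambda_{r+1}(N_T)$, then Weyl gives $\tilde\lambda_{r+1}^2 \leq \lambda_{r+1}(A) + \|N_T - A\| = \|N_T - A\| \leq 3\|\Gamma(Z)\|$. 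Either way,
\begin{align*}
    (\hat\lambda_r^{(T)})^2 - \tilde\lambda_{r+1}^2 &\geq \lambda_r^2 - 6\|\Gamma(Z)\|.
\end{align*}

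The remaining step is to show $6\|\Gamma(Z)\| \leq \lambda_r^2/2$, i.e., $\|\Gamma(Z)\| \leq \lambda_r^2/12$, on the event in Lemma~\ref{lem:spectral_norm_concentration}. Here I would invoke Assumption~\ref{assumption:eigengap} together with the consequence spelled out just after Lemma~\ref{lem:deterministic_spectral}, namely that $\lambda_r^2 \geq C(\sigma^2 rd\log(n\vee d) + \sigma\sqrt{nr\log(n\vee d)}\kappa\lambda_r)$ for a large constant $C$. Since $\|\Gamma(Z)\| \leq C_{\text{spectral}}(\sigma^2(n + \sqrt{nd}) + \sigma\sqrt{n}\kappa\lambda_r)$ on that event by Lemma~\ref{lem:spectral_norm_concentration}, and since $n + \sqrt{nd} \lesssim rd\log(n\vee d)$ and $\sqrt{n} \lesssim \sqrt{nr\log(n\vee d)}$ under Assumption~\ref{assumption:dimension}, choosing $C_{SNR}$ (hence $C$) large enough forces $\|\Gamma(Z)\| \leq \lambda_r^2/12$, which gives the first displayed conclusion. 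The second conclusion follows by taking $T$ large enough that $\hat A = \lim_T N_T$ (so $\hat\lambda_r = \hat\lambda_r^{(T)}$ and $\tilde\lambda_{r+1}$ stabilizes), using that $T_0$ exists by Lemma~\ref{lem:deterministic_spectral} and the cited results from \citet{zhang_heteroskedastic_2021}.

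I do not expect a serious obstacle here: the only subtlety is bookkeeping the precise meaning of $\hat\lambda_r^{(T)}$ and $\tilde\lambda_{r+1}$ in the notation of Algorithm~\ref{alg1} (eigenvalues of $N_T$ versus singular values, and which truncation index is intended), and making sure the constant chase in $6\|\Gamma(Z)\| \leq \lambda_r^2/2$ is driven entirely by enlarging $C_{SNR}$ rather than introducing a circular dependence. If anything, the mild care needed is to confirm that the existence-of-$T_0$ claim genuinely follows from the prior results under Assumptions~\ref{assumption:dimension} and \ref{assumption:incoherence}, which is already asserted in the remark preceding Lemma~\ref{lem:deterministic_spectral} and can be cited directly.
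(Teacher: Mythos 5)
Your proof is correct and takes essentially the same approach as the paper: Weyl's inequality transfers the spectral gap of $A$ to the perturbed matrices, followed by the observation that Assumption~\ref{assumption:eigengap} and Lemma~\ref{lem:spectral_norm_concentration} together force $\|\Gamma(Z)\| \leq \lambda_r^2/12$ once $C_{\mathrm{SNR}}$ is taken large enough. The paper routes its first Weyl step through $\tilde A = A + \Gamma(Z)$ ($\hat\lambda_r^2 \geq \tilde\lambda_r^2 - \|N_T - \tilde A\| \geq \tilde\lambda_r^2 - 4\|\Gamma(Z)\|$, then $\tilde\lambda_r^2 - \tilde\lambda_{r+1}^2 \geq \lambda_r^2 - 2\|\Gamma(Z)\|$) while you go directly from $A$ to $N_T$; both yield $\hat\lambda_r^2 - \tilde\lambda_{r+1}^2 \geq \lambda_r^2 - 6\|\Gamma(Z)\|$ and the constant chase is identical.

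One small notational clarification worth flagging: $\tilde\lambda_{r+1}^2$ is the $(r+1)$-th eigenvalue of $\tilde A = A + \Gamma(Z)$, not of the rank-$r$ truncation $\tilde N_T$ (which would trivially be zero) and not of $N_T$. This is consistent with the convention in the proof of Theorem~\ref{thm:2_infty_deterministic}, where $\tilde U,\tilde\Lambda$ are the eigendecomposition of $\tilde A$ and $\hat U,\hat\Lambda$ that of $N_T$. With this reading, the correct Weyl bound is $\tilde\lambda_{r+1}^2 \leq \lambda_{r+1}^2 + \|\Gamma(Z)\| = \|\Gamma(Z)\|$, which is even better than the $3\|\Gamma(Z)\|$ you derived in your alternative interpretation, so the conclusion is unaffected. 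The ``in particular'' statement also does not involve $\tilde\lambda_{r+1}$ ``stabilizing'' with $T$, since it is a fixed quantity; it follows simply because the displayed inequality holds for every $T \geq T_0$ and hence for the limiting $\hat\lambda_r$ as well.
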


\begin{proof}
$\hat{\lambda}_r^2\geq \tilde{\lambda}_r^2-\|N_T-\tilde{A}\|\geq \tilde{\lambda}_r^2-4\|\Gamma(Z)\|$ once $T\geq T_0$. Then $\hat{\lambda}_r^2-\tilde{\lambda}_{r+1}^2\geq \tilde{\lambda}_r^2-\tilde{\lambda}_{r+1}^2-4\|\Gamma(Z)\|\geq \lambda_r^2-\lambda_{r+1}^2-6\|\Gamma(Z)\|=\lambda_r^2-6\|\Gamma(Z)\|$. On the event in  Lemma~\ref{lem:spectral_norm_concentration}, $\|\Gamma(Z)\|\leq C_{\mathrm{spectral}}(\sigma^2(n+\sqrt{nd})+\sigma\sqrt{n}\kappa\lambda_r),$ and under the signal-to-noise ratio condition of Assumption~\ref{assumption:eigengap}, $$\lambda_r^2\geq 12C_{\mathrm{spectral}}\left(\sigma^2(n+\sqrt{nd})+\sigma\sqrt{n}\kappa \lambda_r\right),$$ this gives $$\|\Gamma(Z)\|\leq \frac{1}{12}\lambda_r^2,$$ meaning that $$\hat{\lambda}_r^2-\tilde{\lambda}_{r+1}^2\geq \lambda_r^2/2.$$
\end{proof}

We can now prove Theorem \ref{thm:2_infty_deterministic}.

\begin{proof}[Proof of Theorem \ref{thm:2_infty_deterministic}]
We write:
\begin{align*}
    \hat U  - \tilde U \tilde U\t \hat U  &= P_{\tilde U}\tilde A[\hat U - \tilde U \tilde H] \hat \Lambda\inv + [N_T -  P_{\tilde U}\tilde A][ \hat U - \tilde U \tilde H] \hat \Lambda\inv \\&\qquad +  [N_T -  P_{\tilde U}\tilde A]\tilde U \tilde H \hat \Lambda\inv - \tilde U[ \tilde H \hat \Lambda- \tilde \Lambda \tilde H]\hat \Lambda\inv \\
    :&= J_1 + J_2 + J_3 + J_4.
\end{align*}
We bound each term successively.

\ \\ \noindent \textbf{The term $J_1$:} 
Note that $\|\hat{U}-\tilde{U}\tilde{H}\|\leq\sqrt{2}\|\sin\Theta(\hat{U},\tilde{U})\|.$  Therefore, we have that 
\begin{align}
    \| P_{\tilde U}\tilde A[ \hat U - \tilde U \tilde H]\hat \Lambda\inv \|_{2,\infty} &\leq   \| P_{\tilde U}\tilde A \|_{2,\infty }  \| \hat U - \tilde U \tilde H \| \|\hat \Lambda\inv \| \nonumber \\
    &\leq  \| \tilde U \tilde \Lambda \tilde U \|_{2,\infty}  \| \hat U - \tilde U \tilde H \| \hat \lambda_{r}^{-2} \nonumber \\
    &\leq \frac{\tilde \lambda_1^2}{\hat \lambda_r^2}   \| \tilde U \|_{2,\infty}  \| \hat U - \tilde U \tilde H \| \nonumber \\ 
&\leq \sqrt{2}\frac{\tilde \lambda_1^2}{\hat \lambda_r^2}   \| \tilde U \|_{2,\infty} \|\sin\Theta(\hat{U},\tilde{U})\|. \label{j1bound}
\end{align}

\ \\ \noindent \textbf{The term $J_2$:}
We decompose via
\begin{align*}
[N_T-P_{\tilde{U}}\tilde{A}][\hat{U}-\tilde{U}\tilde{H}]\hat{\Lambda}^{-1}&=\hat{U}[\hat{\Lambda}\hat{U}\t-\tilde{H}\t\tilde{\Lambda}\tilde{U}\t][\hat{U}-\tilde{U}\tilde{H}]\hat{\Lambda}^{-1}+[\hat{U}-\tilde{U}\tilde{H}]\tilde{H}\t\tilde{\Lambda}\tilde{U}\t[\hat{U}-\tilde{U}\tilde{H}]\hat{\Lambda}^{-1}\\
&\quad-\tilde{U}[I-\tilde{H}\tilde{H}\t]\tilde{\Lambda}\tilde{U}\t[\hat{U}-\tilde{U}\tilde{H}]\hat{\Lambda}^{-1}.
\end{align*}
Note that $\tilde{U}\t[\hat{U}-\tilde{U}\tilde{H}]=0$, by the definition of $\tilde{H}$, whence we have 
\begin{align*}
[N_T-P_{\tilde{U}}\tilde{A}][\hat{U}-\tilde{U}\tilde{H}]\hat{\Lambda}^{-1} &=\tilde{U}\tilde{H}[\hat{\Lambda}\hat{U}\t-\tilde{H}\t\tilde{\Lambda}\tilde{U}\t][\hat{U}-\tilde{U}\tilde{H}]\hat{\Lambda}^{-1}+[\hat{U}-\tilde{U}\tilde{H}][\hat{\Lambda}\hat{U}\t-\tilde{H}\t\tilde{\Lambda}\tilde{U}\t][\hat{U}-\tilde{U}\tilde{H}]\hat{\Lambda}^{-1}\\
&=\tilde{U}\tilde{H}\hat{\Lambda}[\hat{U}-\tilde{U}\tilde{H}]\t[\hat{U}-\tilde{U}\tilde{H}]\hat{\Lambda}^{-1}+[\hat{U}-\tilde{U}\tilde{H}]\hat{\Lambda}[I-\tilde{H}\t\tilde{H}]\hat{\Lambda}^{-1}.
\end{align*}
Taking norms, we have 
\begin{align*}
    \|[N_T-P_{\tilde{U}}\tilde{A}][\hat{U}-\tilde{U}\tilde{H}]\hat{\Lambda}^{-1}\|_{2,\infty}\leq \|\tilde{U}\|_{2,\infty}\hat{\lambda}_1^2\|\hat{U}-\tilde{U}\tilde{H}\|^2\hat{\lambda}_r^{-2}+\|\hat{U}-\tilde{U}\tilde{H}\|_{2,\infty}\hat{\lambda}_1^2\|I-\tilde{H}\t\tilde{H}\|\hat{\lambda}_r^{-2}
\end{align*}
Recall that $\|I-\tilde{H}\t\tilde{H}\|\leq \|\hat{U}\hat{U}\t-\tilde{U}\tilde{U}\t\|\leq 2\|\sin\Theta(\hat{U},\tilde{U})\|$, and $\|\hat{U}-\tilde{U}\tilde{H}\|\leq \sqrt{2}\|\sin\Theta(\hat{U},\tilde{U})\|.$  Consequently,
\begin{align}
    \| J_2 \|_{2,\infty} &\leq  2 \|\tilde{U}\|_{2,\infty} \frac{\hat{\lambda}_1^2}{\hat{\lambda}_r^{2}}\|\sin\Theta(\hat U, \tilde U)\|^2+2 \|\hat{U}-\tilde{U}\tilde{H}\|_{2,\infty}\frac{\hat{\lambda}_1^2}{\hat{\lambda}_r^{2}} \|\sin\Theta(\hat U, \tilde U \|. \label{j2bound}
\end{align}

\ \\ \noindent \textbf{The term $J_3$:} Again using the fact that $\tilde{U}\t[\hat{U}-\tilde{U}\tilde{H}]=0$,
\begin{align*}
[N_T-P_{\tilde{U}}\tilde{A}][\hat{U}-\tilde{U}\tilde{H}]\hat{\Lambda}^{-1}&=\hat{U}\hat{\Lambda}[I-\tilde{H}\t\tilde{H}]\hat{\Lambda}^{-1}\\
&=\tilde{U}\tilde{H}\hat{\Lambda}[I-\tilde{H}\t\tilde{H}]\hat{\Lambda}^{-1}+[\hat{U}-\tilde{U}\tilde{H}]\hat{\Lambda}[I-\tilde{H}\t\tilde{H}]\hat{\Lambda}^{-1}.
\end{align*}

This gives 
\begin{align}
    \|[N_T-P_{\tilde{U}}\tilde{A}][\hat{U}-\tilde{U}\tilde{H}]\hat{\Lambda}^{-1}\|_{2,\infty}&\leq \|\tilde{U}\|_{2,\infty}\hat{\lambda}_1^2\|I-\tilde{H}\t\tilde{H}\|\hat{\lambda}_r^{-2}+\|\hat{U}-\tilde{U}\tilde{H}\|_{2,\infty}\hat{\lambda}_1^2\|I-\tilde{H}\t \tilde{H}\|\hat{\lambda}_r^{-2} \nonumber \\
    &\leq 2 (\|\tilde{U}\|_{2,\infty}+\|\hat{U}-\tilde{U}\tilde{H}\|_{2,\infty}) \frac{\hat{\lambda}_1^2}{\hat{\lambda}_r^2} \|\sin \Theta(\hat{U},\tilde{U})\|. \label{j3bound}
\end{align}

\ \\ \noindent \textbf{The term $J_4$:} In $2,\infty$ norm, we note that
\begin{align*}
  \|  \tilde U[ \tilde H \hat \Lambda- \tilde \Lambda \tilde H]\hat \Lambda\inv \|_{2,\infty} &\leq  \frac{\|  \tilde U \|_{2,\infty}}{\hat \lambda_r^2} \|\tilde H \hat \Lambda- \tilde \Lambda \tilde H\|.
\end{align*}
Furthermore,
\begin{align*}
    \| \tilde H \hat \Lambda -\tilde \Lambda \tilde H \| &= \| \tilde U\t[ N_T - \tilde A] \hat U\| \\
    &\leq  \| \tilde U\t[ N_T - \tilde A] \|.
\end{align*}
Consequently,
\begin{align}
    \|J_4 \| &\leq \| \tilde U \|_{2,\infty} \frac{1}{\hat \lambda_r^2} \| \tilde U\t [ N_T - \tilde A] \|. \label{j4bound}
\end{align}

\ \\ \noindent \textbf{Putting it together:}
Collecting the bounds in \eqref{j1bound},\eqref{j2bound},\eqref{j3bound}, and \eqref{j4bound}, we see that 
\begin{align*}
\|\hat{U}-\tilde{U}\tilde{H}\|_{2,\infty}&\leq \frac{\tilde{\lambda}_1^2}{\hat{\lambda}_r^2}\|\tilde{U}\|_{2,\infty}\left[\sqrt{2}\|\sin\Theta(\hat{U},\tilde{U})\|+\frac{\hat{\lambda}_1^2}{\tilde{\lambda}_1^2}(2\|\sin\Theta(\hat{U},\tilde{U})\|^2+2\|\sin(\hat{U},\tilde{U})\|)+\frac{\tilde{K}_T}{\tilde{\lambda}_1^2}\right]\\
&\quad+\|\hat{U}-\tilde{U}\tilde{H}\|_{2,\infty}\frac{4\hat{\lambda}_1^2}{\hat{\lambda}_r^2}\|\sin\Theta(\hat{U},\tilde{U})\|,
\end{align*}
where $\tilde K_T := \|N_T - \tilde A \|$.   Applying Davis-Kahan, Lemma~\ref{lem:deterministic_spectral}, and Lemma~\ref{lem:eigengaps}, we see that 
\begin{align*}
    \|\sin\Theta(\hat{U},\tilde{U})\|&\leq \frac{\|\hat{A}-\tilde{A}\|}{\hat{\lambda}_r^2-\tilde{\lambda}_{r+1}^2} \\
    &\leq \frac{41\|U\|_{2,\infty}\|\Gamma(Z)\|}{\lambda_r^2/2}=:\tau.
\end{align*}
By Theorem~\ref{thm:2_infty_random}, for large enough $n$, $\|\tilde{U}\|_{2,\infty}\leq \|U\|_{2,\infty}$ since the bound in Theorem~\ref{thm:2_infty_random} is of the form $c \|U\|_{2,\infty}$, where $c < 1$ for $n$ sufficiently large.
This gives
$$
\|\hat{U}-\tilde{U}\tilde{H}\|_{2,\infty}\leq 2\frac{\tilde{\lambda}_1^2}{\hat{\lambda}_r^2}\|U\|_{2,\infty}\left[\sqrt{2}\tau+\frac{\hat{\lambda}_1^2}{\tilde{\lambda}_1^2}\left(2\tau^2+2\tau\right)+\frac{\tau \lambda_r^2/2}{\tilde{\lambda}_1^2}\right]+\|\hat{U}-\tilde{U}\tilde{H}\|_{2,\infty}\frac{4\hat{\lambda}_1^2}{\hat{\lambda}_r^2}\tau.
$$
The proof of Lemma~\ref{lem:eigengaps} reveals that $(11/12)\lambda_1^2\leq\tilde{\lambda}_1^2\leq (13/12)\lambda_1^2$, $(3/4)\lambda_r^2\leq \hat{\lambda}_r^2 \leq (5/4) \lambda_r^2$, with the same bounds holding for $\hat{\lambda}_1$, also. Thus $\tilde{\lambda}_1^2/\hat{\lambda}_r^2\leq (13/9)\kappa^2$, $\hat{\lambda}_1^2/\tilde{\lambda}_1^2\leq 15/11$, and $\hat{\lambda}_1^2/\hat{\lambda}_r^2\leq (5/3)\kappa^2$. This gives

$$
\|\hat{U}-\tilde{U}\tilde{H}\|_{2,\infty}\leq 2(13/9)\kappa^2\|U\|_{2,\infty}\left[\sqrt{2}\tau+(15/11)\left(2\tau^2+2\tau\right)+\frac{\tau \lambda_r^2/2}{(11/12)\lambda_1^2}\right]+\|\hat{U}-\tilde{U}\tilde{H}\|_{2,\infty}4(5/3)\kappa^2\tau.
$$

When $(20/3)\kappa^2 \tau\leq 1/2$, which occurs for $n$ sufficiently large under the event in Theorem \ref{thm:2_infty_random}, this gives

\begin{align*}
\|\hat{U}-\tilde{U}\tilde{H}\|_{2,\infty}&\leq \kappa^2\frac{500\|U\|_{2,\infty}^2\|\Gamma(Z)\|}{\lambda_r^2}\left[\sqrt{2}+(15/11)\left(2+\frac{3}{20\kappa^2}\right)+\frac{6}{11\kappa^2}\right]\\&=C_D \kappa^2 \frac{\|U\|_{2,\infty}^2\|\Gamma(Z)\|}{\lambda_r^2}
\end{align*}
as required.
\end{proof}

\section{Proof of Theorem \ref{thm:b-e}}
\label{sec:be_proof}
First, we justify Equation \eqref{mainresidual}.  Note that by Theorem \ref{thm:2_infty}, on that event $\|\hat U\|_{2,\infty} \leq C \|U\|_{2,\infty}$, which implies that $\hat U$ is as incoherent as $U$ up to constant factors.   Now following similarly in the first part of the proof Theorem \ref{thm:2_infty}, we have that for the same orthogonal matrix $\mathcal{O}_*$ as in Lemma \ref{orthogonal_matrix_lemma} that
\begin{align}
    e_i\t\bigg( \hat U \mathcal{O}_* - U \bigg) e_j =
      e_i\t \hat U( \mathcal{O}_* - \hat U\t \tilde U\tilde U\t U )e_j + e_i\t \bigg( \hat U \hat U\t \tilde U\tilde U\t  U - \tilde U \tilde U\t U\bigg) e_j  + e_i\t \bigg( \tilde U \tilde U\t U - U \bigg) e_j. \label{hatexp}
\end{align}
As in the proof of Theorem \ref{thm:2_infty_random} (see Appendix \ref{sec:lemmas_random}), let $\tilde U_D $ be the matrix of eigenvectors of $MM\t + EM\t + ME\t + \Gamma(EE\t)$, and let $W:= EM\t + ME\t + \Gamma(EE\t)$.  

Now we again apply Lemma \ref{lem:xia} to $\tilde U_D \tilde U_D\t$.  First, recall the definition of $S_{MM\t,1}(W) = \per W U\Lambda^{-2}U\t + U\t \Lambda^{-2}U\t W \per$, and note that $S_{MM\t,1}(W)U = \per W U\Lambda^{-2}$.  Now, just as in the proof of Theorem \ref{thm:2_infty_random} we expand $\tilde U_D \tilde U_D\t$ as an infinite series in $W$ via
 \begin{align}
    e_i\t \bigg(\tilde U\tilde U\t U - U\bigg) e_j &=   e_i\t \bigg( \tilde U_D \tilde U_D\t U - U \bigg) e_j - e_i\t \bigg( \tilde U_D \tilde U_D\t - \tilde U \tilde U\t\bigg)Ue_j \nonumber\\
    &=  e_i\t S_{MM\t,1}(W) U e_j + e_i\t \sum_{k\geq 2} S_{MM\t,k}(W) Ue_j  -e_i\t \bigg( \tilde U_D \tilde U_D\t - \tilde U \tilde U\t\bigg)Ue_j \nonumber\\
    &=  e_i\t \per \bigg(EM\t + ME\t + \Gamma(EE\t)\bigg)\Lambda^{-2} e_j + e_i\t \sum_{k\geq 2} S_{MM\t,k}(W) Ue_j \nonumber \\
   &\qquad - e_i\t \bigg( \tilde U_D \tilde U_D\t - \tilde U \tilde U\t\bigg)Ue_j \nonumber\\
   &= e_i\t  \bigg(EM\t + \Gamma(EE\t)\bigg) U \Lambda^{-2} e_j  - e_i\t UU\t\bigg(EM\t + \Gamma(EE\t)\bigg) U \Lambda^{-2} e_j \nonumber\\
    &\qquad + e_i\t \sum_{k\geq 2} S_{MM\t,k}(W) Ue_j - e_i\t \bigg( \tilde U_D \tilde U_D\t - \tilde U \tilde U\t \bigg)Ue_j  \label{tildeexp}
\end{align}
where in the penultimate line we used the fact that $\per M = 0$.  Hence, plugging the expansion in \eqref{tildeexp} into \eqref{hatexp}, we see that
\begin{align*}
     e_i\t\bigg( \hat U \mathcal{O}_* - U \bigg) e_j &= e_i\t  EM\t U\Lambda^{-2} e_j  + e_i\t \Gamma(EE\t)U\Lambda^{-2} e_j - e_i\t UU\t \bigg( EM\t + \Gamma(EE\t) \bigg) U\Lambda^{-2} e_j  \\
     &\qquad  + e_i\t \sum_{k\geq 2} S_{MM\t,k}(W) Ue_j - e_i\t \bigg( \tilde U_D \tilde U_D\t - \tilde U \tilde U\t \bigg)Ue_j \\
     &\qquad +  e_i\t \hat U( \mathcal{O}_* - \hat U\t \tilde U\tilde U\t U )e_j + e_i\t \bigg( \hat U \hat U\t \tilde U\tilde U\t  U - \tilde U \tilde U\t U\bigg) e_j \\
     :&= e_i\t EM\t U\Lambda^{-2} e_j + R_0 + R_1 + R_2 + R_3 + R_4 + R_5,
\end{align*}
where
\begin{align*}
    R_0 :&= e_i\t \Gamma(EE\t) U\Lambda^{-2} e_j; \\
    R_1 :&= e_i\t U\bigg( EM\t + \Gamma(EE\t) \bigg) U\Lambda^{-2} e_j  ;\\
    R_2: &=e_i\t \bigg( \tilde U_D \tilde U_D\t - \tilde U \tilde U\t \bigg)Ue_j; \\
    R_3 :&=  e_i\t \sum_{k\geq 2} S_{MM\t,k}(W) Ue_j  ;\\
    R_4:&=   e_i\t \hat U( \mathcal{O}_* - \hat U\t \tilde U\tilde U\t U )e_j  ;\\
    R_5:&=  e_i\t \bigg( \hat U \hat U\t \tilde U\tilde U\t  U - \tilde U \tilde U\t U\bigg) e_j.
\end{align*}
We now characterize the residual terms. 
\begin{restatable}{lemma}{residualsrand} \label{lem:residuals1}
There exist universal constants $C_{6}$ and $C_7$ such that the residual terms $R_1$,$R_2$,  and $R_3$ satisfy, uniformly over $i$ and $j$,
\begin{align*}
\frac{1}{\sigma_{ij}} \bigg(|R_1| + |R_2| + |R_3| \bigg) 
&\leq C_{6} \kappa_{\sigma} \kappa^2 \mu_0 \sqrt{\frac{r\log(n\vee d)}{n}}+C_7 \kappa^3 \kappa_{\sigma} \mu_0 \frac{r\log(n\vee d)}{\snr}
\end{align*}
with probability at least $1 - 5(n\vee d)^{-4}$.
\end{restatable}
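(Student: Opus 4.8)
The plan is to treat the three residuals separately, reducing $R_2$ and $R_3$ to spectral‑norm estimates already in hand (Lemmas~\ref{lem:diag} and \ref{lem:xia}--\ref{lem:2infty_general}) and $R_1$ to elementary concentration, then normalizing each by $\sigma_{ij}$ using Assumption~\ref{assumption:be_assumptions}. First comes the bookkeeping: Assumption~\ref{assumption:be_assumptions} together with $\|\Sigma_i^{1/2}V_{\cdot j}\|^2=V_{\cdot j}\t\Sigma_i V_{\cdot j}\leq\|\Sigma_i\|\leq\sigma^2$ gives $\sigma/\kappa_\sigma\leq\|\Sigma_i^{1/2}V_{\cdot j}\|\leq\sigma$, hence $1/\sigma_{ij}=\lambda_j/\|\Sigma_i^{1/2}V_{\cdot j}\|\in[\lambda_j/\sigma,\,\kappa_\sigma\lambda_j/\sigma]$ and in particular $1/\sigma_{ij}\leq\kappa_\sigma\kappa\lambda_r/\sigma=\kappa_\sigma\kappa\,\snr\sqrt{rd}$. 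From Assumptions~\ref{assumption:eigengap} and \ref{assumption:dimension} and the definitions of $\delta_1,\delta_2,\delta$ one gets $\delta/\lambda_r^2\lesssim\kappa\sqrt{\log(n\vee d)}/\snr<1$ for $C_{SNR}$ large, and, using the bound $\|W\|\leq\delta/\sqrt{r\log(n\vee d)}$ established right after Lemma~\ref{lem:diag}, also $\|W\|/\lambda_r^2\lesssim\kappa/(\snr\sqrt r)<1$. All estimates below hold on the intersection of the events of Lemmas~\ref{lem:spectral_norm_concentration} and \ref{lem:diag}, the event of Lemma~\ref{lem:2infty_general} (for $1\leq p\leq c\log(n\vee d)$), and two concentration events for $R_1$ introduced below; a union bound over these will deliver the asserted $1-5(n\vee d)^{-4}$.

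For $R_2$, since $\|Ue_j\|=1$ we simply read off $|R_2|\leq\|\tilde U_D\tilde U_D\t-\tilde U\tilde U\t\|\leq C\delta_1\|U\|_{2,\infty}/\lambda_r^2$ from Lemma~\ref{lem:diag}; normalizing with $\|U\|_{2,\infty}\leq\mu_0\sqrt{r/n}$ (Assumption~\ref{assumption:incoherence}) and $\lambda_r/\sigma=\snr\sqrt{rd}$ puts this strictly below the second summand. For $R_3=e_i\t\sum_{k\geq2}S_{MM\t,k}(W)Ue_j$ I would re‑run the argument behind Theorem~\ref{thm:2_infty_random}: Lemma~\ref{lem:xia} applies because $\|W\|\leq\lambda_r^2/2$, and each monomial $e_i\t\mathcal{P}^{-s_1}W\cdots W\mathcal{P}^{-s_{k+1}}Ue_j$ (with $s_1+\cdots+s_{k+1}=k$) is bounded as there, with two extra inputs. (i) Any monomial whose last factor is $\mathcal{P}^0=\per$ annihilates $Ue_j$, so the survivors have $s_{k+1}\geq1$ and carry $\lambda_j^{-2s_{k+1}}$; since $\lambda_j^{-2s_{k+1}}\lambda_r^{-2(k-s_{k+1})}\leq\lambda_j^{-2}\lambda_r^{-2(k-1)}$ this contributes exactly one factor $\lambda_j^{-2}$, which is precisely what will cancel the $\lambda_j$ hidden in $1/\sigma_{ij}$. (ii) The series begins at $k=2$, so the geometric series in $\delta/\lambda_r^2$ produces $(\delta/\lambda_r^2)^2$ rather than $\delta/\lambda_r^2$. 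Monomials with $s_1\geq1$ are handled by factoring out $\|e_i\t U\Lambda^{-2s_1}\|\leq\|U\|_{2,\infty}\lambda_r^{-2s_1}$ and operator‑norm bounding the rest; those with $s_1=0$ by applying Lemma~\ref{lem:2infty_general} to the leading $(\per W)^{a_0}U$ block; the $\leq 4^k$ sign patterns and the tail $k\gtrsim\log(n\vee d)$ (crude operator norm) are summed exactly as in Theorem~\ref{thm:2_infty_random}. The upshot is $|R_3|\lesssim\lambda_j^{-2}\|U\|_{2,\infty}\,\delta^2/\lambda_r^2$, and normalizing (using $\lambda_r/\sigma=\snr\sqrt{rd}$, $\|U\|_{2,\infty}\leq\mu_0\sqrt{r/n}$, $d\geq c_1 n$, $\kappa\geq1$) again lands it below the second summand.

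For $R_1$ I would split along the two pieces of the noise, writing $R_{1a}=e_i\t UU\t EM\t U\Lambda^{-2}e_j$ and $R_{1b}=e_i\t UU\t\Gamma(EE\t)U\Lambda^{-2}e_j$. Using $M\t U=V\Lambda$, $R_{1a}=\lambda_j^{-1}\langle UU_{i\cdot},\,EV_{\cdot j}\rangle$; the key point---and what separates this from a crude $\|U\t EV_{\cdot j}\|\lesssim\sigma\sqrt r$ bound---is that $\|UU_{i\cdot}\|=\|U_{i\cdot}\|\leq\|U\|_{2,\infty}$, so by Assumption~\ref{assumption:noise} the quantity $\langle UU_{i\cdot},EV_{\cdot j}\rangle=\sum_k(UU_{i\cdot})_k\langle Y_k,\Sigma_k^{1/2}V_{\cdot j}\rangle$ is a sum of independent mean‑zero variables with combined $\psi_2$‑norm $\lesssim\sigma\|U_{i\cdot}\|$, giving $|R_{1a}|\lesssim\lambda_j^{-1}\sigma\|U_{i\cdot}\|\sqrt{\log(n\vee d)}$ after a union bound over $i,j$ (probability cost $\leq(n\vee d)^{-4}$). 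For $R_{1b}=\lambda_j^{-2}\sum_{k\neq l}(UU_{i\cdot})_k U_{lj}\langle E_k,E_l\rangle$ I would write $\langle E_k,E_l\rangle=Y_k\t\Sigma_k^{1/2}\Sigma_l^{1/2}Y_l$ to recognize a centered bilinear form $Y\t BY$ in the $nd$ independent subgaussian coordinates of $Y=(Y_1,\dots,Y_n)$ with vanishing diagonal blocks; since the coefficient matrix $(UU_{i\cdot})(U_{\cdot j})\t$ is rank one with norm $\|U_{i\cdot}\|$ and $\tr(\Sigma_k\Sigma_l)\leq\sigma^4 d$, one has $\|B\|_F\lesssim\sigma^2\sqrt d\,\|U_{i\cdot}\|$ and $\|B\|_{\mathrm{op}}\lesssim\sigma^2\|U_{i\cdot}\|$, so Hanson--Wright gives $|R_{1b}|\lesssim\lambda_j^{-2}\sigma^2\|U_{i\cdot}\|\sqrt{d\log(n\vee d)}$ (the Frobenius term dominating since $d\geq\log(n\vee d)$) after a union bound. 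Normalizing with $\|U_{i\cdot}\|\leq\mu_0\sqrt{r/n}$, $\sigma\sqrt d/\lambda_r=1/(\snr\sqrt r)$, and $\kappa_\sigma\leq\kappa^2\kappa_\sigma$ shows $\tfrac1{\sigma_{ij}}(|R_{1a}|+|R_{1b}|)$ lies under the first summand.

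The main obstacle is $R_3$: one must verify that the leave‑one‑out/dependence bookkeeping of Theorem~\ref{thm:2_infty_random} survives multiplying the Neumann series by $U_{\cdot j}$ on the right, and in particular that every surviving monomial carries \emph{simultaneously} a power of $\|U\|_{2,\infty}$ (from the $e_i\t$ end, via Lemma~\ref{lem:2infty_general}) and a power of $\lambda_j^{-2}$ (from the $U_{\cdot j}$ end); dropping either factor inflates the bound by $\sqrt{d/n}$ or $\sqrt{n/(rd)}$, which Assumption~\ref{assumption:dimension} cannot absorb. The analogous, far simpler pitfall for $R_1$ is bounding $\|U\t EV_{\cdot j}\|$ by $\sigma\sqrt r$ instead of exploiting $\|UU_{i\cdot}\|=\|U_{i\cdot}\|$ to extract the extra incoherence factor.
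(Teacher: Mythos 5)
Your decomposition and toolkit match the paper's almost exactly: you split $R_1$ into the two pieces coming from $EM\t$ and $\Gamma(EE\t)$ (the paper's $R_{11}$, $R_{12}$) and treat them with Hoeffding and Hanson--Wright respectively, keyed to the crucial observation that $\|UU\t e_i\|=\|U_{i\cdot}\|\leq\|U\|_{2,\infty}$ rather than $\|U\t e_i\|\leq1$; you bound $R_2$ directly from Lemma~\ref{lem:diag}; and you bound $R_3$ by rerunning the geometric series of Lemma~\ref{lem:xia}/Lemma~\ref{lem:2infty_general} starting at $k=2$. This is precisely the paper's proof.

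The one place you diverge is a refinement that the paper does not bother with: you peel off the exact $\lambda_j^{-2s_{k+1}}$ factor from the last block $\mathcal{P}^{-s_{k+1}}Ue_j$ and propagate $\lambda_j^{-2}$ through the sum, landing at $|R_3|\lesssim\lambda_j^{-2}\|U\|_{2,\infty}\delta^2/\lambda_r^2$. The paper instead bounds the whole tail in $\ell_{2,\infty}$ norm, giving $|R_3|\lesssim\|U\|_{2,\infty}\delta^2/\lambda_r^4$, and then absorbs $1/\sigma_{ij}\leq\kappa_\sigma\lambda_j/\sigma\leq\kappa_\sigma\kappa\lambda_r/\sigma$. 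Your version saves a factor of $\kappa$ that is irrelevant here since the target bound already carries $\kappa^3$, so the extra bookkeeping buys nothing; it also requires slightly more care than you suggest, since Lemma~\ref{lem:2infty_general} controls $\|(\per W\per)^{p_0-1}WU\|_{2,\infty}$ (a row-wise bound that does not separate the column index $j$), and you therefore only get the clean $\lambda_j^{-2}$ factor from the terminal $\mathcal{P}^{-s_{k+1}}$, not from any intermediate $\mathcal{P}^{-s_m}$ — which is exactly what your inequality $\lambda_j^{-2s_{k+1}}\lambda_r^{-2(k-s_{k+1})}\leq\lambda_j^{-2}\lambda_r^{-2(k-1)}$ already accounts for, so this is fine. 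Two small notes: (i) your normalized $R_2$ bound actually sits under the \emph{first} summand $C_6\kappa_\sigma\kappa^2\mu_0\sqrt{r\log(n\vee d)/n}$, as in the paper, if one uses the intermediate bound $\|\tilde U_D\tilde U_D\t-\tilde U\tilde U\t\|\lesssim\lambda_1\sigma\sqrt{\log(n\vee d)}\,\|U\|_{2,\infty}/\lambda_r^2$ from the proof of Lemma~\ref{lem:diag} rather than the displayed statement (the displayed $\delta_1$ form uses only part of $\delta$); either way the contribution fits. (ii) You should tally the probability budget explicitly — $R_{11}$, $R_{12}$, $R_2$, and $R_3$ each cost $O((n\vee d)^{-4})$, summing to $5(n\vee d)^{-4}$ after relabeling constants, as in the paper.
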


\begin{restatable}{lemma}{residualsdet}\label{lem:residuals2}
On the intersection of the events in Theorem \ref{thm:2_infty} and Lemma \ref{lem:spectral_norm_concentration} the residual terms $R_4$ and $R_5$ satisfy for all $i$ and $j$,
\begin{align*}
\frac{1}{\sigma_{ij}}  \bigg( | R_4 | + |R_5| \bigg)  &\leq  C_8 \kappa^3\kappa_{\sigma} \mu_0  \frac{1}{\snr} + C_9  \kappa^4 \kappa_{\sigma} \mu_0^2 \frac{r}{\sqrt{n}}.
\end{align*}
for some universal constants $C_{8}$ and $C_{9}$.
\end{restatable}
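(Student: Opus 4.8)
The plan is to bound $R_4$ and $R_5$ deterministically on the event in the statement, which by the proof of Theorem~\ref{thm:2_infty} coincides with the intersection of the events of Theorem~\ref{thm:2_infty_random} and Lemma~\ref{lem:spectral_norm_concentration}. On this event we may freely use: the algorithmic error bound $\|\hat U-\tilde U\tilde H\|_{2,\infty}\lesssim\kappa^2\|U\|_{2,\infty}^2\|\Gamma(Z)\|/\lambda_r^2$ (Theorem~\ref{thm:2_infty_deterministic}); the incoherence bounds $\|\tilde U\|_{2,\infty}\lesssim\|U\|_{2,\infty}$ and $\|\hat U\|_{2,\infty}\lesssim\|U\|_{2,\infty}$ (Theorems~\ref{thm:2_infty_random} and~\ref{thm:2_infty}); the $\sin\Theta$ bounds $\|\sin\Theta(\hat U,\tilde U)\|\lesssim\|\Gamma(Z)\|/\lambda_r^2$ and $\|\sin\Theta(U,\tilde U)\|\lesssim\|\Gamma(Z)\|/\lambda_r^2$ (Davis--Kahan together with Lemmas~\ref{lem:deterministic_spectral},~\ref{lem:eigengaps},~\ref{lem:spectral_norm_concentration} and Assumption~\ref{assumption:eigengap}); and the spectral-norm bound $\|\Gamma(Z)\|\lesssim\sigma^2(n+\sqrt{nd})+\sigma\sqrt n\kappa\lambda_r$ (Lemma~\ref{lem:spectral_norm_concentration}). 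Throughout set $H:=U\t\tilde U$ and $\tilde H:=\tilde U\t\hat U$, so that $\hat U\t\tilde U=\tilde H\t$ and $\tilde U\t U=H\t$, and in particular $\hat U\hat U\t\tilde U\tilde U\t U=\hat U\tilde H\t H\t=\hat U(H\tilde H)\t$.

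For $R_5$, writing $\hat U=\tilde U\tilde H+(\hat U-\tilde U\tilde H)$ in $\hat U\hat U\t\tilde U\tilde U\t U=\hat U\tilde H\t H\t$ gives $R_5=e_i\t\tilde U(\tilde H\tilde H\t-I)H\t e_j+e_i\t(\hat U-\tilde U\tilde H)\tilde H\t H\t e_j$; since $\|I-\tilde H\tilde H\t\|=\|\sin\Theta(\hat U,\tilde U)\|^2$ and $\|\tilde H\|,\|H\|\le1$, this yields $|R_5|\le\|\tilde U\|_{2,\infty}\|\sin\Theta(\hat U,\tilde U)\|^2+\|\hat U-\tilde U\tilde H\|_{2,\infty}\lesssim\|U\|_{2,\infty}(\|\Gamma(Z)\|/\lambda_r^2)^2+\kappa^2\|U\|_{2,\infty}^2\|\Gamma(Z)\|/\lambda_r^2$. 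The first summand has the same form as the bound we obtain for $R_4$ below, so it suffices to carry the second summand forward.

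For $R_4$, take $\mathcal{O}_*$ to be the orthogonal polar factor of $H\tilde H$, the choice underlying Lemma~\ref{orthogonal_matrix_lemma}; then $R_4=e_i\t\hat U(\mathcal{O}_*\t-H\tilde H)\t e_j$, so $|R_4|\le\|\hat U\|_{2,\infty}\|\mathcal{O}_*\t-H\tilde H\|$. Because the polar factor of an $r\times r$ contraction $C$ satisfies $\|C-\mathrm{polar}(C)\|\le\|I-C\t C\|$ (apply $|p-1|\le|p^2-1|$ for $p\ge0$ to the singular values of $C$), and $I-(H\tilde H)\t(H\tilde H)=(I-\tilde H\t\tilde H)+\tilde H\t(I-H\t H)\tilde H$ with $\|I-\tilde H\t\tilde H\|=\|\sin\Theta(\hat U,\tilde U)\|^2$ and $\|I-H\t H\|=\|\sin\Theta(U,\tilde U)\|^2$, we obtain $\|\mathcal{O}_*\t-H\tilde H\|\lesssim(\|\Gamma(Z)\|/\lambda_r^2)^2$ and hence $|R_4|\lesssim\|U\|_{2,\infty}(\|\Gamma(Z)\|/\lambda_r^2)^2$.

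It remains to substitute the quantitative estimates. From Lemma~\ref{lem:spectral_norm_concentration} and $d\ge c_1 n$, $\|\Gamma(Z)\|/\lambda_r^2\lesssim\sigma^2\sqrt{nd}/\lambda_r^2+\sigma\sqrt n\kappa/\lambda_r$; rewriting these via $\lambda_r^2=\snr^2\sigma^2 rd$ and $\lambda_r=\snr\sigma\sqrt{rd}$, and using $\snr\gtrsim\kappa$ (Assumption~\ref{assumption:eigengap}), gives $\|\Gamma(Z)\|/\lambda_r^2\lesssim\sqrt{n/d}\,\kappa/(\snr\sqrt r)$. Plugging in $\|U\|_{2,\infty}\le\mu_0\sqrt{r/n}$ and $1/\sigma_{ij}=\lambda_j/\|\Sigma_i^{1/2}V_{\cdot j}\|\le\kappa\kappa_{\sigma}\lambda_r/\sigma=\kappa\kappa_{\sigma}\snr\sqrt{rd}$ (Assumption~\ref{assumption:be_assumptions}), the term $\|U\|_{2,\infty}(\|\Gamma(Z)\|/\lambda_r^2)^2$ contributes, after dividing by $\sigma_{ij}$, at most $\kappa^3\kappa_{\sigma}\mu_0/\snr$, and the term $\kappa^2\|U\|_{2,\infty}^2\|\Gamma(Z)\|/\lambda_r^2$ contributes at most $\kappa^4\kappa_{\sigma}\mu_0^2\,r/\sqrt n$; this gives the claimed inequality after renaming constants. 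The main obstacle is the $R_4$ step: Lemma~\ref{orthogonal_matrix_lemma} only supplies an $\ell_{2,\infty}$ bound on $U(H\tilde H-\mathcal{O}_*\t)$, and converting it to a spectral-norm bound via $\|X\|\le\sqrt n\,\|X\|_{2,\infty}$ would cost a factor $\sqrt n$, producing spurious $\sqrt r$ and $\mu_0$ factors; one must instead obtain $\|\mathcal{O}_*\t-H\tilde H\|\lesssim(\|\Gamma(Z)\|/\lambda_r^2)^2$ directly as above. A secondary bookkeeping point is that the factors $\sqrt{n/d}$ must be retained rather than bounded by $O(1)$, since they are exactly what turns the raw bounds (which carry a $\sqrt d$) into bounds scaling with $1/\snr$ and $r/\sqrt n$.
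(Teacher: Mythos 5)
Your proof is correct and follows essentially the same route as the paper's: bound $|R_4|$ and $|R_5|$ by $\|U\|_{2,\infty}(\|\Gamma(Z)\|/\lambda_r^2)^2$ and $\kappa^2\|U\|_{2,\infty}^2\|\Gamma(Z)\|/\lambda_r^2$ respectively (up to constants), then convert via Lemma~\ref{lem:spectral_norm_concentration}, Assumption~\ref{assumption:be_assumptions}, and the incoherence bound. Two small remarks. First, your identity $R_5=e_i\t\bigl[\tilde U(\tilde H\tilde H\t-I)+(\hat U-\tilde U\tilde H)\tilde H\t\bigr]H\t e_j$ is a cleaner decomposition than the paper's, which introduces an auxiliary Procrustes matrix $\mathcal{O}_1=\argmin_{\mathcal{O}}\|\hat U-\tilde U\mathcal{O}\|_F$; both give the same two summands. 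Second, your claim that $\mathcal{O}_*=\mathrm{polar}(H\tilde H)\t$ is ``the choice underlying Lemma~\ref{orthogonal_matrix_lemma}'' is not quite right: the paper takes $\mathcal{O}_*=(\mathcal{O}^{(1)}\mathcal{O}^{(2)})\t$ with $\mathcal{O}^{(1)}=\mathrm{polar}(\tilde U\t U)$ and $\mathcal{O}^{(2)}=\mathrm{polar}(\hat U\t\tilde U)$, which generally differs from the polar factor of the product $H\tilde H$. This does not affect correctness---your $\mathcal{O}_*$ also satisfies the conclusion of Lemma~\ref{orthogonal_matrix_lemma} via $\|C-\mathrm{polar}(C)\|\le\|I-C\t C\|$ and the same Theorem~\ref{thm:b-e} proof goes through with it, since the theorem only asserts the existence of some orthogonal matrix---but the choice should be stated as an alternative rather than identified with the paper's.
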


To bound $R_0$, we note that we can equivalently write
\begin{align*}
R_0 :=    \sum_{k\neq i} \langle E_i, E_k \rangle (U \Lambda^{-2})_{kj} = \sum_{k\neq i} \langle E_i, E_k \rangle U_{kj} \lambda_j^{-2}.  
\end{align*}
We have the following bound for $R_0$. 

\begin{restatable}{lemma}{finalres}\label{lem:final_res}
There exists a universal constant $C_{10}$ such that with probability at least $1 - 4(n\vee d)^{-4}$
\begin{align*}
    \frac{1}{\sigma_{ij}}\bigg| \sum_{k\neq i} \langle E_i, E_k \rangle U_{kj} \lambda_j^{-2} \bigg|  &\leq C_{10} \mu_0 \kappa_{\sigma} \frac{\log(n\vee d)}{\snr},
\end{align*}
where the probability is uniform over $i$ and $j$.
\end{restatable}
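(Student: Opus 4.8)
The plan is to exploit the bilinear structure of $R_0 = \sum_{k\neq i}\langle E_i, E_k\rangle U_{kj}\lambda_j^{-2}$ by conditioning on the $i$-th noise row $E_i$, after which the sum over $k$ becomes a sum of conditionally independent, conditionally centered subgaussian variables. First I would dispense with the deterministic factors: since $\lambda_j \geq \lambda_r$ and Assumption~\ref{assumption:be_assumptions} gives $\|\Sigma_i^{1/2}V_{\cdot j}\|^{-1} \leq \kappa_{\sigma}/\sigma$, we have
\begin{align*}
  \frac{1}{\sigma_{ij}}|R_0| = \frac{1}{\lambda_j\,\|\Sigma_i^{1/2}V_{\cdot j}\|}\Bigl|\sum_{k\neq i}\langle E_i, E_k\rangle U_{kj}\Bigr| \leq \frac{\kappa_{\sigma}}{\sigma\lambda_r}\Bigl|\sum_{k\neq i}\langle E_i, E_k\rangle U_{kj}\Bigr|;
\end{align*}
since $\log(n\vee d)/\snr = \sigma\sqrt{rd}\,\log(n\vee d)/\lambda_r$ and $\mu_0, r, \log(n\vee d) \geq 1$, it therefore suffices to establish the cruder estimate $\bigl|\sum_{k\neq i}\langle E_i, E_k\rangle U_{kj}\bigr| \lesssim \sigma^2\sqrt{d\log(n\vee d)}$, uniformly in $i$ and $j$, on an event of probability $1 - O((n\vee d)^{-4})$.

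I would then fix $i,j$ and condition on $E_i$. Because the rows of $E$ are independent (Assumption~\ref{assumption:noise}), the summands $\langle E_i, E_k\rangle U_{kj}$, $k\neq i$, are conditionally independent, and each is conditionally mean zero as $\E E_k = 0$. Writing $\langle E_i, E_k\rangle U_{kj} = \langle \Sigma_k^{1/2}E_i, Y_k\rangle U_{kj}$ exhibits it as a linear form in the vector $Y_k$, whose $\psi_2$ norm is at most $1$ by Assumption~\ref{assumption:noise}, so its conditional $\psi_2$ norm is at most $\|\Sigma_k^{1/2}E_i\|\,|U_{kj}| \leq \sigma\|E_i\|\,|U_{kj}|$. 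Summing independent centered subgaussians and using $\sum_k U_{kj}^2 = 1$, the conditional $\psi_2$ norm of $\sum_{k\neq i}\langle E_i, E_k\rangle U_{kj}$ is of order $\sigma\|E_i\|$, so a standard subgaussian tail bound gives
\begin{align*}
  \Bigl|\sum_{k\neq i}\langle E_i, E_k\rangle U_{kj}\Bigr| \leq C\,\sigma\|E_i\|\sqrt{\log(n\vee d)}
\end{align*}
with conditional probability at least $1 - (n\vee d)^{-6}$ for $C$ large enough; integrating out $E_i$ the bound holds unconditionally with the same probability.

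The last ingredient is a high-probability bound on $\|E_i\|$. Since $\|E_i\|^2 = Y_i\t\Sigma_i Y_i$ is a quadratic form in a subgaussian vector, the Hanson--Wright inequality gives $\|E_i\|^2 \leq \tr(\Sigma_i) + C\bigl(\|\Sigma_i\|_F\sqrt{\log(n\vee d)} + \|\Sigma_i\|\log(n\vee d)\bigr)$ with probability at least $1 - (n\vee d)^{-6}$; bounding $\tr(\Sigma_i) \leq d\sigma^2$, $\|\Sigma_i\|_F \leq \sqrt{d}\,\sigma^2$, $\|\Sigma_i\| \leq \sigma^2$, and using $\log(n\vee d) \leq d$ from Assumption~\ref{assumption:dimension}, this reads $\|E_i\| \leq C'\sigma\sqrt{d}$. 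Combining with the previous display gives $\bigl|\sum_{k\neq i}\langle E_i, E_k\rangle U_{kj}\bigr| \lesssim \sigma^2\sqrt{d\log(n\vee d)}$ off an event of probability at most $2(n\vee d)^{-6}$, and a union bound over the at most $n$ rows (for the $\|E_i\|$ events) and the at most $nr \leq (n\vee d)^2$ pairs $(i,j)$ (for the conditional tail events) makes this simultaneous over all $i,j$ at a total cost of at most $4(n\vee d)^{-4}$ in probability. Substituting back into the first display completes the proof. I do not expect a genuine obstacle here: the only real care is the bookkeeping of the nested conditionings so that the $\|E_i\|$ bound and the conditional subgaussian bound can be intersected across all indices, and in fact the argument yields an estimate somewhat stronger than claimed (with $\sqrt{\log(n\vee d)}$ in place of $\log(n\vee d)$ and no $\mu_0\sqrt{r}$ factor), reflecting that $R_0$ contributes only at lower order in Theorem~\ref{thm:b-e}.
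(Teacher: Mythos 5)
Your proof is correct and uses essentially the same strategy as the paper: condition on $E_i$, observe the summands are conditionally independent mean-zero subgaussians, bound the conditional $\psi_2$ norm, and combine with a separate high-probability bound on $\|E_i\|$. Your two refinements — using $\sum_k U_{kj}^2 = 1$ instead of the cruder $\sqrt{n}\,\|U\|_{2,\infty}\leq \mu_0\sqrt{r}$ for the conditional $\psi_2$ norm, and absorbing the $\|E_i\|$ tail into the constant rather than multiplying by a second $\sqrt{\log}$ — do indeed yield a bound sharper by a factor of $\mu_0\sqrt{r\log(n\vee d)}$, which implies the lemma's stated bound a fortiori.
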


Let $R := R_0 + R_1 + R_2 + R_3 + R_4 + R_5$.  This argument leaves us with
\begin{align*}
   e_i\t(\hat U\mathcal{O}_* - U) e_j &=  e_i\t EM\t U\Lambda^{-2} e_j + R.
\end{align*}
Note in addition that  $M\t U\Lambda^{-2} = V\Lambda U\t U \Lambda^{-2} =  V\Lambda^{-1}$ by definition.  Hence, the term $e_i\t EM\t U\Lambda^{-2} e_j$ can be equivalently written as $\langle E_i, V_{\cdot j}\rangle \lambda_j\inv$
where $V_{\cdot j}$ is the $j$'th column of the matrix $V$, which justifies equation \eqref{mainresidual}.

Now, combining the bounds for the residuals in Lemmas \ref{lem:residuals1}, \ref{lem:residuals2}, and \ref{lem:final_res}, we see that with probability at least $1 - 10(n\vee d)^{-4} - 4(n\vee d)^{-6} \geq 1 - (n\vee d)^{-3}$,
\begin{align*}
    \frac{|R|}{\sigma_{ij}} &\leq C_6 \kappa_{\sigma} \kappa^2 \mu_0 \sqrt{\frac{r\log(n\vee d)}{n}} + C_7 \kappa^3 \kappa_{\sigma} \mu_0 \frac{r\log(n\vee d)}{\snr} + C_8 \kappa^3 \kappa_{\sigma} \mu_0 \frac{1}{\snr} \\
    &\qquad + C_9 \kappa^4\kappa_{\sigma} \mu_0^2 \frac{r}{\sqrt{n}} + C_{10} \mu_0 \kappa_{\sigma} \frac{\log(n\vee d)}{\snr}  \\
    &\leq \tilde C_1 \kappa^3 \kappa_{\sigma} \mu_0 \frac{r\log(n\vee d)}{\snr} + \tilde C_2 \kappa^2 \kappa_{\sigma} \mu_0 \sqrt{\frac{r}{n}} \bigg(  \sqrt{\log(n\vee d)} + \mu_0 \kappa^2 \sqrt{r} \bigg) \\
    =&: B.
\end{align*}

We can now complete the proof of Theorem \ref{thm:b-e}. By the classical Berry-Esseen Theorem \citep{berry_accuracy_1941}, for any $x \in \R$, denoting $Y_{i\alpha}$ as the $\alpha$ entry of $Y_i$, 
\begin{align*}
    \bigg|\p\bigg( \frac{\langle E_i, V_{\cdot j} \rangle}{\|\Sigma_i^{1/2} V_{\cdot j}\|} > x \bigg) - \Phi(x) \bigg| &\leq C \frac{\sum_{\alpha} | (\Sigma_i^{1/2}V_{\cdot j})_\alpha|^3 \E | Y_{i\alpha}|^{3}}{\|\Sigma_i^{1/2}V_{\cdot j}\|^3} \\
&\leq C \frac{ \|\Sigma_i^{1/2} V_{\cdot j}\|_3^3}{\|\Sigma_i^{1/2}V_{\cdot j}\|^3}.
\end{align*}
Hence, by the Lipchitz property of $\Phi$, (e.g. \citet{xia_normal_2019})
\begin{align*}
\p\bigg( \frac{1}{\sigma_{ij}} e_i\t \bigg( \hat U\mathcal{O}_* - U\bigg)e_j \leq x \bigg) &\leq \p\Bigg\{\frac{\langle E_i, V_{\cdot j} \rangle}{\|\Sigma_i^{1/2} V_{\cdot j}\|} \leq x + B \bigg\} + (n\vee d)^{-3} \\
&\leq  \Phi(x + B) + C \frac{ \|\Sigma_i^{1/2} V_{\cdot j}\|_3^3}{\|\Sigma_i^{1/2}V_{\cdot j}\|^3} + (n\vee  d)^{-3} \\
&\leq \Phi(x) +  C \frac{ \|\Sigma_i^{1/2} V_{\cdot j}\|_3^3}{\|\Sigma_i^{1/2}V_{\cdot j}\|^3} + B + (n\vee d)^{-3}.
\end{align*}
A similar bound for the left tail also holds.  
 Therefore, after relabeling constants and noting that $\kappa^2 \kappa_{\sigma} \sqrt{r/n} \geq (n\vee d)^{-3}$, we conclude that
 \begin{align*}
     \sup_{x\in \R} \bigg| \p\bigg( \frac{1}{\sigma_{ij}} e_i\t \bigg(\hat U\mathcal{O}_* - U\bigg) e_j \leq x \bigg) - \Phi(x) \bigg| &\leq C \frac{ \|\Sigma_i^{1/2} V_{\cdot j}\|_3^3}{\|\Sigma_i^{1/2}V_{\cdot j}\|^3} + B + (n\vee d)^{-3} \\
     &\leq C_1 \frac{ \|\Sigma_i^{1/2} V_{\cdot j}\|_3^3}{\|\Sigma_i^{1/2}V_{\cdot j}\|^3} + C_2 \kappa^3 \kappa_{\sigma} \mu_0 \frac{r\log(n\vee d)}{\snr} \\
     &\qquad + C_3 \kappa^2 \kappa_{\sigma} \mu_0 \sqrt{\frac{r}{n}} \bigg(  \sqrt{\log(n\vee d)} + \mu_0 \kappa^2 \sqrt{r} \bigg).
 \end{align*}

 \section{Proof of Corollaries in Section \ref{sec:applications}}
 \begin{proof}[Proof of Corollary \ref{cor:mixture}]
By the proof of Theorem \ref{thm:b-e}, we have that
\begin{align*}
    e_i\t \bigg( \hat U \mathcal{O}_* - U \bigg)e_j &= \langle E_i, V_{\cdot j} \rangle \lambda_j\inv + R_{ij},
\end{align*}
Hence. the $i$'th row of $\hat U$ satisfies
\begin{align*}
    e_i\t \bigg( \hat U \mathcal{O}_* - U \bigg) &= E_i\t V \Lambda\inv + R_{i}.
\end{align*}
We now analyze $(S_i)^{-1/2} R_i$.  However, by Lemmas \ref{lem:residuals1}, \ref{lem:residuals2}, and \ref{lem:final_res}, we see that $R_{ij}$ satisfies with probability at least $1 - (n\vee d)^{-3}$
\begin{align*}
    |R|_{ij} &\leq C_2 \sigma_{ij}  \kappa^3 \kappa_{\sigma} \mu_0 \frac{r \log(n\vee d)}{\snr} + C_3 \sigma_{ij}\kappa^2 \kappa_{\sigma} \mu_0 \sqrt{\frac{r}{n}} \bigg( \sqrt{\log(n\vee d)} + \mu_0 \kappa^2 \sqrt{r}\bigg).
\end{align*}
In addition, note that
\begin{align*}
    \|S_{i}^{-1/2}\| \sigma_{ij} &\leq \kappa \kappa_{\sigma}.
\end{align*}
Therefore,
\begin{align*}
    S_i^{-1/2} R_i \to 0
\end{align*}
in probability (and almost surely) as $n$ and $d $ tend to infinity, since $\|S_i^{-1/2}\| \sigma_{ij} = O(1)$ when $\kappa$ and $\kappa_{\sigma}$ are bounded.  
Furthermore, we note that
\begin{align*}
    \E  \bigg(\langle E_i, V_{\cdot j} \rangle \langle E_i, V_{\cdot k} \rangle \lambda_j\inv \lambda_k\inv \bigg) &= (S_i)_{jk}.
\end{align*}
Hence, the result holds by the Cramer-Wold device and Slutsky's Theorem.    
\end{proof}

\begin{proof}[Proof of Corollary \ref{cor:consistent_estimate}]
 Without loss of generality assume that $C_k = \{1, \cdots, n_k\}$, or else reorder the matrix.  Furthermore, we assume that the set of indices for community $k$ is known; under the assumptions for Theorem \ref{thm:2_infty} this will be true for sufficiently large $n,d$ since $\| \hat U - U \mathcal{O}_* \| \ll \|U\|_{2,\infty}$ and each row of $U$ reveals the community memberships by Lemma 2.1 of \citet{lei_consistency_2015}.

In what follows, recall that 
$\Lambda\inv V\t E_i$ is an $r$-dimensional column vector and $E_i\t V \Lambda\inv$ is a row vector.  For convenience, we let $\bar U^{(k)}$ denote the rank one matrix whose rows are all just $\bar U^{(k)}$ .  
By the expansion in the proof Theorem \ref{thm:b-e}, we have that
\begin{align*}
    \frac{1}{n_k} \sum_{i=1}^{n_k} \bigg(\hat U  - \bar{U}^{(k)} \bigg)_{i\cdot} \bigg(\hat U  - \bar{U}^{(k)} \bigg)_{i\cdot}\t &=  \frac{1}{n_k} \sum_{i=1}^{n_k} \bigg(\hat U  - U\mathcal{O}_*\t \bigg)_{i\cdot} \bigg(\hat U  - U\mathcal{O}_*\t \bigg)_{i\cdot}\t \\
    &\qquad +  \frac{1}{n_k} \sum_{i=1}^{n_k} \bigg(\hat U  - U\mathcal{O}_*\t \bigg)_{i\cdot}\bigg(U\mathcal{O}_*\t - \bar{U}^{(k)} \bigg)_{i\cdot}\t \\
    &\qquad +  \frac{1}{n_k} \sum_{i=1}^{n_k}\bigg(U\mathcal{O}_*\t - \bar{U}^{(k)} \bigg)_{i\cdot} \bigg(\hat U  - U\mathcal{O}_*\t \bigg)_{i\cdot}\t\\
    &\qquad + \frac{1}{n_k} \sum_{i=1}^{n_k} \bigg(\bar{U}^{(k)}  - U \mathcal{O}_*\t \bigg)_{i\cdot} \bigg(\bar{U}^{(k)}  - U\mathcal{O}_*\t \bigg)_{i\cdot}\t \\
    :&=J_1 +J_2 + J_3 
\end{align*}
where
\begin{align*}
    J_1 :&=  \frac{1}{n_k} \sum_{i=1}^{n_k} \bigg(\hat U  - U\mathcal{O}_*\t \bigg)_{i\cdot}\bigg(\hat U  - U\mathcal{O}_*\t \bigg)_{i\cdot}\t; \\ 
   J_2 :&= \frac{1}{n_k} \sum_{i=1}^{n_k} \bigg(\bar{U}^{(k)}  - U \mathcal{O}_*\t \bigg)_{i\cdot} \bigg(\bar{U}^{(k)}  - U\mathcal{O}_*\t \bigg)_{i\cdot}\t; \\
    J_3 :&=  \frac{1}{n_k} \sum_{i=1}^{n_k} \bigg(\hat U  - U\mathcal{O}_*\t \bigg)_{i\cdot}\bigg(U\mathcal{O}_*\t - \bar{U}^{(k)} \bigg)_{i\cdot}\t  +  \frac{1}{n_k} \sum_{i=1}^{n_k}\bigg(U\mathcal{O}_*\t - \bar{U}^{(k)} \bigg)_{i\cdot} \bigg(\hat U  - U\mathcal{O}_*\t \bigg)_{i\cdot}\t.
\end{align*}
We will show that $(S^{(k)})\inv J_1$ converges to $I_r$ in probability and that the other terms tend to zero in probability.  
\\ \ \\ \noindent
\textbf{The term $J_1$:} Using the same expansion as in the proof for Corollary \ref{cor:mixture} we expand out $\hat U - U\mathcal{O}_*\t$ via
\begin{align*}
    J_1 :&= \frac{1}{n_k} \sum_{i=1}^{n_k} \bigg(\hat U  - U\mathcal{O}_*\t \bigg)_{i\cdot}\bigg(\hat U  - U\mathcal{O}_*\t \bigg)_{i\cdot}\t \\
    &= \frac{1}{n_k}\sum_{i=1}^{n_k} \bigg(  \mathcal{O}_*(\Lambda\inv V\t E_i) +\mathcal{O}_* R_i \bigg)\bigg(  \mathcal{O}_*(\Lambda\inv V\t E_i) +\mathcal{O}_* R_i \bigg)\t \\
    &=  \frac{1}{n_k} \sum_{i=1}^{n_k} \mathcal{O}_* (\Lambda\inv V\t E_i E_i\t V \Lambda\inv)  \mathcal{O}_*\t + \frac{1}{n_k} \sum_{i=1}^{n_k} \mathcal{O}_* R_i (E_i\t V \Lambda\inv)  \mathcal{O}_*\t  + \frac{1}{n_k} \sum_{i=1}^{n_k} \mathcal{O}_* (\Lambda\inv  V\t E_i )R_i  \mathcal{O}_*\t \\
    &\qquad + \frac{1}{n_k} \sum_{i=1}^{n_k} \mathcal{O}_* R_i R_i\t \mathcal{O}_*\t \\
    :&= J_{11} + J_{12} + J_{13},
\end{align*}
where
\begin{align*}
    J_{11} :&= \frac{1}{n_k} \sum_{i=1}^{n_k} \mathcal{O}_* (\Lambda\inv V\t E_i E_i\t V \Lambda\inv)  \mathcal{O}_*\t; \\
    J_{12} :&= \frac{1}{n_k} \sum_{i=1}^{n_k} \mathcal{O}_* R_i (E_i\t V \Lambda\inv)  \mathcal{O}_*\t  + \frac{1}{n_k} \sum_{i=1}^{n_k} \mathcal{O}_* (\Lambda\inv  V\t E_i )R_i  \mathcal{O}_*\t; \\
    J_{13} :&= \frac{1}{n_k} \sum_{i=1}^{n_k} \mathcal{O}_* R_i R_i\t \mathcal{O}_*\t.
\end{align*}
Since $E_i = (\Sigma^{(k)})^{1/2} Y_i$, the term $J_{11}$ satisfies
\begin{align*}
 \mathcal{O}_*  ( S^{(k)})\inv \mathcal{O}_*\t J_{11} = \mathcal{O}_*  ( S^{(k)})\inv \frac{1}{n_k} \sum_{i=1}^{n_k} \Lambda\inv V\t \Sigma_i^{1/2} Y_i Y_i\t \Sigma_i^{1/2} V \Lambda\inv \mathcal{O}_*\t.
\end{align*}
The random variable $ \Lambda\inv V\t \Sigma_i^{1/2} Y_i Y_i\t \Sigma_i^{1/2} V \Lambda\inv$ is an $r \times r$ matrix with expectation $S^{(k)}$, and hence $ \|\mathcal{O}_* (S^{(k)})\inv \mathcal{O}_*\t J_1 - I_r \| \to 0$ in probability by the strong law of large numbers, the rotation invariance of the spectral norm, and the fact that each $Y_i$ has independent subgaussian components. We now show the other terms all tend to zero in probability.

As for $J_{12}$, we analyze the term
\begin{align*}
     \mathcal{O}_*  ( S^{(k)})\inv \mathcal{O}_*\t \frac{1}{n_k} \sum_{i=1}^{n_k} \mathcal{O}_* R_i (E_i\t V \Lambda\inv)  \mathcal{O}_*\t &=  \mathcal{O}_*  ( S^{(k)})\inv\frac{1}{n_k} \sum_{i=1}^{n_k}  R_i (E_i\t V \Lambda\inv)  \mathcal{O}_*\t.
\end{align*}
The other term is similar.  By the rotational invariance of the spectral norm, we may ignore the orthogonal matrices henceforth.  By the residual bounds in Lemmas \ref{lem:residuals1}, \ref{lem:residuals2}, and \ref{lem:final_res}, we have that with probability at least $1 - (n\vee d)^{-3}$,
\begin{align*}
  \max_{i} \|(S^{(k)})^{-1/2}  R_{i}\| \lesssim  \frac{\log(n\vee d)}{\snr} +  \sqrt{\frac{\log(n\vee d)}{n}}, 
\end{align*}
where we let the implicit constants depend on $\kappa, \mu_0, \kappa_{\sigma},$ and $r$, since they are assumed bounded in $n$ and $d$.  Let this event be denoted $\mathcal{E}$.  Then
\begin{align*}
  \p\bigg( \|   ( S^{(k)})\inv\frac{1}{n_k} \sum_{i=1}^{n_k}  R_i (E_i\t V \Lambda\inv)  \| > t \bigg) &\leq \p\bigg( \|   ( S^{(k)})\inv\frac{1}{n_k} \sum_{i=1}^{n_k}  R_i (E_i\t V \Lambda\inv)   \| > t \cap\mathcal{E} \bigg) + (n\vee d)^{-3},
\end{align*}
so it suffices to analyze this term on the event $\mathcal{E}$.  Note that the vector $E_i\t V \Lambda\inv$ is an $r-$dimensional random variable with covariance matrix $S^{(k)}$.  Note that the condition number of $S^{(k)}$ is at most $\kappa^2 \kappa_{\sigma}^2$, and hence $(S^{(k)})^{1/2}$ has condition number at most $\kappa \kappa_{\sigma}$.  Therefore,
\begin{align*}
    \|   ( S^{(k)})\inv\frac{1}{n_k} \sum_{i=1}^{n_k}  R_i (E_i\t V \Lambda\inv)  \| &\leq \kappa \kappa_{\sigma}  \|   ( S^{(k)})^{-1/2}\frac{1}{n_k} \sum_{i=1}^{n_k}  R_i (E_i\t V \Lambda\inv)   ( S^{(k)})^{-1/2} \| \\
    &\leq  r \kappa \kappa_{\sigma}  \|  \frac{1}{n_k} \sum_{i=1}^{n_k}  ( S^{(k)})^{-1/2}  R_i (E_i\t V \Lambda\inv)   ( S^{(k)})^{-1/2} \|_{\max}.
\end{align*}
Now consider the $j,l$ entry of the above matrix, which can be written as
\begin{align*}
  \frac{1}{n_k} \sum_{i=1}^{n_k}  \bigg( ( S^{(k)})^{-1/2} R_i\bigg)_j \bigg[(E_i\t V \Lambda\inv)   ( S^{(k)})^{-1/2} \bigg]_l.
\end{align*}
By (the restricted) Markov's inequality,
\begin{align*}
    \p\bigg( \bigg| \frac{1}{n_k} \sum_{i=1}^{n_k}  \bigg( ( S^{(k)})^{-1/2} R_i\bigg)_j \bigg[(E_i\t V \Lambda\inv)   &( S^{(k)})^{-1/2} \bigg]_l \bigg| > t \cap \mathcal{E} \bigg) \\
    &\leq \frac{1}{t} \E \mathbb{I}_{\mathcal{E}} \bigg| \frac{1}{n_k} \sum_{i=1}^{n_k}  \bigg( ( S^{(k)})^{-1/2} R_i\bigg)_j \bigg[(E_i\t V \Lambda\inv)   ( S^{(k)})^{-1/2} \bigg]_l \bigg| \\
    &\leq \frac{1}{t} \max_i   \E \mathbb{I}_{\mathcal{E}} \bigg|  \bigg( ( S^{(k)})^{-1/2} R_i\bigg)_j \bigg[(E_i\t V \Lambda\inv)   ( S^{(k)})^{-1/2} \bigg]_l \bigg| \\
    &\lesssim \frac{1}{t} \bigg( \frac{\log(n\vee d)}{\snr} +  \sqrt{\frac{\log(n\vee d)}{n}}\bigg)  \E  \bigg| \bigg[(E_i\t V \Lambda\inv)   ( S^{(k)})^{-1/2} \bigg]_l \bigg| \\
    &\lesssim\frac{1}{t} \bigg( \frac{\log(n\vee d)}{\snr} +  \sqrt{\frac{\log(n\vee d)}{n}}\bigg),
\end{align*}
where the final inequality is due to the fact that $E_i\t V \Lambda\inv (S^{(k)})^{-1/2}$ is an isotropic $r$-dimensional subgaussian random variable, and hence has moments are bounded by $O(1)$.  Therefore, we conclude that since the $j,l$ entry converges to zero in probability, since $r$ is fixed, we conclude that $(S^{(k)})\inv J_{12}$ converges to zero in probability.

For $J_{13}$, after accounting for orthogonal matrices, we note that 
\begin{align*}
     \frac{1}{n_k} \bigg\| (S^{(k)})\inv \sum_{i=1}^{n_k} R_i R_i\t \bigg\| &\leq  \max_i \| (S^{(k)})\inv R_i R_i\t \|.
\end{align*}
The matrix $R_i R_i\t$ is rank one and $S^{(k)}$ is assumed to be positive definite by Assumption \ref{assumption:be_assumptions}.  Therefore this term satisfies
\begin{align*}
    \max_i \| (S^{(k)})\inv R_i R_i\t \| &= \max_i R_i\t (S^{(k)})\inv R_i\t \\
    &= \max_i \| (S^{(k)})^{-1/2} R_i \|^2.
\end{align*}
By the argument for the same term in the proof of Corollary \ref{cor:b-e}, we conclude that this term tends to zero in probability, where we have implicitly used the fact that the bounds in Lemmas \ref{lem:residuals1}, \ref{lem:residuals2} and \ref{lem:final_res} are uniform over $i$.  
\\ \ \\ \noindent
\textbf{The term $J_2$:} We note that $\bar U^{(k)}$ is the same for all $i$ and by Lemma 2.1 of \citet{lei_consistency_2015}, the term $U_{i\cdot}$ is the same for all $i$ belonging to community $k$.  Hence, again using the asymptotic expansion as in the proof of Corollary \ref{cor:b-e}, we have that
\begin{align*}
    \bigg( \bar U^{(k)}  - U^{(k)}\mathcal{O}_*\t \bigg)\bigg( \bar U^{(k)} - U^{(k)}\mathcal{O}_*\t \bigg)\t    &=  \bigg( \frac{1}{n_k} \sum_{i=1}^{n_k} (\hat U_i - U_i \mathcal{O}_*\t) \bigg) \bigg( \frac{1}{n_k} \sum_{i=1}^{n_k} (\hat U_i - U_i \mathcal{O}_*\t) \bigg)\t \\
    &=   \bigg( \frac{1}{n_k} \sum_{i=1}^{n_k} \mathcal{O}_*(\Lambda\inv V\t E_i) +\mathcal{O}_* R_i   \bigg) \bigg( \frac{1}{n_k} \sum_{i=1}^{n_k} \mathcal{O}_*(\Lambda\inv V\t E_i) +\mathcal{O}_* R_i  \bigg)\t \\
    &= \mathcal{O}_* \bigg( \frac{1}{n_k} \sum_{i=1}^{n_k} \Lambda\inv V\t E_i \bigg) \bigg( \frac{1}{n_k} \sum_{i=1}^{n_k} \Lambda\inv V\t E_i \bigg)\t \mathcal{O}_*\t \\&\qquad +  \mathcal{O}_* \bigg( \frac{1}{n_k} \sum_{i=1}^{n_k} R_i  \bigg) \bigg( \frac{1}{n_k} \sum_{i=1}^{n_k} \Lambda\inv V\t E_i   \bigg)\t \mathcal{O}_*\t
    \\&\qquad +  \mathcal{O}_* \bigg( \frac{1}{n_k} \sum_{i=1}^{n_k} \Lambda\inv V\t E_i   \bigg)\bigg( \frac{1}{n_k} \sum_{i=1}^{n_k} R_i  \bigg)\t \mathcal{O}_*\t\\
    &\qquad + \mathcal{O}_* \bigg( \frac{1}{n_k} \sum_{i=1}^{n_k} R_i  \bigg)\bigg( \frac{1}{n_k} \sum_{i=1}^{n_k} R_i  \bigg)\t\mathcal{O}_*\t \\
    :&= J_{21} + J_{22} + J_{23} + J_{24}.
\end{align*}
The term $J_{21}$ satisfies
\begin{align*}
    \| \mathcal{O}_* (S^{(k)})\inv \mathcal{O}_*\t J_{21} \| &\leq \kappa \kappa_{\sigma} \frac{1}{n_k^2} \| (S^{(k)})^{-1/2} \sum_{i=1}^{n_k} \Lambda\inv V\t E_i \|^2.
\end{align*}
Therefore, by Markov's inequality, since $\kappa$ and $\kappa_{\sigma}$ are assumed bounded, by including them in the implicit constants, we have that
\begin{align*}
      \p \bigg( \| \mathcal{O}_* (S^{(k)})\inv\mathcal{O}_*\t J_{21} \| > t \bigg) &\lesssim \frac{ \E\bigg(  \| (S^{(k)})^{-1/2} \sum_{i=1}^{n_k} \Lambda\inv V\t E_i \|^2 \bigg)}{n_k^2 t} \\
      &= \frac{ \E   \bigg(\sum_{i=1}^{n_k} \Lambda\inv V\t E_i \bigg)\t (S^{(k)})\inv   \bigg(\sum_{j=1}^{n_k} \Lambda\inv V\t E_i \bigg) \bigg)}{n_k^2 t} \\
      &=\sum_{i=1}^{n_k} \frac{ \E \bigg[  E_i\t V \Lambda\inv (S^{(k)})\inv   \bigg(\sum_{j=1}^{n_k} \Lambda\inv V\t E_j \bigg) \bigg]}{n_k^2 t} \\
      &=\sum_{i=1}^{n_k} \frac{ \E \tr \bigg(\sum_{j=1}^{n_k} \Lambda\inv V\t E_j \bigg) \bigg[  E_i\t V \Lambda\inv (S^{(k)})\inv    \bigg]}{n_k^2 t} \\
      &= \sum_{i=1}^{n_k} \frac{  \tr \Lambda\inv V\t \Sigma^{(k)} V \Lambda\inv (S^{(k)})\inv    }{n_k^2 t} \\
      &= \sum_{i=1}^{n_k} \frac{r}{n_k^2 t} \\
      &= \frac{r}{n_k t},
\end{align*}
which implies that $\| (S^{(k)})\inv J_{21} \|$ converges to zero in probability.  

Note that $J_{22}$ is a rank one matrix.  Therefore
\begin{align}
      \|\mathcal{O}_*  (S^{(k)})\inv \mathcal{O}_*\t J_{22} \| &\leq \frac{\kappa\kappa_\sigma}{n_k^2}\bigg| \bigg\langle  (S^{(k)})^{-1/2} \sum_{i=1}^{n_k} R_i, (S^{(k)})^{-1/2} \sum_{i=1}^{n_k} E_i\t V \Lambda\inv \bigg\rangle \bigg| \nonumber\\
      &\lesssim \frac{1}{n_k^2} \bigg\| (S^{(k)})^{-1/2} \sum_{i=1}^{n_k} R_i \bigg\| \bigg\|(S^{(k)})^{-1/2} \sum_{i=1}^{n_k} E_i\t V\Lambda\inv \bigg\| \nonumber \\
      &\lesssim   \bigg\|(S^{(k)})^{-1/2} \sum_{i=1}^{n_k} E_i\t V\Lambda\inv \bigg\| \frac{\max_i \| (S^{(k)})^{-1/2} R_i\|}{n_k}. \label{init1}
\end{align}
By the same calculation as in $J_{21}$, for any $t > 0$,
\begin{align*}
    \p\bigg(  \bigg\|(S^{(k)})^{-1/2} \sum_{i=1}^{n_k} E_i\t V\Lambda\inv \bigg\| > t \bigg) &\leq \frac{r n_k }{ t^2 }.
\end{align*}
Setting $t = n^{1/2} \sqrt{\log(n)}$ shows that
\begin{align}
    \bigg\|(S^{(k)})^{-1/2} \sum_{i=1}^{n_k} E_i\t V\Lambda\inv \bigg\| &\leq \sqrt{n\log(n)}\label{init2}
\end{align}
with probability at least $1 - o(1)$.  Furthermore, by the same argument as in the proof of Corollary \ref{cor:b-e}, (i.e. the residual bounds on $R_i$ (as in Lemmas \ref{lem:residuals1}, \ref{lem:residuals2}, and \ref{lem:final_res}), we have that with probability at least $1 - (n\vee d)^{-3}$
\begin{align}
    \max_{i} \|(S^{(k)})^{-1/2}  R_{i}\| \lesssim  \frac{\log(n\vee d)}{\snr} +  \sqrt{\frac{\log(n\vee d)}{n}}, \label{init3}
\end{align}
where we again let the implicit constants depend on $\kappa, \mu_0, \kappa_{\sigma},$ and $r$, since they are assumed bounded in $n$ and $d$.  Hence, combining \eqref{init1}, \eqref{init2}, and \eqref{init3}, we have that with probability at least $1 - o(1)$, 
\begin{align*}
     \| \mathcal{O}_* (S^{(k)})\inv  \mathcal{O}_*\t J_{22} \| &\leq \frac{\sqrt{n\log(n)}}{n_k} \bigg(  \frac{\log(n\vee d)}{\snr} +  \sqrt{\frac{\log(n\vee d)}{n}} \bigg)
\end{align*}
which converges to zero.  The same exact proof works for $J_{23}$.  For $J_{24}$, we note that by Cauchy-Schwarz, the term tends to zero by a similar method as in the bound for $J_1$.  
\\ \ \\ \noindent
\textbf{The term $J_3$:} Recall that $J_3$ consists of two terms, one of which is the transpose of the other.  We analyze only the first as the other is similar, again using the same expansion as in $J_1$ and $J_2$.  We have that  
\begin{align*}
  \bigg\|   \mathcal{O}_* [S^{(k)}]\inv \mathcal{O}_*\t \bigg[ &\frac{1}{n_k} \sum_{i=1}^{n_k} \bigg( \hat U - U \mathcal{O}_*\t\bigg)_{i\cdot}\bigg( U\mathcal{O}_*\t - \bar U^{(k)} \bigg)_{i\cdot}\t \bigg]\bigg\| \\
  &= \bigg\| \frac{1}{n_k} \sum_{i=1}^{n_k} [S^{(k)}]\inv \mathcal{O}_*\t \bigg( \mathcal{O}_* \Lambda\inv V\t E_i + \mathcal{O}_* R_i \bigg)\bigg( U\mathcal{O}_*\t - \bar U^{(k)} \bigg)_{i\cdot}\t \bigg] \mathcal{O}_* \bigg\| \\
  &\leq \kappa \kappa_{\sigma}\bigg\| \frac{1}{n_k} \sum_{i=1}^{n_k} [S^{(k)}]^{-1/2} \bigg(  \Lambda\inv V\t E_i +  R_i \bigg)\bigg( U\mathcal{O}_*\t - \bar U^{(k)} \bigg)_{i\cdot}\t \mathcal{O}_*  [S^{(k)}]^{-1/2} \bigg]\bigg\| \\
  &\leq \kappa \kappa_{\sigma}\bigg\| \frac{1}{n_k} \sum_{i=1}^{n_k} [S^{(k)}]^{-1/2} \bigg(  \Lambda\inv V\t E_i +  R_i \bigg)\bigg( \frac{1}{n_k}\sum_{m=1}^{n_k} U\mathcal{O}_*\t -  \hat U \bigg)_{m\cdot}\t \mathcal{O}_*  [S^{(k)}]^{-1/2} \bigg]\bigg\| \\
  &\leq \kappa \kappa_{\sigma}\bigg\| \frac{1}{n_k} \sum_{i=1}^{n_k} [S^{(k)}]^{-1/2} \bigg(  \Lambda\inv V\t E_i +  R_i \bigg)\bigg( \frac{1}{n_k}\sum_{m=1}^{n_k} \mathcal{O}_* \Lambda\inv V\t E_m + \mathcal{O}_* R_m \bigg)\t \mathcal{O}_*  [S^{(k)}]^{-1/2} \bigg]\bigg\| \\
   &\leq \kappa \kappa_{\sigma}\bigg\| \frac{1}{n_k} \sum_{i=1}^{n_k} [S^{(k)}]^{-1/2} \bigg(  \Lambda\inv V\t E_i +  R_i \bigg)\bigg( \frac{1}{n_k}\sum_{m=1}^{n_k} E_m\t V \Lambda\inv +  R_m\t \bigg)   [S^{(k)}]^{-1/2} \bigg]\bigg\| \\
   &\lesssim \bigg\| \frac{1}{n_k} \sum_{i=1}^{n_k} [S^{(k)}]^{-1/2} \bigg(  \Lambda\inv V\t E_i  \bigg)\bigg( \frac{1}{n_k}\sum_{m=1}^{n_k} E_m\t V \Lambda\inv  \bigg)   [S^{(k)}]^{-1/2} \bigg]\bigg\| \\
   &\qquad + \bigg\| \frac{1}{n_k} \sum_{i=1}^{n_k} [S^{(k)}]^{-1/2} \bigg(  \Lambda\inv V\t E_i  \bigg)\bigg( \frac{1}{n_k}\sum_{m=1}^{n_k}   R_m\t \bigg)   [S^{(k)}]^{-1/2} \bigg]\bigg\| \\
   &\qquad + \bigg\| \frac{1}{n_k} \sum_{i=1}^{n_k} [S^{(k)}]^{-1/2} R_i \bigg( \frac{1}{n_k}\sum_{m=1}^{n_k} E_m\t V \Lambda\inv  \bigg)   [S^{(k)}]^{-1/2} \bigg]\bigg\| \\
   &\qquad + \bigg\| \frac{1}{n_k} \sum_{i=1}^{n_k} [S^{(k)}]^{-1/2} R_i \bigg( \frac{1}{n_k}\sum_{m=1}^{n_k}  R_m\t \bigg)   [S^{(k)}]^{-1/2} \bigg]\bigg\| \\
   :&= J_{31} + J_{32} + J_{33} + J_{34}.
\end{align*}
Since the term inside of $J_{31}$ is a product of two rank-one matrices, its spectral norm is equal to 
\begin{align*}
    \bigg\| \frac{1}{n_k} \sum_{i=1}^{n_k}  [S^{(k)}]^{-1/2} \Lambda\inv V\t E_i \bigg\|^2 &= \frac{1}{n_k^2}  \bigg\| \sum_{i=1}^{n_k}  [S^{(k)}]^{-1/2} \Lambda\inv V\t E_i \bigg\|^2.
\end{align*}
Note that
\begin{align*}
\E \bigg(  \sum_{i=1}^{n_k}  [S^{(k)}]^{-1/2} \Lambda\inv V\t E_i \bigg) \bigg( \ \sum_{m=1}^{n_k} E_m\t V \Lambda\inv [S^{(k)}]^{-1/2} \bigg) &= \sum_{i=1}^{n_k}  I_r \\
&= n_k I_r.
\end{align*}
Therefore, by Markov's inequality,
\begin{align*}
    \p\bigg( \frac{1}{n_k^2}  \bigg\| \sum_{i=1}^{n_k}  [S^{(k)}]^{-1/2} \Lambda\inv V\t E_i \bigg\|^2 > t \bigg) &\leq \frac{\E  \bigg\| \sum_{i=1}^{n_k}  [S^{(k)}]^{-1/2} \Lambda\inv V\t E_i \bigg\|^2}{n_k^2 t} \\
    &=\frac{n_k \tr  I_r}{n_k^2 t} \\
    &\leq \frac{r}{n_k t},
\end{align*}
so that $J_{31}$ tends to zero in probability.  

For $J_{32}$, using the inequality $|ab| \leq |a| |b|$, we have that on the event $\mathcal{E}$,
\begin{align*}
    \bigg|\bigg(\frac{1}{n_k} \sum_{i=1}^{n_k} &[S^{(k)}]^{-1/2} \big(  \Lambda\inv V\t E_i  \big) \bigg)\bigg( \frac{1}{n_k}\sum_{i=1}^{n_k}   R_i\t \bigg)   [S^{(k)}]^{-1/2} \bigg)_{jl} \bigg|\\
    &\leq \max_{m} \bigg| \bigg ([S^{(k)}]^{-1/2} R_m \bigg)_l \bigg| \bigg| \bigg(\frac{1}{n_k} \sum_{i=1}^{n_k} [S^{(k)}]^{-1/2} \big(  \Lambda\inv V\t E_i  \big) \bigg)_j \bigg| \\
    &\lesssim \bigg( \frac{\log(n\vee d)}{\snr} + \sqrt{\frac{\log(n\vee d)}{n}} \bigg) \bigg| \bigg(\frac{1}{n_k} \sum_{i=1}^{n_k} [S^{(k)}]^{-1/2} \big(  \Lambda\inv V\t E_i  \big) \bigg)_j \bigg|.
\end{align*}
We have already shown that the outermost term tends to zero in probability, implying that $J_{32}$ tends to zero in probability since $r$ is fixed.  The same argument also works for $J_{33}$.  For $J_{34}$, the same argument as $J_{13}$ suffices, and hence $J_{3}$ tends to zero in probability, which completes the proof.
\end{proof}

\section{Proofs of Lemmas in Section \ref{sec:randompart}} \label{sec:lemmas_random}

In this section we prove the additional Lemmas required for the proof of Theorem \ref{thm:2_infty_random}. 

\subsection{Proof of Lemmas \ref{lem:spectral_norm_concentration} and \ref{lem:diag}}

We first prove the spectral norm concentration bound in Lemma \ref{lem:spectral_norm_concentration}.  We restate it here.

\spectralnormconcentration*

\begin{proof}[Proof of Lemma \ref{lem:spectral_norm_concentration}]
We will follow the proof in Theorem 2 and Lemma 3 in \citet{amini_concentration_2021}. More specifically, defining $\nu := d + \|M\|^2/\sigma^2$, we will show that there exists a universal constant $c$ such that for any $u \geq 0$, with probability at least $1 - 4(n\vee d)^{-6} \exp(-u^2)$,
\begin{align}
    \| \Gamma(EM\t + ME\t +EE\t) \| \leq 2\sigma^2 \nu \max(\delta^2,\delta), \label{claim1}
\end{align}
for $\delta = \sqrt{\frac{6\log(n\vee d)/c + n\log(9)/c + u^2/c}{\nu}}$.  To obtain the final result, note that by choosing $C$ sufficiently large, we have that when $u = 0$,
\begin{align*}
    \delta \leq C \sqrt{n/\nu}. 
\end{align*}
Furthermore, $\max(\delta^2, \delta) \leq \delta^2 + \delta$ for all $\delta$.  Then the result reads that with probability at least $1 - 4(n\vee d)^{-6}$,
\begin{align*}
    \| \Gamma(EM\t + ME\t +EE\t) \| &\leq 2 \sigma^2 \nu( \delta^2 + \delta ) \\
    &\leq 2 \sigma^2 \nu ( C^2 n/\nu + C\sqrt{n/\nu} ) \\
    &\leq 2 \sigma^2 C ( Cn +  \sqrt{n\nu} ) \\
    &\leq 2 \sigma^2 C( Cn + \sqrt{n (d + \| M\|^2 /\sigma^2 )} ) \\
    &\leq 2 \sigma^2 C( Cn + \sqrt{nd}) + 2C \sqrt{n} \sigma \lambda_1  \\
    &\leq C_{\text{spectral}} \bigg( \sigma^2 (n + \sqrt{nd}) + \sqrt{n} \sigma \kappa \lambda_r \bigg)
\end{align*}
by taking $C_{\text{spectral}}$ sufficiently large and recalling that $\kappa = \lambda_1/\lambda_r$.

We now prove the claim \eqref{claim1}.   Let $z \in \R^n$ be a unit vector, and define $Y_z := z\t \Gamma(Z) z$.  Recall $Z = EM\t + ME\t + EE\t$, where the rows of $E$ are of the form $\Sigma_{i}^{1/2} W_i$ for vectors $W_i$ with independent mean-zero coordinates with unit $\psi_2$ norm.  Define $\Vec{X} \in \R^{nd}$ as the vector obtained by stacking the vectors $X_i$.  Then 
\begin{align*}
    z\t (A + \Gamma( Z)) z &= \| Xz \|^2 - \sum_{i}z_i^2 \| X_i \|^2 + z\t G(A) z.
\end{align*}
Define the matrix $\Xi_z := z\t \otimes I_d \in \R^{d \times nd}$.  Then $\Xi_z \Vec{X} = X z$, where $X$ is the matrix whose columns are $X_i$.  Let $\Sigma$ be the block-diagonal matrix whose $i$'th block is $\Sigma_i^{1/2}$.  Then
\begin{align*}
    Xz = \Xi_z \Vec{\mu} + \Xi_z \Sigma^{1/2} \Vec{W} = \Xi_z \Sigma^{1/2} \Vec{\xi},
\end{align*}
for $\xi = \Sigma^{-1/2} \Vec{\mu} + \Vec{W}$.  Similarly, note that
\begin{align*}
    \sum_{i=1}^{n} z_i^2  \| X_i\|^2 &= \| \diag( z_i I_d )  \Vec{X} \|^2 = \| \diag(z_i I_d ) \Sigma^{1/2} \Vec{\xi} \|^2.
\end{align*}
Therefore, we have that $z\t Az-z\t G(A)z=z\t\Gamma(A)z$ and
\begin{align*}
    z\t ( \Gamma(Z)) z &= z\t ( A + \Gamma(Z)) z - z\t A z \\
    &= \| \Vec{X}z \|^2 - \sum_{i=1}^n z_i^2 \| X_i \|^2 +z\t G(A)z - z\t A z \\
    &= \| \Xi_z \Sigma^{1/2} \Vec{\xi} \|^2 - \| \diag(z) \Sigma^{1/2} \Vec{\xi} \|^2 - z\t \Gamma(A)z \\
    &= \Vec{\xi}\t B_z \Vec{\xi} - \Vec{\xi}\t C_z \Vec{\xi} - z\t \Gamma(A) z \\
    &= \Vec{\xi}\t (B_z - C_z) \Vec{\xi} - \E\bigg( \Vec{\xi}\t (B_z - C_z) \Vec{\xi} \bigg),
\end{align*}
where $B_z := \Sigma^{1/2} \Xi_z\t \Xi_z \Sigma^{1/2}$; $C_z := \Sigma^{1/2} \diag(z_i I_d) \diag(z_i I_d) \Sigma^{1/2}$.  We now apply the extension of the Hanson-Wright inequality (Theorem 6 in \citet{amini_concentration_2021}) to this quadratic form to determine that \begin{align}
    \p\bigg( |Y_z | \geq t \bigg) \leq 4 \exp\bigg( -c \min\bigg( \frac{t^2}{\| B_z - C_z \|_F^2 + \| \tilde M (B_z - C_z)\|_F^2}, \frac{t}{\| B_z - C_z \|} \bigg) \bigg), \label{hansonwright}
\end{align}
where $\tilde M := (\Sigma^{-1/2} \Vec{\mu})\t$. We now bound the denominators.  

We have that $B_z - C_z = \Sigma^{1/2}(\Xi_z\t \Xi_z - \diag(z_i^2 I_d)) \Sigma^{1/2}$.  Hence $\| B_z - C_z \|_F = \| \Sigma^{1/2}(\Xi_z\t \Xi_z - \diag(z_i^2 I_d)) \Sigma^{1/2}\|_F \leq \| \Sigma^{1/2} \|^2 \| (\Xi_z\t \Xi_z - \diag(z_i^2 I_d)) \|_F$.  Furthermore, by the parallelogram law and the fact that $z$ is unit norm,
\begin{align*}
    \| \Xi_z\t \Xi_z - \diag(z_i^2 I_d) \|_F^2 &= 2\| \Xi_z\t \Xi_z \|_F^2 + 2 \| \diag(z_i^2 I_d) \|_F^2 - \| \Xi_z\t \Xi_z + \diag(z_i^2 I_d) \|_F^2 \\
    &\leq 2d + 2d \\
    &\leq 4d,
\end{align*}
so that $\|B_z - C_z\|_F^2 \leq 4d \sigma^4,$ where $\sigma^2 = \| \Sigma \| = \max_i \|\Sigma_i\|$.  Additionally,
\begin{align*}
    \| \tilde M(B_z - C_z) \|_F^2 &= \| (\Sigma^{-1/2} \Vec{\mu} )\t \Sigma^{1/2}( \Xi_z\t \Xi_z - \diag(z_i^2 I_d) ) \Sigma^{1/2} \|_F^2 \\
    &= \| \Vec{\mu} ( \Xi_z\t \Xi_z - \diag(z_i^2 I_d) ) \Sigma^{1/2} \|_F^2 \\
    &= \| \Sigma^{1/2}( \Xi_z\t \Xi_z - \diag(z_i^2 I_d) ) \Vec{\mu} \|_2^2 \\
    &\leq \sigma^2 \|( \Xi_z\t \Xi_z - \diag(z_i^2 I_d) ) \Vec{\mu} \|_2^2.
\end{align*}
Note that 
\begin{align*}
    \|( \Xi_z\t \Xi_z - \diag(z_i^2 I_d) ) \Vec{\mu} \|_2 &\leq \| \Xi_z\t \Xi_z \Vec{\mu} \| + \|\diag(z_i^2 I_d)\Vec{\mu} \| \\
    &\leq 2 \| M \|
\end{align*}
using the definition of $\Vec{\mu}$.  Finally, for the operator norm, we have that
\begin{align*}
    \| B_z - C_z \|&\leq \| B_z \| + \|C_z \| \\
    &\leq \sigma^2 \| \Xi_z \|^2 + \sigma^2 \| \diag(z_i^2 I_d)\|^2 \\
    &\leq 2 \sigma^2.
\end{align*}
In summary,
\begin{align}
    \| B_z - C_z \|_F^2 &\leq 4\sigma^4 d; \label{b1}\\
    \| \tilde M(B_z - C_Z)\|_F^2 &\leq 4 \sigma^2 \| M \|^2; \label{b2}\\
    \| B_z - C_z \| &\leq 4 \sigma^2. \label{b3}
\end{align}
Plugging \eqref{b1}, \eqref{b2}, and \eqref{b3} into Equation \ref{hansonwright} and absorbing $1/4$ into the constant $c$ yields
\begin{align*}
     \p\bigg( |Y_z | \geq t \bigg) \leq 4 \exp\bigg( -c \min\bigg( \frac{t^2}{\sigma^4d  + \sigma^2 \|M\|^2}, \frac{t}{\sigma^2} \bigg) \bigg).
\end{align*}
Define $\tilde t := \sigma^2 t$.  Then the inequality above becomes 
\begin{align*}
     \p\left( |Y_z | \geq t \sigma^2 \right) \leq 4 \exp\bigg( -c \min\bigg( \frac{t^2}{d  + \sigma^{-2}\|M\|^2},t \bigg)\bigg).
\end{align*}
 By changing $t$ to $\nu t$, the above concentration can be written via
\begin{align*}
    \p\bigg( |Y_z| \geq t \sigma^2 \nu \bigg) \leq 4 \exp(-c\nu \min(t^2,t)).
\end{align*}
Define $\delta := \sqrt{ \frac{6\log(n\vee d) +n \log(9) + u^2}{\nu c}}$.  Note that regardless of the value of $\delta$,
\begin{align*}
    \min( \max(\delta^2, \delta)^2 , \max(\delta^2, \delta)) \leq \delta^2.
\end{align*}
Taking $t = \max(\delta^2,\delta)$, we arrive at
\begin{align*}
    \p( |Y_z| \geq \sigma^2 \nu \max(\delta^2,\delta)) &\leq 4 \exp(-c \nu\min( \max(\delta^2, \delta)^2 , \max(\delta^2, \delta)) ) \\
    &\leq 4 \exp( -c \nu\delta^2) \\
    &\leq 4 \exp( -c \frac{6\log(n\vee d) +n \log(9) + u^2}{c} ) \\
    &\leq 4 \exp( -6\log(n\vee d) - n\log(9) - u^2).  
\end{align*}
Now we follow the proof of Theorem 2 in \citet{amini_concentration_2021} via an $\eps$-net. Let $\mathcal{N}$ be a $1/4$ net of the $n$-sphere, and hence that $| \mathcal{N}| \leq 9^n$.  Then $\| \Gamma(Z) \| \leq 2 \max_{z\in \mathcal{N}} |Y_z|$, so that by a union bound,
\begin{align*}
    \p\bigg( \| \Gamma(Z) \| \geq 2 \sigma^2 \nu \max(\delta^2,\delta) \bigg) &\leq 4 \cdot 9^n \exp(-6\log(n\vee d) - n\log(9) - u^2) \\
    &\leq 4 \exp( n\log(9) - 6\log(n\vee d) - n\log(9) - u^2) \\
    &\leq 4 (n\vee d)^{-6} \exp(-u^2).
\end{align*}
This proves the result. 

\end{proof}

\diagonals*

\begin{proof}[Proof of Lemma \ref{lem:diag}]
This follows because \begin{align*}
\| \diag (EM\t + ME\t) \|_2 &=   \max_{i} |(EM\t+ ME\t)_{ii}| \\
&= 2\max_{i} |\langle E_{i}. M_{i} \rangle| \\
&= 2\max_{i} | \sum_{\alpha} Y_{i\alpha} (\Sigma_{i}^{1/2} M_{i})_\alpha |.
\end{align*}
This is a sum of $d$ independent random variables with $\psi_2$ norm bounded by $2 \max_{i} \| \Sigma_{i}^{1/2} M_{i} \|  \leq 2 \sigma \max_{i} \|M_{i}\| \leq 2\sigma \lambda_1 \|U\|_{2,\infty}$.  Consequently, Hoeffding's inequality and a union bound that $\|\diag(EM\t + ME\t)\| \leq 2 \lambda_1 \sigma \|U\|_{2,\infty}\sqrt{6 \log(n \vee d)}$ with probability at least $1- 2(n\vee d)^{-5}$. By the Davis-Kahan theorem,
\begin{align*}
  \|\tilde U_D \tilde U_D\t - \tilde U \tilde U\t\| &\leq  C \frac{2 \lambda_1}{\lambda_r^2} \sigma \|U\|_{2,\infty}\sqrt{ \log(n\vee d)} \\
  &\leq \frac{C\delta_1}{\lambda_r^2} \|U\|_{2,\infty}
\end{align*}
where we have used Weyl's inequality and Assumption \ref{assumption:eigengap}. 
\end{proof}

\subsection{Proof of Lemma \ref{lem:2infty_general}}
In order to prove Lemma \ref{lem:2infty_general}, we will also require the following additional lemmas.

\begin{restatable}{lemma}{leminftyzero}
\label{lem:2infty_0}
There exists an absolute constant $C_0$ such that with probability at least $1 - (n\vee d)^{-5}$ it holds that 
\begin{align*}
   \| \per W U\|_{2,\infty} &\leq C_{0}\bigg(\sqrt{rnd}\log( n\vee d) \sigma^2 \|U\|_{2,\infty} + \sqrt{ rn\log( n\vee d)} \lambda_1 \sigma \bigg)\|U\|_{2,\infty},
\end{align*}
\end{restatable}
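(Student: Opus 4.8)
The plan is to split $\per W U$ according to the three summands of $W = EM\t + ME\t + \Gamma(EE\t)$. The middle term drops out: since $M = U\Lambda V\t$ we have $\per M = (I_n - UU\t)M = 0$, so $\per ME\t U = 0$. For the first term, $M\t U = V\Lambda$, hence $\per EM\t U = \per EV\Lambda$. Thus it suffices to bound $\|\per EV\Lambda\|_{2,\infty}$ and $\|\per \Gamma(EE\t)U\|_{2,\infty}$ separately. Writing $\delta_1 := \sqrt{rnd}\log(n\vee d)\sigma^2$ and $\delta_2 := \sqrt{rn\log(n\vee d)}\lambda_1\sigma$ as in the surrounding proof, the claim reads $\|\per WU\|_{2,\infty}\lesssim(\delta_1\|U\|_{2,\infty}+\delta_2)\|U\|_{2,\infty}$, and I will show the first piece is $\lesssim\delta_2\|U\|_{2,\infty}$ and the second is $\lesssim(\delta_1\|U\|_{2,\infty}+\delta_2)\|U\|_{2,\infty}$. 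I will repeatedly use the trivial facts $\sqrt{r/n}\le\|U\|_{2,\infty}\le 1$, and the consequence of Assumption \ref{assumption:eigengap} that $\lambda_1\ge\lambda_r\ge C_{SNR}\sigma\sqrt{rd}\ge\sigma\sqrt d$.

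For $\per EV\Lambda$ I write the $i$-th row as $E_i\t V\Lambda - U_{i\cdot}(U\t EV)\Lambda$. The entries of $E_i\t V$ are $\langle E_i,V_{\cdot j}\rangle=\langle Y_i,\Sigma_i^{1/2}V_{\cdot j}\rangle$, subgaussian with parameter $\le\|\Sigma_i^{1/2}V_{\cdot j}\|\le\sigma$; a union bound over the $nr$ pairs and summing over $r$ coordinates gives $\|E_i\t V\Lambda\|\lesssim\lambda_1\sigma\sqrt{r\log(n\vee d)}$. For $\|U\t EV\|$ I use an $\eps$-net of $S^{r-1}\times S^{r-1}$: for fixed unit $a,b$, $a\t U\t EVb=\sum_i(Ua)_i\langle Y_i,\Sigma_i^{1/2}Vb\rangle$ is a sum of independent mean-zero subgaussians with total variance proxy $\le\sigma^2\|Ua\|^2=\sigma^2$, and the net has cardinality $e^{O(r)}$, so (using $\log(n\vee d)\le n$ from Assumption \ref{assumption:dimension}, with a large enough tail constant) $\|U\t EV\|\lesssim\sigma\sqrt{r+\log(n\vee d)}\lesssim\sigma\sqrt{r\log(n\vee d)}$. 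Since $\|U_{i\cdot}\|\le\|U\|_{2,\infty}$ and $\sqrt n\,\|U\|_{2,\infty}\ge\sqrt r\ge 1$, both pieces are $\lesssim\lambda_1\sigma\sqrt{r\log(n\vee d)}\lesssim\delta_2\|U\|_{2,\infty}$.

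The term $\per\Gamma(EE\t)U$ is where the within-row dependence must be handled, and this is the main obstacle. Its $i$-th row is $\sum_{k\neq i}\langle E_i,E_k\rangle U_{k\cdot}-U_{i\cdot}\big(U\t\Gamma(EE\t)U\big)$. For the first piece I condition on $E_i$: the rows $\{E_k\}_{k\neq i}$ remain independent, and each scalar $\langle E_i,E_k\rangle=\langle\Sigma_k^{1/2}E_i,Y_k\rangle$ is conditionally mean-zero and subgaussian with parameter $\le\sigma\|E_i\|$, so $\sum_{k\neq i}\langle E_i,E_k\rangle U_{k\cdot}$ is a conditional sum of independent mean-zero random vectors whose $j$-th coordinate has variance proxy $\le\sigma^2\|E_i\|^2$. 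A coordinatewise subgaussian bound, a union over the $r$ coordinates, and the bound $\max_i\|E_i\|\lesssim\sigma\sqrt d$ (Hanson--Wright on $Y_i\t\Sigma_iY_i$, using $\log(n\vee d)\le n\le d/c_1$) give $\|\sum_{k\neq i}\langle E_i,E_k\rangle U_{k\cdot}\|\lesssim\sigma^2\sqrt{rd\log(n\vee d)}$, which is $\lesssim\delta_2\|U\|_{2,\infty}$ because $\lambda_1\ge\sigma\sqrt d$ and $\sqrt n\,\|U\|_{2,\infty}\ge 1$. For the second piece, $\|U_{i\cdot}\,U\t\Gamma(EE\t)U\|\le\|U\|_{2,\infty}\|U\t\Gamma(EE\t)U\|_F$; each entry $(U\t\Gamma(EE\t)U)_{jl}=\sum_{k\neq m}\langle E_k,E_m\rangle U_{kj}U_{ml}$ is a mean-zero quadratic form in the subgaussian entries of $E$ whose associated matrix has Frobenius norm $O(\sigma^2\sqrt d)$ (using $\tr(\Sigma_k\Sigma_m)\le d\sigma^4$) and operator norm $O(\sigma^2)$, so Hanson--Wright plus a union over the $r^2$ entries yields $\|U\t\Gamma(EE\t)U\|_F\lesssim r\sigma^2\sqrt{d\log(n\vee d)}$; hence this piece is $\lesssim\|U\|_{2,\infty}\,r\sigma^2\sqrt{d\log(n\vee d)}\lesssim\delta_1\|U\|_{2,\infty}^2$, again using $\|U\|_{2,\infty}\ge\sqrt{r/n}$.

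Combining the four estimates and taking a union bound over the finitely many high-probability events (each arranged, by choosing tail constants large, to fail with probability $\ll(n\vee d)^{-5}$, absorbing the $O(nd)$ coordinate unions and the $e^{O(r)}$ net points into the exponent) gives $\|\per WU\|_{2,\infty}\lesssim(\delta_1\|U\|_{2,\infty}+\delta_2)\|U\|_{2,\infty}$ with probability at least $1-(n\vee d)^{-5}$, which is the assertion after renaming the constant as $C_0$. The two delicate points are the conditioning step — necessary precisely because, unlike in the independent-entry case, the columns of $E$ within a row are dependent, so $\langle E_i,E_k\rangle$ cannot be treated entrywise — and the entrywise Hanson--Wright control of $U\t\Gamma(EE\t)U$, which is what produces the sharp $\sqrt d$ (rather than crude $d$) scaling needed to stay below $\delta_1\|U\|_{2,\infty}^2$.
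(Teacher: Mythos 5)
Your proof is correct and reaches the stated bound, but it is organized rather differently from the paper's argument, which is worth pointing out. The paper does \emph{not} exploit the cancellation $\per M = 0$: it splits $\|\per WU\|_{2,\infty}\leq \|U\|_{2,\infty}\|W\|+\|WU\|_{2,\infty}$, dispatches the first piece via the spectral-norm concentration lemma (Lemma~\ref{lem:spectral_norm_concentration}), and then bounds $(WU)_{ij}$ entrywise before multiplying by $\sqrt r$; this forces it to carry the $ME\t$ contribution explicitly as term~(II) of its expansion. You instead kill the $ME\t$ piece algebraically, then control the $UU\t$ projection directly at the row level using an $\eps$-net bound on $U\t EV$ and an entrywise Hanson--Wright bound on $U\t\Gamma(EE\t)U$, so you never need the spectral-norm lemma inside this proof. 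Your treatment of the dependent part $\Gamma(EE\t)U$ is also a bit cleaner than the paper's: you condition on the whole vector $E_i$, note that $\langle E_i,E_k\rangle=\langle\Sigma_k^{1/2}E_i,Y_k\rangle$ has conditional subgaussian parameter at most $\sigma\|E_i\|$, and close with the uniform bound $\max_i\|E_i\|\lesssim\sigma\sqrt d$; the paper instead conditions on a coordinatewise event ``$\mathcal A$'' controlling the partial sums $\sum_\alpha Y_{i\alpha}U_{kj}(\Sigma_k^{1/2}\Sigma_i^{1/2})_{\alpha\beta}$, which requires a union over $nd$ auxiliary events before Hoeffding can be applied. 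Both routes cost the same in the end, but your conditioning on $E_i$ makes the mechanism behind the dependence-handling more transparent and is closer in spirit to the leave-one-out construction used later in Lemma~\ref{lem:2infty_general}. One small thing to keep in mind if you write this up in full: the bound $\|E_i^\top V\Lambda\|\lesssim\sigma\lambda_1\sqrt{r\log(n\vee d)}$ needs to hold uniformly over $i$, so the union bound you mention over the $nr$ pairs $(i,j)$ must carry a tail constant large enough to beat $(n\vee d)^{-5}$ after the union; you flag this but it should be made explicit.
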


\begin{proof}[Proof of Lemma \ref{lem:2infty_0}]
Note that
\begin{align}
    \|\per W U\|_{2,\infty} &\leq \| U U\t (ME\t + EM\t + \Gamma(EE\t)) U \|_{2,\infty} + \| (EM\t + ME\t + \Gamma(EE\t) )U\|_{2,\infty}  \nonumber \\
    &\leq \|U\|_{2,\infty} \| W \| + \| (EM\t + ME\t + \Gamma(EE\t))U\|_{2,\infty} \label{lem:2infty_0_1}
\end{align}
We will analyze the second term.  
Note that the $i,j$ entry of $W$ can be written as
\begin{align*}
     E_i\t M_j + M_i\t E_j +  E_i \t E_j(1 - \delta_{ij}).
\end{align*}
Furthermore, 
note that
\begin{align*}
\| W U \|_{2,\infty} &\leq \sqrt{r} \| WU \|_{\max}.
\end{align*}
Fix an index $i,j$.  then this shows that
\begin{align*}
(WU)_{ij} &= \sum_{k=1}^n \bigg( \langle Y_i, \Sigma_i^{1/2} M_k \rangle + \langle Y_k, \Sigma_k^{1/2} M_i \rangle +(1 - \delta_{ik}) \langle Y_i, \Sigma_i^{1/2} \Sigma_k^{1/2} Y_k \rangle \bigg) U_{kj}.
\end{align*}
Define
\begin{align*}
\xi_{ik} := \begin{cases}  \langle Y_i, \Sigma_i^{1/2} M_k \rangle + \langle Y_k, \Sigma_k^{1/2} M_i \rangle + \langle Y_i, \Sigma_i^{1/2} \Sigma_k^{1/2} Y_k \rangle & i\neq k \\
2 \langle Y_i, \Sigma_i^{1/2} M_i \rangle  & i = k. \end{cases}
\end{align*}
Then 
\begin{align*}
(WU)_{ij} = \sum_{k=1}^n  \xi_{ik} U_{kj}
 \end{align*}

Expanding out $\xi_{ik}$, we have that 
\begin{align*}
\xi_{ik} &= \begin{cases} \sum_{\alpha =1}^{d} (Y_{i\alpha} [ \Sigma_i^{1/2} M_k]_{\alpha} + Y_{k\alpha} (\Sigma_{k}^{1/2} M_i)_{\alpha} ) + \sum_{\alpha=1}^d \sum_{\beta=1}^d (\Sigma_k^{1/2} \Sigma_{i}^{1/2})_{\alpha \beta} Y_{i\alpha} Y_{k\beta} & i \neq k \\
2 \sum_{\alpha =1}^{d} (Y_{i\alpha} [ \Sigma_i^{1/2} M_i]_{\alpha}  & i = k, \end{cases}
\end{align*}
where $Y_{i\alpha}$ denotes the $\alpha$ coordinate of the $i$'th random vector $Y_i$. We want to write this in terms of the independent collection of random variables $Y$. 
We have that
\begin{align*}
\bigg| \sum_{k=1}^n  \xi_{ik} U_{kj}\bigg| &\leq 2\bigg| \sum_{k=1}^n  U_{kj}\sum_{\alpha =1}^{d} (Y_{i\alpha} [ \Sigma_i^{1/2} M_k]_{\alpha}]  \bigg| + \bigg| \sum_{k=1,k\neq i}^n  U_{kj} \sum_{\alpha=1}^d Y_{k\alpha} (\Sigma_{k}^{1/2} M_i)_{\alpha} ) \bigg| \\
&\qquad + \bigg| \sum_{k=1,k\neq i}^n  U_{kj}\sum_{\alpha=1}^d \sum_{\beta=1}^d (\Sigma_k^{1/2} \Sigma_{i}^{1/2})_{\alpha \beta} Y_{i\alpha} Y_{k\beta} \bigg| \\
&= (I) + (II) + (III).
\end{align*}
In what follows, each term is bounded separately.

\ \\ \noindent \textbf{The term (I):}
The first term can be written via
\begin{align*}
\sum_{\alpha=1}^{d} Y_{i\alpha} \bigg[ \sum_{k=1}^{n} U_{kj}[\Sigma_i^{1/2} M_k]_{\alpha} \bigg],
\end{align*}
 which is a sum of $d$ mean-zero random variables.  So it suffices to bound the coefficients in order to apply the Hoeffding concentration inequality for subgaussians.  The coefficients can be found via
 \begin{align*}
 a_{(I)}(\alpha) := \sum_{k=1}^{n} U_{kj}[\Sigma_i^{1/2} M_k]_{\alpha} 
 \end{align*}
 for $\alpha$ ranging from $1$ to $d$.  
   Furthermore, 
 \begin{align*}
 \sum_{\alpha=1}^d \bigg( \sum_{k=1}^{n} U_{kj}\tilde M_{ik\alpha} \bigg)^2 &\leq \sum_{\alpha=1}^{d} \bigg( \sum_{k=1}^{n} U_{kj}^2 \bigg)  \bigg( \sum_{k=1,k\neq i}^{n} \tilde M_{ik\alpha}^2 \bigg) \\
 &\leq \sum_{\alpha=1}^d  \bigg( \sum_{k=1}^{n}  (\Sigma_i^{1/2} M_k)_{\alpha}^2 \bigg) \\
 &\leq \| \Sigma_{i}^{1/2} M\t \|^2_F \\
 &=  \| M \Sigma_{i}^{1/2} \|^2_F \\
 &\leq n \| M \Sigma_i^{1/2} \|_{2,\infty}^2 \\
 &\leq n \|U\|_{2,\infty}^2 \lambda_1^2 \sigma^2.
 \end{align*}
 By the generalized Hoeffding inequality (Theorem 2.6.3 in \citet{vershynin_high-dimensional_2018}) we obtain
 \begin{align*}
 \p\bigg( | (I) | > t \bigg) \leq 2\exp\bigg[ -c \bigg( \frac{t^2}{ n \|U\|_{2,\infty}^2 \lambda_1^2 \sigma^2} \bigg) \bigg]
 \end{align*}
 Taking $t =  C\|U\|_{2,\infty} \lambda_1 \sigma \sqrt{ n (2\gamma \log(n\vee d) + 2\log(r))}$ shows that this holds with probability at least $1 - 2r\inv n^{-\gamma}$.  Therefore, we derive the first bound,
 \begin{align*}
     (I) &\leq  C \|U\|_{2,\infty} \lambda_1 \sigma \sqrt{ n (2\gamma \log(n\vee d) + 2\log(r))} \\
     &\leq C \|U\|_{2,\infty} \lambda_1 \sigma \sqrt{\gamma n  \log(n\vee d) },
 \end{align*}
 since $\log(r) \leq \log(n\vee d)$.  
 
 \ \\ \noindent \textbf{The term (II):}
By a similar argument, it suffices to bound the norm of the coefficient vector, where the coefficient vector ranges over $\alpha$ and $k\neq i$ with
\begin{align*}
(a_2)_{\alpha,k} := U_{kj}  (\Sigma_{k}^{1/2} M_i)_{\alpha} ) .
\end{align*}
Therefore, we see that
\begin{align*}
\| a_2\|_2^2 :&= \sum_{\alpha=1}^d  \sum_{k=1,k\neq i}^{n} U_{kj}^2 (\Sigma_{k}^{1/2} M_i)_{\alpha} ) ^2 \\
&\leq \sum_{\alpha=1}^d  \sum_{k=1,k\neq i}^{n} (\Sigma_{k}^{1/2} M_i)_{\alpha} ) ^2 \\
&\leq    \sum_{k=1,k\neq i}^{n} \| \Sigma_{k}^{1/2} M_i \|^2 \\
&\leq   n \max_{k} \| M_i\t \Sigma_k^{1/2} \|^2 \\
&\leq   n \| M_i \|^2 \max_k \| \Sigma_k^{1/2} \|^2 \\
&\leq  n \|M\|_{2,\infty} \max_k \| \Sigma_k^{1/2} \|^2 \\
&\leq   n \sigma^2 \| M \|_{2,\infty}^2 \\
&\leq \|U\|_{2,\infty}^2  n \sigma^2 \lambda_1^2.
\end{align*}
Therefore, we see that with probability at least $1- 2r\inv n^{-\gamma},$
\begin{align*}
    (II) &\leq \sqrt{2} \|U\|_{2,\infty}\sigma \lambda_1 \sqrt{\gamma\log(n\vee d) n},
\end{align*}
which matches the previous bound.

\ \\ \noindent \textbf{The term (III):}
The final quantity is of the form
 \begin{align*}
 (III) := \sum_{k=1,k\neq i}^n U_{kj} \sum_{\alpha=1}^d \sum_{\beta=1}^d (\Sigma_k^{1/2} \Sigma_i^{1/2})_{\alpha\beta} Y_{i\alpha}Y_{k\beta}.
 \end{align*}
We  use a conditioning argument.  First, consider the event
 \begin{align*}
\mathcal{A} := \{ \sum_{\alpha=1}^{d} Y_{i\alpha} U_{kj} (\Sigma_k^{1/2} \Sigma_i^{1/2})_{\alpha\beta} > s \text{ for any $k$ and $\beta$}\}
\end{align*}
for some $s > 0$ to be determined later.  Note that this event depends on the collection $\{Y_{i\alpha}\}_{\alpha}$ only. Conditional on this event, we see that the sum is a sum of independent mean-zero subguassian random variables with $\psi_2$ norm bounded by $s$.  Since there are $(n-1)d$ such random variables, the generalized Hoeffding inequality shows that
\begin{align*}
\p( | (III) | > t | \mathcal{A} ) \leq 2\exp \bigg( -\frac{1}{2} \frac{t^2}{(n-1)ds^2} \bigg)
\end{align*}
so taking $t = \sqrt{2}\sqrt{(n-1)ds^2(\gamma\log(n\vee d) +\log(r))}$ shows that this holds with probability at least $1 - 2r\inv n^{-\gamma}$. Now I will find a probabilistic bound on $s$.

Note that the sum in the event $\mathcal{A}$ is a sum over $d$ independent random variables, so it suffices to estimate the term
\begin{align*}
\sum_{\alpha=1}^{d}  (U_{kj} (\Sigma_k^{1/2} \Sigma_i^{1/2})_{\alpha\beta})^2
\end{align*}
uniformly over $k,\beta$.  We see that
\begin{align*}
\max_{k,\beta} \sum_{\alpha=1}^{d}  (U_{kj} (\Sigma_k^{1/2} \Sigma_i^{1/2})_{\alpha\beta})^2 &\leq \max_{k,\beta} U_{kj}^2 \sum_{\alpha=1}^{d} (\Sigma_k^{1/2} \Sigma_i^{1/2})_{\alpha\beta}^2 \\
&\leq \max_{k} \| \Sigma_{i}^{1/2} \Sigma_k^{1/2} \|_{2,\infty}^2  U_{kj}^2 \\
&\leq  \| \Sigma_{i}^{1/2}\|_{2,\infty}^2  \|U\|_{2,\infty}^2 \max_{k}\| \Sigma_k^{1/2} \|^2 \\
&\leq  \| \Sigma_{i}^{1/2}\|_{2,\infty}^2  \|U\|_{2,\infty}^2 \sigma^2.
\end{align*}
Therefore, we have that since there are at most $nd$ terms with $k$ and $\beta$,
\begin{align*}
\p( \mathcal{A}) &\leq nd \max_{k,\beta} \p\bigg( \sum_{\alpha=1}^{d} Y_{i\alpha} U_{kj} (\Sigma_k^{1/2} \Sigma_i^{1/2})_{\alpha\beta} > s \text{ for fixed $k$ and $\beta$} \bigg) \leq 2nd \exp \bigg( - \frac{1}{2} \frac{s^2}{\| \Sigma_{i}^{1/2}\|_{2,\infty}^2  \|U\|_{2,\infty}^2 \sigma^2}\bigg),
\end{align*}
so taking $s = \sqrt{2} \sqrt{(\gamma + 1)\log(n\vee d) + \log(d) + \log(r)} \|U\|_{2,\infty} \| \Sigma_{i}^{1/2}\|_{2,\infty} \sigma$ shows that this holds with probability at least $1 - 2 r\inv n^{-\gamma}$.  Therefore, with this fixed choice of $s$, we see that
\begin{align*}
\p( (III) \leq t ) &\leq \p( (III) \leq t | \mathcal{A} ) + \p(\mathcal{A}^c) \\
&\leq 2r\inv n^{-\gamma} + 2r\inv n^{-\gamma}
\end{align*}
Doing the algebra, we see that
\begin{align*}
t &= \sqrt{2}\sqrt{(n-1)d} s \sqrt{\gamma\log(n\vee d) + \log(r)} \\
&= \sqrt{2}\sqrt{(n-1)d}\sqrt{\gamma\log(n\vee d) + \log(r)} \bigg( \sqrt{2} \sqrt{(\gamma + 1)\log(n\vee d) + \log(d) + \log(r)} \|U\|_{2,\infty} \| \Sigma_{i}^{1/2}\|_{2,\infty} \sigma\bigg) \\
&\leq 4\sqrt{nd} \sqrt{(\gamma + 1) \log(n\vee d)} \sqrt{(\gamma + 1)\log(n \vee d)} \|U\|_{2,\infty} \| \Sigma_{i}^{1/2}\|_{2,\infty} \sigma \\
&\leq 8 \sqrt{nd} \|U\|_{2,\infty}  \sigma^2  \gamma\log( n\vee d)
\end{align*}
Letting $\gamma \geq 20$ and taking a union over all $nr$ entries shows that with probability at least $1 - (n\vee d)^{-10}.$,
\begin{align*}
    \| WU\|_{2,\infty} &\leq C_0 \bigg(\sqrt{rnd}\log(n\vee d) \sigma^2  +  \sqrt{ rn\log(n\vee d)} \lambda_1 \sigma \bigg) \|U\|_{2,\infty}.
\end{align*}
Hence, the result holds by also applying Lemma \ref{lem:spectral_norm_concentration} on the term $\| W \|$ in Equation \eqref{lem:2infty_0_1}.  Since the spectral norm bound is smaller than the bound above and holds with probability at least $1 - 4(n\vee d)^{-6}$, the result holds by increasing the constant $C_0$ by a factor of $2$.  
\end{proof}

\leminftygeneral*

\begin{proof}[Proof of Lemma \ref{lem:2infty_general}]
We will prove the result by induction.  When $p = 1$, the result holds by Lemma \ref{lem:2infty_0}, where we take $C_1$ and $C_2$ in the statement of the lemma to be large, fixed constants.  We now fix these constants $C_1$ and $C_2$.  

Let $ p > 1$.  Assume that with probability at least $1 -p (n\vee d)^{-5}$ that for all $1 \leq p_0 \leq p-1$ that 
\begin{align*}
    \|\per (\per W \per)^{p_0-1} W U\|_{2,\infty} &\leq C_1 (C_2\delta)^{p_0} \|U\|_{2,\infty}.
\end{align*}
Note that by the definition of $W := EM\t + ME\t + \Gamma(EE\t)$, we have the identity $\per W \per = \per \Gamma(EE\t) \per$. Let $\mathcal{B}$ be the event that
\begin{align*}
       \mathcal{B}:= \bigg\{\|\per (\per W \per)^{p_0-1} W U\|_{2,\infty} &\leq C_1 (C_2\delta)^{p_0} \|U\|_{2,\infty} \text{ for all $1 \leq p_0 \leq p-1$} \bigg\} \\
    &\bigcap \bigg\{ \|W \| \leq \frac{\delta}{\sqrt{r\log(n\vee d)}} \bigg\} \\
    &\bigcap \bigg\{ \|\Gamma(EE\t) \| \leq \frac{\delta}{\sqrt{r\log(n\vee d)}} \bigg\} .
\end{align*}
Note that $\p(\mathcal{B}) \geq 1 - p(n\vee d)^{-5} -12(n\vee d)^{-6}$ by the induction hypothesis and Lemmas \ref{lem:spectral_norm_concentration} and \ref{lem:diag} (to get the bound on $\Gamma(EE\t)$, we apply Lemma \ref{lem:spectral_norm_concentration} with $M = 0$).

We note that
\begin{align}
\| (\per W \per )^{p-1} W U \|_{2,\infty} &\leq  \| UU\t \Gamma(EE\t)  (\per W \per )^{p-2} W U\|_{2,\infty} \nonumber\\
    &\qquad + \|\Gamma(EE\t) (\per W \per )^{p-2} W U\|_{2,\infty} \nonumber \\
    &\leq  \|U\|_{2,\infty} \|\Gamma(EE\t)\|\|W\|^{p-1} +  \| \Gamma(EE\t)(\per W \per )^{p-2} W U\|_{2,\infty} \nonumber\\
    &\leq  \|U\|_{2,\infty} \|\Gamma(EE\t)\|\|W\|^{p-1}+ \sqrt{r}  \|\Gamma(EE\t) (\per W \per )^{p-2} W U\|_{\max}. \label{lem:2infty_general_1}
\end{align}
We first will analyze the $(i,j)$ entry of the matrix $\Gamma(EE\t)(\per W \per)^{p-2} W U$.  Fix an index $i$ and consider the auxiliary matrix $X^{-i}$ defined via
\begin{align*}
    X^{-i} := (\per (I - e_i e_i\t)W (I - e_i e_i\t) \per)^{p-2} (I - e_i e_i\t)W (I - e_i e_i\t)  U.
\end{align*}
Note that $X^{-i}$ is independent of the random variable $E_i$.  Define also the matrix $X$ via
\begin{align*}
    X :&= (\per W \per )^{p-2} W U.
\end{align*}
We note that the $i,j$ entry of the matrix $\Gamma(EE\t) X$ can be written as
\begin{align*}
   \sum_{k\neq i} \langle E_i, E_k \rangle X_{kj}^{-i} + 
     \sum_{k\neq i} \langle E_i, E_k \rangle (X_{kj} - X_{kj}^{-i}) 
    := I_1^{ij} + I_2^{ij} 
\end{align*}
where 
\begin{align*}
    I_1^{ij} :&=  \sum_{k\neq i} \langle E_i, E_k \rangle X_{kj}^{-i}; \\ 
    I_2^{ij} :&= 
      \sum_{k\neq i} \langle E_i, E_k \rangle (X_{kj} - X_{kj}^{-i}).  
\end{align*}
Let $\mathcal{A}_{ij}$ be the event that for all $k$, $|X_{kj}^{-i}| \leq 2 C_1 (C_2\delta)^{p-1} \|U\|_{2,\infty}=:A$.  We first study $I_1^{ij}$ on the event $\mathcal{A}_{ij}$. 

Note that
 \begin{align*}
  I_1^{ij} &= (E_i)\t\bigg(\sum_{k\neq i} E_k X_{kj}^{-i}\bigg) \\
   &= Y_i\t  \Sigma_i^{1/2} \bigg( (E^{-i})\t X^{-i} e_j \bigg),
 \end{align*}
where $E^{-i}$ is the matrix $E$ with the $i$'th   row  set to zero. Recall that by construction $\mathcal{A}_{ij}$ does not depend on $E_i$.  Hence, conditional on $E_k$ for $k\neq i$, this is a sum of $d$ independent mean-zero random variables $Y_{i\alpha}$ with coefficients indexed by $\alpha$.  Define the vector $v := \Sigma_i^{1/2} (E^{-i})\t X^{-i} e_j \in \R^{d}$.   Note that
\begin{align*}
    \|v\| &= \bigg\| \Sigma_i^{1/2}(E^{-i})\t X^{-i}e_j\bigg\|\\
    &\leq \|\Sigma_i^{1/2}\|\|E^{-i}\|\|X^{-i}e_j\|_2\\
    &\leq \sigma_i\| E^{-i} \| \| X^{-i} e_j \|_2,
\end{align*}
which always holds.  Moreover, on the event $\mathcal{A}_{ij}$ it holds that 
\begin{align*}
 \| X^{-i} e_j \|_2 &\leq \sqrt{n} \max_{k} |X_{kj}^{-i}| \\
 &\leq \sqrt{n} A.
\end{align*}
Suppose for the moment that $\mathcal{E}$ is the event that $\|E^{-i} \| \leq B$, for some bound $B$.  Then by independence of this event with $E_i$, it holds that
\begin{align*}
    \| Y_i\t v \|_{\psi_2}^2 &= \| v\|_2^2.
\end{align*}
Therefore, by Hoeffding's inequality, for some universal constant $C_3$,
\begin{align}
    \p\bigg(  |Y_i\t \Sigma_i^{1/2} (E^{-i})\t X^{-i} e_j | > C_3 \sqrt{n}\sigma_i A B \sqrt{20\log(n\vee d)} \cap \mathcal{A}_{ij} \cap \mathcal{E} \bigg) &\leq 2 (n\vee d)^{-20}.\label{eq:lem6hoeffcalc}
\end{align}
We now deduce a bound $B$ for the spectral norm of $E^{-i}$.  Note that for any deterministic unit vectors $a \in \R^{n}$ and $b \in \R^d$, by independence of the $Y_{k\alpha}$'s it holds that
\begin{align*}
 \|   a\t E^{-i} b \|_{\psi_2}^2  &= \| \sum_{k\neq i} \sum_{j} a_k E^{-i}_{kj} b_j \|_{\psi_2}^2 \\
    &=\bigg\| \sum_{k\neq i} \sum_{j} a_k (\Sigma_k^{1/2} Y_k)_{j} b_j \bigg\|_{\psi_2}^2 \\
    &=  \sum_{k\neq i} a_k^2 \bigg\| \sum_{j} (\Sigma_k^{1/2} Y_k)_{j} b_j\bigg\|_{\psi_2}^2 \\
  &=  \sum_{k\neq i} a_k^2\bigg\| \sum_{j} \sum_{\alpha} (\Sigma_k^{1/2})_{j\alpha} Y_{k\alpha} b_j\bigg\|_{\psi_2}^2 \\
  &= \sum_{k\neq i} a_k^2 \sum_{\alpha} \bigg\| Y_{k\alpha} \sum_{j} (\Sigma_k^{1/2})_{j\alpha} b_j\bigg\|_{\psi_2}^2 \\
  &=  \sum_{k\neq i} a_k^2 \sum_{\alpha} \| \sum_{j} (\Sigma_k^{1/2})_{j\alpha} b_j \|_2^2 \\
  &=  \sum_{k\neq i} a_k^2 \sum_{\alpha}  [ \Sigma_k^{1/2} b]_{\alpha}^2 \\
  &= \sum_{k\neq i} a_k^2  \| \Sigma_k^{1/2} b \|_2^2 \\
  &\leq \sigma^2  \sum_{k\neq i} a_k^2  \|b\|^2 \\
  &\leq \sigma^2.
\end{align*}
Therefore, by a standard $\eps$-net argument (e.g. the argument in the proof of Theorem 4.4.5 in \citet{vershynin_high-dimensional_2018}), it holds that there exists a universal constant $C_4$ such that
\begin{align*}
    \p\bigg( \|E^{-i} \| > C_4 \sigma ( \sqrt{d} + u) \bigg) &\leq 2 \exp(-u^2).
\end{align*}
Here we implicitly used Assumption \ref{assumption:dimension}, or that $d\geq c n$.  Define the event
\begin{align*}
    \mathcal{E} := \bigg\{ \|E^{-i} \| \leq C_4 \sigma ( \sqrt{d} + \sqrt{20\log(n\vee d)}) \bigg\},
\end{align*}
so that $\p(\mathcal{E}^c) \leq 2(n\vee d)^{-20}$.   On the event $\mathcal{E}$ it holds that $\|E^{-i}\| \leq C_4 \sigma \sqrt{20 d\log(n\vee d)}$.
By Equation~(\ref{eq:lem6hoeffcalc}),
\begin{align*}
    \p\bigg(  |Y_i\t \Sigma_i^{1/2} (E^{-i})\t X^{-i} e_j | > & 20 C_3 C_4 \sigma^2 \sqrt{nd}A \log(n\vee d)  \cap \mathcal{A}_{ij} \bigg) \\ &\leq   \p\bigg(  |Y_i\t \Sigma_i^{1/2} (E^{-i})\t X^{-i} e_j | > 20 C_3 C_4 \sigma^2 \sqrt{nd}A \log(n\vee d)  \cap \mathcal{A}_{ij} \cap \mathcal{E} \bigg) + \p( \mathcal{E}^c ) \\
    &\leq 4(n\vee d)^{-20}.
\end{align*}
\noindent
Furthermore, $A := 2C_1(C_2\delta)^{p-1}\|U\|_{2,\infty}$. 
  Recall that $\delta$ satisfies, for some sufficiently large absolute constant $C_0$,
 \begin{align*}
     \delta &= C_0 \bigg( \sqrt{rnd} \log(n\vee d) \sigma^2 + \sqrt{rn \log(n\vee d)} \sigma\lambda_1 \bigg),
 \end{align*}
 so that $\frac{\delta}{\sqrt{r}} \geq \log(n\vee d) \sqrt{nd} \sigma^2 + \lambda_1 \sqrt{n \log(n\vee d)} \sigma $.   Therefore,
 \begin{align*}
     \p\bigg( |I_1^{ij} | > \frac{C_1 (C_2\delta)^{p}}{4\sqrt{r}} \|U\|_{2,\infty} \cap \mathcal{A}_{ij} \bigg) \leq 4(n\vee d)^{-20}
 \end{align*}
 as long as $C_2 \geq 20 C_3 C_4$, which is true by taking $C_2$ large since $C_3$ and $C_4$ are universal constants.  Therefore, from equation \eqref{lem:2infty_general_1}, 
\begin{align}
\p\bigg( &\|U\|_{2,\infty} \|\Gamma(EE\t)\|\| W \|^{p-1} + \sqrt{r} \|\Gamma(EE\t)(\per W \per)^{p-2} W U\|_{\max} > C_1(C_2\delta)^{p} \|U\|_{2,\infty}\bigg) \nonumber\\
&\leq \p\bigg( \|U\|_{2,\infty} \|\Gamma(EE\t)\|\| W \|^{p-1}+ \sqrt{r} \|\Gamma(EE\t)(\per W \per)^{p-2} W U\|_{\max} > C_1(C_2\delta)^{p} \|U\|_{2,\infty}\cap \mathcal{B} \bigg) + \p(\mathcal{B}^c) \nonumber\\
&\leq  \p\bigg( \|\Gamma(EE\t)(\per W \per)^{p-2} W U\|_{\max} > \frac{C_1}{2\sqrt{r}}(C_2\delta)^{p} \|U\|_{2,\infty}\cap \mathcal{B} \bigg) + \p(\mathcal{B}^c) \nonumber \\
&\leq \p\bigg( \bigcup_{i,j}  \bigg|e_i\t \Gamma(EE\t)(\per W \per)^{p-2} W U e_j\bigg| > \frac{C_1}{2\sqrt{r}}(C_2\delta)^{p} \|U\|_{2,\infty}\cap \mathcal{B} \bigg) + \p(\mathcal{B}^c) \nonumber\\
&\leq nr \max_{i,j} \p\bigg(   |\Gamma(EE\t)(\per W \per)^{p-2} W U|_{ij} > \frac{C_1}{2\sqrt{r}}(C_2\delta)^{p} \|U\|_{2,\infty}\cap \mathcal{B} \bigg) + \p(\mathcal{B}^c) \nonumber\\
&\leq nr \max_{i,j} \p \bigg( |I_1^{ij} + I_2^{ij} | > \frac{C_1}{2\sqrt{r}}(C_2\delta)^{p} \|U\|_{2,\infty}\cap \mathcal{B} \bigg) + \p(\mathcal{B}^c) \nonumber\\
&\leq nr \max_{i,j} \p \bigg( |I_1^{ij}| >  \frac{C_1}{4\sqrt{r}}(C_2\delta)^{p} \|U\|_{2,\infty}\cap \mathcal{B} \bigg)  + nr \max_{i,j} \p \bigg( |I_2^{ij}| >  \frac{C_1}{4\sqrt{r}}(C_2\delta)^{p} \|U\|_{2,\infty}\cap \mathcal{B} \bigg) + \p(\mathcal{B}^c) \nonumber \\
    &\leq nr \max_{i,j} \p\bigg( | I_1^{ij} \mathcal| > \frac{C_1}{4\sqrt{r}}(C_2\delta)^{p} \|U\|_{2,\infty}\cap \mathcal{B} \cap \mathcal{A}_{ij} \bigg) + nr \max_{i,j} \p ( \mathcal{B} \cap \mathcal{A}_{ij}^c) \nonumber \\
    &\qquad   + nr \max_{i,j} \p \bigg( |I_2^{ij}| >  \frac{C_1}{4\sqrt{r}}(C_2\delta)^{p} \|U\|_{2,\infty}\cap \mathcal{B} \bigg) + \p(\mathcal{B}^c). 
   \label{probabilitysums}
\end{align}
In Lemma \ref{lem:events} we show for all $i$ and $j$ that $\p(\mathcal{B} \cap \mathcal{A}_{ij}^c ) = 0$ and
\begin{align*}
\p\bigg( | I_2^{ij} | >\frac{C_1}{4\sqrt{r}}(C_2\delta)^{p} \|U\|_{2,\infty} \cap \mathcal{B} \bigg) &= 0.
\end{align*}
In addition, from our previous analysis,
\begin{align*}
    \p\bigg( |I_1^{ij} | > \frac{C_1 (C_2\delta)^{p}}{4\sqrt{r}} \|U\|_{2,\infty} \cap \mathcal{A}_{ij} \bigg) \leq 4(n\vee d)^{-20}.
\end{align*}
Finally, $\p(\mathcal{B}^c) \leq p(n\vee d)^{-5} + 12(n\vee d)^{-6}$ by the induction hypothesis.  Plugging  these results into Expression \eqref{probabilitysums}, we obtain
\begin{align*}
    nr \p\bigg( | I_1^{ij}| > \frac{C_1}{4\sqrt{r}}(C_2\delta)^{p} \|U\|_{2,\infty} \cap \mathcal{A}_{ij} \bigg) + nr & \p ( \mathcal{B} \cap \mathcal{A}_{ij}^c)  + \p\bigg( | I_2^{ij} | >\frac{C_1}{4\sqrt{r}}(C_2\delta)^{p} \|U\|_{2,\infty} \cap \mathcal{B} \bigg) + \p(\mathcal{B}^c) \\
    &\leq 4(nr)(n\vee d)^{-20} + p(n\vee d)^{-5} + 12(n\vee d)^{-6} \\
    &\leq (p+1)(n\vee d)^{-5}
\end{align*}
as desired.  
\end{proof}

\begin{lemma}\label{lem:events}
Define $X^{-i}$ and $X$ as in the proof of Lemma \ref{lem:2infty_general}.  
Let $\mathcal{A}_{ij}$ and $\mathcal{B}$ be the events defined via 
\begin{align*}
\mathcal{A}_{ij}:= \bigg\{    \max_{k} | X_{kj}^{-i} | \leq 2 C_1(C_2\delta)^{p-1}\|U\|_{2,\infty} \bigg\}& \\
        \mathcal{B}:= \bigg\{\|\per (\per W \per)^{p_0-1} W U\|_{2,\infty} &\leq C_1 (C_2\delta)^{p_0} \|U\|_{2,\infty} \text{ for all $1 \leq p_0 \leq p-1$} \bigg\} \\
    &\bigcap \bigg\{ \|W \| \leq \frac{\delta}{\sqrt{r\log(n\vee d)}} \bigg\} \\
    &\bigcap \bigg\{ \|\Gamma(EE\t) \| \leq \frac{\delta}{\sqrt{r\log(n\vee d)}} \bigg\}. 
\end{align*}
Then $\mathcal{B} \cap \mathcal{A}^c_{ij}$ is empty for all $i$ and $j$.  Define also $I_2^{ij} := e_i\t \Gamma(EE\t) (X - X^{-i})e_j.$ as in the proof of Lemma \ref{lem:2infty_general}. Then  again for all $i$ and $j$,
\begin{align*}
    \p\bigg( |I_2^{ij}| > \frac{C_1}{4\sqrt{r}} (C_2 \delta)^{p}\|U\|_{2,\infty} \cap \mathcal{B} \bigg) = 0.
\end{align*}
\end{lemma}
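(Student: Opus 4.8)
The plan is to establish both assertions as \emph{deterministic} consequences of being on $\mathcal{B}$, so that the stated probabilities are literally zero; no further probabilistic arguments are then needed. Throughout write $W := EM\t + ME\t + \Gamma(EE\t)$ and $W^{-i} := (I - e_ie_i\t)W(I - e_ie_i\t)$. First I would record the structural facts that drive everything. Since $W$ is symmetric, $W^{-i} = W - R$ with $R := (We_i)e_i\t + e_i(We_i)\t - W_{ii}e_ie_i\t$, a matrix of rank at most two, supported on row and column $i$, with $\|R\| \le 3\|W\|$. On $\mathcal{B}$ we have $\|W\|, \|\Gamma(EE\t)\| \le \delta/\sqrt{r\log(n\vee d)}$ together with the row-wise information $\|(\per W)^{p_0}U\|_{2,\infty} = \|\per(\per W\per)^{p_0-1}WU\|_{2,\infty} \le C_1(C_2\delta)^{p_0}\|U\|_{2,\infty}$ for every $p_0 \le p-1$; in particular $\|(WU)_{i\cdot}\| \le 2C_1C_2\delta\|U\|_{2,\infty}$. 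The identities I would use repeatedly are $e_i\t(I - e_ie_i\t) = 0$, $\per M = 0$, $U\t\per = 0$, the expansion $e_i\t W = E_i\t V\Lambda U\t + U_{i\cdot}\Lambda V\t E\t + e_i\t\Gamma(EE\t)$ (so that the $\mathrm{row}(U)$-component of $e_i\t W$ is $E_i\t V\Lambda U\t$, which is killed by any iterate lying in $\mathrm{range}(\per)$), and the low-dimensional projection bound $\|EV\| \lesssim \sigma\sqrt{n} \lesssim \delta/(\lambda_1\sqrt{r\log(n\vee d)})$ on a routinely augmented version of $\mathcal{B}$ (this is much smaller than the crude $\|E\| \lesssim \sigma\sqrt{d}$, and is where Assumption~\ref{assumption:eigengap} re-enters). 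Finally, the off-diagonal entries of $\per e_i = e_i - UU_{i\cdot}\t$ are $-U_{k\cdot}U_{i\cdot}\t$, of size at most $\|U\|_{2,\infty}^2$, and $X^{-i}_{ij} = 0$ trivially, so only rows $k \ne i$ of $X^{-i}$ matter for $\mathcal{A}_{ij}$.

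\textbf{First assertion.} Write $Y^{(m)} := (\per W^{-i})^m U$, so $X = (\per W)^{p-1}U$ and $X^{-i} = Y^{(p-1)}$. From $\per W^{-i} = \per W - (\per e_i)(e_i\t W) - (\per We_i)e_i\t + W_{ii}(\per e_i)e_i\t$ one obtains
\begin{equation*}
Y^{(m)} = \per W\,Y^{(m-1)} - (\per e_i)\bigl(e_i\t W\,Y^{(m-1)}\bigr) - \bigl(\per We_i - W_{ii}\per e_i\bigr)\bigl(Y^{(m-1)}\bigr)_{i\cdot},
\end{equation*}
and unrolling this recursion gives $X^{-i} = (\per W)^{p-1}U + \sum_{m=1}^{p-1}(\per W)^{p-1-m}\Delta_m$, where $\Delta_m$ denotes the two correction terms above. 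The leading term is bounded in $\|\cdot\|_{2,\infty}$ by $C_1(C_2\delta)^{p-1}\|U\|_{2,\infty}$, which is precisely the $p_0 = p-1$ bound built into $\mathcal{B}$. For each correction I would peel off $\|(\per W)^{p-1-m}\|_{2,\infty} \le \|W\|^{p-1-m}$ (with the sharper rank-one bookkeeping when the trailing factor is $\per e_i$, which on rows $k\ne i$ supplies an extra $\|U\|_{2,\infty}^2$ whenever no further $\per W$ block intervenes), bound $(Y^{(m-1)})_{i\cdot}$ by $\|U\|_{2,\infty}\|W\|^{m-1}$ via $e_i\t(\per W^{-i})^{m-1} = -U_{i\cdot}U\t W^{-i}(\per W^{-i})^{m-2}$ (using $e_i\t(I-e_ie_i\t)=0$), and bound $e_i\t W\,Y^{(m-1)}$ by using $U\t Y^{(m-1)} = 0$ (for $m\ge 2$) to kill the $E_i\t V\Lambda U\t$ part, the estimate on $\|EV\|$ for the $U_{i\cdot}\Lambda V\t E\t$ part, and, for $m=1$, the identification $e_i\t WU = (WU)_{i\cdot}$ with its $\mathcal{B}$-bound. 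Each correction is then at most an absolute constant times $\|U\|_{2,\infty}\,\delta^{p-1}/(r\log(n\vee d))^{(p-2)/2}$ or smaller, so choosing $C_2$ large makes $\sum_m\|(\per W)^{p-1-m}\Delta_m\|_{2,\infty} \le C_1(C_2\delta)^{p-1}\|U\|_{2,\infty}$, whence $\max_k|X^{-i}_{kj}| \le \|X^{-i}\|_{2,\infty} \le 2C_1(C_2\delta)^{p-1}\|U\|_{2,\infty}$ on $\mathcal{B}$. Thus $\mathcal{B} \subseteq \mathcal{A}_{ij}$ and $\p(\mathcal{B}\cap\mathcal{A}_{ij}^c) = 0$.

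\textbf{Second assertion.} The same unrolling gives $X - X^{-i} = -\sum_{m=1}^{p-1}(\per W)^{p-1-m}\Delta_m$ (exactly the sum of corrections), and $I_2^{ij} = e_i\t\Gamma(EE\t)(X - X^{-i})e_j$, so on $\mathcal{B}$
\begin{equation*}
|I_2^{ij}| \le \|e_i\t\Gamma(EE\t)\|\,\|X - X^{-i}\| \le \frac{\delta}{\sqrt{r\log(n\vee d)}}\,\|X - X^{-i}\|.
\end{equation*}
Re-running the correction bounds of the previous paragraph purely in operator norm (which requires no $\ell_{2,\infty}$ bookkeeping and, modulo the $\Gamma(EE\t)$-chains flagged below, still produces a $\|U\|_{2,\infty}$ prefactor) yields $\|X - X^{-i}\| \le \tfrac{C_1}{4}\sqrt{\log(n\vee d)}\,(C_2\delta)^{p-1}\|U\|_{2,\infty}$ for $C_2$ large, and hence $|I_2^{ij}| \le \tfrac{C_1}{4\sqrt{r}}(C_2\delta)^p\|U\|_{2,\infty}$ deterministically on $\mathcal{B}$, giving the claimed vanishing probability.

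\textbf{Main obstacle.} The delicate point --- the reason this is advertised as the technical heart of the leave-one-out analysis --- is verifying that \emph{every} correction term carries a genuine $\|U\|_{2,\infty}$ factor rather than only powers of $\|W\| \asymp \delta/\sqrt{r\log(n\vee d)}$. The ``mixed'' chains are tamed by the cancellations above ($U\t\per = 0$, $\per M = 0$, $e_i\t(I-e_ie_i\t)=0$, and the row-$i$ bounds in $\mathcal{B}$), but chains built partly or entirely from the hollowed product $\Gamma(EE\t)$ do not immediately expose a $\|U\|_{2,\infty}$, since entries $\langle E_k,E_i\rangle$ and blocks like $e_i\t\Gamma(EE\t)$ are only controlled in size by $\|\Gamma(EE\t)\|$. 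Handling these requires exploiting that $e_i\t\Gamma(EE\t)$ is \emph{independent} of the iterates $(\per W^{-i})^m U$ (which use only rows $\ne i$), and, for longer such chains, introducing a further nested leave-one-out index so that the same independence structure can be applied recursively. I expect this bookkeeping --- rather than any single estimate --- to constitute the bulk of the argument.
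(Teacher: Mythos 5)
Your high-level plan is correct (both assertions are deterministic on $\mathcal{B}$, and the device is a telescoping expansion of $X - X^{-i}$ into rank-$\le 2$ corrections), but your expansion goes in the wrong direction, and the obstacle you flag at the end is a genuine gap, not merely bookkeeping. The paper's telescoping writes $X - X^{-i} = \sum_{m} S_{-i}^{m-1}\,S_\xi\, S^{p-m-2}WU + S_{-i}^{p-2}\xi U$ with the leave-one-out operators $S_{-i}$ on the \emph{left} and the original operators $S = \per W\per$ on the \emph{right}, so that the factor sitting next to $U$ is always of the form $S^{q}WU$ (or $\per S^{q-1}WU$), whose $\ell_{2,\infty}$ norm is exactly what $\mathcal{B}$ controls; the $\|U\|_{2,\infty}$ factor then falls out from the right, and the left factor $\|B S_{-i}^{m-1}\|$ is discarded in operator norm. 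Your recursion for $Y^{(m)}$ unrolls with $\per W$ on the left, which mirrors this and puts the leave-one-out iterates $Y^{(m-1)} = (\per W^{-i})^{m-1}U$ on the \emph{right}. But $\mathcal{B}$ contains no $\ell_{2,\infty}$ information about $W^{-i}$-iterates, so the quantity $e_i\t W Y^{(m-1)}$ (and in particular its $\Gamma(EE\t)$-part $e_i\t\Gamma(EE\t)Y^{(m-1)}$) cannot be tied back to $\|U\|_{2,\infty}$ using only the given event. You acknowledge this, but the remedy you propose --- exploiting independence of $e_i\t\Gamma(EE\t)$ from $Y^{(m-1)}$, or a further nested leave-one-out --- is a probabilistic argument, which cannot yield a probability of \emph{exactly} zero as the lemma requires; the whole point of this lemma is that everything is deterministic once one is on $\mathcal{B}$. (A smaller but related symptom: you also invoke $\|EV\| \lesssim \sigma\sqrt{n}$ on ``a routinely augmented version of $\mathcal{B}$,'' which is again not available if the probabilities are to be literally zero relative to $\mathcal{B}$ as stated.)

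The fix is simply to telescope from the other end, $A^q - B^q = \sum_{m} B^m (A - B) A^{q-1-m}$ with $A = S$, $B = S_{-i}$, so the ``good'' $S$-iterates appear adjacent to $WU$. Then every term has the shape $e_k\t B' S_{-i}^{m-1}\,(\per\,\text{rank-}\le\!2\,\per)\, S^{p-m-2}WU$ with $B'\in\{\Gamma(EE\t), I\}$, and one peels off $\|B' S_{-i}^{m-1}\|$ in operator norm while the right block is bounded directly by the $\mathcal{B}$-event: $\|\per W\per\, S^{p-m-2}WU\|_{2,\infty} = \|S^{p-m-1}WU\|_{2,\infty} \le C_1(C_2\delta)^{p-m}\|U\|_{2,\infty}$ and $\|\per S^{p-m-2}WU\|_{2,\infty}\le C_1(C_2\delta)^{p-m-1}\|U\|_{2,\infty}$. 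This recovers the $\|U\|_{2,\infty}$ factor for \emph{every} term deterministically, the geometric sum in $m$ converges once $C_2$ is large enough, and the boundary term and the extra $\Gamma(EE\t)$ in $I_2^{ij}$ give the $\delta/\sqrt{r\log(n\vee d)}$ gain needed to land at $\tfrac{C_1}{4\sqrt{r}}(C_2\delta)^p\|U\|_{2,\infty}$. As a minor aside, you define $X = (\per W)^{p-1}U$, which equals $\per(\per W\per)^{p-2}WU$ rather than the paper's $X = (\per W\per)^{p-2}WU$; the extra $\per$ doesn't change the analysis materially, but it means $e_i\t\Gamma(EE\t)X$ in your $I_2^{ij}$ is not literally the quantity the lemma is about.
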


\begin{proof}[Proof of Lemma \ref{lem:events}]
The proof will be split up into steps, the first of which will be expanding out the difference $X - X^{-i}$ in terms of a matrix telescoping series, the second of which will be bounding individual terms, and the final step will prove the two results.

\ \\ \noindent 
\textbf{Step 1: A useful expansion}\\ \ \\
We have that
\begin{align*}
    X &= (\per W\per)^{p-2} WU \\
    X^{-i} &= (\per (I - e_i e_i\t) W (I - e_i e_i\t) \per)^{p-2} (I - e_i e_i\t) W (I - e_i e_i\t)  U.
\end{align*}
Define the matrix $\xi := W - (I- e_ie_i\t) W (I - e_i e_i\t)$.  We note that
\begin{align*}
    X - X^{-i} &= (\per W \per )^{p-2} W U - (\per (I - e_i e_i\t) W (I - e_i e_i\t) \per)^{p-2} (I - e_i e_i\t) W (I - e_i e_i\t)  U \\
    &= (\per W \per )^{p-2} W U -  (\per (I - e_i e_i\t) W (I - e_i e_i\t) \per)^{p-2} ( W - \xi) U \\
    &= \bigg[ (\per W \per )^{p-2} - (\per (I - e_i e_i\t) W (I - e_i e_i\t) \per)^{p-2} \bigg] W U \\
    &\qquad + (\per (I - e_i e_i\t) W (I - e_i e_i\t) \per)^{p-2} \xi U \\
    &= \bigg[ (\per W \per )^{p-2} - (\per (I - e_i e_i\t) W (I - e_i e_i\t) \per)^{p-3}(\per (W - \xi) \per) \bigg] W U
    \\
    &\qquad + (\per (I - e_i e_i\t) W (I - e_i e_i\t) \per)^{p-2} \xi U \\
    &= \bigg[  \bigg( ( \per W \per)^{p-3} - (\per (I - e_i e_i\t) W (I - e_i e_i\t) \per)^{p-3} \bigg) (\per W \per ) \bigg] W U \\
    &\qquad +  (\per (I - e_i e_i\t) W (I - e_i e_i\t) \per)^{p-3} (\per \xi \per) W U \\
    &\qquad + (\per (I - e_i e_i\t) W (I - e_i e_i\t) \per)^{p-2} \xi U.
\end{align*}
Note that if $ p = 2$ then we simply have $\xi U$.  Define the matrices
\begin{align*}
    S_{\xi} &:= \per \xi \per \\
    S_{-i} :&= (\per(I-e_i e_i\t)W(I-e_i e_i\t)\per) \\
    S :&= \per W \per.
\end{align*}
Then iterating the process above, it holds that
\begin{align*}
    X - X^{-i} &= \sum_{m=1}^{p-2}(\per(I-e_i e_i\t)W(I-e_i e_i\t)\per)^{m-1}(\per\xi\per)(\per W\per)^{p-m-2}WU\\
&+(\per(I-e_i e_i\t)W(I-e_i e_i\t)\per)^{p-2}\xi U \\
&= \sum_{m=1}^{p-2} S_{-i}^{m-1} S_{\xi} S^{p-m-2} W U + S_{-i}^{p-2} \xi U,
\end{align*}
where the sum is understood to be the empty sum if $p = 2$.  



\ \\ \noindent
\textbf{Step 2: Analyzing each term in the sum} \\ \ \\
We now analyze each individual term in the sum on the event $\mathcal{B}$, where we also analyze each row of $W(X - X^{-i})$. Let the matrix $B \in \{\Gamma(EE\t), I\}$ be fixed.  We ignore the boundary term $BS_{-i}^{p-2} \xi U$ for now.

Note that the $k$'th row of such term can be written as
\begin{align*}
    e_k\t  B S_{-i}^{m-1} S_{\xi} S^{p-m-2} W U,
\end{align*}
Recall that
\begin{align*}
    S_{\xi} &= \per \xi \per \\
    &= \per \bigg( e_i e_i\t W +  W e_i e_i\t - e_i e_i\t W e_i e_i\t \bigg) \per \\
    &= \per e_i e_i\t W \per + \per W e_i e_i\t \per - \per e_i e_i\t W e_i e_i\t \per.
\end{align*}
Therefore, by homogeneity of the vector norm, we have that
\begin{align*}
  \|  e_k\t  B &S_{-i}^{m-1} S_{\xi}S^{p-m-2} W U \| \\
  &\leq  \| e_k\t  B S_{-i}^{m-1} \per e_i e_i\t W \per S^{p-m-2} W U \| +  \|  e_k\t  B S_{-i}^{m-1} \per W e_i e_i\t \per S^{p-m-2} W U \| 
  \\
  &\qquad + \|  e_k\t B S_{-i}^{m-1} \per e_i e_i\t W e_i e_i\t \per S^{p-m-2} W U \|  \\
  &\leq \| e_k\t B S_{-i}^{m-1} \per  e_i \| \| e_i\t W \per S^{p-m-2} W U\|  + \| e_k\t B S_{-i}^{m-1} \per W e_i \| \| e_i\t \per S^{p-m-2} W U \| \\
  &\qquad + \| e_k\t B S_{-i}^{m-1}\per e_i \| \| e_i\t W e_i \| \|e_i\t \per S^{p-m-2} W U\| \\
  &\leq \| e_k\t B S_{-i}^{m-1} \per  e_i \|\bigg( \| e_i\t \per W \per S^{p-m-2} W U\| + \| e_i\t U U\t W \per S^{p-m-2} W U\| \bigg) \\
  &\qquad + \| e_k\t B S_{-i}^{m-1} \per W e_i \| \| e_i\t \per S^{p-m-2} W U \|  + \| e_k\t B S_{-i}^{m-1}\per e_i \| \| e_i\t W e_i \| \|e_i\t \per S^{p-m-2} W U\| \\
  &\leq \| B S_{-i}^{m-1} \| \| \per W \per S^{p-m-2} W U\|_{2,\infty}  + \|B S_{-i}^{m-1} \| \|U\|_{2,\infty} \| W \per S^{p-m-2} W U\| \\
  &\qquad + \| B S_{-i}^{m-1} \per W \| \| \per S^{p-m-2} W U \|_{2,\infty}  + \| B S_{-i}^{m-1} \| \| W \| \|\per S^{p-m-2} W U\|_{2,\infty} \\
  &\leq \| B S_{-i}^{m-1} \| \|S^{p -m-1} W U\|_{2,\infty} + \|U \|_{2,\infty} \|B S_{-i}^{m-1} \| \|W \|^{p-m} + \| B S_{-i}^{m-1} \| \| W \| \| \per S^{p-m-2} W U\|_{2,\infty} \\
  &\qquad + \| B S_{-i}^{m-1} \| \| W \| \|\per S^{p-m-2} W U\|_{2,\infty} \\
  &\leq \| B S_{-i}^{m-1} \| \bigg( C_1 (C_2 \delta)^{p-m} \|U\|_{2,\infty} + \|U\|_{2,\infty} \| W\|^{p-m} + 2\|W \| C_1 (C_2 \delta)^{p-m-1} \|U\|_{2,\infty} \bigg) \\
  &\leq \|B S_{-i}^{m-1} \| \|U\|_{2,\infty} \bigg( C_1 (C_2 \delta)^{p-m} +  \bigg[\frac{\delta}{\sqrt{r\log(n \vee d)}} \bigg]^{p-m} + 2\frac{\delta}{\sqrt{r\log(n \vee d)}} C_1 (C_2 \delta)^{p-m-1} \bigg).
\end{align*}
But for $\|B S_{-i}^{m-1}\|$, this bound is uniform in $i$ and $j$. Finally, for the boundary term, we note that the same strategy can be applied in precisely the same manner, yielding the same bound.  

\ \\ \noindent
\textbf{Step 3: Putting it together}\\ \ \\
First we show the upper bound on $I_2^{ij}$ on $\mathcal{B}$.  Recall that
\begin{align*}
    I_2^{ij} = e_i\t \Gamma(EE\t) (X - X^{-i}) e_j.
\end{align*}
By the bounds on each term, we have that
\begin{align*}
    | e_i\t  \Gamma(EE\t) (X - X^{-i}) e_j| &\leq \| e_i\t  \Gamma(EE\t) (X - X^{-i}) \| \\
    &\leq \sum_{m=1}^{p-2}  \| \Gamma(EE\t)\| \|W\|^{m-1} \|U\|_{2,\infty} \bigg( C_1 (C_2 \delta)^{p-m} +  \bigg[\frac{\delta}{\sqrt{r\log(n \vee d)}} \bigg]^{p-m} + 2\frac{\delta}{\sqrt{r\log(n \vee d)}} C_1 (C_2 \delta)^{p-m-1} \bigg) \\
    &\leq 4 C_1 \|U\|_{2,\infty} \sum_{m=1}^{p-2}   \| \Gamma(EE\t)\| \|W\|^{m-1} (C_2 \delta)^{p-m} \\
    &\leq 4 C_1 \|U\|_{2,\infty}\sum_{m=1}^{p-2}  \bigg( \frac{\delta}{\sqrt{r\log(n \vee d)}}\bigg)^m (C_2 \delta)^{p-m} \\
    &\leq \frac{4 C_1 \|U\|_{2,\infty} }{\sqrt{r\log(n\vee d)}} (C_2 \delta)^{p} \sum_{m=1}^{p-2}  C_2^{-m} \\
    &\leq \frac{C_1}{4 \sqrt{r}} (C_2 \delta)^{p} \|U\|_{2,\infty}
\end{align*}
as long as $C_2 \geq 48$.

Next we show that $\mathcal{B} \cap \mathcal{A}^c_{ij}$ is empty for all $i$ and $j$.  More specifically, we show that
\begin{align*}
    | X_{kj}^{-i} - X_{kj} | &\leq C_1 (C_2 \delta)^{p-1} \|U\|_{2,\infty},
\end{align*}
Again upper bounding the $k,j$ entry by the $k$'th row norm, we note that the $k$'th row of $X - X^{-i}$ can be written as
$ e_k\t ( X - X^{-i} ).
$ 
 Using the expansion and the bounds on each term in the summation, we have that
\begin{align*}
    \| e_k\t (X - X^{-i}) \| &\leq \sum_{m=1}^{p-1}\|W\|^{m-1} \|U\|_{2,\infty} \bigg( C_1 (C_2 \delta)^{p-m} +  \bigg[\frac{\delta}{\sqrt{r\log(n \vee d)}} \bigg]^{p-m} + \frac{\delta}{\sqrt{r\log(n \vee d)}} C_1 (C_2 \delta)^{p-m-1} \bigg) \\
    &\leq 4 C_1 \|U\|_{2,\infty}  (C_2 \delta)^{p-1} \sum_{m=1} C_2^{-m} \\
    &\leq C_1 \|U\|_{2,\infty} (C_2 \delta)^{p-1},
\end{align*}
and hence on $\mathcal{B}$ we have that 
\begin{align*}
    \| e_k\t  X^{-i} e_j\| &\leq \|X\|_{2,\infty} + \| e_k\t (X - X^{-i})\|  \\
    &\leq 2 C_1 ( C_2 \delta)^{p-1} \|U\|_{2,\infty}
\end{align*}
as desired.  Therefore $\mathcal{B} \cap \mathcal{A}^c_{ij}$ is empty.

\end{proof}

\section{Proof of Lemmas in Section \ref{sec:deterministicpart}}

\subsection{Proof of Lemma \ref{lem:deterministic_spectral}}
We will need the following lemma, adapted from \citet{zhang_heteroskedastic_2021}.  

\begin{lemma}[Lemma 1 in \citet{zhang_heteroskedastic_2021}]\label{lemma1_spectral}
Let $U,V \in \mathbb{O}(n,r)$ and let $A$ be a fixed $n\times n$ matrix.  Then
\begin{align*}
\|G(UU^{\t} A)\|&\leq \|U\|_{2,\infty}\|A\|\\
    \| G(AVV\t) \| &\leq \| V\|_{2,\infty} \| A \|. 
\end{align*}
\end{lemma}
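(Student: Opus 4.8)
The plan is to use the fact that $G$ retains only the diagonal: for any square matrix $N$ we have $G(N) = \diag(N_{11},\dots,N_{nn})$, so that $\|G(N)\| = \max_i |N_{ii}| = \max_i |e_i\t N e_i|$. Hence both inequalities reduce to bounding a single scalar of the form $e_i\t N e_i$ uniformly in $i$, with $N = UU\t A$ in the first case and $N = AVV\t$ in the second, and the right-hand sides are the familiar product of an $\ell_{2,\infty}$ norm of the orthonormal factor with the spectral norm of $A$.

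For the first bound I would write $e_i\t UU\t A e_i = \langle U\t e_i,\, U\t A e_i\rangle$ and apply Cauchy--Schwarz. Since the columns of $U$ are orthonormal, $\|U\t\| = 1$, so $\|U\t A e_i\| \le \|A e_i\| \le \|A\|$, while $\|U\t e_i\| = \|U_{i\cdot}\| \le \|U\|_{2,\infty}$ by definition of the $\ell_{2,\infty}$ norm; taking the maximum over $i$ gives $\|G(UU\t A)\| \le \|U\|_{2,\infty}\|A\|$. For the second bound I would symmetrically write $e_i\t A VV\t e_i = \langle V\t A\t e_i,\, V\t e_i\rangle$, so that Cauchy--Schwarz together with $\|V\t\| = 1$ yields $\|V\t A\t e_i\| \le \|A\t e_i\| \le \|A\t\| = \|A\|$ and $\|V\t e_i\| = \|V_{i\cdot}\| \le \|V\|_{2,\infty}$; maximizing over $i$ completes the proof.

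There is essentially no obstacle here: this is Lemma~1 of \citet{zhang_heteroskedastic_2021}, and the only points requiring a moment's care are that a matrix with orthonormal columns acts as a (non-strict) contraction, i.e. $\|U\t\| = \|V\t\| = 1$, and the elementary observation that $G$ extracts exactly the diagonal of $UU\t A$ (resp. $AVV\t$). I would simply present the two cases in sequence as above.
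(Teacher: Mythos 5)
Your proof is correct. The paper does not reprove this statement but simply cites it as Lemma~1 of \citet{zhang_heteroskedastic_2021}; your argument---observing that $G(N)$ is the diagonal part so $\|G(N)\| = \max_i |e_i^{\top} N e_i|$, then applying Cauchy--Schwarz together with $\|U^{\top}\| = \|V^{\top}\| = 1$---is the standard proof of that cited lemma and is essentially identical to the one given in that reference.
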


\begin{proof}[Proof of Lemma \ref{lem:deterministic_spectral}]
First, if $T_0 \geq C \log\left(\frac{\lambda_r^2}{\|\Gamma(Z)\|} \right)$, then by \citet{zhang_heteroskedastic_2021} it holds that $\|N_T - A \| \leq 3 \|\Gamma(Z)\|$. In addition, supposing the result holds and that $\rho \leq \frac{1}{2}$, then when $T - T_0 \geq C \log\left( \frac{1}{\|U\|_{2,\infty}}\right)$ it holds that
\begin{align*}
    \| \hat A - \tilde A\| \leq 41 \|U\|_{2,\infty} \|\Gamma(Z)\|.
\end{align*}
Hence, we must have that
\begin{align*}
    T  &\geq C \log\left( \frac{1}{\|U\|_{2,\infty}}\right) + T_0 \\
    &\geq C \bigg( \log\left( \frac{1}{\|U\|_{2,\infty}}\right) + \log \left(\frac{\lambda_r^2}{\|\Gamma(Z)\|} \right) \bigg) \\
    &\geq C \bigg(  \log \left(\frac{\lambda_r^2}{\|U\|_{2,\infty}\|\Gamma(Z)\|} \right) \bigg).
\end{align*}
This proves the ``consequently'' part.  We now show that
\begin{align*}
    \tilde K_T &\leq \frac{4}{\rho^{T - T_0}} \|\Gamma(Z)\| + \frac{20}{1-\rho} \|U\|_{2,\infty}\|\Gamma(Z)\|.
\end{align*}
We have that
\begin{align*}
\| N_T - \tilde A \| &= \| G(N_T - \tilde A) \| \\
&\leq \left\|G\left(\left(P_{U^{ T-1}}-P_{\tilde{U}}\right)\left(N_{T-1}-\tilde{A}\right)\right)\right\|+\left\|G\left(P_{\tilde U}\left(N_{T-1}-\tilde{A}\right)\right)\right\|+\left\|G\left(P_{U_{\perp}^{T-1}} \tilde{A}\right)\right\| \\
&\leq \left\|G\left(\left(P_{U ^{T-1}}-P_{\tilde{U}}\right)\left(N_{T-1}-\tilde{A}\right)\right)\right\|+\left\|G\left(P_{\tilde U}\left(N_{T-1}-\tilde{A}\right)\right)\right\|+\left\|G\left(P_{U_{\perp}^{T-1}}A\right)\right\| + \| G (P_{U_{\perp}^{T-1}} \Gamma(Z) ) \| \\
&\leq \left\|G\left(\left(P_{U^{ T-1}}-P_{\tilde{U}}\right)\left(N_{T-1}-\tilde{A}\right)\right)\right\|+\left\|G\left(P_{\tilde U}\left(N_{T-1}-\tilde{A}\right)\right)\right\|+\left\|G\left(P_{U_{\perp}^{T-1}}A\right)\right\| \\
&\qquad + \| G (P_{U_{\perp}^{T-1}} - P_{\tilde U_{\perp}}) \Gamma(Z) ) \|  + \| G (P_{\tilde U_{\perp}} \Gamma(Z) ) \| \\
&= J_1 + J_2 + J_3 + J_4 + J_5.
\end{align*}
we now bound each term.

\ \\ \noindent
\textbf{The term $J_1$:}
We use the restricted-rank operator norm of $G$ to bound this term, since $\mathrm{rank}(P_{U^{T-1}}-P_{\tilde{U}})\leq 2r$:
$$\|G((P_{U^{T-1}}-P_{\tilde{U}})(N_{T-1}-\tilde{A}))\|\leq  \|P_{U^{T-1}}-P_{\tilde{U}}\|\|N_{T-1}-\tilde{A}\|.$$ 

\ \\ \noindent
\textbf{The term $J_2$:}  By Lemma~\ref{lemma1_spectral},  $$\|G(P_{\tilde{U}}(N_{T-1}-\tilde{A}))\|\leq \|\tilde{U}\|_{2,\infty}\|N_{T-1}-\tilde{A}\|.$$

\ \\ \noindent
\textbf{The term $J_3$:}  By Lemmas~\ref{lemma1_spectral} and \ref{lemma7_spectral}, $$\|G(P_{U_\perp^{T-1}}A)\|=\|G(P_{U_\perp^{T-1}}A P_{U})\|\leq \|U\|_{2,\infty}\|P_{U_\perp^{T-1}}A\|\leq 2\|U\|_{2,\infty}\|N_{T-1}-A\|.$$

\ \\ \noindent
\textbf{The term $J_4$:}  Since $P_{U_{\perp}^{T-1}}-P_{\tilde{U}_{\perp}}=(I-P_{\tilde{U}_{\perp}})-(I-P_{U_{\perp}^{T-1}})=P_{\tilde{U}}-P_{U^{T-1}},$ we proceed as we did for $J_1$, obtaining $$\|G((P_{U_{\perp}^{T-1}}-P_{\tilde{U}_{\perp}})\Gamma(Z)\|\leq \|P_{U^{T-1}}-P_{\tilde{U}}\| \|\Gamma(Z)\|.$$

\ \\ \noindent
\textbf{The term $J_5$:}  We have $G(P_{\tilde U_{\perp}} \Gamma(Z)) = G( \Gamma(Z)) - G(P_{\tilde U} \Gamma(Z)) = - G(P_{\tilde U} \Gamma(Z)) $. By Lemma~\ref{lemma1_spectral},  $$\|G(P_{\tilde{U}_{\perp}}\Gamma(Z))\|\leq \|\tilde{U}\|_{2,\infty}\|\Gamma(Z)\|.$$

\ \\ \noindent
\textbf{Putting it together:} 
Let $K_{T-1} := \| N_{T-1} - A\|$ and let $\tilde K_{T-1} := \| N_{T-1} - \tilde A\|$.  Compiling these bounds, we have that
\begin{align*}
    J_1 &\leq  \| P_{U^{T-1}} - P_{\tilde U} \| \tilde K_{T-1} \\
    J_2 &\leq \|\tilde U\|_{2.\infty} \tilde K_{T-1} \\
    J_3 &\leq 2 \|U\|_{2,\infty} K_{T-1} \\
    J_4 &\leq  \| P_{U^{T-1}} - P_{\tilde U} \| \| \Gamma(Z) \| \\
    J_5 &\leq \|\tilde U\|_{2,\infty} \|\Gamma(Z) \|.
\end{align*}
These bounds hold regardless of $T$.  Hence, we may take $T$ such that $\|N_{T-1} - A\|\leq 3 \| \Gamma(Z)\|$ 
(by \citet{zhang_heteroskedastic_2021}, we may take $T \geq C \log \left( \frac{\lambda_r^2}{\|\Gamma(Z)\|} \right)$.
The proof of Lemma~\ref{lem:eigengaps} shows that $\|\Gamma(Z)\|\leq \lambda_r^2/12$, so $\tilde{K}_{T-1}\leq 4(\lambda_r^2/12)\leq \lambda_r^2/2$, and by the Davis-Kahan theorem, we have that
\begin{align*}
    \| P_{U^{T-1}} - P_{\tilde U} \| &\leq 2\frac{ \tilde K_{T-1} }{\lambda_r^2}.
\end{align*}
Applying this to the above bounds, we see that
\begin{align*}
        J_1 &\leq 2 \frac{\tilde K_{T-1}^2}{\lambda_r^2} \\
        J_4 &\leq 2 \frac{\tilde K_{T-1}}{\lambda_r^2} \| \Gamma(Z) \|.
\end{align*}
Moreover, we have the trivial bound
\begin{align*}
    K_{T-1} &\leq \tilde K_{T-1} + \| \Gamma(Z)\|.
\end{align*}
Hence, we see that
\begin{align*}
     J_1 &\leq 2 \frac{\tilde K_{T-1}^2}{\lambda_r^2} \\
    J_2 &\leq \|\tilde U\|_{2.\infty} \tilde K_{T-1} \\
    J_3 &\leq 2 \|U\|_{2,\infty} \bigg(\tilde K_{T-1} + \| \Gamma(Z)\| \bigg)\\
    J_4 &\leq  2 \frac{\tilde K_{T-1}}{\lambda_r^2} \| \Gamma(Z) \|\\
    J_5 &\leq \|\tilde U\|_{2,\infty} \|\Gamma(Z) \|.
\end{align*}
Now, for $T_0$ such that $K_{T_0} \leq 3 \| \Gamma(Z)\|$, $\tilde{K}_{T_0}\leq 4\|\Gamma(Z)\|,$ and we see that we have the initial bound
\begin{align*}
    \tilde K_{T_0 + 1} &\leq  40 \frac{ \| \Gamma(Z) \|^2}{\lambda_r^2} + 5\| \tilde U\|_{2,\infty} \|\Gamma(Z) \|  + 10 \|U\|_{2,\infty}\|\Gamma(Z) \|.
\end{align*}
On the event in Theorem~\ref{thm:2_infty_random} and Assumption \ref{assumption:eigengap}, once $n$ and $d$ are large enough, $\|\tilde{U}-UU\t\tilde{U}\|_{2,\infty}\leq \|U\|_{2,\infty}$, so $$\|\tilde{U}\|_{2,\infty}\leq \|\tilde{U}-UU\t\tilde{U}\|_{2,\infty}+\|UU\t\tilde{U}\|_{2,\infty}\leq 2\|U\|_{2,\infty},
$$ since $\|U\t\tilde{U}\|\leq 1$. This gives $$\tilde{K}_{T_0+1}\leq 40 \frac{\|\Gamma(Z)\|^2}{\lambda_r^2}+20\|U\|_{2,\infty}\|\Gamma(Z)\|.$$
Let $\rho=10 \|\Gamma(Z)\|/\lambda_r^2$, and suppose that for $ T - 1\geq T_0$ we have the bound
\begin{align*}
    \tilde K_{T-1} &\leq 4 \rho^{T-1-T_0} \|\Gamma(Z)\| + \frac{20}{1-\rho} \|U\|_{2,\infty} \|\Gamma(Z)\|.
\end{align*}
Clearly for $T_0$ we have this bound. With the recursion, and using $\tilde{K}_{T-1}\leq 4\|\Gamma(Z)\|$ and $\|\tilde{U}\|_{2,\infty}\leq 2\|U\|_{2,\infty}$, we get
\begin{align*}
\tilde K_T & \leq 10 \frac{\tilde{K}_{T-1}}{\lambda_r^2}\|\Gamma(Z)\|+20\|U\|_{2,\infty}\|\Gamma(Z)\|\\
&\leq \frac{10}{\lambda_r^2}\left[4\rho^{T-1-T_0}\|\Gamma(Z)\|+\frac{20}{1-\rho}\|U\|_{2,\infty}\|\Gamma(Z)\|\right]\|\Gamma(Z)\|+20\|U\|_{2,\infty}\|\Gamma(Z)\|\\
&\leq 4\rho^{T-T_0}\|\Gamma(Z)\|+\left(1+\frac{\rho}{1-\rho}\right)20\|U\|_{2,\infty}\|\Gamma(Z)\|\\
&= 4\rho^{T-T_0}\|\Gamma(Z)\|+\frac{20}{1-\rho}\|U\|_{2,\infty}\|\Gamma(Z)\|
\end{align*}
as required.
\end{proof}

\section{Proof of Lemmas in Section \ref{sec:be_proof}}

Recall Lemma \ref{lem:residuals1}.  
\residualsrand*

\begin{proof}[Proof of Lemma \ref{lem:residuals1}]
Recall the definitions of $R_1$, $R_2$ and $R_3$ via
\begin{align*}
    R_1 :&= e_i\t UU\t\bigg( EM\t + \Gamma(EE\t) \bigg) U\Lambda^{-2} e_j  ;\\
    R_2: &=e_i\t \bigg( \tilde U_D \tilde U_D\t - \tilde U \tilde U\t \bigg)Ue_j; \\
    R_3 :&=  e_i\t \sum_{k\geq 2} S_{MM\t,k}(W) Ue_j  .
\end{align*}
We analyze each in turn.

\ \\ \noindent \textbf{The term $R_1$:} 
We will split this into two terms, $R_{11}$ and $R_{12}$.  From the identity $M\t U = V \Lambda$, the $i,j$ entry of $R_1$ can be written as
\begin{align*}
 \frac{1}{\lambda_j} e_i\t UU\t E V_{\cdot j} + \frac{1}{\lambda_j^2}\sum_{k\neq l} (UU\t)_{ik} U_{lj} \langle E_k, E_l \rangle 
 :&= R_{11} + R_{12},
\end{align*}
where
\begin{align*}
    R_{11} :&= \frac{1}{\lambda_j} \sum_{k=1}^{n} (UU\t)_{ik} \langle E_k, V_{\cdot j} \rangle; \\
    R_{12} :&= \frac{1}{\lambda_j^2}\sum_{k\neq l} (UU\t)_{ik} U_{lj} \langle E_k, E_l \rangle.
\end{align*}
Dividing $R_{11}$ by $\sigma_{ij}$ reveals it is of the form
\begin{align*}
    \frac{1}{\| \Sigma_i^{1/2}V_{\cdot j}\|} \sum_{k=1}^n (UU\t)_{ik} \langle E_k, V_{\cdot j} \rangle.
\end{align*}
To calculate an upper bound, we need to calculate the $\psi_2$ norm squared:
\begin{align*}
   \frac{1}{\| \Sigma_i^{1/2}V_{\cdot j}\|^2}  \sum_{k=1}^{n} (UU\t)_{ik}^2 V_{\cdot j}\t \Sigma_k V_{\cdot j} &\leq  \frac{\sigma^2}{\| \Sigma_i^{1/2}V_{\cdot j}\|^2}  \sum_{k=1}^{n} (UU\t)_{ik}^2 \\
    &\leq \kappa_{\sigma}^2\|U\|_{2,\infty}^2
\end{align*}
which by Hoeffding's inequality shows that this is less than $\tilde C_{R_1} \kappa_{\sigma}\|U\|_{2,\infty}t$ with probability at least $1 - 2 \exp(-ct^2)$.  Hence, we obtain the bound $C_{R_1} \kappa_{\sigma} \|U\|_{2,\infty} \sqrt{\log(n\vee d)}$ with probability at least $1 - 2(n\vee d)^{-4}$.

We now analyze $R_{12}$.  Note that
\begin{align*}
R_{12} := \frac{1}{\lambda_j^2}\sum_{k\neq l} (UU\t)_{ik} U_{lj} \langle E_k, E_l \rangle
\end{align*}
resembles the random variable in the Hanson-Wright inequality (e.g. \citet{vershynin_concentration_2020,chen_hanson-wright_2020}).  By the generalized Hanson-Wright inequality (e.g. Exercise 6.2.7 in \citet{vershynin_high-dimensional_2018}), we have that
\begin{align*}
    \p\bigg\{ \bigg| \sum_{k\neq l} B_{kl} \langle E_k, E_l \rangle \bigg| \geq t \bigg) &\leq 2 \exp\bigg[ -c \min\bigg( \frac{t^2}{\sigma^4 d \|B\|_F^2}, \frac{t}{\sigma^2 \| B\|} \bigg) \bigg]
\end{align*}
where $B_{kl} := (UU\t)_{il} U_{kj}$.  Note that its Frobenius norm can be evaluated via
\begin{align*}
    \| B \|_F^2 &= \sum_{k \neq l} (UU\t)_{il}^2 U_{kj}^2 \\
    &\leq \sum_{l=1}^n(UU\t)_{il}^2 \sum_{k=1}^{n} U_{kj}^2 \\
    &\leq \sum_{l=1}^n(UU\t)_{il}^2 \\
    &\leq \| U\|_{2,\infty}^2.
\end{align*}
Similarly,
\begin{align*}
\| B \| :&= \sup_{\|x\| = 1, \|y\|= 1} \sum_{k=1}^{n} x_k (UU\t)_{ik}  \sum_{l=1}^{n} U_{lj} y_l \\
&\leq  \sup_{\|x\| = 1} \sum_{k=1}^{n} x_k (UU\t)_{ik} \\
&= \| (UU\t)_i \|_2\\
&\leq \|U\|_{2,\infty}.
\end{align*}
Therefore,
\begin{align*}
     \p\bigg\{ \bigg| \sum_{k\neq l} B_{kl} \langle E_k E_l \rangle \bigg| \geq t \bigg) &\leq 2 \exp\bigg[ -c \min\bigg( \frac{t^2}{\sigma^4 d \|U\|_{2,\infty}^2}, \frac{t}{\sigma^2 \|U\|_{2,\infty}} \bigg) \bigg].
\end{align*}
Taking $t = s \sqrt{d}\|U\|_{2,\infty} \sigma^2$ shows that
\begin{align*}
     \p\bigg\{ \bigg| \sum_{k\neq l} B_{kl} \langle E_k E_l \rangle \bigg| \geq s \sqrt{d}\|U\|_{2,\infty} \sigma^2 \bigg) &\leq 2 \exp\bigg( -c \min\bigg( s^2, s\sqrt{d} \bigg) \bigg),
\end{align*}
and hence taking $s = \frac{1}{\sqrt{c}}\sqrt{4 \log(n\vee d)}$ we see that with probability at  least $1 -2 (n\vee d)^{-4}$
\begin{align*} 
\bigg| \frac{1}{\lambda_j^2}\sum_{k\neq l} B_{kl} \langle E_k E_l \rangle \bigg| &\leq C_{R_1} \frac{ \sigma^2}{\lambda_j^2}   \|U\|_{2,\infty} \sqrt{d \log(n\vee d)}.
\end{align*}
Dividing by $\sigma_{ij}$ reveals that with this same probability,
\begin{align*}
    |R_{22}/\sigma_{ij} | &\leq C_{R_1} \kappa_{\sigma} \kappa \frac{\sigma \sqrt{d}}{\lambda_j} \|U\|_{2,\infty} \sqrt{\log(n\vee d)} \\
&\leq C_{R_1} \kappa_{\sigma} \kappa \mu_0 \frac{\sigma \sqrt{rd}}{\lambda_r} \sqrt{\frac{\log(n\vee d)}{n}} \\
&\leq C_{R_1} \kappa_{\sigma} \kappa \mu_0 \frac{1}{\snr}  \sqrt{\frac{\log(n\vee d)}{n}}.
\end{align*}

\ \\ \noindent \textbf{The term $R_2$:} Note that $\tilde U_D $ and $\tilde U$ were already analyzed in Lemma \ref{lem:diag}, which shows that 
\begin{align*}
    \| \tilde U_D \tilde U_D\t - \tilde U \tilde U\t \| &\leq \frac{C_{R_2} \lambda_1 \sigma \sqrt{ \log(n\vee d)}}{\lambda_r^2} \|U\|_{2,\infty} \\
    &\leq C_{R_2} \|U\|_{2,\infty} \kappa \frac{\sigma }{\lambda_r} \sqrt{ \log(n\vee d)}
\end{align*}
with probability at least $1- 2(n\vee d)^{-4}$.  Multiplying by $\frac{1}{\sigma_{ij}}$ yields the upper bound 
\begin{align*}
    C_{R_2} \kappa_{\sigma}\|U\|_{2,\infty} \kappa^2 \sqrt{\log(n\vee d)} &\leq    C_{R_2} \kappa_{\sigma}\mu_0 \kappa^2 \sqrt{\frac{r \log(n\vee d)}{n}}.
\end{align*}
with probability at least $1 -2 (n\vee d)^{-4}$.

\ \\ \noindent \textbf{The term $R_3$:} 
First, note that
\begin{align*}
\bigg| e_i\t \sum_{k\geq 2} S_{MM\t,k}(W) U e_j \bigg| &\leq \| \sum_{k\geq 2} S_{MM\t,k} (W)U\|_{2,\infty} \\
&\leq \sum_{k\geq 2}\|  S_{MM\t,k} (W)U\|_{2,\infty}.
\end{align*}
Examining the proof of Theorem \ref{thm:2_infty_random} shows that the exact same result holds, only  now we start the summation at $k =2$.  Consequently, using the definition of $\delta$ as in the proof of Theorem \ref{thm:2_infty_random}, by Lemma \ref{lem:2infty_general} we have that with probability $1 - c \log(n\vee d )(n\vee d)^{-5}$ that
\begin{align*}
    \sum_{k=2}^{\infty} \|  S_{MM\t,k} (W)U\|_{2,\infty} &\leq \|U\|_{2,\infty}\sum_{k=2}^{c\log(n\vee d)} C_1 \bigg(\frac{4 C_2 \delta}{\lambda_r^2} \bigg)^{k} + \sum_{k= c\log(n\vee d)}^{\infty}\bigg(\frac{4 \delta}{\lambda_r^2} \bigg)^{k} \\
    &\leq C_{R_3} \|U\|_{2,\infty} \frac{\delta^2}{\lambda_r^4}.
\end{align*}
Hence, with probability at least $1 - (n\vee d)^{-4}$,   
\begin{align*}
    \frac{1}{\sigma_{ij}} |R_3| &\leq C_{R_3} \|U\|_{2,\infty}\frac{ \delta^2}{\lambda_r^4}   \frac{1}{\sigma_{ij}} \\
    &\leq C_{R_3}\|U\|_{2,\infty} \frac{1}{\sigma_{ij} \lambda_r^4} \bigg( \sqrt{r n d} \log (\max (n, d)) \sigma^{2}+ \sqrt{r n \log (n)} \lambda_{1} \sigma\bigg)^2 \\
    &\leq C_{R_3} \|U\|_{2,\infty} \frac{rnd \log^2(d) \sigma^4 + rn\log(n\vee d) \lambda_1^2 \sigma^2 + rn\sqrt{d} \log^{3/2}(n) \sigma^3 }{\sigma_{ij} \lambda_r^4}   \\
    &\leq C_{R_3} \kappa\kappa_{\sigma} \|U\|_{2,\infty} \frac{rnd \log^2(d) \sigma^3 + rn\log(n\vee d) \lambda_1^2 \sigma + rn\sqrt{d} \log^{3/2}(n) \sigma^2 }{ \lambda_r^3} \\
    &\leq C_{R_3} \kappa \kappa_{\sigma}\|U\|_{2,\infty} \bigg( \frac{rnd \log^2(n)\sigma^3+ rn\sqrt{d}\log^{3/2}(n)\sigma^2}{\lambda_r^3} + \frac{rn\log(n\vee d) \kappa^2 \sigma}{\lambda_r} \bigg) \\
    &\leq C_{R_3} \kappa \kappa_{\sigma} \mu_0 \bigg( \frac{r^{3/2}\sqrt{n} d \sigma^3}{\lambda_r^3} \log^2(n\vee d) + \frac{r^{3/2} \sqrt{nd} \sigma^2}{\lambda_r^3} \log^{3/2}(n\vee d) + \frac{r^{3/2} \sqrt{n} \kappa^2 \sigma}{\lambda_r} \log(n\vee d) \bigg) \\
    &\leq C_{R_3} \kappa \kappa_{\sigma} \mu_0 \bigg(  \frac{\log^2(n)}{\snr^3} + \frac{\sqrt{r}}{\lambda_r} \frac{\log^{3/2}(n\vee d)}{\snr^2} + \frac{\kappa^2 r \log(n\vee d)}{\snr} \bigg) \\
    &\leq  C_{R_3} \kappa^3 \kappa_{\sigma} \mu_0 \frac{r\log(n\vee d)}{\snr},
\end{align*}
where we have absorbed extra constants into $C_{R_3}$ since $\snr \geq \kappa \sqrt{\log(n\vee d)}$ by Assumption \ref{assumption:eigengap}. 
Therefore, summing up the probabilities and absorbing the constants, we see that with probability at least $1 - 5(n\vee d)^{-4}$ that 
\begin{align*}
\frac{1}{\sigma_{ij}} \bigg(|R_1| + |R_2| + |R_3| \bigg) &\leq  C_{R_1} \kappa_{\sigma } \kappa \mu_0 \frac{1}{\snr} \sqrt{\frac{\log(n\vee d)}{n}}+  C_{R_2} \kappa_{\sigma}\mu_0 \kappa^2 \sqrt{\frac{r\log(n\vee d)}{n}}+C_{R_3} \kappa^3 \kappa_{\sigma} \mu_0 \frac{r\log(n\vee d)}{\snr} \\
&\leq C_{6} \kappa_{\sigma} \kappa^2 \mu_0 \sqrt{\frac{r\log(n\vee d)}{n}}+C_7 \kappa^3 \kappa_{\sigma} \mu_0 \frac{r\log(n\vee d)}{\snr}
\end{align*}
as required.
\end{proof}

\residualsdet*

\begin{proof}[Proof of Lemma \ref{lem:residuals2}]
Recall the definitions of $R_4$ and $R_5$ via
\begin{align*}
R_4:&=   e_i\t \hat U( \mathcal{O}_* - \hat U\t \tilde U\tilde U\t U )e_j  ;\\
R_5:&=  e_i\t \bigg( \hat U \hat U\t \tilde U\tilde U\t  U - \tilde U \tilde U\t U\bigg) e_j.
\end{align*}
On the event in Theorem \ref{thm:2_infty}, $\| \hat U \|_{2,\infty} \leq \| U\|_{2,\infty} + \inf_{\mathcal{O}_*} \|\hat U - U\mathcal{O}_*\|_{2,\infty} \leq C \|U\|_{2,\infty}$ by Assumption \ref{assumption:eigengap}.  Therefore, the term $R_4$ can be bounded in a similar manner to the proof of Lemma \ref{orthogonal_matrix_lemma} (see Appendix \ref{sec:lems_aux}) via
\begin{align*}
\bigg| e_i\t \hat U( \mathcal{O}_* - \hat U\t \tilde U\tilde U\t U )e_j\bigg| &\leq \| \hat U\|_{2,\infty} \| \mathcal{O}_* - \hat U\t \tilde U\tilde U\t U \|  \\
&\leq C\|U\|_{2,\infty} \bigg( \|\sin(\hat U, \tilde U)\|^2 + \|\sin(\tilde U,U)\|^2\bigg) \\
&\leq C \|U\|_{2,\infty} \frac{\|\Gamma(Z)\|^2}{\lambda_r^4},
\end{align*}
where the final inequality is by the Davis-Kahan Theorem and Lemmas \ref{lem:spectral_norm_concentration} and \ref{lem:deterministic_spectral}.  On the event in Lemma \ref{lem:spectral_norm_concentration}, the numerator can be bounded by
\begin{align*}
    C_{\mathrm{spectral}}^2 \bigg( \sigma^2 ( n + \sqrt{nd}) + \sigma \lambda_1 \sqrt{n} \bigg)^2.
\end{align*}
Consequently, there exists a universal constant $C_{R_4}$ such that
\begin{align*}
    \frac{1}{\sigma_{ij}}| R_4| &\leq \frac{1}{\sigma_{ij}}C_{R_4} \|U\|_{2,\infty} \frac{ \sigma^3 n\sqrt{d}  + \sigma^2 \lambda_1^2 n + \sigma^4 nd}{\lambda_r^4} \\
    &=C_{R_4} \frac{\lambda_j}{\|\Sigma_i^{1/2}V_{\cdot j}\|} \|U\|_{2,\infty} \frac{ \sigma^3 n\sqrt{d}  + \sigma^2 \lambda_1^2 n + \sigma^4 nd}{\lambda_r^4} \\
    &\leq C_{R_4} \kappa\kappa_{\sigma} \|U\|_{2,\infty} \frac{\sigma^2 n\sqrt{d} + \sigma \lambda_1^2 n + \sigma^3 nd}{\lambda_r^3} \\
    &\leq  C_{R_4} \kappa\kappa_{\sigma} \mu_0 \frac{\sigma^2 \sqrt{rnd} + \sigma \lambda_1^2 \sqrt{r n} + \sigma^3 \sqrt{rn} d}{\lambda_r^3} \\
    &\leq C_{R_4}  \kappa\kappa_{\sigma} \mu_0 \bigg( \frac{1}{\lambda_r \snr^2} + \frac{\kappa^2}{\snr} + \frac{1}{\snr^3} \bigg) \\
    &\leq C_8 \kappa^3\kappa_{\sigma} \mu_0  \frac{1}{\snr}.
\end{align*}
The term $R_5$ satisfies
\begin{align*}
\frac{1}{\sigma_{ij}} \bigg| e_i\t \bigg( \hat U \hat U\t \tilde U\tilde U\t  U - \tilde U \tilde U\t U\bigg) e_j \bigg| &\leq\frac{1}{\sigma_{ij}} \| \hat U \hat U\t   \tilde U\tilde U\t U - \tilde U \tilde U\t U \|_{2,\infty} \\
&\leq \frac{1}{\sigma_{ij}} \|\hat U \hat U\t \tilde U - \tilde U\|_{2,\infty}.
\end{align*}
The definition of $\tilde H$ in the proof of Theorem \ref{thm:2_infty_deterministic} shows that $\tilde H\t = \hat U\t \tilde U$.  Define $$\mathcal{O}_1:=\argmin_{\mathcal{O} \in \mathbb{O}(r)} \| \hat U - \tilde U \mathcal{O} \|_F.$$
Then
\begin{align*}
    \frac{1}{\sigma_{ij}} \|\hat U \hat U\t \tilde U - \tilde U\|_{2,\infty} &=  \frac{1}{\sigma_{ij}} \| \hat U \tilde H\t - \tilde U \|_{2,\infty} \\
    &\leq \frac{1}{\sigma_{ij}} \bigg( \| \hat U \tilde H\t - \hat U \mathcal{O}_1\t \|_{2,\infty}  +\| \hat U \mathcal{O}_1\t - \tilde U \|_{2,\infty} \bigg) \\
    &=\frac{1}{\sigma_{ij}} \bigg( \| \hat U \tilde H\t - \hat U \mathcal{O}_1\t \|_{2,\infty}  +\| \hat U  - \tilde U \mathcal{O}_1  \|_{2,\infty} \bigg)\\
    &\leq \frac{1}{\sigma_{ij}} \bigg( \| \hat U (\tilde H\t - \mathcal{O}_1\t) \|_{2,\infty}  +\| \hat U  - \tilde U \tilde H   \|_{2,\infty} + \| \tilde U (\mathcal{O}_1 - \tilde H) \|_{2,\infty} \bigg) \\
    &\leq \frac{1}{\sigma_{ij}} \bigg( \|\hat U \|_{2,\infty} \| \tilde H - \mathcal{O}_1 \| + \|\tilde U \|_{2,\infty} \| \mathcal{O}_1 - \tilde H \| + \|\hat U - \tilde U \tilde H \|_{2,\infty} \bigg) \\
    &\leq \frac{1}{\sigma_{ij}} \bigg( 2 C \|U\|_{2,\infty} \| \sin(\hat U, \tilde U ) \|_2^2 + \|\hat U - \tilde U \tilde H \|_{2,\infty} \bigg) \\
    &\leq \frac{1}{\sigma_{ij}} \bigg( 2 C \|U\|_{2,\infty}\frac{\|\Gamma(Z)\|^2}{\lambda_r^4} + \|\hat U - \tilde U \tilde H \|_{2,\infty} \bigg),
    \end{align*}
    where the final line follows from the Davis-Kahan Theorem.  By Theorem \ref{thm:2_infty_deterministic} we have that
    \begin{align}
         \|\hat U - \tilde U \tilde H \|_{2,\infty} &\leq C_D \kappa^2 \|U\|_{2,\infty}^2 \frac{ \|\Gamma(Z)\|}{\lambda_r^2}. \label{lems_dets_ub}
    \end{align}
    We have already shown that
    \begin{align*}
        \frac{1}{\sigma_{ij}} \| U \|_{2,\infty} \frac{\|\Gamma(Z)\|^2}{\lambda_r^4} &\leq C_8 \frac{\kappa^3 \kappa_{\sigma} \mu_0}{\snr},
    \end{align*}
    which matches the bound for $R_4$, so by increasing the constant $C_8$, we need only bound the term in Equation     \eqref{lems_dets_ub}.
    We have that
    \begin{align*}\frac{1}{\sigma_{ij}} C_D \kappa^2 \|U\|_{2,\infty}^2 \frac{ \|\Gamma(Z)\|}{\lambda_r^2}  
&\leq \frac{C_9}{\sigma_{ij}} \kappa^2 \|U\|_{2,\infty}^2 \frac{\sigma^2 (n + \sqrt{nd}) + \sigma \lambda_1 \sqrt{n}}{\lambda_r^2} \\
&\leq C_9 \kappa_{\sigma}\kappa^3 \|U\|_{2,\infty}^2 \frac{ \sigma (n + \sqrt{nd}) + \lambda_1 \sqrt{n}}{\lambda_r} \\
&\leq C_9 \kappa_{\sigma} \kappa^3 \mu_0^2 \frac{ r\sigma (n + \sqrt{nd}) + \lambda_1 r\sqrt{n}}{n\lambda_r} \\
&\leq C_9 \kappa_{\sigma}\kappa^3 \mu_0^2 \bigg( \frac{r\sigma}{\lambda_r} + \frac{r \sigma \sqrt{d}}{\sqrt{n} \lambda_r} + \kappa \frac{r}{\sqrt{n}}\bigg) \\
&\leq C_9  \kappa^4 \kappa_{\sigma} \mu_0^2 \frac{r}{\sqrt{n}}
\end{align*}
which is the desired upper bound.
\end{proof}

\finalres*

\begin{proof}[Proof of Lemma \ref{lem:final_res}]
First, conditionally on $E_i$ the sum is a sum of independent random variables each with $\psi_2$ norm bounded by 
\begin{align*}
    \bigg\|\langle E_i, E_k \rangle U_{kj} \lambda_j^{-2} \bigg\|_{\psi_2} &\leq  \max_{k} \| \langle E_i, E_k \rangle \|_{\psi_2} |U_{kj}|\lambda_j^{-2} \\
    &\leq  C \|E_i\| \sigma    \|U\|_{2,\infty}\lambda_j^{-2},
\end{align*}
where $C$ is a universal constant. Hence, for any $t \geq 0$, we have that 
\begin{align*}
    \bigg| \sum_{k\neq i} \langle E_i, E_k \rangle U_{kj} \lambda_j^{-2} \bigg| &\leq C \sigma \sqrt{n} \| E_i\| \|U\|_{2,\infty} \lambda_j^{-2} t 
\end{align*}
with probability at least $1 - 2\exp(-c t^2)$.  Furthermore, for some other universal constant $C$, $\|E_i\| \leq C\sigma_i \sqrt{d} s$ with probability at least $1 - 2 \exp(-cs^2)$ (uniformly over $i$).  Hence, 
\begin{align*}
 \bigg| \sum_{k\neq i} \langle E_i, E_k \rangle U_{kj} \lambda_j^{-2} \bigg|  &\leq C_{10} \sigma \sqrt{nd} \lambda_j^{-2} \|U\|_{2,\infty} \sigma_i t
\end{align*}
with probability at least $1 - 4\exp(-c t)$.  Recall $\sigma^2_{ij} := \|\Sigma_i^{1/2} V_{\cdot j}\|^2 \lambda_j^{-2}$. Then
\begin{align*}
\frac{1}{\sigma_{ij}} \bigg| \sum_{k\neq i} \langle E_i, E_k \rangle U_{kj} \lambda_j^{-2} \bigg| &\leq\frac{C_{10}}{\sigma_{ij}} \sigma \sqrt{nd} \lambda_j^{-2} \|U\|_{2,\infty} \sigma_i t \\
&\leq C_{10}\frac{\sigma \sqrt{d}}{\lambda_j} \frac{\sigma_i}{\|\Sigma_i^{1/2}V_{\cdot j}\|} \sqrt{n}\|U\|_{2,\infty} t \\
&\leq C_{10} \kappa_{\sigma}\frac{\sigma \sqrt{d}}{\lambda_j}\sqrt{n}\|U\|_{2,\infty} t
\end{align*}
with probability at least $1 - 4e^{-ct}$, since $\sigma_i/\|\Sigma_i^{1/2}V_{\cdot j}\| \leq \kappa_{\sigma}$ remains bounded away from zero and infinity.    Taking $t = C (4 /c) \log(n\vee d)$ and absorbing the constants shows that with probability at least $1 - 4(n\vee d)^{-4}$, uniformly over $i$ and $j$ that
\begin{align*}
    \frac{1}{\sigma_{ij}} \bigg| \sum_{k\neq i} \langle E_i, E_k \rangle U_{kj} \lambda_j^{-2} \bigg| &\leq C_{10} \kappa_{\sigma}\frac{\sigma \sqrt{d}}{\lambda_j}\sqrt{n}\|U\|_{2,\infty} \log(n\vee d) \\
    &\leq C_{10} \mu_0 \kappa_{\sigma} \frac{\log(n\vee d)}{\snr}
\end{align*}
as required.  
\end{proof}

\section{Proof of Auxiliary Lemmas}
\label{sec:lems_aux}
First, recall Lemma \ref{orthogonal_matrix_lemma}.
\orthogonalmatrixlemma*
\begin{proof}[Proof of Lemma \ref{orthogonal_matrix_lemma}]
We have that for any orthogonal matrices $\mathcal{O}$ and $\tilde{\mathcal{O}}$ that
\begin{align*}
   \| UU\t \utilde\utilde\t \uhat  - U\mathcal{O} \|_{2,\infty} &\leq \|U\|_{2,\infty} \|U\t \utilde \utilde \t \uhat - \mathcal{O}\| \\
   &\leq \|U\|_{2,\infty}\bigg( \|U\t \utilde \utilde \t \uhat - \tilde{\mathcal{O}} \utilde\t \uhat\| + \|\mathcal{O} - \tilde{\mathcal{O}}\utilde\t \uhat\|\bigg)\\
   &\leq \|U\|_{2,\infty}\bigg( \|U\t \utilde  - \tilde{\mathcal{O}} \| + \|\mathcal{O} - \tilde{\mathcal{O}} \utilde\t \uhat\|\bigg).
\end{align*}
Let $\mathcal{O}^{(1)}$ and $\mathcal{O}^{(2)}$ be the orthogonal matrices satisfying
\begin{align*}
   \mathcal{O}^{(1)} :&= \argmin \|U - \utilde \mathcal{O}\|_F; \\
    \mathcal{O}^{(2)} :&= \argmin \|\utilde - \uhat\mathcal{O}\|_F.
\end{align*}
Define $\mathcal{O}_* := (\mathcal{O}^{(1)} \mathcal{O}^{(2)})\t.$  
Letting $\tilde{\mathcal{O}}= \mathcal{O}^{(1)}$ and $\mathcal{O} = \mathcal{O}_*\t$ implies that this is less than or equal to
\begin{align*}
    \|U\|_{2,\infty} \bigg( \|\sin\Theta(U, \utilde)\|^2 + \|\sin\Theta(\utilde, \uhat)\|^2\bigg).  
\end{align*}
By the Davis-Kahan Theorem, under Assumptions \ref{assumption:eigengap} and \ref{assumption:incoherence} and under the event in Lemma \ref{lem:spectral_norm_concentration}  by Lemmas \ref{lem:deterministic_spectral} and \ref{lem:spectral_norm_concentration} we have that for some constant $C$,
\begin{align*}
      \|U\|_{2,\infty} \bigg( \|\sin\Theta(U, \utilde)\|^2 + \|\sin\Theta(\utilde, \uhat)\|^2\bigg) &\leq C \|U\|_{2,\infty} \bigg( \frac{\|\Gamma(Z)\|^2}{\lambda_r^4} + \|U\|_{2,\infty}^4  \frac{\|\Gamma(Z)\|^2}{\lambda_r^4} \bigg) \\
      &\leq C \|U\|_{2,\infty} \frac{\|\Gamma(Z)\|^2}{\lambda_r^4}
\end{align*}
since $\|\Gamma(Z)\|/\lambda_r^2 \leq C$ under these assumptions and the event in Lemma \ref{lem:spectral_norm_concentration}.
\end{proof}

\begin{lemma}\label{lemma7_spectral}
If $M$ has rank $r$, and $\hat{U}$ is the projector onto the top $r$ left singular vectors of $M+E$, and if $\lambda_r \geq 2 \| E \|$, then 
$$\|(I-P_{\hat{U}})M\|\leq 2\|E\|.$$
\end{lemma}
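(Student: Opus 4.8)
The plan is to exploit the fact that $P_{\hat U}(M+E)$ is the best rank-$r$ approximation of $M+E$ in spectral norm, combined with Weyl's inequality for singular values. Write $\hat M := M + E$ and let $\hat\sigma_1 \geq \hat\sigma_2 \geq \cdots$ denote its singular values. Since $\hat U$ collects the top $r$ left singular vectors of $\hat M$, the operator $(I - P_{\hat U})\hat M$ kills the leading $r$ singular directions of $\hat M$, so reading off the singular value decomposition gives $\|(I - P_{\hat U})\hat M\| = \hat\sigma_{r+1}$.

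Next I would bound $\hat\sigma_{r+1}$ via Weyl's inequality: $\hat\sigma_{r+1} = \sigma_{r+1}(M+E) \leq \sigma_{r+1}(M) + \|E\|$. Because $M$ has rank $r$ we have $\sigma_{r+1}(M) = 0$, so $\|(I - P_{\hat U})\hat M\| \leq \|E\|$.

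Finally, decomposing $(I - P_{\hat U})M = (I - P_{\hat U})\hat M - (I - P_{\hat U})E$ and applying the triangle inequality together with $\|I - P_{\hat U}\| \leq 1$ yields $\|(I - P_{\hat U})M\| \leq \|(I - P_{\hat U})\hat M\| + \|E\| \leq 2\|E\|$, as claimed. The hypothesis $\lambda_r \geq 2\|E\|$ enters only through Weyl's inequality applied to $\hat\sigma_r$ and $\hat\sigma_{r+1}$, which guarantees a strict gap $\hat\sigma_r > \hat\sigma_{r+1}$ and hence that the top-$r$ left singular subspace $\hat U$ is well defined; the inequality itself does not otherwise use it. There is no genuine obstacle here — the only step requiring a moment of care is the identification $\|(I - P_{\hat U})\hat M\| = \hat\sigma_{r+1}$, which is immediate from the SVD of $\hat M$.
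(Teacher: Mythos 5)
Your proof is correct and follows essentially the same route as the paper's: both decompose $(I - P_{\hat U})M = (I - P_{\hat U})(M+E) - (I - P_{\hat U})E$, identify $\|(I - P_{\hat U})(M+E)\|$ with the $(r+1)$-st singular value of $M+E$, and bound that via Weyl's inequality. Your remark that the hypothesis $\lambda_r \geq 2\|E\|$ is not needed for the inequality itself is accurate, though the resulting gap is only $\hat\sigma_r \geq \hat\sigma_{r+1}$ rather than strict, so uniqueness of $\hat U$ is not actually guaranteed; the bound nonetheless holds for any valid choice of $\hat U$.
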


\begin{proof}
We have that
\begin{align*}
    \| (I - P_{\tilde U} )M \| &\leq \| (I - P_{\tilde U}) (M + E)\| +  \| (I - P_{\tilde U}) E \|\\
    &\leq \lambda_{r+1}(M + E) + \| E \| \\
    &\leq 2 \| E \|
\end{align*}
by Weyl's inequality.
\end{proof}

\bibliography{CMDS.bib}

\end{document}